\documentclass[reqno,a4paper,11pt]{amsart}
\usepackage{hyperref}
\usepackage{tikz}
\usetikzlibrary{arrows.meta,calc,decorations.pathreplacing}
\usepackage[active]{srcltx}
\usepackage[curve]{xypic}
\usepackage{graphicx}
\usepackage{mathrsfs}
\usepackage{geometry}
\usepackage{color}

\usepackage[active]{srcltx}
\usepackage{epsfig,amsmath}
\usepackage[english]{babel}
\usepackage{amsmath,amsfonts,amssymb,amscd,color,epsfig,amsthm}
\providecommand{\th}{Theorem}
\newcommand{\Sone}{\mathbb R/\mathbb Z}
\newcommand{\sig}{\sigma_d}

\newcommand{\Hd}{\operatorname{dim}_{\!H}}

\newcommand{\CH}{\operatorname{CH}}

\newcommand{\Imp}{\operatorname{Imp}}

\newtheorem{theorem}{Theorem}[section]
\newtheorem{lemma}[theorem]{Lemma}
\newtheorem{proposition}[theorem]{Proposition}
\newtheorem{corollary}[theorem]{Corollary}

\newtheorem{definition}[theorem]{Definition}

\theoremstyle{remark}

\theoremstyle{plain}

\numberwithin{equation}{section}

\newcommand{\wh}{\widehat}

\def\EEE{{\cal E}}
\def\FFF{{\cal F}}

\def\GGG{{\cal G}}

\def\JJJ{{\cal J}}

\def\LLL{{\cal L}}

\def\RRR{{\cal R}}
\def\SSS{{\cal S}}

\def\WWW{{\cal W}}

\def\PPPP{\mathscr P}

\def\CCCC{\mathscr C}

\def\g{\gamma}
\def\G{\Gamma}

\def\De{\Delta}
\def\de{\delta}
\def\x{\xi}

\def\R{\mbox{$\mathbb R$}}

\def\C{\mbox{$\mathbb C$}}

\def\D{\mathbb D}

\def\Z{\mbox{$\mathbb Z$}}

\def\lv{ \left(\begin{matrix} }
 \def\rv{\end{matrix}\right)}

\def\cal{\mathcal}

\def\dw{{\dw}}

\def\ov{\overline}

\usepackage{tikz}
\usetikzlibrary{arrows}
\tikzstyle{every picture}=[> = to]
\tikzset{cdlabel/.style={execute at begin node=$\scriptstyle,execute at end node=$}}
\tikzset{implication/.style={double equal sign distance, -implies}}
\tikzset{biimplication/.style={double equal sign distance, implies-implies}}

\def\dw{{\ \mathrm{d}w}}
\newcommand{\PaperNeedspace}[1]{\par\begingroup
  \dimen0=#1\relax\advance\dimen0 by\pagetotal
  \ifdim\dimen0>\pagegoal\newpage\fi\endgroup}
\hypersetup{hidelinks,pdftitle={Continuity of biaccessible dimension for polynomials}}

\title{Continuity of  biaccessibility dimension for polynomials}
\date{}
\author{Yan Gao}
\address{Yan Gao, School of Mathematical Sciences, Shenzhen University, Shenzhen 518061, China}
\email{gyan@szu.edu.cn}
\subjclass[2020]{Primary 37F10; Secondary 37B40, 37F20, 37F46.}
\keywords{core entropy, critical portrait, invariant lamination, biaccessibility dimension, polynomials}

\begin{document}
\begin{abstract}
For a polynomial \(f\) of degree \(d\ge 2\), its biaccessible set \(B(f)\) consists of angles whose external rays are smooth and land at a point receiving at least two smooth external rays. We prove that the function \(f\mapsto \Hd(B(f))\) is continuous on the entire space of degree-\(d\) polynomials. Therefore $(\log d)\Hd B(f)$ gives a continuous extension of classical core entropy from the postcritically finite case to all  polynomials of degree $d$.
\end{abstract}
\maketitle

\section{Introduction}
\label{sec:introduction}

Let \(\PPPP_d \simeq \mathbb{C}^{d-1}\) denote the space of monic centered polynomials of degree \(d \ge 2\). A central problem in complex dynamics is to study how the dynamics changes as polynomials vary in (canonical subsets of) $\PPPP_d$.
 A  subset of \(\PPPP_d\) that has attracted particular interest is the \emph{connectedness locus}
\[
\CCCC_d = \{ f \in \PPPP_d : \text{the Julia set of } f \text{ is connected} \}.
\]
When $d=2$, the connectedness locus $\CCCC_2$ is the celebrated Mandelbrot set, which has been extensively studied. For $d\geq 3$, however, the structure of $\CCCC_d$ becomes considerably more intricate and remains much less understood.


To gain a qualitative understanding of the connectedness locus, W.~Thurston introduced a new invariant, known as the \emph{core entropy} $h(f)$, for each postcritically finite polynomial $f$. It is defined as the topological entropy of $f$ restricted to its Hubbard tree (see Definition~\ref{def:Hubbard}). In \cite{TG}, Thurston laid the foundations for the theory of core entropy and, in particular, developed an algorithm for computing core entropy from critical angle data (see also \cite{G}).

Another quantity closely related to core entropy is the
\emph{biaccessibility dimension}. For any $f\in\PPPP_d$,
let $B(f)$ denote the set of angles whose external rays are smooth and land at a point receiving at least two smooth external rays. The biaccessibility dimension of $f$ is $\dim_H B(f)$. For postcritically finite polynomials,
it satisfies
\begin{equation}\label{eq:0}
h(f)=\log d\cdot \Hd(B(f)).
\end{equation}
See \cite[Section~1.1]{GT} and the references therein.
Unlike core entropy defined via Hubbard trees, the
biaccessibility dimension remains well-defined even when
the Julia set is disconnected or fails to be locally connected.

One main result of this paper establishes the continuity of the biaccessibility dimension as a function on \(\PPPP_d\).

\begin{theorem}\label{thm:main}
The function \(f\mapsto \Hd(B(f))\) is continuous on \(\PPPP_d\).
\end{theorem}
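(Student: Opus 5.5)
The plan is to pass from polynomials to combinatorial data on the circle, and to prove continuity there.

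\emph{Step 1: reduce to angles.} For every $f\in\PPPP_d$, connected Julia set or not, I would attach a critical portrait $\Theta(f)$, i.e.\ a finite family of finite sets of angles compatible with $\sigma_d$. These come from the external rays landing at (or, in the escaping case, crashing into) the critical points. Such portraits exist, though in general not uniquely. The goal is to show
\[
\Hd B(f)=\Hd B(\Theta(f))
\]
for a combinatorially defined biaccessible set $B(\Theta)$. Here $B(\Theta)$ is built from the invariant lamination generated by $\Theta$: it is the set of angles that are endpoints of nondegenerate leaves, with the leaves that come from smooth rays separated out.

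\emph{Step 2: prove the equality in each case.}
\begin{itemize}
\item Disconnected Julia set: this is handled by the Böttcher coordinate and symbolic dynamics.
\item Connected, non-locally-connected Julia set: only an inequality can hold in general. The clean statement I would aim for is that $B(f)$ and $B(\Theta)$ differ by a set of Hausdorff dimension $0$, or at least of dimension no larger than the dimension of the biaccessible set.
\end{itemize}

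\emph{Step 3: express $\Hd B(\Theta)$ as an entropy.} I would show that $\Hd B(\Theta)=h(\Theta)/\log d$, where $h(\Theta)$ is a growth rate of the lamination. It can be computed as the spectral radius of Thurston's transfer operator acting on pairs of angles, or as a limit of counts of separations at level $n$.

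\emph{Step 4: continuity in two directions.}
\begin{itemize}
\item Upper semicontinuity: realise $h$ as an infimum over $n$ of quantities that are continuous in $\Theta$ (the counts of separating sets at level $n$).
\item Lower semicontinuity: exhibit, for $\Theta$, a subshift-type Cantor set of biaccessible angles of dimension close to $\Hd B(\Theta)$. Such a set is built from finitely many periodic leaves that avoid the critical leaves, and it persists under small perturbation of the polynomial, because landing of rays at repelling periodic points is stable.
\end{itemize}

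\emph{Step 5: pass back to polynomials.} I would combine the above with the fact that $f\mapsto\Theta(f)$ is semicontinuous in the Hausdorff sense: limits of portraits of $f_n$ are portraits compatible with $f$, possibly with critical leaves collapsing or splitting. Upper semicontinuity of $\Hd B$ then follows, because biaccessibility dimension cannot jump up at such limits; the entropy bound from $\Theta$ should be monotone under these degenerations.

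\emph{Main obstacle.} I expect the hardest point to be upper semicontinuity at parameters where parabolic implosion or Cremer/Siegel behaviour occurs, and in particular when a sequence of escaping polynomials converges into $\CCCC_d$. In that situation many rays of $f_n$ can be biaccessible because of crashing rays, while for $f$ they are not. To handle it, I would show that the extra biaccessible angles lie in preimages of arbitrarily small arcs around the critical portrait angles, and then bound their dimension by the level-$n$ upper estimate applied to the limit portrait.
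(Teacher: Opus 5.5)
Your proposal has genuine gaps, in two places.

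\textbf{The pointwise identity (Steps~1--2).} These steps do not establish $\Hd B(f)=\Hd B(\Theta(f))$, and you concede only an inequality when $J_f$ is not locally connected. Without this identity, the chain $\Hd B(f)=h(\Theta(f))/\log d$ on which Steps~3--5 rest is broken. A portrait read off from critical rays does not tell you which pairs of smooth rays of $f$ actually land together; with a Cremer cycle, a critical point in $J_f$ need not receive any ray at all. For disconnected $J_f$, the B\"ottcher coordinate and symbolic dynamics describe only the escaping part: non-trivial periodic components of $K_f$ carry polynomial-like dynamics with all the difficulties of the connected case.

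The paper avoids critical rays altogether. It builds its model from actual landing data, namely the perfect part $\LLL^{\rm p}_{\rm bi}(f)$ of the lamination generated by finite smooth landing classes. The upper bound $\Hd B(f)\le\Hd B(\LLL^{\rm p}_{\rm bi}(f))$ is then cheap: only countably many leaves are discarded, and infinite landing classes have dimension zero. The reverse inequality is the hard part. It requires proving that finite periodic classes of the model are realized by rays of $f$ landing at a single periodic point (Lemma~\ref{lem:finite-fibers}, via impressions and straightening of the periodic component). It then requires converting interval horseshoes of the model into conformal strip systems bounded by actual ray pairs.

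Your lower-semicontinuity sketch presupposes exactly this. Stability of landing at repelling points is useful only once you know two things: that the chosen periodic leaves are actual ray pairs of $f$ landing at repelling points off the postcritical set, and that no critical value of $f^m$ enters the strip.

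\textbf{Upper semicontinuity (Step~5).} This is the decisive gap. You assert, without proof, that limits of portraits of $f_n$ are portraits ``compatible with'' $f$ and that entropy is ``monotone under these degenerations''. Combinatorial core entropy is continuous on portrait space. Hence the inequality you need, $h(\lim_n\Theta(f_n))\le(\log d)\Hd B(f)$, is essentially a restatement of upper semicontinuity itself. The paper explicitly does not claim that the limit labeled wedge comes from a marking portrait of $f$. The device in Step~4 does not help either. Level-$n$ separation data jump at portraits where a postcritical angle meets a critical polygon, and $f\mapsto\Theta(f)$ is neither single-valued nor continuous.

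The missing idea is uniform realization:
\begin{enumerate}
\item From the convergent wedges one selects $M_0$ closed paths of length $m_0$ at one vertex, which occur in every $\G_{\xi_n}$ for large $n$.
\item Lemma~\ref{lem:key} (uniform pruning plus realization) turns them into $f_n^m$-conformal strip systems. Here $m,M$ are independent of $n$, $\log M/m$ is close to $\log M_0/m_0$, and all boundary angles lie in the finite set $\sig^{-m}(\operatorname{Fix}(\sig^{2m}))$.
\item These systems also avoid a finite set $E_f$ chosen from the \emph{limit} $f$: the periodic angles in forward orbits of $\Sigma_0(f)$, and the periodic angles landing at parabolic or postcritical points.
\item After passing to a subsequence with constant boundary angles, stability of landing at repelling points transports the strip system to $f$ itself. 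This gives a Cantor subset of $B(f)$ of dimension greater than $\limsup_n\Hd B(f_n)-2\epsilon$.
\end{enumerate}

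Your ``main obstacle'' also misidentifies the danger. Crashing rays are not smooth, so they never contribute to $B(f_n)$. What can actually go wrong is one of the following:
\begin{itemize}
\item the Cantor sets of $f_n$ have unbounded return times;
\item in the limit, boundary rays cease to be smooth;
\item in the limit, boundary rays land at parabolic or postcritical points;
\item in the limit, the landing points of distinct boundary ray pairs collide.
\end{itemize}
The uniform bounds and the choice of $E_f$ are designed to exclude precisely these failures.
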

Consequently, $(\log d)\dim_H B(f)$ provides a continuous extension of the classical core entropy to all polynomials of degree $d$.

Our proof of Theorem~\ref{thm:main} relies neither on the structure of the parameter space nor on the continuity of core entropy on the space of critical portraits established in \cite{GT}. Instead, it rests on the following combinatorial entropy-dimension formula, which extends \eqref{eq:0} to {general laminational equivalence relations}.

Let $\sim$ be a \emph{laminational equivalence relation}, and
write $\LLL=\LLL_\sim$ and $g=g_\sim$ for the associated
invariant lamination and topological polynomial, respectively.
The \emph{biaccessible set} $B(\LLL)$ is the union of all
non-singleton $\sim$-classes.

A \emph{marking portrait} $\xi=\xi_\sim$ specifies the Julia critical
classes and dynamically compatible choices of angles in
critical Fatou gaps, following  the critical marking framework
of \cite{Poi,Kiwi2,Z}.
Let $h(\xi)$ denote the logarithmic growth rate of closed
paths in the \emph{separation graph} $\Gamma_\xi$ arising from
Thurston's entropy algorithm. It is called the \emph{(combinatorial) core entropy} of the critical portrait $\xi$.

\begin{theorem}\label{thm:equal}
For every marking portrait $\xi$ of $\LLL$,
\[
h(\xi)=(\log d)\Hd(B(\LLL)).
\]
\end{theorem}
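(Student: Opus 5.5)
We prove the two inequalities $h(\xi)\le (\log d)\Hd(B(\LLL))$ and $h(\xi)\ge(\log d)\Hd(B(\LLL))$ separately, following the strategy of Tiozzo and Gao--Tiozzo for postcritically finite polynomials, now carried out purely at the level of the lamination $\LLL$ and the marking portrait $\xi$. The basic link is this. The vertices of the separation graph $\Gamma_\xi$ are pairs of angles (or leaves/chords) of $\LLL$. Its edges record how $\sigma_d$ maps a pair either onto one pair or, when the pair is separated by the marked critical data of $\xi$, onto several pairs. A path of length $n$ in $\Gamma_\xi$ therefore corresponds to a chain of $\sim$-classes crossed by a chord, pulled back along a branch of $\sigma_d^{-n}$ selected by the partition of the circle into the $d$ regions cut out by $\xi$.

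\emph{Upper bound.} Fix $\epsilon>0$ and a finite collection of leaves, for instance those of generation $\le N$ separating the marked critical classes. For each vertex $v$ of $\Gamma_\xi$, the biaccessible angles lying "over" the corresponding arc are covered by the arcs obtained by pulling back along paths of length $n$ starting at $v$. Each such arc has length at most $C d^{-n}$. By the definition of $h(\xi)$ as the growth rate of closed paths, together with a Perron--Frobenius-type argument, the number of such paths is at most $C_\epsilon e^{n(h(\xi)+\epsilon)}$. This gives $\Hd(B(\LLL)\cap I)\le (h(\xi)+\epsilon)/\log d$ for every arc $I$ that does not meet the finitely many leaves cut off at level $N$. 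The remaining part of $B(\LLL)$ is a countable union of preimages of sets of the same kind, and Hausdorff dimension is countably stable, so the bound extends to all of $B(\LLL)$.

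\emph{Lower bound.} Here we build Cantor subsets of $B(\LLL)$. Take a strongly connected component of $\Gamma_\xi$ whose spectral radius attains $e^{h(\xi)}$; if $\Gamma_\xi$ is infinite, approximate it by finite strongly connected subgraphs with growth rate at least $h(\xi)-\epsilon$. Consider the closed loops based at a vertex $v$. Each loop of length $n$ gives a map $\sigma_d^{-n}$ sending the pair of leaves in $v$ into itself, and hence an iterated function system of contractions with ratio $d^{-n}$ on a circle arc. Every point of its limit set is a limit of nested leaves, so it should be biaccessible or a limit of biaccessible angles. The similarity dimension of this system is at least $(h(\xi)-\epsilon)/\log d$, and a separation condition is provided by the fact that distinct loops give pulled-back arcs lying in distinct regions of the critical partition. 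It remains to show that the limit set lies inside $B(\LLL)$ (and not merely in its closure) up to a countable set. For this we use that two distinct angles are separated by leaves at every generation and do not lie in one Fatou gap, and that a nested intersection of $\sim$-chords collapses to a nondegenerate class by the closedness of $\sim$.

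The main obstacle will be making the correspondence between paths in $\Gamma_\xi$ and pulled-back arcs exact in the presence of Fatou critical gaps and periodic (Siegel or parabolic-type) gaps. There the marking chooses angles in the critical Fatou gaps, and pulled-back arcs can fail to be separated or can contain non-biaccessible boundaries of gaps. My first approach is to show that boundaries of gaps contribute only a set of zero entropy. Gap boundaries map by monotone degree-$k$ maps of the circle, so the biaccessible points on them have dimension $0$; equivalently, the subgraph of $\Gamma_\xi$ that corresponds to them has subexponential growth. This would make $h(\xi)$ independent of the choice of marked angles, and would then allow a reduction to the Julia-critical case treated above.
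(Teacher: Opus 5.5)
Your overall architecture matches the paper's: two inequalities, biaccessible angles coded by paths in $\Gamma_\xi$ with cylinders of size $d^{-n}$, and Cantor sets built from loops at one vertex. Both halves, however, have a genuine gap.

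\textbf{Upper bound.} The estimate ``at most $C_\epsilon e^{n(h(\xi)+\epsilon)}$ paths of length $n$ from $v$'' does not follow from the closed-path definition of $h(\xi)$ by a Perron--Frobenius argument. The graph $\Gamma_\xi$ is infinite, and in a countable graph the number of all paths from a vertex can grow strictly faster than the number of closed paths, because paths can escape to infinity. Truncating at generation $N$ simply loses the angles whose codings escape. The missing idea is a structural bound specific to separation graphs. Every branching edge lands at height $1$, so a path avoiding the finite set $S_N$ of vertices of width $\le N$ makes a genuine choice at most once every $N$ steps. Hence there are at most $2^{1+n/N}$ such paths (Lemma~\ref{lem:escape}). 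The paper then splits codings into two kinds. Codings returning infinitely often to some vertex $u$ are covered by cylinders of closed paths at $u$, each a union of at most $2dn$ intervals of length $d^{-n}$. Codings eventually leaving every finite set give angles of dimension $\le\log 2/(N\log d)$ for every $N$, hence dimension $0$. You also assert without argument that the rest of $B(\LLL)$ consists of preimages of angles lying over postcritical arcs. This coverage statement is a real input (Lemma~\ref{lem:coverage}). It is derived from the cut-point lemma (Proposition~\ref{pro:atom}) after discarding a countable set of branch points, Fatou-boundary cut points and marked classes.

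\textbf{Lower bound.} A loop at $v$ does not give a contraction $\sigma_d^{-n}$ of a fixed circle arc into itself. The vertex $v$ is a pair of postcritical angles, not a pair of leaves. Their landing points are in general non-periodic; they may be endpoints of $J_\sim$ with one-angle classes, or lie on Fatou closures. Critical values of the return map can sit on side branches of the arc. So the inverse branch along a loop's itinerary is only piecewise defined (its cylinder has up to $2dn$ pieces), and the branches of different loops have no common domain. Separation is also not given by ``distinct regions of the critical partition'': distinct loops may share initial edges, and disjointness comes only from the trimmed return intervals.

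What loops honestly produce is a monotone interval horseshoe, possibly with plateaux, on a reduced arc $I_v$ obtained after collapsing the marked internal rays. The paper converts this into dimension without any IFS, in four steps. The horseshoe has entropy $\ge\log M$. Its non-escaping set lifts through finite-to-one factor maps to a compact $\sigma_d^m$-invariant set $A_0\subset\mathbb S$ of the same entropy, once Fatou parts are shown to carry no entropy (Lemma~\ref{lem:fatou-removal}). Furstenberg's formula then gives $\dim_H A_0=h_{\rm top}(\sigma_d^m|_{A_0})/(m\log d)$. Finally, $A_0\setminus B$ is finite, because lifts of interior points of a regulated arc are cut points. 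An IFS version would first require pruning to interior periodic cut points $a,b$ with finite classes and no critical values between them (as in Lemma~\ref{lem:pruning}), so that the two channels between their leaves map homeomorphically.

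\textbf{Fatou gaps.} Your treatment needs correcting. Reduction to Julia-type critical data works for Siegel gaps: their boundaries carry no periodic points, and the inserted polygons with their pullbacks add only countably many leaves (Proposition~\ref{pro:reduction}). It fails for attracting cycles, whose periodic critical gaps carry marking polygons with a periodic vertex, so inserting them destroys properness. The paper instead collapses the marked internal rays and shows that zero-arc vertices have uniformly bounded path counts (Lemma~\ref{lem:zero}). That is what lets you base the loops on a non-degenerate arc. Also, biaccessible vertices of a gap have dimension zero because they are endpoints of countably many edges. It is not because of a degree-$k$ boundary map, which has entropy $\log k>0$.
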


In particular, this core entropy is independent  of the marking. 

The key idea in the proof of Theorem~\ref{thm:equal} is to
convert closed paths in $\G_\xi$ based at a vertex into
a monotone horseshoe on a postcritical arc in the
reduction of $g$. This serves as a substitute for
a finite Hubbard-tree Markov partition.

For Theorem \ref{thm:main}, the main new ingredient is a
\emph{uniform realization procedure} that realizes
combinatorial interval horseshoes as conformal strip
systems in the polynomial dynamical plane, with arbitrarily small
entropy loss and uniform control of return times and
boundary-angle periods.

\subsection{Relation to previous work}\label{sec:related}

The study of biaccessible angles predates the continuity
problem for core entropy. Early work of Zakeri, Zdunik
and Smirnov investigated the measure-theoretic and
dynamical properties \cite{Zakeri,Zdunik,Smirnov}.
Meerkamp and Schleicher proved that the biaccessibility
dimension of a polynomial in $\CCCC_d$ is strictly
less than one unless its Julia set is an interval
\cite{MS}. Bruin and Schleicher established refined
dimension estimates in the quadratic case \cite{BS}.

Tiozzo proved the continuity of quadratic core entropy
in the external angle \cite{Ti}. Dudko and Schleicher
independently established continuity on the Mandelbrot
set \cite{DS}. In an appendix, Jung identified this
entropy with $(\log 2)\dim_H B(f)$ {using the combinatorics
of the Mandelbrot set}, thereby
establishing the continuity of the biaccessibility
dimension \cite{DS}.

In higher degrees, Gao and Tiozzo established the
continuity of combinatorial core entropy on the space
of critical portraits and of core entropy on the
postcritically finite locus \cite{GT}.
Theorem~\ref{thm:main} extends the continuity of the
biaccessibility dimension to all polynomials of any
degree.

Other extensions of core entropy use different invariant sets.
Generalized core entropy defined via
maximal dendrite factors need not be continuous, although
its lower envelope is continuous on the Mandelbrot set
\cite[Theorem~III]{LTYY}. Hubbard-forest entropy for
partially postcritically finite polynomials also exhibits
different continuity behavior
\cite[Proposition~1.2 and Theorem~1.6]{G2}.

\subsection{Outline of the proof}\label{sec:proof}
The proof of Theorem \ref{thm:main} is divided into four steps.\vspace{5pt}

\emph{$\bullet$ The entropy-dimension formula (Theorem \ref{thm:equal})}.
This proof relates three descriptions
of the dynamics: paths in $\Gamma_\xi$, returns of postcritical
arcs under the reduced map $\overline g_\sim$, and angular
itineraries under $\sigma_d(\theta)=d\theta\pmod 1$.
Collapsing the finite marked forest yields an arc rule
that assigns a monotone branch and an unlinked angular
class to each edge. Iterated pullbacks then associate
finite paths with subarcs and angular cylinders, while
closed paths yield interval returns.

Two issues require special care: degenerate arcs may
cause spurious exponential growth, and paths escaping
every finite subgraph must also be accounted for.
We show that degenerate arcs contribute no exponential
growth and that escaping paths correspond to a set
of angles of Hausdorff dimension zero.
For recurrent paths, closed-path counts and contraction
of angular inverse branches yield the upper dimension bound.
For the lower bound, closed paths based at a non-degenerate
arc produce horseshoes with entropy arbitrarily close
to $h(\xi)$. Their compact invariant sets lift to
compact $\sigma_d^n$-invariant angle sets without entropy
loss. After removing countably many exceptional angles
the classical entropy--dimension formula for the circle
map then gives the lower bound.\vspace{5pt}

\textit{$\bullet$ From the model to actual rays.}
For an arbitrary polynomial $f$, let
$\mathcal L_{\rm bi}^{\rm p}(f)$ be the perfect part of
the lamination generated by finite smooth landing classes.
It is a proper invariant lamination and thus defines a
laminational model. The discarded leaves have at most
countably many endpoints, while angles in infinite landing
classes form a set of Hausdorff dimension zero.
Since the construction uses actual landing data, it
applies to both connected and disconnected Julia sets.

Periodic classes of the abstract model need not correspond
to actual landing classes. We establish a realization
result for the finite periodic classes used as return
boundaries. This allows suitable interval returns in
the dendrite quotient to be realized as conformal strip
systems bounded by pairs of smooth landing rays of $f$.
Contraction of inverse branches then yields smooth
biaccessible rays throughout the associated non-escaping
sets. In particular, a system with $M$ branches and
return time $m$ produces a compact
$\sigma_d^m$-invariant set $\Theta\subset B(f)$ with
\[
\dim_H\Theta=\frac{\log M}{m\log d}.
\]
Such sets approximate the Hausdorff dimension of $B(f)$.
The realization is local to the chosen returns and
requires no global landing map.\vspace{3pt}

\textit{$\bullet$ Uniform pruning.}
The key quantitative step is to make this realization
uniform. Given $M_0\ge 2$ closed paths of length $m_0$
based at a vertex corresponding to a non-degenerate arc,
we construct, for any $\delta>0$ and finite exceptional
angle set $E$, a conformal strip system satisfying
\[
\frac{\log M}{m}>
\frac{\log M_0}{m_0}-\delta.
\]
Both $m$ and $M$ depend only on
$d,m_0,M_0,\delta,\#E$. The pruning procedure removes
critical-value obstructions and unsuitable ray boundaries
while forcing the boundary angles to lie in the finite set
\[
\sigma_d^{-m}\bigl(\operatorname{Fix}(\sigma_d^{2m})\bigr).
\]
This provides the uniform control needed as $f$ varies.\vspace{3pt}

\emph{$\bullet$ Continuity.} For lower semicontinuity, we choose such a system with
repelling boundary points and perturb it.
For upper semicontinuity, let $f_n\to f$.
Graph-growth continuity  provides a common finite
family of closed paths whose growth approximates the
limiting dimensions. Uniform pruning realizes these paths
for the $f_n$ with a common return time and boundary angles
in a finite set. The exceptional set is chosen using $f$
so that the limiting boundary rays are smooth, land at
repelling points, and admit smooth lifts of the required
orders. Passing to a subsequence fixes the boundary angles.
Convergence and separation of these ray pairs ensure
that their landing points remain distinct, yielding a
non-degenerate return system for $f$ and the desired
upper bound.

The limiting graph serves only to select finite paths;
it need not be a marking graph of $f$. Thus the proof
requires neither a continuous choice of critical markings
nor continuity of entropy on the space of critical portraits.\vspace{5pt}

The paper is organized as follows.
Sections~2 and~\ref{sec:invariant} review preliminaries
on polynomial dynamics and invariant laminations.
Section~\ref{sec:partition} develops critical markings
and reduced dynamics for laminational equivalence relations.
Section~\ref{sec:paths} introduces separation graphs and
establishes new path-counting estimates.
Section~\ref{sec:dimension} proves the combinatorial
entropy--dimension formula (Theorem~\ref{thm:equal}).
Finally, Section~\ref{sec:continuity1} establishes
the analytic realization and proves the continuity
of biaccessibility dimension (Theorem~\ref{thm:main}).

\subsection{Notations}

We denote the complex plane by $\C$ and set
$\mathbb D=\{z\in\C:|z|<1\}$.
We identify $\mathbb S=\mathbb R/\mathbb Z$ with
$\partial\mathbb D$ via $\theta\mapsto e^{2\pi i\theta}$.
The circle length is normalized to one, and we write
$\sigma_d(\theta)=d\theta\pmod 1$.
All logarithms are natural.

For a polynomial or topological polynomial $g$, let
$\operatorname{Crit}(g)$ and
$\operatorname{CritV}(g)=g(\operatorname{Crit}(g))$
denote the sets of its critical points and critical values, respectively.
Its postcritical set is defined by
\begin{equation}\label{eq:-3}
\operatorname{Post}(g)
=\bigcup_{k\ge1}g^k(\operatorname{Crit}(g)),
\end{equation}
without taking the closure.

An \emph{arc} (or \emph{interval}) in $\C$ is a
homeomorphic image of $[0,1]$ or a singleton.
It is \emph{non-degenerate} if it is not a singleton.
By the interior and boundary of an arc, we mean the
arc without its endpoints and the set of its endpoints,
respectively.

\vspace{10pt}

\noindent{\bf Acknowledgement.} The author thanks M. Hassler for helpful discussions. The  author is supported by the  National Key R\&D Program of China (Grant no. 2021YFA1003203), the NSFC (Grant no. 12322104, 12671097).

\section{Polynomial dynamics}
We recall some classical results of polynomial dynamics for use in Section~\ref{sec:continuity1}.

\subsection{External radii and rays}
Let \(f\) be a polynomial of degree \(d\ge 2\). The \emph{basin of infinity} of \(f\) is
\[
\Omega_f(\infty)=\Omega_f=\{z\in\mathbb{C}: f^{\circ n}(z)\to\infty \text{ as } n\to\infty\}.
\]
The boundary of \(\Omega_f\) is the \emph{Julia set} \(J_f\), the complement of \(\Omega_f\) is the \emph{filled Julia set} \(K_f\), and \(\mathbb{C}\setminus J_f\) is the \emph{Fatou set} \(F_f\).

To study the dynamics of \(f\), we use the Green function \(G_f\) and the B\"{o}ttcher map \(\chi_f\) (e.g.\ see \cite[Section 18]{Mil}).
The Green function
\[
G_f:\mathbb{C}\to\mathbb{R}_{\ge0},\qquad
z\mapsto \lim_{n\to\infty}\frac{\log_+|f^{\circ n}(z)|}{d^n}
\]
measures the escape rate of points to \(\infty\). It vanishes at the filled Julia set \(K_f\) and satisfies \(G_f(f(z))=d\cdot G_f(z)\). The number $G_f(z)$ is called the \emph{potential} of $z$.
On $\Omega_f$, the gradient vector field $\nabla_f=\nabla G_f$ vanishes exactly at the escaping critical and precritical points of $f$. 

The Böttcher map \(\chi_f\) conjugates \(f\) to \(z\mapsto z^d\) near \(\infty\). Its germ at \(\infty\) is unique up to \(z\mapsto \zeta z\), where \(\zeta^{d-1}=1\). For monic \(f\), we normalize \(\chi_f\) so that 
\[\chi_f(z)/z\to1 \quad\text{as}\quad z\to\infty\]

Starting near infinity, continue each gradient trajectory towards lower
potential until it first reaches a zero of $\nabla G_f$, or until its
potential tends to zero. The union of these maximal non-singular
trajectories is the \emph{reduced basin} $\Omega_f^*$.
The normalized B\"ottcher map extends conformally to $\Omega_f^*$,
with image a radial slit domain $\Delta_f$ containing a neighbourhood
of infinity; see \cite[Section~2.2]{PZ1}.\vspace{2pt}

For \(\theta\in\R/\Z\), let \((r_\theta,\infty)e^{2\pi i \theta}\) be the maximal portion of \((1,\infty)e^{2\pi i \theta}\) contained in \(\De_f\). The \emph{external radius} \(R^*_f(\theta)\) at argument \(\theta\) is
\[
R^{*}_f(\theta)=\chi_f^{-1}((r_\theta,\infty)e^{2\pi i \theta}).
\]
In the case \(r_\theta>1\), the external radius \(R^*_f(\theta)\) terminates at some point \(z\) where \(\nabla_f(z)=0\). Otherwise, \(r_\theta=1\) and \(R^{*}_f(\theta)\) is a \emph{smooth external ray}. In this case, we simply write $R_f^*(\theta)=R_f(\theta)$, and  call $\theta$ a  \emph{smooth angle}.  

Let $\Sigma_0(f)$ denote the arguments of the external radii that terminate at critical points of \(f\). Since zeros of $\nabla_f$ are exactly the (pre)critical points of $f$ in $\Omega_f$, the external rays are well-defined for all arguments in 
\[
\mathbb S_f:=\mathbb S\setminus\Sigma(f), \quad\text{where}\ \Sigma(f):= \bigcup_{n=0}^{\infty}\sigma_d^{-n}(\Sigma_0(f)).
\]

A smooth external ray \emph{lands} if the intersection of its closure with the Julia set is a singleton. The following is a classic result, see e.g.  \cite[Theorem~2.1(i)]{PZ1}.

\begin{lemma}\label{lem:PZ1}
If $\theta\in\mathbb S_f$ is periodic, then $R_f(\theta)$ lands at a repelling or parabolic periodic point of $f$. 
\end{lemma}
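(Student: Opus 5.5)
The plan is the standard argument, carried out in the general setting where $J_f$ may be disconnected; the paper cites \cite[Theorem~2.1(i)]{PZ1} for this. Let $\theta\in\mathbb S_f$ have period $p$ under $\sigma_d$, and put $F=f^{\circ p}$. Because $\theta\notin\Sigma(f)$, none of the forward images of $R_f(\theta)$ meets an escaping (pre)critical point. So $F$ maps $R_f(\theta)$ onto itself, acting in Böttcher coordinates as $re^{2\pi i\theta}\mapsto r^{d^p}e^{2\pi i\theta}$. Parametrize the ray by potential $t\in(0,\infty)$, writing $z(t)$ for the point with $G_f=t$. Then $F(z(t))=z(d^pt)$. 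The proof has three steps.

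\textbf{Step 1: the accumulation set is a continuum of fixed points.} Let $A=\bigcap_{s>0}\overline{z((0,s))}$ be the accumulation set of the ray as $t\to0$. It is a nonempty compact connected subset of $J_f$, and $F(A)=A$. To see that $F$ fixes $A$ pointwise, use the hyperbolic metric $\rho$ of $\Omega_f$ (or of $\mathbb C\setminus K_f$). Since $F$ is a covering of $\mathbb C\setminus K_f$ onto itself in a neighbourhood of the ray, it is a local isometry for $\rho$. Hence the $\rho$-length $\ell$ of the segment $z([t,d^pt])$ is independent of $t$. Near $\partial\Omega_f$ the density of $\rho$ blows up, so the Euclidean length of this segment tends to $0$ as $t\to0$; this comparison works even when $\Omega_f$ is infinitely connected, because $\partial\Omega_f=J_f$ is uncountable. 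Consequently $|F(z(t))-z(t)|\to0$, and every point of $A$ is fixed by $F$. The fixed points of $F$ form a finite set and $A$ is connected, so $A$ is a single point $z_0$, and the ray lands.

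\textbf{Step 2: $z_0$ is not attracting, and not irrationally indifferent.} First, $z_0\in J_f$, so it is not attracting. Suppose the multiplier $\lambda$ of $F$ at $z_0$ satisfies $|\lambda|=1$ with $\lambda$ not a root of unity. Apply the Snail Lemma \cite[Lemma~16.2]{Mil}. Its hypothesis is a path $\gamma$ landing at $z_0$ with $F(\gamma)\subset\gamma$ and $F(z)\neq z$ along $\gamma$. The ray segment $z((0,s])$ provides such a path, since $F$ acts on it as translation by $\log d^p$ in the parameter $\log t$. The lemma then forces $z_0$ to be repelling or $\lambda=1$, which contradicts the assumption. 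Therefore $z_0$ is repelling or parabolic. If $\lambda$ is a root of unity other than $1$, the point is parabolic by definition.

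\textbf{Main obstacle.} The hardest step is Step 1 when $J_f$ is disconnected. In that case $\Omega_f$ is not simply connected, and the Böttcher map is only defined on the reduced basin $\Omega_f^*$. So I must check that the hyperbolic-metric estimate is valid along the smooth ray. The idea is to use the covering $f\colon\Omega_f\setminus f^{-1}(\mathrm{crit.\ values})\to\Omega_f\setminus\{\text{crit.\ values}\}$, or to invoke Schwarz–Pick for $F$ on $\Omega_f$ itself. Schwarz–Pick gives only $\rho(F(z),F(w))\le\rho(z,w)$. That inequality suffices: it shows the $\rho$-lengths $\ell_k$ of successive fundamental segments are nonincreasing, hence bounded. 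Combined with the blow-up of $\rho$ near $J_f$, bounded $\rho$-length gives Euclidean lengths tending to $0$. This is the route I would take.
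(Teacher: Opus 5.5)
The paper does not prove this lemma itself; it only quotes \cite[Theorem~2.1(i)]{PZ1}. Your outline is the standard one: bounded hyperbolic length of fundamental segments, so the accumulation set consists of fixed points, then the Snail Lemma. It is complete when $J_f$ is connected. Step~1, however, has a genuine gap in exactly the disconnected case you flag.

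\emph{The local-isometry claim.} Being a covering ``near the ray'' does not make $F$ a local isometry of $\rho=\rho_{\Omega_f}$. That requires $F\colon\Omega_f\to\Omega_f$ to be a global covering, which fails as soon as $\Omega_f$ contains critical points.

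\emph{The Schwarz--Pick fallback points the wrong way.} Write $\gamma_k=z([d^{-kp}t_0,d^{-(k-1)p}t_0])$, so $F(\gamma_k)=\gamma_{k-1}$ and $\gamma_k$ approaches $J_f$. Schwarz--Pick gives $\ell_\rho(\gamma_{k-1})=\ell_\rho(F(\gamma_k))\le\ell_\rho(\gamma_k)$. The lengths are therefore nondecreasing toward $J_f$, and you get no upper bound at all.

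\emph{Removing only the critical values does not help either.} You would need $\Omega_f\setminus f^{-1}(\text{crit.\ values})\subset\Omega_f\setminus\{\text{crit.\ values}\}$. This holds only if the removed set is forward invariant.

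\emph{The fix.} You need a metric for which $F$ is non-contracting.
\begin{enumerate}
\item Let $P=\bigcup_{n\ge1}f^n(\operatorname{Crit}(f)\cap\Omega_f)$. Since $G_f(f^n(c))=d^nG_f(c)\to\infty$, the set $P$ is closed and discrete in $\mathbb C$, and $G_f\ge\tau>0$ on $P$.
\item Put $W=\mathbb C\setminus(K_f\cup P)$. Because $F(P)\subset P$, you get $F^{-1}(W)\subset W$.
\item Every critical point of $F$ in $\Omega_f$ is mapped into $P$. Hence $F\colon F^{-1}(W)\to W$ is a proper local homeomorphism, that is, a covering.
\item Consequently $\ell_W(\gamma)\le\ell_{F^{-1}(W)}(\gamma)=\ell_W(F(\gamma))$ for every curve $\gamma\subset F^{-1}(W)$.
\item Take $t_0<\tau$. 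Then $\gamma_k\subset F^{-1}(W)$ for $k\ge2$, so $\ell_W(\gamma_k)\le\ell_W(\gamma_1)<\infty$ for all $k$.
\end{enumerate}

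\emph{The Euclidean comparison.} The relevant fact is not that $J_f$ is uncountable. Use $W\subset\mathbb C\setminus\{a,b\}$, where $a\in K_f$ is nearest to $z$ and $b\in K_f$ satisfies $|a-b|\ge\operatorname{diam}K_f/2$. This gives $\rho_W(z)\ge c/(\delta\log(C/\delta))$ with $\delta=\operatorname{dist}(z,K_f)$, so the Euclidean lengths of $\gamma_k$ tend to $0$.

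\emph{A minor correction in Step~2.} $F$ pushes ray points away from $z_0$, so the Snail Lemma \cite{Mil} must be applied to the local inverse of $F$ at $z_0$, which exists because $\lambda\neq0$. It then yields $|\lambda|>1$ or $\lambda=1$, as you want.
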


\begin{lemma}\label{lem:stable}
Let \(\theta\) be a preperiodic angle. If \(R_f(\theta)\) is smooth and lands at a pre-repelling point \(z\) whose orbit avoids the critical points of \(f\), then every \(g\) sufficiently close to \(f\) in \(\PPPP_d\) has a smooth ray \(R_g(\theta)\) landing at a pre-repelling point \(z_g\), such that
$
z_g\to z$ and $ \overline{R_g(\theta)}\to \overline{R_f(\theta)}
$
as \(g\to f\).
\end{lemma}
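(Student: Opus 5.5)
The argument has two parts: stability of the ray up to a fixed potential, which follows from the B\"ottcher coordinate, and control of the tail near the Julia set, which follows from a contracting inverse branch near the repelling cycle.

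Let $z=z_0$, and let $p=f^{k}(z)$ be the repelling periodic point, of period $q$, that $z$ eventually maps onto. Choose $k$ minimal, so that $\sigma_d^k(\theta)=:\phi$ is periodic of period dividing $q'$ for some multiple $q'$ of $q$.

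\textbf{Step 1: the periodic ray.} Since $p$ is repelling, $f^{q'}$ has a local inverse branch $\psi_f$ defined on a small disk $U\ni p$. This branch is strictly contracting in a suitable metric, maps $\overline{\psi_f(U)}$ into $U$, and fixes $p$. By the implicit function theorem, $p$ moves holomorphically to a repelling point $p_g$ for $g$ near $f$. The same disk $U$ works for $g$, with a branch $\psi_g$ depending continuously on $g$.

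Fix a potential level $t>0$ small enough that the piece $R_f(\phi)\cap\{G_f\le t\}$ lies in $U$. Also choose a fundamental segment $I_f=R_f(\phi)\cap\{t/d^{q'}\le G_f\le t\}$ inside $U$.

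The ray $R_f(\phi)$ is smooth, and so are $R_f(\sigma_d^j\phi)$ for $j<q'$, since they are images of the smooth ray $R_f(\theta)$. So the B\"ottcher map is defined along these rays down to potential $t/d^{q'}$ and avoids critical points there. By continuity of $\chi_g^{-1}$ in $g$ on compact parts of the slit domain, the rays $R_g(\sigma_d^j\phi)$ exist down to potential $t/d^{q'}$ for $g$ near $f$. Moreover, the segments $I_g$ converge to $I_f$ and lie in $U$. In particular no escaping critical point of $g$ is hit above that potential.

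Below that level, the ray is generated by pulling back: $R_g(\phi)\cap\{G_g\le t\}=\bigcup_{n\ge0}\psi_g^n(I_g)$. This holds because $\psi_g$ maps pieces of $R_g(\phi)$ in $U$ to pieces of $R_g(\phi)$ at $d^{-q'}$ times the potential, as the inverse branch agrees with the continuation of the smooth ray. Uniform contraction of $\psi_g$ on $U$ then shows that this union is a smooth ray landing at $p_g$. It also gives $\overline{R_g(\phi)}\to\overline{R_f(\phi)}$ in the Hausdorff topology, by the uniform tail estimate $\operatorname{diam}\psi_g^n(U)\le C\lambda^n$.

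\textbf{Step 2: pulling back to $\theta$.} For the preperiodic part, use that the orbit $z,f(z),\dots,f^{k-1}(z)$ avoids critical points. Hence $f^k$ is univalent near $z$ with inverse branch $\eta_f$ sending $p$ to $z$, and $\eta_g$ depends continuously on $g$ with $\eta_g(p_g)=z_g\to z$.

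Away from the Julia set, the remaining segment of $R_f(\theta)$ above some potential is a compact arc in the domain of $\chi_f$, so it moves continuously, as in Step 1. Near the landing point, the tail $R_g(\theta)\cap\{G_g\le s\}$ is obtained as $\eta_g$ of the tail of $R_g(\phi)$ at potential $d^k s$. The two pieces match at a common level because both are lifts of the same ray determined by the B\"ottcher coordinate.

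Smoothness of $R_g(\theta)$ follows because the only possible obstruction is an escaping precritical point of $g$ on the ray. None occurs on the compact part by continuity, and none on the tail since $\eta_g$ is univalent there. Hence $R_g(\theta)$ lands at the pre-repelling point $z_g$, and the closures converge.

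\textbf{Main obstacle.} The delicate point is in Step 1: justifying that for $g$ near $f$ the ray $R_g(\phi)$ does not terminate at an escaping precritical point at very low potential, where continuity of the B\"ottcher map gives no uniform control. I would handle this as above. Every point of $R_g(\phi)$ below potential $t$ is an iterated $\psi_g$-preimage of a point of $I_g$, and $\psi_g$ is univalent. So gradient lines map to gradient lines without singularity, and no critical point of $\nabla G_g$ can appear.
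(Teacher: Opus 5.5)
Your argument is correct and is the standard Douady--Hubbard proof: a fundamental segment of the ray at positive potential persists, the tail is recovered by pulling back with the uniformly contracting inverse branch at the repelling cycle, and the preperiodic part follows by a univalent pullback along the non-critical orbit. The paper gives no proof of its own and simply cites this result as Proposition~8.5 of Douady--Hubbard.

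The one step you should prove rather than assert is the junction identity $\psi_g\bigl(R_g(\phi,t)\bigr)=R_g(\phi,t/d^{q'})$, together with its analogue for $\eta_g$ in Step~2. It holds for $g=f$ because the tail of $R_f(\phi)$ below potential $t/d^{q'}$ is connected, accumulates at $p$, and lies in $f^{-q'}(U)$. It persists for nearby $g$ because both points are $g^{q'}$-preimages of $R_g(\phi,t)$ lying close to $R_f(\phi,t/d^{q'})$, where $g^{q'}$ is uniformly injective. This identity, not the bare fact that both arcs are lifts of the same ray, is what makes your pullback argument non-circular.
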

This is also  a well-known result, see for instance {\cite[Proposition~8.5]{DH1}}.\vspace{2pt}

Note that smooth external rays do not necessarily admit smooth lifts. The following simple observation gives a condition guaranteeing a smooth lift.

For every $N\geq1$, we define a set
\begin{equation}\label{eq:-2}
\Sigma_N(f):=\bigcup_{k=1}^N\sigma_d^k(\Sigma_0(f)).
\end{equation}

\begin{lemma}\label{lem:lift}
Fix any integer $N\geq1$. The following two statements hold.
\begin{enumerate}
\item If  \(\theta\notin \Sigma(f)\cup \Sigma_N(f)\), then all lifts of \(R_f(\theta)\) of order at most \(N\) are smooth.
\item Let $\Theta$ be a finite set of angles such that $\sigma_d^N(\Theta)=\Theta$ and \(\Theta\cap\Sigma_N(f)=\emptyset\).  Then $R_f(\theta)$  is smooth for every $\theta\in\Theta$.
\end{enumerate} 
\end{lemma}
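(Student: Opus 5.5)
The plan is to argue directly from the definition of smoothness via the Böttcher coordinate, using the description of non-smooth radii: an external radius $R^*_f(\alpha)$ fails to be smooth exactly when its gradient trajectory, followed downward from $\infty$, hits a zero of $\nabla_f$, i.e.\ an escaping (pre)critical point. Equivalently, $\alpha\in\Sigma(f)$, and then $\sigma_d^n(\alpha)\in\Sigma_0(f)$ for some $n\ge0$.

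For (1), we let $\alpha$ be an angle with $\sigma_d^k(\alpha)=\theta$ for some $1\le k\le N$, so that the radius $R^*_f(\alpha)$ is a lift of $R_f(\theta)$ of order $k$. We suppose, for contradiction, that $R^*_f(\alpha)$ is not smooth. Then $\alpha\in\Sigma(f)$, so there is a minimal $n\ge0$ with $\beta:=\sigma_d^n(\alpha)\in\Sigma_0(f)$, and the radius at $\beta$ terminates at a critical point.

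We then split into two cases. If $n\ge k$, then $\theta=\sigma_d^k(\alpha)$ satisfies $\sigma_d^{n-k}(\theta)=\beta\in\Sigma_0(f)$, so $\theta\in\Sigma(f)$, which is excluded. If $n<k\le N$, then $\theta=\sigma_d^{k-n}(\beta)$ with $1\le k-n\le N$, so $\theta\in\Sigma_N(f)$, which is also excluded. Hence every lift of order at most $N$ is smooth.

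The one point needing care is the identification "$R^*_f(\alpha)$ non-smooth $\iff$ $\alpha\in\Sigma(f)$." For this we use $f\circ\chi_f^{-1}=\chi_f^{-1}\circ(z\mapsto z^d)$ on $\Delta_f$. A radius stopping at an escaping precritical point $w$ maps forward under $f^n$ onto the radius stopping at the critical point of the orbit of $w$. The forward map is defined on the radius since potentials only increase, and $r_{\sigma_d(\alpha)}=r_\alpha^d$ holds at first termination.

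For (2), we take $\theta\in\Theta$. Since $\sigma_d^N$ permutes the finite set $\Theta$, the angle $\theta$ is periodic and $\theta=\sigma_d^N(\theta')$ for some $\theta'\in\Theta$. If $R_f(\theta)$ were non-smooth, some iterate $\sigma_d^n(\theta)$ with $n\ge0$ would lie in $\Sigma_0(f)$. Writing $\theta=\sigma_d^{jN}(\theta'')$ with $\theta''\in\Theta$ and $jN>n$, we get $\sigma_d^n(\theta)=\sigma_d^{n+jN}(\theta'')$. Adding suitable multiples of $N$ and using periodicity, $\theta$ is the $m$-th image of that point of $\Sigma_0(f)$ for some $1\le m\le N$: indeed, from $\beta:=\sigma_d^n(\theta)\in\Sigma_0(f)$ we have $\theta=\sigma_d^{m}(\beta)$ where $m\equiv -n \pmod N$ and $1\le m\le N$, because $\sigma_d^N$ fixes $\Theta$ setwise and hence $\sigma_d^{pN}(\theta)=\theta$ for some $p$. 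This gives $\theta\in\Sigma_N(f)$, a contradiction. The main subtlety here is the mod-$N$ bookkeeping, using that the period of $\theta$ is a multiple of $N$ (more precisely, that $\sigma_d^{pN}(\theta)=\theta$ for some $p\ge1$).
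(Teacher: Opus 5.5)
Your part (1) is correct and is essentially the paper's argument. The paper takes the minimal order $k$ of a non-smooth lift, which forces that lift into $\Sigma_0(f)$. You instead split on the first time $n$ at which the orbit of $\alpha$ enters $\Sigma_0(f)$. Both give either $\theta\in\Sigma(f)$ or $\theta\in\Sigma_N(f)$.

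Part (2), however, contains a false step. From $\beta=\sigma_d^n(\theta)\in\Sigma_0(f)$ and $1\le m\le N$ with $m+n=qN$, you conclude $\theta=\sigma_d^m(\beta)=\sigma_d^{qN}(\theta)$. This requires the $\sigma_d^N$-period $p$ of $\theta$ to divide your particular $q$. The hypothesis $\sigma_d^N(\Theta)=\Theta$ only says that $\sigma_d^N$ permutes $\Theta$. Knowing $\sigma_d^{pN}(\theta)=\theta$ for some $p$ does not give $\sigma_d^{qN}(\theta)=\theta$.

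The step is not merely unjustified but actually fails. Take $f(z)=z^2+c$ with $c$ on the parameter ray of angle $2/3$. The radius at angle $1/3$ terminates at the critical point, so $\Sigma_0(f)=\{1/3,5/6\}$ and $\Sigma_1(f)=\{2/3\}$. With $N=1$, $\Theta=\{1/3,2/3\}$ and $\theta=\beta=1/3$, your recipe gives $n=0$ and $m=1$, and asserts $1/3=\sigma_2(1/3)=2/3$. So a non-smooth angle of $\Theta$ need not itself lie in $\Sigma_N(f)$. Consistently, the hypothesis of (2) fails here through the other element, since $2/3\in\Theta\cap\Sigma_1(f)$. The hypothesis must therefore be used on the whole set $\Theta$, not on $\theta$ alone.

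The repair is short: stop insisting on returning to $\theta$. The angle $\theta^*:=\sigma_d^m(\beta)=(\sigma_d^N)^q(\theta)$ lies in $\Theta$ because $\sigma_d^N(\Theta)=\Theta$. It also lies in $\sigma_d^m(\Sigma_0(f))\subset\Sigma_N(f)$, since $1\le m\le N$. This contradicts $\Theta\cap\Sigma_N(f)=\emptyset$, and it is exactly the paper's proof.
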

\begin{proof}
(1) Otherwise, let \(k\leq N\) be the minimal integer such that some \(\alpha\in\sigma_d^{-k}(\theta)\) is not smooth. Then \(\alpha\in\Sigma_0(f)\), so \(\theta\in\Sigma_k(f)\subset\Sigma_N(f)\), a contradiction. \vspace{2pt}

(2)
If  an angle $\theta\in\Theta$ lies in $\Sigma(f)$, there would be
an integer  $k\ge0$ with
$\sigma_d^k(\theta)=\eta\in\Sigma_0(f)$. Choose $1\le j\le N$ such that
 $k+j$ is a multiple of $N$. Then
$\theta':=\sigma_d^j(\eta)=\sigma_d^{k+j}(\theta)\in \Sigma_{N}(f)\cap \Theta$, a contradiction.
\end{proof}

\subsection{Impressions of smooth angles}
 For an arbitrary angle
$\eta\in \mathbb S_f$ and $t>0$,  we write
$R_f(\eta,t)$ for the point on $R_f(\eta)$ with potential $t$.
Fix an angle \(\theta\in \mathbb{S}_f\) and define its \emph{impression} as
\[{\rm Imp}_f(\theta)=\bigcap_{\de>0}\overline{\{R_f(\eta,t):\eta\in\mathbb S_f,\,|\eta-\theta|<\de,\,0<t<\de\}}.\]

By a \emph{continuum}, we mean a connected and compact set in $\C$.

\begin{lemma}\label{lem:star-impressions}
The following statements hold.
\begin{enumerate}
\item For any $\theta\in\mathbb{S}_f$, its impression is a non-empty continuum in $J_f$, and it holds that $f({\rm Imp}_f(\theta))\subset {\rm Imp}_f(\sigma_d(\theta))$.
\item The impressions $\{{\rm Imp}_f(\theta),\theta\in\mathbb{S}_f\}$ form an upper semicontinuous family, i.e.,
if $\theta_k\to \theta,\,z_k\in{\rm Imp}_f(\theta_k)$ and $z_k\to z$, then $z\in{\rm Imp}_f(\theta)$. 
\end{enumerate} 
\end{lemma}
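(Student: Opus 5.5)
I will write $A_\delta(\theta)=\{R_f(\eta,t):\eta\in\mathbb S_f,\ |\eta-\theta|<\delta,\ 0<t<\delta\}$. Then ${\rm Imp}_f(\theta)=\bigcap_{\delta>0}\overline{A_\delta(\theta)}$ is a decreasing intersection of the compact sets $\overline{A_\delta(\theta)}$, and the plan is to read off both statements from this description.

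For (1), first observe that each $\overline{A_\delta(\theta)}$ is nonempty and compact. It is nonempty because $\mathbb S\setminus\mathbb S_f=\Sigma(f)$ is countable, so $\mathbb S_f$ is dense and every $\eta\in\mathbb S_f$ carries points of all small potentials. It is compact because potentials below $\delta$ lie in the bounded set $\{G_f\le\delta\}$. A decreasing intersection of nonempty compact sets is nonempty and compact. For $z\in{\rm Imp}_f(\theta)$ there are points $R_f(\eta_k,t_k)$ with $t_k\to0$ converging to $z$; by continuity of $G_f$ we get $G_f(z)=0$, so $z\in K_f$. Since $z$ is a limit of points of $\Omega_f$, in fact $z\in\partial\Omega_f=J_f$.

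Connectedness is the main point, and this is where I expect the real obstacle: $A_\delta(\theta)$ is a union of ray segments over a set of angles with countably many gaps, so it need not be connected itself. The plan is to show that each $\overline{A_\delta(\theta)}$ is connected, after which a decreasing intersection of continua is a continuum. To do this, I would use that the B\"ottcher coordinate $\chi_f$ is a homeomorphism from $\Omega_f^*$ onto the slit domain $\Delta_f$. Consider the sector $S=\{re^{2\pi i\eta}:|\eta-\theta|<\delta,\ 1<r<e^{\delta}\}$. Removing the radial slits from $S$ leaves a connected set $S\cap\Delta_f$: every point on a non-smooth radius $\eta\in\Sigma(f)$ lies, at radii below its slit, on a radial segment whose tip accumulates on nearby smooth radii. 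I would then check that $\overline{A_\delta(\theta)}\supset\overline{\chi_f^{-1}(S\cap\Delta_f)}$ and that the reverse inclusion holds up to points of non-smooth radii with small potential. These extra points are themselves limits of points on smooth rays, since $\mathbb S_f$ is dense and $\chi_f^{-1}$ is continuous on $\Delta_f$. Hence $\overline{A_\delta(\theta)}$ is the closure of a connected set, and so it is connected. A simpler alternative route, if needed, is to work with $\eta\in\mathbb S_f$ only and argue that $\overline{A_\delta}$ cannot split into two disjoint closed sets. Such a splitting would separate the angle set, and density together with continuity of $\chi_f^{-1}$ along $S\cap\Delta_f$ forbids this.

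For the invariance $f({\rm Imp}_f(\theta))\subset{\rm Imp}_f(\sigma_d(\theta))$, take $z=\lim R_f(\eta_k,t_k)$ with $\eta_k\to\theta$, $\eta_k\in\mathbb S_f$, $t_k\to0$. Then $\sigma_d(\eta_k)\in\mathbb S_f$, and the relation $f(R_f(\eta,t))=R_f(\sigma_d\eta,dt)$ holds. Therefore $f(z)=\lim R_f(\sigma_d\eta_k,dt_k)$ with $\sigma_d\eta_k\to\sigma_d\theta$ and $dt_k\to0$, so $f(z)\in{\rm Imp}_f(\sigma_d\theta)$. Here I use the characterization ${\rm Imp}_f(\theta)=\{\lim R_f(\eta_k,t_k):\eta_k\to\theta,\ t_k\to0\}$, which follows from a diagonal argument.

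For (2), I use the same sequential characterization together with a diagonal argument. Since $z_k\in{\rm Imp}_f(\theta_k)$, I can choose $\eta_k\in\mathbb S_f$ and $t_k>0$ with $|\eta_k-\theta_k|<1/k$, $t_k<1/k$ and $|R_f(\eta_k,t_k)-z_k|<1/k$. Then $\eta_k\to\theta$, $t_k\to0$ and $R_f(\eta_k,t_k)\to z$, so $z\in{\rm Imp}_f(\theta)$.
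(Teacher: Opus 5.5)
There is a genuine gap in the connectedness step. It cannot be repaired as long as one parameter $\delta$ controls both the angular width and the potential cutoff. Suppose some $\eta\in\Sigma(f)$ with $|\eta-\theta|<\delta$ has stopping potential $t_\eta\ge\delta$. The removed slit $\{re^{2\pi i\eta}:1<r\le e^{t_\eta}\}$ reaches down to the unit circle and crosses your sector $S$ completely, so $S\cap\Delta_f$ is disconnected. Your justification misreads the slit domain: there are no points of $\Delta_f$ on a non-smooth radius below its crash potential.

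Worse, $\overline{A_\delta(\theta)}$ itself can be disconnected for every small $\delta$, so your alternative route fails as well. Take $f(z)=z^2+c$ with $|c|$ so large that $t_0:=G_f(0)>1/2$. Then $P_n:=\sigma_2^{-n}(\Sigma_0(f))$ is a grid of $2^{n+1}$ equally spaced angles, and each of their radii stops at potential $\ge t_02^{-n}$. Consider smooth rays with angles on the two sides of a point of $P_n$. Under $f^n$, their pieces below potential $t_02^{-n}$ go into the two different components of $\{G_f<t_0\}$. Hence these pieces lie in different components of the open set $\{G_f<t_02^{-n}\}$. Now let $2^{-(n+2)}<\delta\le 2^{-(n+1)}$. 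The window $(\theta-\delta,\theta+\delta)$ contains a point of $P_n$, and $\overline{A_\delta(\theta)}\subset\{G_f\le\delta\}\subset\{G_f<t_02^{-n}\}$ meets two components of this open set. So it is not connected, even though ${\rm Imp}_f(\theta)$ is a point here.

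The missing idea is to decouple the two parameters, as the paper does. Since $\eta\mapsto t_\eta$ is upper semicontinuous and $t_\theta=0$, for $\varepsilon_j\downarrow0$ you can choose $\delta_j\downarrow0$ with $t_\eta<\varepsilon_j/2$ whenever $|\eta-\theta|<\delta_j$. Then every field line in the window survives below potential $3\varepsilon_j/4$. So $V_j=\{R^*_f(\eta,t):|\eta-\theta|<\delta_j,\ t_\eta<t<\varepsilon_j\}$ is connected: slide each point along its field line to the unobstructed equipotential arc at potential $3\varepsilon_j/4$. By density of $\mathbb S_f$ and continuity of $(\eta,t)\mapsto R^*_f(\eta,t)$ on $\{t>t_\eta\}$, $\overline{V_j}$ equals the closure of the smooth-ray points with the same bounds. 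These nested continua intersect in ${\rm Imp}_f(\theta)$, since they are squeezed between $\overline{A_{\min(\delta_j,\varepsilon_j)}(\theta)}$ and $\overline{A_{\max(\delta_j,\varepsilon_j)}(\theta)}$.

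The rest of your proposal is correct and agrees with the paper. This covers nonemptiness, compactness, ${\rm Imp}_f(\theta)\subset J_f$, the invariance $f({\rm Imp}_f(\theta))\subset{\rm Imp}_f(\sigma_d(\theta))$, and upper semicontinuity via the diagonal choice of $\eta_k,t_k$.
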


\begin{proof}

 For an arbitrary angle
$\eta\in\mathbb S$, let $t_\eta\geq 0$ be its stopping potential.
Thus $\eta\in \mathbb S_f$ if and only if $t_\eta=0$. We also write
$R_f^*(\eta,t)$ for the point on the maximal smooth field line at
angle $\eta$, whenever $t>t_\eta$. It is known that the function $\eta\mapsto t_\eta$ is upper semicontinuous, and
$(\eta,t)\mapsto R_f^*(\eta,t)$ is continuous on
$\{(\eta,t):t>t_\eta\}$ (see e.g. \cite[Section 2.2]{PZ1}).

Fix $\theta\in \mathbb S_f$. Choose decreasing sequences
$\varepsilon_j,\delta_j\to 0^+$,  such that
\[
|\eta-\theta|<\delta_j
\quad\Longrightarrow\quad
t_\eta<\frac{\varepsilon_j}{2}.
\]
This is possible because $t_\theta=0$ and $\eta\mapsto t_\eta$ is semicontinuous. Consider the sets
\[
V_j=
\left\{
R_f^*(\eta,t):
|\eta-\theta|<\delta_j,\
t_\eta<t<\varepsilon_j
\right\}.
\]
Each $V_j$ is connected, since  every point can be joined along
its field line to potential $3\varepsilon_j/4$, and the points at
this potential form an arc parametrized by
$|\eta-\theta|<\delta_j$.

Since smooth angles are dense and the ray parametrization is
continuous at positive potential, we have
\[
\overline{V_j}
=
\overline{
\left\{
R_f(\eta,t):
\eta\in \mathbb S_f,\
|\eta-\theta|<\delta_j,\
0<t<\varepsilon_j
\right\}
}.
\]
These closures are nonempty, compact, connected and nested.
 Moreover,
\[
{\rm Imp}_f(\theta)=\bigcap_{j\geq 1}\overline{V_j}.
\]
Thus ${\rm Imp}_f(\theta)$ is a nonempty continuum in $J_f$

Note that smooth rays map to smooth rays, and
$
f(R_f(\eta,t))=R_f(\sigma_d(\eta),dt)
$
for any $\eta\in \mathbb S_f,t>0$.
Hence
$f({\rm Imp}_f(\theta))
\subset {\rm Imp}_f(\sigma_d(\theta)).
$

To show the upper semicontinuity, suppose that
$\theta_k,\theta\in\mathbb S_f$, $\theta_k\to\theta$, and
$z_k\in{\rm Imp}_f(\theta_k)$ with $z_k\to z$.
By the definition of impression, we can choose $\eta_k\in \mathbb S_f$
and $0<t_k<1/k$ such that
\[
|\eta_k-\theta_k|<\frac1k,
\qquad
|R_f(\eta_k,t_k)-z_k|<\frac1k.
\]
Then $\eta_k\to\theta$, $t_k\to0$, and
$R_f(\eta_k,t_k)\to z$. Hence $z\in{\rm Imp}_f(\theta)$.
\end{proof}

\section{Invariant laminations}\label{sec:invariant}

In this section, we introduce basic definitions and collect some known results on (invariant) geodesic laminations that will be used below.

\subsection{Invariant geodesic laminations}

A \emph{chord} is a closed segment in $\overline{\D}$ connecting two points of $\partial \D$.
Let $\mathcal{R}$ be a collection of chords. We call $$\mathcal{R}^\cup:=(\bigcup_{\ell\in\RRR} \ell)\bigcup \mathbb{S}$$ the \emph{solid} of $\mathcal{R}$.
We are interested in collections of chords with specific properties.

\begin{definition}[Geodesic laminations]
A \emph{(geodesic) prelamination} is a collection \(\mathcal{L}\) of chords, called \emph{leaves}, together with all points of \(\mathbb{S}\), viewed as \emph{degenerate leaves}, such that the leaves are pairwise disjoint in \(\mathbb{D}\). If \(\mathcal{L}^{\cup}\) is closed, we call \(\mathcal{L}\) a \emph{(geodesic) lamination}. It is \emph{trivial} if \(\mathcal{L}^{\cup}=\mathbb{S}\). \emph{Gaps} of \(\mathcal{L}\) are the closures of the connected components of \(\mathbb{D}\setminus\mathcal{L}^{\cup}\).
\end{definition}

We endow the space of chords, including degenerate ones, with the Hausdorff metric by regarding them as compact subsets of \(\overline{\mathbb{D}}\). In the following, all limits of leaves are taken in this compact metric space.

Let \(\mathcal{L}\) be a lamination. We extend \(\sigma_d\) over the leaves of \(\mathcal{L}\) so that the restriction of the extension to each leaf is affine. This extension is continuous on \(\mathcal{L}^{\cup}\) and well-defined. We use the same notation for \(\sigma_d\) and all its extensions.

Let us introduce the notion of a (sibling) $\sigma_{d}$-invariant lamination given in \cite{BMOV}, which is a slight modification of the notion of an invariant lamination introduced by Thurston \cite{ThLam}.

\begin{definition}[Invariant laminations]\label{def:invariant-lamination}
A  lamination $\mathcal{L}$ is  $(\sigma_{d})$-invariant if:
\begin{enumerate}
\item for each $\ell \in \mathcal{L}$ we have $\sigma_{d}(\ell) \in \mathcal{L}$;
\item for each $\ell \in \mathcal{L}$ there exists $\ell^{*} \in \mathcal{L}$ so that $\sigma_{d}(\ell^{*}) = \ell$;
\item for each $\ell \in \mathcal{L}$ such that $\sigma_{d}(\ell)$ is not a singleton, there exist $d$ pairwise disjoint leaves $\ell_{1},\ldots,\ell_{d}$ in $\mathcal{L}$ such that $\ell_{1} = \ell$ and $\sigma_{d}(\ell_{i}) = \sigma_{d}(\ell)$ for all $i=2,\ldots,d$.
\end{enumerate}
\end{definition}

Note that the pairwise disjoint leaves in part (3) of the above definition cannot intersect even at an endpoint on the unit circle.

We call the leaf \(\ell^*\) in (2) a \emph{pullback} of \(\ell\), and the leaves \(\ell_2,\ldots,\ell_d\) in (3) \emph{sibling leaves}, or simply \emph{siblings}, of \(\ell=\ell_1\). In general, a sibling of \(\ell\) is a leaf with the same image as \(\ell\) but distinct from it.

In \cite{BMOV}, the authors proved that every invariant lamination is gap invariant. For any compact set \(A\subset \mathbb{S}\), we use \(\operatorname{CH}(A)\) to denote the convex hull of \(A\) in \(\overline{\mathbb{D}}\).
A continuous map is \emph{monotone} if every point-preimage is
connected.

\begin{proposition}
Let $\LLL$ be an invariant lamination.
 If $G$ is a gap of $\mathcal{L}$ and $H=\sigma_d(G):=\mathrm{CH}(\sigma_{d}(G\cap \mathbb{S}))$, then $H$ is a point, a leaf of $\mathcal{L}$ or a gap of $\mathcal{L}$. In the latter case, the map $\sigma_{d}|_{\partial G} : \partial G \to \partial H$  is a positively oriented composition of a monotone map and a covering map.
\end{proposition}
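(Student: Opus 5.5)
We prove the gap-invariance statement directly from Definition~\ref{def:invariant-lamination}. Throughout, let $A=G\cap\mathbb S$ be the closed set of vertices of $G$; then $\partial G$ is made up of arcs of $\mathbb S$ contained in $A$ together with boundary leaves of $\mathcal L$. Since $\sigma_d$ maps leaves to leaves, it maps $\partial G$ onto a closed curve $\sigma_d(\partial G)$ made of leaves of $\mathcal L$ and arcs of $\mathbb S$. If $\sigma_d(A)$ is a single point, $H$ is that point. If it consists of two points, then $\sigma_d$ carries some boundary leaf of $G$ onto the chord joining them, so $H$ is a leaf of $\mathcal L$.

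Now suppose $\sigma_d(A)$ has at least three points. The central claim is that no leaf of $\mathcal L$ meets the interior of $H$. Suppose a leaf $\ell$ crosses $\operatorname{int}H$. Its endpoints then lie in distinct complementary arcs of $\sigma_d(A)$, so $\ell$ separates two points $\sigma_d(a),\sigma_d(b)$ with $a,b\in A$.

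We pull $\ell$ back as follows. By property (2) there is a pullback $\ell^*$ with $\sigma_d(\ell^*)=\ell$. Property (3) applied to $\ell^*$ gives $d$ pairwise disjoint leaves $\ell_1,\dots,\ell_d$, all mapping onto $\ell$. Their $2d$ endpoints are disjoint and cut $\mathbb S$ into $2d$ arcs, which alternately map onto the two arcs of $\mathbb S\setminus\ell$. Since the $\ell_i$ are disjoint, they cut the disc into $d+1$ regions. The key counting fact is that each region, with its boundary arcs, maps onto one side of $\ell$ with total degree $1$ on its circle portion. This is a standard argument: the total length of the arcs in a region maps to a full arc of length equal to $d$ times that arc.

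Because $G$ is a gap, no $\ell_i$ meets $\operatorname{int}G$. Hence $A$ lies in the closure of a single region $R$, so $\sigma_d(A)$ lies in one closed side of $\ell$. This contradicts the choice of $\ell$. Therefore $\operatorname{int}H$ contains no leaf. Combined with the fact that the boundary chords of $H$ are images of boundary leaves of $G$, this shows $H$ is a gap of $\mathcal L$.

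For the degree structure, we use the region $R$ and the other regions cut off by sibling collections of the boundary leaves of $G$. Going around $\partial G$ positively, $\sigma_d$ is locally orientation-preserving on circle arcs and affine on leaves. Each boundary leaf of $G$ is either mapped to a boundary leaf of $H$ or collapsed to a point, the latter happening when it is critical. The map $\partial G\to\partial H$ is therefore a positively oriented map of a circle. It factors as the collapse of critical leaves, together with any sub-arcs of $\partial G$ that map onto a single point of $\partial H$ (the monotone part), followed by a local homeomorphism of circles, which is a covering.

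The main obstacle is the degree-one claim for complementary regions of sibling collections, which is what rules out $\sigma_d(A)$ wrapping across $\ell$. I would prove it first as a separate lemma: the arcs of $R\cap\mathbb S$ have total length $1/d$ modulo the alternation pattern. This follows by induction on $d$ from the disjointness and equal images of the siblings.
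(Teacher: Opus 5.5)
The paper does not prove this proposition; it quotes it from Blokh--Mimbs--Oversteegen--Valkenburg. Your strategy is the standard one: pull a crossing leaf back to a full sibling collection, and use that $G$ lies in one complementary region. However, there are two gaps, described below.

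\emph{First gap: the lemma you call the main obstacle is false.} The $d$ siblings cut $\overline{\mathbb D}$ into $d+1$ regions. Each region has at least one boundary arc of $\mathbb S$, because siblings share no endpoints. There are $2d$ such arcs, and each maps homeomorphically onto a component of $\mathbb S\setminus\sigma_d(\ell)$. So for $d\ge2$ some region always carries at least two arcs and covers its side with degree at least $2$. For example, take $d=2$ and $\ell=\overline{0.1\,0.3}$ with siblings $\overline{0.05\,0.15}$ and $\overline{0.55\,0.65}$: the middle region has two arcs, each covering $(0.3,0.1)$.

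What your argument actually needs, and what is true, is the \emph{same-side} property: all arcs of $\partial R\cap\mathbb S$ map onto the same component of $\mathbb S\setminus\sigma_d(\ell)$. To see this, note that the $2d$ sibling endpoints are exactly the preimages of the endpoints $x,y$ of $\ell$, and these alternate around $\mathbb S$. An arc of $\partial R$ that begins over $x$ ends over $y$. The sibling leaf issuing from that endpoint leads to a point over $x$, where the next arc of $\partial R$ begins. With this replacement, your proof that no leaf meets $\operatorname{int}H$ goes through.

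\emph{Second gap: orientation, on which your claim that $H$ is a gap also depends.} Being affine on leaves says nothing about direction. Take an edge $\overline{ab}$ of $G$ with $(a,b)\cap A=\emptyset$, traversed from $a$ to $b$. Nothing you have said rules out that the complementary arc of $\sigma_d(A)$ cut off by $\overline{\sigma_d(a)\sigma_d(b)}$ is $(\sigma_d(b),\sigma_d(a))$, so that the boundary map folds back along this edge. In that case $\sigma_d(\partial G)$ could be a proper sub-arc of $\partial H$. Your assertion that every edge of $H$ is the image of an edge of $G$ would then be unsupported. That assertion is exactly what makes every edge of $H$ a leaf, via property~(1) of Definition~\ref{def:invariant-lamination}, and hence what makes $H$ a gap.

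To fix this, run the sibling argument once more for each edge $\ell=\overline{ab}$ of $G$ with $\sigma_d(a)\ne\sigma_d(b)$:
\begin{itemize}
\item $G$ lies in the closure of the region $R$ of the sibling collection of $\ell$ that is adjacent to $\ell$ on the side of $(b,a)$.
\item The arc of $\partial R$ starting at $b$ maps onto $(\sigma_d(b),\sigma_d(a))$.
\item By the same-side property, $\sigma_d(A)\subset[\sigma_d(b),\sigma_d(a)]$. Hence $\overline{\sigma_d(a)\sigma_d(b)}$ is an edge of $H$, and it is traversed positively.
\end{itemize}
It follows that $\sigma_d|_{\partial G}$ has a non-decreasing lift $F$ with $F(t+1)=F(t)+k$. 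Since $\sigma_d(A)$ has at least three points, $F$ is not constant, so $k\ge1$ and the map is onto $\partial H$. Only now do you know that every edge of $H$ is a leaf. Collapsing the intervals on which $F$ is constant then gives the factorization into a monotone map followed by a covering map.
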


A gap $G$ is said to be \emph{infinite} (\emph{finite}, \emph{uncountable}) if $G\cap \mathbb{S}$ is infinite (finite, uncountable). Uncountable gaps are also called \emph{Fatou gaps}.  Straight segments from $\partial G$ are called \emph{edges} of $G$, and the points in $G\cap\mathbb S$  are  \emph{vertices} of $G$.

\begin{theorem}[\cite{Kiwi1},\,Theorem 1.1]\label{thm:kiwi}
Let \(A\subset \mathbb S\) be a finite set. Suppose that for any \(m \neq n\) we have
\[
\operatorname{CH}(\sigma_d^m(A)) \cap \operatorname{CH}(\sigma_d^n(A)) = \emptyset.
\]
Then $\# A\leq 2^d$. As a consequence, every infinite gap of $\LLL$ is preperiodic.
\end{theorem}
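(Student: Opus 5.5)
The first assertion, $\#A\le 2^d$ for a finite wandering polygon, is Kiwi's theorem, and I will take it as given from \cite{Kiwi1}. What remains is the consequence: every infinite gap $G$ of an invariant lamination $\LLL$ is preperiodic. I would argue by contradiction and reduce to the finite statement.

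Suppose $G$ is an infinite gap that is not preperiodic, and write $G_n=\sigma_d^n(G)$. First I check that each $G_n$ is again an infinite gap. By the gap-invariance proposition above, $G_n$ is a point, a leaf or a gap. Since $\sigma_d$ is at most $d$-to-one on $\mathbb S$, the image of the infinite set $G\cap\mathbb S$ is infinite. So $G_n$ cannot be a point or a leaf, and it is an infinite gap.

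If $G_m=G_n$ for some $m\ne n$, then $G$ is preperiodic, which we excluded. Hence the gaps $G_n$ are pairwise distinct. Two distinct gaps have disjoint interiors, so $G_m\cap G_n$ is empty, a common vertex, or a common edge.

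Next I pick a finite set $A\subset G\cap\mathbb S$ with $\#A=2^d+1$ and verify that it satisfies the hypothesis of Theorem~\ref{thm:kiwi}. We have $\CH(\sigma_d^m(A))\subset G_m$. Any edge of $G_m$ is a boundary chord of the convex set $G_m$, so an inscribed polygon with vertices in $G_m\cap\mathbb S$ meets that edge only in those of its own vertices that lie on it. Therefore
\[
\CH(\sigma_d^m(A))\cap\CH(\sigma_d^n(A))\subset \sigma_d^m(A)\cap\sigma_d^n(A),
\]
and it suffices to choose $A$ with $\sigma_d^m(A)\cap\sigma_d^n(A)=\emptyset$ for all $m\ne n$. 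Once this holds, Theorem~\ref{thm:kiwi} gives $\#A\le 2^d$, a contradiction.

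The main obstacle is choosing these vertices. I need $2^d+1$ vertices of $G$ that are not preperiodic and whose forward orbits are pairwise disjoint, so that no relation $\sigma_d^m(a)=\sigma_d^n(b)$ with $m\ne n$ occurs among them. I would treat two cases.
\begin{enumerate}
\item If $G\cap\mathbb S$ is uncountable, this is straightforward. The preperiodic angles and each grand orbit of $\sigma_d$ are countable, so I can choose the points one at a time from distinct grand orbits and avoid all preperiodic angles.
\item If $G\cap\mathbb S$ is countably infinite, I would use the wandering hypothesis. A coincidence $\sigma_d^m(a)=\sigma_d^n(b)$ with $m\neq n$ forces the distinct gaps $G_m$ and $G_n$ to share a vertex. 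I would aim to prove that a single angle can be a common vertex of only finitely many of the $G_k$. Then only finitely many vertices of $G$ are involved in such coincidences, and infinitely many good vertices remain.
\end{enumerate}

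For the finiteness claim in case 2 I would first try the following. Gaps sharing a vertex $p$ are arranged in a cyclic order around $p$. Their edges at $p$ are leaves of $\LLL$ converging at $p$. Pulling back along the orbit should then produce either a periodic gap or a critical configuration, and I would try to rule both out using the fact that $\sigma_d$ is a local homeomorphism away from critical leaves.

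If this route fails, the fallback is to invoke the full form of \cite{Kiwi1}, where wandering gaps of invariant laminations are shown to be finite directly.
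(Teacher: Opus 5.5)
Your reduction is sound up to the choice of $A$, and case 1 is complete. The images $G_n$ are pairwise distinct infinite gaps that meet at most in a common edge or vertex. So a nonempty intersection $\operatorname{CH}(\sigma_d^m(A))\cap\operatorname{CH}(\sigma_d^n(A))$ forces a common point of $\sigma_d^m(A)$ and $\sigma_d^n(A)$. Your displayed inclusion is not literally true when a common edge has both endpoints in both sets, but this weaker statement is all you use. The paper gives no argument here; it attributes both the bound and the consequence to Kiwi. The genuine gap is case 2, which is where the difficulty lies. The finiteness claim is not proved: ``pulling back should produce a periodic gap or a critical configuration'' is not an argument. Even granting the claim, your inference that ``only finitely many vertices of $G$ are involved in such coincidences'' does not follow.

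A coincidence $\sigma_d^m(a)=\sigma_d^n(b)$ with $m\ne n$ makes $x=\sigma_d^m(a)$ a common vertex of $G_m$ and $G_n$. Then $\sigma_d^j(x)$ is a common vertex of $G_{m+j}$ and $G_{n+j}$ for all $j\ge 0$.

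\begin{itemize}
\item Your claim rules this out only when $x$ is preperiodic, since a periodic angle in its orbit would be a vertex of infinitely many $G_k$.
\item If $x$ is not preperiodic, its orbit consists of distinct angles, each lying on just two of the gaps. This is fully compatible with your claim.
\item Nothing you have prevents infinitely many such merging pairs, involving infinitely many vertices of $G$.
\end{itemize}

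When $G\cap\mathbb S$ is countable, it could a priori lie in a single grand orbit. Discard the finitely many images that are critical or have a critical edge. Then $\sigma_d^k$ is injective on $G\cap\mathbb S$, so any two vertices merge at different times, and no admissible two-point $A$ exists.

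What is missing is an actual argument for infinite gaps with countably many vertices. It must either produce $2^d+1$ vertices with non-preperiodic, pairwise non-merging orbits, or show directly that such a gap cannot wander. Falling back on the lamination form of Kiwi's result is what the paper does, but then your reduction adds nothing beyond case 1.

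Case 1 already covers the laminations $\mathcal L_\sim$ to which the paper applies the consequence. An isolated vertex $a$ of an infinite gap $G$ of $\mathcal L_\sim$ would be the common endpoint of two edges $\overline{ua},\overline{aw}$ of $G$. Then $u,a,w$ would lie in one $\sim$-class whose convex hull meets the interior of $G$, forcing $G$ to be that finite hull. Hence $G\cap\mathbb S$ is perfect and uncountable.
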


Let $G$ be a gap or a leaf. If $\sigma_{d}(G)$ is a singleton, then the \emph{degree} of $G$ is the cardinality of $G \cap \mathbb{S}$. Otherwise, the \emph{degree} of $G$ equals the number of components in the preimage of a point $z \in \sigma_{d}(\partial G)$ under the map $\sigma_{d}|_{\partial G}$. 

Leaves or gaps of degree greater than \(1\) are called \emph{critical leaves} or \emph{critical gaps} of \(\LLL\), respectively; collectively, they are called \emph{critical sets}.

A periodic Fatou gap $G$ is \emph{hyperbolic} or \emph{Siegel} according to ${\rm deg}(\sigma_d^n|_G)>1$ or ${\rm deg}(\sigma_d^n|_G)=1$, where $n$ is the period of $G$.

\subsection{Laminational equivalence relations and topological polynomials}\label{sec:laminations}

A \emph{laminational equivalence relation} is an equivalence relation $\sim$ in $\mathbb{S}$ such that:
\begin{itemize}
\item [(R1)] The relation $\sim$ is closed in $\mathbb{S}\times \mathbb{S}$;
\item [(R2)] Each $\sim$-class is a finite subset of $\mathbb{S}$;
\item [(R3)] $\sim$-classes have pairwise disjoint convex hulls.
\item [(R4)] If $A$ is  a $\sim$-class, then $\sigma_d(A)$ is a $\sim$-equivalence class.
\item [(R5)] If $A$ is a $\sim$-class, then $\sigma_d:A\to\sigma_d(A)$ is \emph{consecutive preserving}, i.e., for any component $(a,b)$ of $\mathbb S\setminus A$,  $(\sigma_d(a),\sigma_d(b))$ is a component of $\mathbb S\setminus \sigma_d(A)$.
\end{itemize}

The collection $\mathcal{L}_{\sim}$ of chords of $\overline{\mathbb{D}}$ that are edges of convex hulls of $\sim$-classes is called the \emph{(geodesic) lamination generated by $\sim$}.

\begin{lemma}[Theorem 3.21,\,\cite{BMOV}]
Laminations $\mathcal{L}_{\sim}$ generated by a laminational equivalence {relation} $\sim$ are (sibling) invariant.
\end{lemma}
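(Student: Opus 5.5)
The plan is to verify the conditions one by one: closedness of $\LLL_\sim^\cup$, then conditions (1), (2) and (3) of Definition~\ref{def:invariant-lamination}. Each follows from the axioms (R1)--(R5). The essential tool is a combinatorial description of how a single $\sim$-class $A$ maps onto $\sigma_d(A)$: by (R5), $\sigma_d|_A$ is consecutive preserving. Hence, going once around the circle, the image of $A$ winds some number $d_A\ge1$ of times around $\sigma_d(A)$, and $\#A=d_A\cdot\#\sigma_d(A)$.

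\emph{Closedness and forward invariance.} Let $\ell_k$ be edges of $\CH(A_k)$ converging to a chord $\ell=\overline{ab}$ with $a\ne b$.
\begin{itemize}
\item By (R1), $a\sim b$, so $a,b$ lie in a common class $A$.
\item If $\ell$ were not an edge of $\CH(A)$, some point of $A$ would lie strictly on each side of $\ell$. Then $\CH(A)$ would cross $\ell_k$ for large $k$.
\item By (R3) this forces $A_k=A$ for large $k$. But $A$ is finite by (R2), so $\ell_k$ is eventually constant, equal to $\ell$ itself, which is a contradiction.
\end{itemize}
Limits of degenerating leaves are points of $\mathbb S$, so $\LLL_\sim^\cup$ is closed.

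For (1), let $\ell=\overline{ab}$ be an edge of $\CH(A)$, so that $(a,b)$ is a component of $\mathbb S\setminus A$. By (R4), $\sigma_d(A)$ is a class. By (R5), $(\sigma_d(a),\sigma_d(b))$ is a component of $\mathbb S\setminus\sigma_d(A)$. Therefore $\sigma_d(\ell)$ is an edge of $\CH(\sigma_d(A))$, or a point when $\sigma_d(a)=\sigma_d(b)$.

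\emph{Preimages.} Let $B$ be a class and $x\in B$.
\begin{itemize}
\item Each of the $d$ points of $\sigma_d^{-1}(x)$ lies in some class. By (R4), that class maps onto a class containing $x$, namely $B$.
\item Hence $\sigma_d^{-1}(B)$ is a disjoint union of classes $A_1,\dots,A_k$.
\item Each $A_i$ contains exactly $d_{A_i}$ preimages of $x$. Counting the preimages of $x$ gives $\sum_i d_{A_i}=d$.
\end{itemize}
Walking around $A_i$ in cyclic order, consecutive pairs map by (R5) to consecutive pairs of $B$ in cyclic order, wrapping $d_{A_i}$ times. So every edge $\overline{xy}$ of $\CH(B)$ is the image of exactly $d_{A_i}$ edges of $\CH(A_i)$, which gives (2). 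When $\#B\ge3$, these $d_{A_i}$ edges are pairwise non-adjacent: adjacent edges of $A_i$ map to adjacent, distinct edges of $B$. So they are pairwise disjoint, even at endpoints.

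\emph{The sibling condition (3).} Let $\ell$ be an edge of some $A=A_{i_0}$ with non-degenerate image $\overline{xy}$.
\begin{itemize}
\item Collect the $d_{A_i}$ edges of each $A_i$ mapping to $\overline{xy}$; this gives $d$ leaves in total, including $\ell$.
\item Leaves in different classes are disjoint by (R3), including at endpoints, since distinct classes are disjoint sets.
\item Leaves within one class are disjoint by the non-adjacency above when $\#B\ge3$.
\end{itemize}
The delicate case, which I expect to be the main obstacle, is $\#B=2$. There $\#A_i=2d_{A_i}$ and every edge of $A_i$ maps onto the leaf $\overline{xy}$, so all $2d_{A_i}$ edges are candidates but adjacent ones share a vertex. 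I will handle this by taking alternate edges of $A_i$, namely those whose counterclockwise initial vertex maps to $x$. This gives $d_{A_i}$ pairwise disjoint edges per class. In $A_{i_0}$, I choose the parity class of alternate edges that contains $\ell$, so that $\ell_1=\ell$ as required. The total is again $\sum_i d_{A_i}=d$. The only care needed is to justify the winding count $\#A=d_A\cdot\#\sigma_d(A)$, i.e.\ that a consecutive-preserving map of cyclically ordered finite sets, induced by the orientation-preserving covering $\sigma_d$, is a $d_A$-fold cyclic covering. This follows by summing the lengths of the arcs $(a,b)$ of $\mathbb S\setminus A$: their images are arcs of $\mathbb S\setminus\sigma_d(A)$ traversed in order, and the total turning of $\sigma_d$ is $d$.
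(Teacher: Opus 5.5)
Your argument is correct, and it differs from the paper in that the paper gives no proof at all: it imports the statement from \cite[Theorem~3.21]{BMOV}. You instead verify closedness and conditions (1)--(3) of Definition~\ref{def:invariant-lamination} directly from (R1)--(R5). The key inputs in your route are:
\begin{enumerate}
\item consecutive preservation makes $\sigma_d|_A$ a $d_A$-fold cyclic covering of $\sigma_d(A)$;
\item the count $\sum_i d_{A_i}=d$, which comes from distributing the $d$ points of $\sigma_d^{-1}(x)$ among the classes over $B$.
\end{enumerate}
The delicate case $\#\sigma_d(A)=2$ is handled correctly by taking alternate edges and choosing the parity class containing $\ell$. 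Your approach buys self-containedness: it uses only the five axioms and elementary cyclic-order combinatorics. The citation buys brevity and keeps the fact inside the BMOV framework that the paper relies on elsewhere (gap invariance, properness, Theorem~4.9 of BMOV).

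Two small presentational points.
\begin{itemize}
\item Your last sentence justifies $\#A=d_A\cdot\#\sigma_d(A)$ in a slightly misdirected way. The image of a complementary arc of $A$ need not be an arc of $\mathbb S\setminus\sigma_d(A)$, since it may wrap fully around the circle; (R5) controls only the endpoints. The identity follows at once because each step of the cyclic walk through $A$ advances exactly one position in $\sigma_d(A)$, and the walk closes up after $\#A$ steps. The arc-length sum only gives $d=d_A+\sum_j k_j$ with integers $k_j\ge 0$, hence $d_A\le d$. That is not needed, since $\sum_i d_{A_i}=d$ already comes from counting preimages of $x$.
\item You should state explicitly that leaves of $\mathcal L_\sim$ are pairwise disjoint in $\mathbb D$. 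This holds because edges of one polygon meet only at vertices, and (R3) separates different classes. It is part of $\mathcal L_\sim$ being a lamination at all.
\end{itemize}
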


Suppose that $\sim$ is a laminational equivalence relation. Write
$
J_\sim={\mathbb S}/\!\sim
$
and let
$
\pi_\sim:{\mathbb S}\to J_\sim
$
be the quotient map. The induced map
$
f_\sim:J_\sim\to J_\sim
$
satisfying
\[
f_\sim\circ \pi_\sim=\pi_\sim\circ \sigma_d
\]
is well defined. We call  \(f_\sim\) the  \emph{topological polynomial} associated to $\sim$, and $J_\sim$ the \emph{Julia set} of $f_\sim$.

To realize this quotient in the plane, we extend \(\pi_\sim\) to \(\mathbb{C}\) by collapsing the convex hull of each \(\sim\)-equivalence class to a point and leaving all other points as singletons. Moore's theorem identifies the resulting quotient with the plane. In this realization, \(f_\sim\) extends to a branched covering of \(\mathbb{C}\) of degree \(d\) as follows.

On $\C\setminus\overline{\D}$, $\sigma_d$ denotes the radial extension
$\sigma_d(re^{2\pi i\theta})=r^d e^{2\pi i d\theta}$ for $r\ge1$.
The unbounded component
\[
\Omega_{\sim}(\infty):=\pi_\sim(\mathbb{C}\setminus\overline{\mathbb{D}})
\]
of \(\mathbb{C}\setminus J_\sim\) is the \emph{basin of infinity}, and we have
\begin{equation}\label{eq:11}
\pi_\sim\circ \sigma_d=f_\sim\circ \pi_\sim
\quad\text{on }\mathbb{C}\setminus\mathbb{D}.
\end{equation}
The image under \(\pi_\sim\) of \(\{re^{2\pi i\theta}:r>1\}\) is called the \emph{external ray} of \(f_\sim\) with argument \(\theta\), denoted by \(R_\sim(\theta)\). It follows from \eqref{eq:11} that
\[
f_\sim(R_\sim(\theta))=R_\sim(\sigma_d(\theta)).
\]
Since \(J_\sim\) is connected and locally connected, every external ray lands on \(J_\sim\).

Every bounded component $U$ of $\C\setminus J_\sim$ is called a \emph{Fatou domain}, which corresponds to a unique Fatou gap $G_U$ of $\LLL_\sim$ such that $\pi_\sim(G_U)=\overline{U}$. So every Fatou domain of $f_\sim$ is (pre)periodic by Theorem \ref{thm:kiwi}. A periodic Fatou domain $U$ has
 a return map of degree greater than one (\emph{attracting domain}) or a degree-one rotation
type return map (\emph{Siegel disk}).

We choose extensions of $f_\sim$ over the bounded Fatou
domains, together with compatible disk coordinates, so
that maps between successive domains preserve radial
segments. In these coordinates, the first-return maps
on periodic attracting and Siegel domains are power maps
and irrational rotations, respectively. The preimages
of {radial segments} in $\D$ under these coordinates are
 \emph{internal rays}.

For two points $x,y\in K_\sim:=\C\setminus \Omega_\sim(\infty)$, there exists a unique arc in $K_\sim$ joining $x,y$ such that its intersection with any Fatou domain consists of subarcs of at most two internal rays. We call this arc the \emph{regulated arc} joining $x,y$ and denote it by $[x,y]_{\sim}$. Generally, given finitely many points $x_1,\ldots, x_m\in K_\sim$, its regulated convex hull $[x_1,\ldots,x_m]_\sim$ is defined as the union of $[x_i,x_j]_\sim$ for all $i,j\in\{1,\ldots,m\}$.

\begin{lemma}\label{lem:regulated}
The following statements hold
\begin{enumerate}
\item The regulated hull $[x_1,\ldots,x_m]_\sim$ is a finite tree, called a \emph{regulated tree}, with endpoints contained in $\{x_1,\ldots,x_m\}$.
\item The intersection of two regulated trees is either empty or  a regulated tree, possibly a singleton.
\item The image of $[x_1,\ldots,x_m]_\sim$ by $f_\sim$ is still a regulated tree, with endpoints contained in $\{f_\sim(x_1),\ldots,f_\sim(x_m)\}\cup {\rm CritV}(f_\sim)$.
\item Every component of the preimage of a regulated tree is a regulated tree.
\end{enumerate}
\end{lemma}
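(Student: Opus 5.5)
The plan is to reduce each of the four assertions to the topology of the filled model $K_\sim$. Set $K_\sim=\C\setminus\Omega_\sim(\infty)$. I will use three facts about it.

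First, $K_\sim$ is a full, locally connected continuum in which every bounded complementary component of $J_\sim$ is a Jordan disk. Indeed, $\pi_\sim$ maps Fatou gaps to closures of Fatou domains, and the boundary of such a domain is the image of $\partial G_U\cap\mathbb S$ under a monotone quotient. Second, $K_\sim$ is uniquely arcwise connected modulo Fatou domains: two arcs joining the same points can differ only inside closures of Fatou domains. This holds because any simple closed curve in $K_\sim$ bounds a disk in $K_\sim$, and that disk must lie in the closure of a single Fatou domain together with its boundary; otherwise a non-trivial class would separate, which contradicts (R3). Third, inside each Fatou domain the internal rays give a canonical "hub" structure. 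With these facts, $[x,y]_\sim$ is obtained from any arc by replacing each passage through a Fatou domain $U$ (entering at $a$, leaving at $b$) with the two internal rays $[a,c_U]\cup[c_U,b]$, where $c_U$ is the center. This is the uniqueness used in defining regulated arcs.

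\emph{Part (1).} Let $T=[x_1,\dots,x_m]_\sim$. I will build $T$ inductively: $T_2=[x_1,x_2]_\sim$, then $T_{k+1}=T_k\cup[y,x_{k+1}]_\sim$, where $y$ is the first point of $T_k$ met by the regulated arc from $x_{k+1}$ to $x_1$. Uniqueness of regulated arcs shows three things:
\begin{itemize}
\item $[x_i,x_j]_\sim\subset T_k$ for $i,j\le k$;
\item the attached piece meets $T_k$ only at $y$;
\item the union is therefore a finite tree.
\end{itemize}
A point of $T$ that is an endpoint of the tree lies in no arc $[x_i,x_j]_\sim$ except as an endpoint, so it lies in $\{x_i\}$.

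\emph{Part (2).} For regulated trees $T,T'$, the intersection is closed. It is arcwise connected because, for $p,q\in T\cap T'$, the regulated arc $[p,q]_\sim$ lies in both trees: by uniqueness it is the arc within each tree, and the arc inside a tree is regulated. A connected closed subset of a finite tree is a subtree with finitely many endpoints $p_1,\dots,p_r$. Hence it equals $[p_1,\dots,p_r]_\sim$, since the regulated hull of points of a regulated tree is contained in that tree.

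\emph{Part (3).} The key observations are these. $f_\sim$ maps internal rays to internal rays. Away from critical points, $f_\sim$ is a local homeomorphism. So the image of a regulated arc $[x,y]_\sim$ is a finite union of regulated arcs. Its "folding points" occur only at critical points, and on an arc that contains no critical point in its interior, $f_\sim$ is injective. To prove injectivity, I would argue that a non-injective image would create a loop in $K_\sim$ enclosing points of $\Omega_\sim(\infty)$, contradicting (R5) via the consecutive-preserving property. Subdividing $T$ at its critical points makes $f_\sim(T)$ a union of regulated arcs joining images of vertices. By part (1)'s construction this union is a tree, and its endpoints are images of endpoints of $T$ or critical values.

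\emph{Part (4).} Let $S$ be a component of $f_\sim^{-1}(T)$. Then $f_\sim|_S:S\to T$ is a branched covering of a tree, and since a tree is simply connected, each restriction over an edge lifts to arcs. So $S$ is a finite tree whose edges are lifts of regulated arcs. Lifts of internal rays are internal rays, so these edges are regulated. Applying part (1) to its endpoints then identifies $S$ with their regulated hull.

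I expect the main obstacle to be in part (3). It requires proving that an arc free of critical points maps injectively, and controlling how arcs crossing critical Fatou domains and critical Julia classes fold.
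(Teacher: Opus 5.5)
The paper does not prove this lemma; it only cites [Poi2, Section~2]. Your outline follows the standard route (fullness of $K_\sim$ gives uniqueness of regulated arcs, the hull is built by attaching arcs, and images and preimages are controlled by local homeomorphy away from critical points). Parts (1) and (2) are acceptable as sketches. However, the step you flag in (3) is not merely unfinished: the mechanism you propose for it does not work. Part (4) also has a separate gap.

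\emph{Part (3).} Non-injectivity cannot ``create a loop in $K_\sim$ enclosing points of $\Omega_\sim(\infty)$''. Since $K_\sim$ is full, no loop in $K_\sim$ encloses escaping points, and (R5) plays no role there. The contradiction has to come from local injectivity plus the absence of loops in the image. Suppose $u\neq v$ lie on $\gamma$, $f_\sim(u)=f_\sim(v)$, and $(u,v)$ contains no critical point.

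For a Fatou domain $U'$, openness of $f_\sim$ gives that $f_\sim^{-1}(\overline{U'})$ is the union of the closures of the at most $d$ components of $f_\sim^{-1}(U')$. Each of these closures meets $\gamma$ in at most two closed internal rays. Hence $Y=f_\sim([u,v])$ meets $\overline{U'}$ inside finitely many closed internal rays. Every simple closed curve in $K_\sim$ lies in one Fatou closure, so $Y$ contains none and is a dendrite.

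$Y$ is non-degenerate, so it has an endpoint $e\neq f_\sim(u)$, and $e=f_\sim(t)$ for some $t\in(u,v)$. But $f_\sim$ is injective near $t$, so $e$ is an interior point of an arc in $Y$, which is impossible.

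The same two observations prove (3) without any injectivity statement. $f_\sim(T)$ is a dendrite. Every preimage in $T$ of one of its endpoints is an endpoint of $T$ or a critical point. So $f_\sim(T)$ is a finite tree with the asserted endpoints, and all its arcs are regulated by the internal-ray count.

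\emph{Part (4).} ``A tree is simply connected, so each edge lifts to arcs'' only shows that $S$ is a finite graph. It does not exclude cycles, since an abstract branched cover of a tree can contain them. What rules them out here is planarity. If a simple closed curve in $S$ bounded a disk $D$, then $f_\sim(D)$ would be a nonempty bounded open set with $\partial f_\sim(D)\subset f_\sim(\partial D)\subset T$. This is impossible, because $T$ has empty interior and connected complement. Alternatively, such a cycle would lie in one Fatou closure $\overline U$, whereas $f_\sim^{-1}(T)\cap\overline U$ lies in finitely many closed internal rays. You also need every arc of $S$, not just every edge, to be regulated; this follows from the same internal-ray count.

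Two smaller points. In your Fact~2, the correct reason is that the open disk bounded by the curve lies in $\operatorname{int}K_\sim$, which is the union of the Fatou domains, hence in a single one; (R3) is not what is being used. Also, regulated arcs must be defined using closures of Fatou domains, since otherwise an arc running along $\partial U$ also qualifies and uniqueness fails. Your replacement construction implicitly assumes this.
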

This result is well-known  and the proof can be found in \cite[Section 2]{Poi2}. 

\begin{definition}\label{def:Hubbard}
The  topological polynomial \(f_\sim\) is called \emph{postcritically finite} if every critical point of \(f_\sim\) has finite forward orbit. Its \emph{Hubbard tree} $H_{f_\sim}$ is defined as the regulated convex hull of the forward orbits of the critical points of \(f_\sim\).
\end{definition}

By Lemma \ref{lem:regulated}, we obtain $f(H_{f_\sim})\subset H_{f_\sim}$. 

\subsection{Proper lamination and perfect lamination}\label{sec:proper}
Given an invariant  lamination $\mathcal{L}$, define an equivalence relation $\sim_{\mathcal{L}}$ by declaring that $x \sim_{\mathcal{L}} z$ if and only if there exists a finite concatenation of leaves of $\mathcal{L}$ connecting $x$ and $z$.
Note that $\sim_{\mathcal{L}}$ is not necessarily a laminational equivalence relation because some $\sim_\LLL$-class is perhaps an infinite set. The following result {characterizes} $\LLL$ such that $\sim_\LLL$ is a laminational equivalence.

\begin{definition}[Proper lamination]\label{def:proper}
An invariant lamination \(\mathcal{L}\) is called \emph{proper} if no critical leaf has a periodic endpoint and no two leaves sharing a periodic endpoint have the same image.
 \end{definition}

\begin{theorem}[Theorem 4.9,\,\cite{BMOV}]
Let $\mathcal{L}$ be an  invariant lamination. Then $\sim_{\mathcal{L}}=\sim$ is a   laminational equivalence relation if and only if $\LLL$ is proper.
\end{theorem}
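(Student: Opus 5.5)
The plan is to prove the two implications separately. Necessity (a laminational $\sim_{\mathcal L}$ forces $\mathcal L$ to be proper) should be a short finite-set argument. Sufficiency splits into checking (R1)–(R5), and most of the work is in (R1) and (R2).

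\emph{Necessity.} Assume $\sim=\sim_{\mathcal L}$ satisfies (R1)–(R5), and suppose $\mathcal L$ is not proper. In both failure modes there is a periodic point $a$ of some period $n$ and a point $b\neq a$ with $a\sim b$ and $\sigma_d(a)=\sigma_d(b)$:
\begin{itemize}
\item if a critical leaf $\overline{ab}$ has periodic endpoint $a$, take $b$ to be its other endpoint;
\item if leaves $\overline{ab},\overline{ac}$ share a periodic endpoint $a$ and have the same image, then $\sigma_d(b)=\sigma_d(c)$, and this pair plays the role of $(a,b)$.
\end{itemize}
Let $A$ be the class of $a$. By (R4), $\sigma_d^n(A)$ is a class, and it contains $\sigma_d^n(a)=a$, so $\sigma_d^n(A)=A$. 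By (R2), $A$ is finite, so the surjection $\sigma_d^n\colon A\to A$ is a bijection. But $\sigma_d$ already identifies two points of $A$, so $\sigma_d^n$ is not injective on $A$, a contradiction.

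\emph{Sufficiency.} Assume $\mathcal L$ is proper. Two axioms are essentially formal:
\begin{itemize}
\item (R4) follows from the invariance axioms of Definition~\ref{def:invariant-lamination}: images of leaves are leaves, so $\sigma_d$ maps classes into classes, and pullbacks together with siblings give the reverse inclusion.
\item (R3) holds because distinct classes are spanned by leaves that are pairwise disjoint in $\mathbb D$. Two chains of leaves whose convex hulls met would either have crossing leaves or share an endpoint, and in the latter case they lie in one class.
\end{itemize}
For (R5), I would use the gap-invariance Proposition. The boundary of the convex hull of a finite class consists of leaves and maps to the boundary of the image hull by a positively oriented map. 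So consecutive vertices go to consecutive vertices unless an edge collapses. A collapsing edge would be critical; one must rule out that it destroys consecutiveness, and this is where sibling invariance enters.

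The remaining and harder part is (R2), finiteness of classes, together with (R1), closedness. For finiteness, take a class $A$ and its finite subchains $A'$. If the forward images $\operatorname{CH}(\sigma_d^m(A'))$ have pairwise disjoint hulls, Theorem~\ref{thm:kiwi} bounds $\# A'\le 2^d$. Otherwise two images meet; since distinct classes have disjoint hulls, some image of $A$ is then mapped into itself. In that case I reduce to a periodic class containing a periodic vertex $p$. Properness says that the leaves at $p$ have distinct images and none is critical, so the first return map is injective on the leaves emanating from the periodic vertices. A circle map of this kind, an injective order-preserving map on the finitely many directions, should force the periodic class to be finite. Pulling back along finitely many steps then bounds the size of $A$.

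For (R1), take $x_k\sim z_k$ with $x_k\to x$, $z_k\to z$. The uniform bound on class size lets me pass to a subsequence along which the chains joining $x_k$ to $z_k$ have a fixed combinatorial length. Closedness of $\mathcal L^\cup$ then makes each leaf converge to a leaf, producing a chain from $x$ to $z$.

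I expect the main obstacle to be the finiteness step. It has two parts: excluding infinite chains accumulating on a periodic point, where properness is used exactly, and making the bound on class size uniform so that the limiting argument for (R1) goes through. If the direct argument stalls, my fallback is to imitate the proof of Theorem~\ref{thm:kiwi} directly on the union of leaves in a class.
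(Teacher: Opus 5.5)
The paper does not prove this statement; it quotes it from \cite{BMOV}. Your necessity half is correct: in both failure modes $\sigma_d$ is non-injective on a finite class $A$ with $\sigma_d^n(A)=A$, which is impossible. Your treatment of (R3) and (R4) is also fine.

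The genuine gap is the finiteness step (R2) for eventually periodic classes. This is the only place where properness has to do real work, and your argument does not supply it.

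You invoke an injective order-preserving return map on ``the finitely many directions'' at a periodic vertex $p$. But finiteness of the fan of leaves at $p$ is part of what must be proved. For $\sigma_2$, the critical leaf $\overline{0\,\frac12}$ together with its pullbacks $\overline{0\,2^{-n}}$ generates an invariant lamination in which the fixed point $0$ carries infinitely many leaves. Injectivity and order preservation alone cannot exclude an infinite fan, since a shift of a $\mathbb Z$-ordered family of leaves has both properties. They must be combined with the expansion of $\sigma_d$ near $p$ and near limit leaves of the fan.

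There are two further holes in this step:
\begin{enumerate}
\item An infinite class need not have any vertex of infinite valence. It may contain an infinite concatenation $\overline{y_0y_1},\overline{y_1y_2},\dots$ of distinct leaves, and your argument never examines this case.
\item You do not justify that an infinite $\sigma_d^k$-invariant class contains a periodic vertex at all. Surjectivity of $\sigma_d^k$ on the class yields periodic points only once the class is known to be finite.
\end{enumerate}
What is missing is a dynamical argument ruling out infinite fans and infinite concatenations inside one class. You would follow such a configuration under $\sigma_d$, use that short leaves and the arcs they cut off are expanded, and show that sibling invariance then forces, at a periodic endpoint, either a critical leaf or two leaves with a common image. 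Properness forbids exactly this.

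Two other steps also need repair.

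For (R1) you rely on a uniform bound on class size. Your argument gives none: the pullback bound depends on the preperiod, and periodic classes are handled one at a time. I also expect no such bound in general, since a single proper lamination appears able to contain periodic polygons with unboundedly many vertices (e.g.\ for infinitely satellite-renormalizable combinatorics). In any case the bound is unnecessary. Granting (R2) and (R3), let $x_k\sim z_k$ with $x_k\to x$ and $z_k\to z\neq x$. If the classes $A_k$ of $x_k$ repeat, finiteness settles the matter. Otherwise the pairwise disjoint polygons $\operatorname{CH}(A_k)$ subconverge to a convex set with empty interior, which must be $\overline{xz}$. The chain from $x_k$ to $z_k$ then contains a leaf with one endpoint near $x$ and the other near $z$, and these leaves converge to the leaf $\overline{xz}$.

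For (R5), the edges of $\operatorname{CH}(A)$ need not be leaves of $\mathcal L$: a class can be joined by diagonals with some hull edges absent (compare Corollary~\ref{cor:differ}). So $\operatorname{CH}(A)$ is in general neither a gap nor a leaf of $\mathcal L$, and the gap-invariance proposition does not apply to it. You have to work with the gaps of $\mathcal L$ adjacent to the leaves of the class, or argue directly from sibling invariance. The collapsing-edge case you leave open is precisely where this matters.
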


By this theorem, every leaf in \((\LLL\setminus\LLL_\sim)\cup(\LLL_\sim\setminus\LLL)\) lies in the convex hull of a finite \(\sim\)-class with at least three angles. There are at most countably many such classes, since their convex hulls have pairwise disjoint nonempty interiors, each containing a point of \(\mathbb{Q}^2\). Therefore,

\begin{corollary}\label{cor:differ}
Let \(\LLL\) be a proper invariant lamination. Then \(\LLL\) and \(\LLL_\sim\) differ in only countably many leaves, where \(\sim:=\sim_\LLL\).
\end{corollary}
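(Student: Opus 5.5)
The statement to prove is Corollary~\ref{cor:differ}. I would derive it directly from the characterization of proper laminations in \cite[Theorem 4.9]{BMOV}, in two steps: first show that every leaf in the symmetric difference $(\LLL\setminus\LLL_\sim)\cup(\LLL_\sim\setminus\LLL)$ is trapped inside the convex hull of a single finite $\sim$-class, and then count such classes.

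\textbf{Step 1: locating the discrepant leaves.} Since $\LLL$ is proper, $\sim=\sim_\LLL$ is a laminational equivalence relation, so every class is finite and classes have pairwise disjoint convex hulls.

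If $\ell=\overline{ab}\in\LLL$ is nondegenerate, then $a\sim b$ by definition of $\sim_\LLL$. Hence $\ell\subset\CH(A)$, where $A$ is the class of $a$. If $\#A=2$, then $\ell$ is the unique edge of $\CH(A)$, so $\ell\in\LLL_\sim$. Thus $\ell\notin\LLL_\sim$ forces $\#A\ge3$.

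Conversely, let $\ell=\overline{ab}\in\LLL_\sim$ be an edge of $\CH(A)$ with $\ell\notin\LLL$. By definition of $\sim_\LLL$, the points $a$ and $b$ are joined by a finite chain of leaves of $\LLL$ through points of $A$. If $\#A=2$, that chain can only be the leaf $\overline{ab}$ itself, so $\ell\in\LLL$, a contradiction. Again $\#A\ge3$.

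So every discrepant leaf lies in $\CH(A)$ for some finite class $A$ with $\#A\ge3$.

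\textbf{Step 2: counting classes and leaves.} For a class $A$ with $\#A\ge3$, the hull $\CH(A)$ is a polygon with nonempty interior. By (R3) these interiors are pairwise disjoint. Choosing a point of $\mathbb Q^2$ in each interior gives an injection into a countable set, so there are at most countably many such classes.

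Each such class is finite, so its hull contains only finitely many chords with endpoints in $A$. Any leaf of $\LLL$ or $\LLL_\sim$ contained in $\CH(A)$ has both endpoints in $A\subset\mathbb S$, because a chord meets $\mathbb S$ exactly at its endpoints and $\CH(A)\cap\mathbb S=A$. A countable union of finite sets is countable, which proves the corollary.

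\textbf{Expected difficulty.} The only delicate point is Step~1 for leaves of $\LLL_\sim$ that are not in $\LLL$. There one must use that $\sim_\LLL$ is defined by chains of leaves, rather than just by closure. I expect no further obstacle.
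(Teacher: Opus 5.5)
Your proof is correct and follows the paper's own argument. Every leaf in the symmetric difference lies in the convex hull of a finite $\sim$-class with at least three angles, and such classes are countable because their hulls have pairwise disjoint nonempty interiors, each containing a point of $\mathbb{Q}^2$. You also spell out two details the paper leaves implicit: two-point classes cause no discrepancy, and each such hull contains only finitely many chords that could be leaves.
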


We recall the notion of a perfect lamination and some
of its basic properties. A lamination is
called \emph{perfect} if it has no isolated leaves.
In particular, the trivial lamination is perfect.

Furthermore, for any lamination $\LLL$, its \emph{perfect part}
$\LLL^{\rm p}$ is the maximal perfect subset of $\LLL$,
viewed as a closed subset of the compact metrizable
space of chords. Since the points of $\mathbb S$
form a perfect subset of $\LLL$, they all belong to
$\LLL^{\rm p}$. The Cantor--Bendixson theorem
immediately yields the following.

\begin{proposition}\label{pro:countable}
Every leaf of $\LLL^{\rm p}$ can be approximated by
distinct leaves of $\LLL$, and
$\LLL\setminus\LLL^{\rm p}$ is countable.
\end{proposition}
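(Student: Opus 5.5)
The statement has two parts: every leaf of $\LLL^{\rm p}$ is a limit of distinct leaves of $\LLL$, and $\LLL\setminus\LLL^{\rm p}$ is countable. The plan is to apply the Cantor--Bendixson theorem in the compact metric space $\mathcal C$ of chords, degenerate ones included, with the Hausdorff metric. The first task is to check that $\LLL$ is a closed subset of $\mathcal C$. Take leaves $\ell_k\in\LLL$ converging to a chord $\ell$. Since $\LLL^\cup$ is closed, $\ell\subset\LLL^\cup$. If $\ell$ is degenerate it is a point of $\mathbb S$, which belongs to $\LLL$ by definition. If $\ell$ is non-degenerate, it crosses no leaf in $\mathbb D$: a transversal crossing is an open condition, so $\ell_k$ would also cross that leaf for large $k$, contradicting disjointness. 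Then $\ell$ lies in $\LLL^\cup$ and crosses no leaf, so it must be a leaf itself or a limit of leaves. To conclude that $\ell\in\LLL$, I would use the standard fact that limits of leaves of a lamination are leaves; this is the usual closedness convention for laminations, where closedness of $\LLL^\cup$ is equivalent to $\LLL$ being closed in $\mathcal C$. So $\LLL$ is a compact metrizable, hence Polish, space.

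Next, Cantor--Bendixson gives a decomposition $\LLL=P\sqcup C$ with $P$ perfect (possibly empty) and $C$ countable. The set $P$ is the largest perfect subset: any perfect $Q\subset\LLL$ satisfies $Q\subset P$, because the union of all perfect subsets is again perfect, and the complement of the kernel is countable. Hence $P=\LLL^{\rm p}$, and $\LLL\setminus\LLL^{\rm p}=C$ is countable, which is the second claim.

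For the first claim, let $\ell\in\LLL^{\rm p}$. Since $\LLL^{\rm p}$ is perfect, $\ell$ is not isolated in $\LLL^{\rm p}$. So there are leaves $\ell_k\in\LLL^{\rm p}\subset\LLL$ with $\ell_k\neq\ell$ and $\ell_k\to\ell$, and we may pass to a subsequence of pairwise distinct ones. The points of $\mathbb S$ are handled the same way, and they lie in $\LLL^{\rm p}$ because $\mathbb S$ is itself a perfect subset of $\mathcal C$.

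The only step that is not routine is the closedness of $\LLL$ in $\mathcal C$. The worry is that a limit chord might lie inside a gap while being built from boundary pieces. The crossing argument rules this out for chords in the interior of a gap, and a limit of edges is itself an edge-limit, so it is a leaf under the closed-lamination convention. Everything else follows directly from Cantor--Bendixson.
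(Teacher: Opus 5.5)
Your proof is correct and follows the paper's own route: the paper simply applies the Cantor--Bendixson theorem to $\LLL$, regarded as a closed subset of the compact metrizable space of chords. The one step you defer to a ``convention'' (that a limit chord is a leaf) can be closed directly. For a non-degenerate limit chord $\ell$, any $x\in\ell\cap\D$ lies on some leaf $m$ because $\LLL^\cup$ is closed. If $m\neq\ell$, the two chords cross at $x$, so your openness argument makes $\ell_k$ cross $m$ for all large $k$, contradicting disjointness; hence $\ell=m\in\LLL$.
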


Although defined purely topologically, the perfect
part of an invariant lamination is itself invariant,
as shown in the following result.

\begin{lemma}[\cite{BOPT2}, Lemma 3.12]\label{lem:perfect}
The perfect part $\LLL^{\rm p}$ of any invariant
lamination $\LLL$ is a perfect invariant lamination.
\end{lemma}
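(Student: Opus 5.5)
The plan is to characterize $\LLL^{\rm p}$ as the set of \emph{condensation leaves} of $\LLL$, and then to verify the three conditions of Definition~\ref{def:invariant-lamination} by a limiting argument. By the Cantor--Bendixson theorem, in the compact metrizable space of chords the perfect kernel of the closed set $\LLL$ consists exactly of those $\ell\in\LLL$ every neighbourhood of which contains uncountably many leaves of $\LLL$.

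First I record the easy parts.
\begin{itemize}
\item $\LLL^{\rm p}$ is closed, and it contains all degenerate leaves.
\item Its non-degenerate leaves are pairwise disjoint in $\D$, being leaves of $\LLL$.
\item The solid $(\LLL^{\rm p})^\cup$ is closed, since a limit of points on leaves of the closed family $\LLL^{\rm p}$ lies on a limit leaf.
\end{itemize}
Hence $\LLL^{\rm p}$ is a perfect lamination, and only invariance needs proof. The basic tool is a finiteness fact: a given non-degenerate chord has at most $d^2$ chords mapping onto it, since each endpoint has $d$ preimages. Together with the continuity of $\sigma_d$ on chords, this lets uncountability pass back and forth between $\ell$ and its image or its preimages.

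\emph{Condition (1).} Let $\ell\in\LLL^{\rm p}$. If $\sigma_d(\ell)$ is a point of $\mathbb S$, it lies in $\LLL^{\rm p}$ automatically. Otherwise, every neighbourhood $W$ of $\sigma_d(\ell)$ pulls back by continuity to a neighbourhood of $\ell$, which contains uncountably many leaves of $\LLL$. These leaves map into $W$, and the map is at most $d^2$-to-one, so their images are uncountably many leaves of $\LLL$ in $W$. Thus $\sigma_d(\ell)$ is a condensation leaf.

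\emph{Condition (2).} Let $\ell=\overline{ab}\in\LLL^{\rm p}$ be non-degenerate. For each $k$, take uncountably many leaves $\ell'$ of $\LLL$ within $1/k$ of $\ell$, and choose a pullback $\ell'^*\in\LLL$ of each. The endpoints of $\ell'^*$ lie close to $\sigma_d^{-1}(a)$ and $\sigma_d^{-1}(b)$, so each $\ell'^*$ is close to one of at most $d^2$ candidate chords $\overline{a'b'}$ with $\sigma_d(a')=a$ and $\sigma_d(b')=b$. By pigeonhole, for each $k$ some candidate attracts uncountably many of these pullbacks. Since there are finitely many candidates, a single candidate $\ell^*$ works for infinitely many $k$. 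Then $\ell^*$ is a limit of leaves, so it lies in the closed set $\LLL$, and every neighbourhood of it contains uncountably many leaves. Hence $\ell^*\in\LLL^{\rm p}$ and $\sigma_d(\ell^*)=\ell$. If $\ell$ is degenerate, any preimage point serves.

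\emph{Condition (3).} This is the step I expect to be the main obstacle, because limits of pairwise disjoint chords may collide at endpoints. Let $\ell\in\LLL^{\rm p}$ with $\sigma_d(\ell)=\overline{AB}$ non-degenerate, so $A\ne B$. Repeat the pigeonhole argument on whole $d$-tuples: each leaf $\ell'$ of $\LLL$ near $\ell$ comes with $d$ pairwise disjoint siblings $\ell'=\ell'_1,\dots,\ell'_d$. Each tuple is close to one of finitely many candidate $d$-tuples of chords joining points of $\sigma_d^{-1}(A)$ to points of $\sigma_d^{-1}(B)$. So some candidate tuple $(\ell_1=\ell,\dots,\ell_d)$ is approximated by uncountably many tuples at every scale. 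As in (2), each $\ell_i$ is a condensation leaf, so $\ell_i\in\LLL^{\rm p}$ and $\sigma_d(\ell_i)=\sigma_d(\ell)$.

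It remains to show the $\ell_i$ are pairwise disjoint.
\begin{itemize}
\item The $2d$ endpoints of the approximating tuples are pairwise distinct, since the chords are disjoint even on $\mathbb S$.
\item These endpoints map to $d$ points near $A$ and $d$ points near $B$. Hence they are near distinct points of the $d$-element sets $\sigma_d^{-1}(A)$ and $\sigma_d^{-1}(B)$.
\item Since $A\neq B$, these two sets are disjoint. So the limit chords $\ell_1,\dots,\ell_d$ have $2d$ distinct endpoints.
\item Limits of pairwise non-crossing chords are non-crossing, so chords with no common endpoint are disjoint in $\overline{\D}$.
\end{itemize}
This gives condition (3), and together with (1) and (2) shows that $\LLL^{\rm p}$ is a perfect invariant lamination.
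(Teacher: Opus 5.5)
Your proof is correct and complete. The paper gives no argument of its own here; it imports the lemma from \cite{BOPT2}.

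The natural alternative iterates the derived-set operation. The paper carries out the first step of that route in Proposition~\ref{pro:perfect}, where it shows that the lamination of non-isolated leaves is invariant before invoking this lemma. One shows that passing to non-isolated leaves preserves invariance, repeats this through the Cantor--Bendixson ordinals, and uses a compactness and intersection argument at limit stages until the sequence stabilizes at $\LLL^{\rm p}$.

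You replace this iteration with the one-step description of the perfect kernel as the set of condensation leaves, which is valid because the space of chords is compact metrizable. A single observation then does the work: $\sigma_d$ is at most $d^2$-to-one onto non-degenerate chords. This lets ``uncountably many nearby leaves'' pass to images, to pullbacks, and to whole sibling collections. What this buys you is a proof with no ordinals and no separate limit stage, using nothing beyond Cantor--Bendixson.

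You handle the delicate point in condition (3) correctly. The approximating endpoints over $A'$ and over $B'$ are pairwise distinct, so they exhaust $\sigma_d^{-1}(A')$ and $\sigma_d^{-1}(B')$. Hence the limit siblings have $2d$ distinct endpoints, and non-crossing upgrades to disjointness in $\overline{\mathbb D}$.

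One phrase needs tightening. In (3), the claim that each $\ell_i$ with $i\ge 2$ is a condensation leaf is not literally ``as in (2)''. There, the assignment $\ell'\mapsto\ell'^*$ was injective. Here you need the assignment $\ell'\mapsto\ell'_i$ to be at most $d^2$-to-one, because $\ell'$ must be one of the chords with image $\sigma_d(\ell'_i)$. This follows from the same finiteness fact you stated at the outset.
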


Moreover, the perfect part enjoys the following
geometric properties.

\begin{lemma}\label{lem:proper}
For any invariant lamination $\LLL$, its perfect
part $\LLL^{\rm p}$ is proper, its gaps are pairwise
disjoint, and no gap meets a leaf other than one
of its edges.
\end{lemma}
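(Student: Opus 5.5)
The plan is to prove the three assertions in the order: no gap meets a leaf other than its edges; gaps are pairwise disjoint; properness. The tool throughout is Proposition~\ref{pro:countable}. Every leaf of $\LLL^{\rm p}$ is a limit of distinct leaves of $\LLL^{\rm p}$ itself, since $\LLL^{\rm p}$ is perfect. Invariance of $\LLL^{\rm p}$ is supplied by Lemma~\ref{lem:perfect}.

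\emph{Gaps versus leaves.} Let $G$ be a gap of $\LLL^{\rm p}$ and $\ell$ a leaf of $\LLL^{\rm p}$ that is not an edge of $G$. Since leaves are disjoint in $\D$ and $\ell$ cannot cross the interior of $G$, the only possible contact is at an endpoint $a\in G\cap\mathbb S$. Then $\ell$ lies outside $G$, or inside $G$ (impossible), so $\ell$ sits in a component of $\overline{\D}\setminus G$ cut off by an edge or at a vertex.
\begin{itemize}
\item If $a$ is a vertex of $G$ adjacent to two edges $e_1,e_2$ of $G$, then $\ell$ lies in the angular wedge outside $G$ between $e_1$ and $e_2$.
\item If $a$ is an accumulation vertex, then $G$ has edges accumulating at $a$ on at least one side.
\end{itemize}
Either way $\ell$ is a non-isolated leaf, so it is approximated by distinct leaves $\ell_n$ of $\LLL^{\rm p}$. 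Leaves near $\ell$ with endpoints near $a$ and near the other endpoint $b$ must avoid the interior of $G$ and avoid crossing the edges. I expect this to force the $\ell_n$ to share the endpoint $a$ with $\ell$ from one side, or to cross an edge of $G$; the latter is impossible.

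The sharing configuration, infinitely many leaves through one point $a$, is the delicate part. I would exclude it by a standard argument. Infinitely many leaves of an invariant lamination with a common endpoint force an infinite $\sim_{\LLL}$-class at $a$. The endpoints $b_n\to b$ form a one-sided sequence, and pushing forward under $\sigma_d$ and applying Theorem~\ref{thm:kiwi} (wandering polygons are bounded) rules this out. Alternatively, one can show that such a cluster consists of isolated leaves within the fan, except possibly the extreme ones. That exception is exactly the case where $\ell$ would be an edge of $G$. I expect this step to be the main obstacle.

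\emph{Disjoint gaps.} Two distinct gaps can meet only at a common vertex $a$, or along a common edge; the latter case is fine as closures, but two gaps cannot share an edge that is approximated from both sides. In the common-vertex case, consider the leaves of $\LLL^{\rm p}$ emanating from $a$ that lie between the two gaps. An edge of one gap at $a$ is a leaf meeting the other gap at a non-edge point, which contradicts the first part. For a shared edge $e$: $e$ is not isolated, but any leaf approximating $e$ would have to enter one of the two gaps, so $e$ would be isolated, which is a contradiction.

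\emph{Properness.} Suppose $\ell=\overline{ab}$ is critical with $a$ periodic of period $n$. Approximate $\ell$ by distinct leaves $\ell_k\to\ell$. Their images are leaves close to the point $\sigma_d(a)$. Pulling back along the periodic orbit and using that $\sigma_d^n$ is expanding near $a$, we get leaves near $\sigma_d^n$-images that must cross $\ell_k$ or $\ell$; this yields linked leaves and a contradiction. Similarly, if two leaves $\overline{ab},\overline{ac}$ share a periodic endpoint $a$ and have the same image, then they bound a critical wedge $\CH\{a,b,c\}$. The sibling condition (3) of Definition~\ref{def:invariant-lamination} applied to approximating leaves, combined with the expansion at $a$, again produces crossing leaves. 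Once more the real work is controlling leaves accumulating at a common endpoint. I would handle this uniformly by the fan-argument above, showing that any leaf through a periodic point is approximated only by leaves not sharing that endpoint.
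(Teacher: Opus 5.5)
Your case analysis reduces all three assertions to one configuration, but that configuration is never excluded.

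\textbf{Gaps and leaves.} Suppose a leaf $\overline{ab}$ of $\LLL^{\rm p}$ meets a gap $G$ at a vertex $a$ without being an edge. Let $\overline{ax}$ be the edge of $G$ at $a$ on the side of $\overline{ab}$. Any chord close to $\overline{ax}$ and outside $G$ either crosses $\overline{ab}$, crosses an edge of $G$, or issues from $a$. Since $\overline{ax}$ is not isolated, you get infinitely many leaves $\overline{ax_n}$ of $\LLL^{\rm p}$ with $x_n\to x$. Note that the approximants of $\overline{ab}$ itself may lie beneath it and avoid $a$, so the contradiction has to come from the edge $\overline{ax}$, not from $\overline{ab}$ as you suggest. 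The shared-vertex case for two gaps, and the properness step below, lead to the same kind of fan.

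\textbf{Excluding fans.} Your tool for this does not work. Theorem~\ref{thm:kiwi} at best shows that $a$ is preperiodic, and at periodic points fans genuinely occur in invariant laminations. For instance, the quadratic lamination generated by the critical leaf $\overline{0\,\tfrac12}$ and its sibling pullbacks contains all the leaves $\overline{0\,2^{-k}}$, $k\ge1$, at the fixed point $0$. So perfectness must be used essentially at (pre)periodic endpoints, and your argument never uses it there. Your alternative claim, that fan leaves are isolated except the extreme ones, is also unsupported: a priori the far endpoints of a fan could form a Cantor set.

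\textbf{Properness.} The sketch has the same hole. Take a critical leaf $\overline{ab}$ with $a$ of period $n$, approximated by leaves $\ell_k$.
\begin{itemize}
\item If the $\ell_k$ avoid both $a$ and $b$, push forward rather than pull back. Then $\sigma_d(\ell_k)$ is a short leaf with $\sigma_d(a)=\sigma_d(b)$ strictly beneath it. Hence $\sigma_d^{\,n}(\ell_k)$ is a short leaf with $a$ strictly beneath it, and it crosses $\overline{ab}$, a contradiction.
\item If the $\ell_k$ share $a$ or $b$, you only obtain short leaves at $a$ on one side. These cross nothing, and you are back to the unproved fan claim.
\end{itemize}

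\textbf{Order of the steps.} The logical order also has to be reversed. The paper first takes properness from \cite[Lemma~4.2]{BOTW}. By \cite[Theorem~4.9]{BMOV}, $\sim=\sim_{\LLL^{\rm p}}$ is then a laminational equivalence relation. Its classes are finite, so only finitely many leaves of $\LLL^{\rm p}$ end at any point, and this disposes of all fans at once. With $\LLL^{\rm p}=(\LLL_\sim)^{\rm p}$ from Corollary~\ref{cor:differ}, the gap assertions follow from \cite[Lemma~3.14]{BOPT2}. At that stage your elementary case analysis would also suffice.

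Without a genuine argument or citation for properness in the shared-endpoint case, your proof is incomplete.
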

{Indeed, properness follows from \cite[Lemma~4.2]{BOTW}.
Let $\sim$ be the relation induced by $\LLL^{\rm p}$.
Corollary~\ref{cor:differ} shows that $\LLL^{\rm p}$ and $\LLL_\sim$
differ by at most countably many leaves, so
$\LLL^{\rm p}=(\LLL_\sim)^{\rm p}$.
The remaining assertions now follow from
\cite[Lemma~3.14]{BOPT2}, applied to the perfect part of $\LLL_\sim$.}\vspace{2pt}

By Lemma \ref{lem:proper}, the perfect part of an invariant lamination induces a topological polynomial whose Fatou domains are all attracting and have pairwise disjoint closures. Collapsing these closures to points yields a planar \emph{dendrite}, i.e., a locally connected continuum contained in $\C$ in which any two points are joined by a unique arc. This construction will be used in Section~\ref{sec:continuity1}.

\section{Critical markings and the induced partitions}\label{sec:partition}

 In this section, we fix a  laminational equivalence relation $\sim$. Let \(\LLL_\sim\) be the lamination generated by \(\sim\) and \(f=f_\sim\) the associated topological polynomial. Denote \(p_f=\pi_\sim|_{\mathbb S}\) the circle projection.

\subsection{Critical markings of topological polynomials}\label{sec:marking}
Recall that a critical portrait of degree $d$ is a collection
\[\xi=\{Q_1,\ldots,Q_s\}\]
of chords and ideal polygons in $\ov{\D}$ fulfilling the following conditions:
\begin{enumerate}
\item any two distinct elements $Q_k$ and $Q_l$  are disjoint in $\D$;
\item the edges of each $Q_k$ are all critical chords;
\item \(\mathbb{D}\setminus\bigcup_{i=1}^s Q_i\) has \(d\) components, each meeting \(\mathbb{S}\) in total length \(1/d\).
\end{enumerate}
\vspace{3pt}

Following \cite{Poi,Kiwi2,Z}, we  associate critical portraits to the topological polynomial \(f\), or equivalently, to the  lamination \(\LLL_\sim\), as follows.
\vspace{5pt}

For a Fatou gap \(G\), let \(\Omega_G\) be its associated Fatou domain. At a point \(z\in\partial\Omega_G\), the external rays landing at \(z\) bound a sector containing \(\Omega_G\). The first such ray in the positive cyclic order is called the \emph{left-supporting ray} of \(\Omega_G\) at \(z\) (if only one ray lands at \(z\), it is the left-supporting ray). Its angle is a vertex of \(G\), called a \emph{left-supporting vertex} of \(G\). Let \(V_L(G)\) denote the set of all left-supporting vertices of \(G\).
Every edge of \(G\) has exactly one endpoint in \(V_L(G)\). Moreover, since \(\sigma_d\) is orientation-preserving on \(\partial G\), we have
\[
\sigma_d(V_L(G))=V_L(\sigma_d(G)).
\]

(1) \emph{Attracting marking.}
\vspace{5pt}

Let $\GGG_A$ be the collection of critical attracting gaps and their iterated images.
This is a finite set. For each gap $G\in\GGG_A$, mark a left-supporting vertex $\beta_G$ of $G$ such that $\sigma_d(\beta_G)=\beta_{\sigma_d(G)}$. Then $\beta_G$ has the same preperiod and period as those of $G$. For any critical gap $G\in\GGG_A$ of degree $k>1$, its marking $\Theta_G$ is defined as 
\[
\Theta_G
=
V_L(G)\cap
\sigma_d^{-1}\bigl(\beta_{\sigma_d(G)}\bigr).
\]
Then  $Q_G:={\rm CH}(\Theta_G)$ is a critical $k$-gon contained in $G$. Define
\[
\mathcal F_A=\{Q_G:\,G\in\GGG_A,\ \deg(G)>1\}.
\]

(2) \emph{Siegel marking}
\vspace{3pt}

Let \(\GGG_S\) be the finite set of critical Siegel gaps, all strictly preperiodic. For each \(G\in\GGG_S\), we mark a vertex $\alpha_G$ of $G$ such that
\begin{itemize}
\item if $\sigma_d^j(G)\in\GGG_S$ for some $j\geq1$, then $\sigma_d^j(\alpha_G)=\alpha_{\sigma_d^j(G)}$;
\item the angles in the $\sigma_d$-grand orbit of $\alpha_G$  avoid the edges of every Fatou gap.
\end{itemize}
This excludes only countably many choices, so compatible choices can still be made backwards along the finite collection of critical Siegel gaps.\vspace{2pt}

For any gap $G\in\GGG_S$
of degree \(k>1\), its marking $\Theta_G$ is defined as 
\[
\Theta_G
=
\partial G\cap
\sigma_d^{-1}\bigl(\sigma_d(\alpha_{G})\bigr).
\]
Then  $Q_G:={\rm CH}(\Theta_G)$ is a critical $k$-gon contained in $G$. Define
\[
\mathcal F_S=\{Q_G:\,G\in \GGG_S\}.
\]

(3) \emph{Julia marking.}
\vspace{3pt}

For every finite critical set $G$ of $\mathcal L_\sim$, including
critical leaves, let $k>1$ be its degree. Choose a \(k\)-element set \(\Theta_G\) of vertices of \(G\) with \(\sigma_d(\Theta_G)\) a singleton, and set \(Q_G:=\operatorname{CH}(\Theta_G)\). Define
\[
\mathcal J
=
\{Q_G: G\text{ is a finite critical set of $\LLL_\sim$}\}.
\]

We call $\FFF=\FFF_A\cup\FFF_S$, $\JJJ$ and $\{\FFF,\,\JJJ\}$ the \emph{Fatou marking}, \emph{Julia marking} and \emph{critical marking} of $f$, respectively.  As a straightforward consequence of this construction, $\xi=\FFF\cup\JJJ$ is a critical portrait of degree $d$, called a \emph{marking portrait} of $f$.  The angles occurring in $\xi$ are called \emph{marking angles}, see Figure \ref{fig:supporting-marking}. 

\begin{figure}[htbp]
\centering
\begin{tikzpicture}[x=1cm,y=1cm,line cap=round,line join=round,font=\small]
  \begin{scope}[shift={(1.8,0)}]
    \node at (0,2.45) {(a) Marking portrait};
    \draw[black,thin] (0,0) circle (1.35);
    \draw[blue!70!black,line width=1.25pt] (0:1.35)--(240:1.35);
    \draw[orange!85!black,line width=1.25pt] (120:1.35)--(240:1.35);
    \foreach \t in {0,120,240} \fill[black] (\t:1.35) circle(1.5pt);
    \node[right] at (0:1.35) {$0$};
    \node[above left] at (120:1.35) {$1/3$};
    \node[below left] at (240:1.35) {$2/3$};
    \draw[blue!70!black,line width=1.25pt] (-1.2,-1.85)--(-0.75,-1.85);
    \node[anchor=west,text=red] at (-0.6,-1.85) {$\{0,2/3\}\in\mathcal F$};
    \draw[orange!85!black,line width=1.25pt] (-1.2,-2.25)--(-0.75,-2.25);
    \node[anchor=west] at (-0.6,-2.25) {$\{1/3,2/3\}\in\mathcal J$};
  \end{scope}
  \node at (6.9,2.45) {(b) Julia set and marked rays};
  \begin{scope}[shift={(7.3,0)},scale=1.2]
\path[fill=gray!12,draw=black,line width=0.22pt] (-1.04875,-0.07662) -- (-1.04695,-0.07310) -- (-1.05327,-0.07046) -- cycle;
\path[fill=gray!12,draw=black,line width=0.22pt] (0.50760,0.20168) -- (0.51122,0.18935) -- (0.51845,0.18935) -- (0.51303,0.18583) -- (0.51303,0.17350) -- (0.52026,0.16821) -- (0.51664,0.16117) -- (0.52387,0.14532) -- (0.52658,0.14267) -- (0.53382,0.14972) -- (0.54828,0.15324) -- (0.55822,0.14355) -- (0.54828,0.13387) -- (0.53743,0.13387) -- (0.52839,0.14091) -- (0.52207,0.13475) -- (0.51664,0.11361) -- (0.51664,0.09952) -- (0.52207,0.09600) -- (0.51484,0.09424) -- (0.51303,0.06958) -- (0.51574,0.06165) -- (0.52297,0.06517) -- (0.52930,0.05901) -- (0.51484,0.05901) -- (0.50760,0.03963) -- (0.50580,0.01145) -- (0.50037,0.00440) -- (0.51122,-0.05196) -- (0.51574,-0.05989) -- (0.52116,-0.05637) -- (0.52930,-0.05901) -- (0.52297,-0.06517) -- (0.51574,-0.06165) -- (0.51303,-0.06958) -- (0.51484,-0.09424) -- (0.52207,-0.09600) -- (0.51664,-0.09952) -- (0.52207,-0.13475) -- (0.52839,-0.14091) -- (0.53743,-0.13387) -- (0.54828,-0.13387) -- (0.55822,-0.14355) -- (0.54828,-0.15324) -- (0.54105,-0.15324) -- (0.52658,-0.14267) -- (0.51664,-0.16117) -- (0.52026,-0.16821) -- (0.51303,-0.17350) -- (0.51303,-0.18583) -- (0.51845,-0.18935) -- (0.51212,-0.18847) -- (0.50760,-0.19992) -- (0.51122,-0.20697) -- (0.50760,-0.20873) -- (0.50941,-0.23163) -- (0.50218,-0.24572) -- (0.50580,-0.25276) -- (0.50037,-0.25452) -- (0.50037,-0.27214) -- (0.50760,-0.27566) -- (0.50128,-0.27478) -- (0.49857,-0.27918) -- (0.49676,-0.29680) -- (0.50218,-0.30032) -- (0.49676,-0.30384) -- (0.50037,-0.31441) -- (0.49676,-0.32146) -- (0.50128,-0.33995) -- (0.53020,-0.32762) -- (0.55189,-0.33467) -- (0.56726,-0.35140) -- (0.56545,-0.36197) -- (0.57539,-0.36637) -- (0.57629,-0.37254) -- (0.56635,-0.36813) -- (0.54466,-0.38046) -- (0.54285,-0.38751) -- (0.53924,-0.38046) -- (0.53382,-0.38223) -- (0.51303,-0.36902) -- (0.49947,-0.34171) -- (0.48320,-0.35228) -- (0.45518,-0.38135) -- (0.45338,-0.38839) -- (0.45880,-0.39367) -- (0.44705,-0.39103) -- (0.42988,-0.41129) -- (0.42445,-0.42890) -- (0.39734,-0.45180) -- (0.37023,-0.49584) -- (0.36480,-0.51873) -- (0.33679,-0.53195) -- (0.29611,-0.56805) -- (0.28165,-0.58567) -- (0.27804,-0.59800) -- (0.25183,-0.61297) -- (0.23285,-0.63147) -- (0.22742,-0.64380) -- (0.20573,-0.65965) -- (0.18223,-0.68783) -- (0.18585,-0.69311) -- (0.17952,-0.69223) -- (0.17320,-0.69840) -- (0.17320,-0.70368) -- (0.15241,-0.72746) -- (0.11987,-0.71865) -- (0.10541,-0.72218) -- (0.10541,-0.71689) -- (0.09276,-0.71337) -- (0.08553,-0.71337) -- (0.08010,-0.72218) -- (0.07649,-0.71161) -- (0.05480,-0.70809) -- (0.04214,-0.70985) -- (0.04305,-0.71954) -- (0.03672,-0.72218) -- (0.03401,-0.71777) -- (0.03762,-0.71073) -- (0.03130,-0.70633) -- (0.00780,-0.70280) -- (0.00418,-0.70985) -- (0.00238,-0.70280) -- (-0.01751,-0.70280) -- (-0.03107,-0.70721) -- (-0.02745,-0.72306) -- (-0.03558,-0.73098) -- (-0.04191,-0.71777) -- (-0.03287,-0.70721) -- (-0.04643,-0.69928) -- (-0.06451,-0.69576) -- (-0.06631,-0.70104) -- (-0.06993,-0.69400) -- (-0.08620,-0.69223) -- (-0.09433,-0.69488) -- (-0.09252,-0.70192) -- (-0.09704,-0.70456) -- (-0.09885,-0.69400) -- (-0.12235,-0.69047) -- (-0.16393,-0.69752) -- (-0.16664,-0.70016) -- (-0.15941,-0.71073) -- (-0.15760,-0.72482) -- (-0.16393,-0.73627) -- (-0.17658,-0.73979) -- (-0.18833,-0.73010) -- (-0.18833,-0.71777) -- (-0.16844,-0.69840) -- (-0.19465,-0.67990) -- (-0.20189,-0.67814) -- (-0.20550,-0.68343) -- (-0.20460,-0.67726) -- (-0.21092,-0.67286) -- (-0.22719,-0.66934) -- (-0.23442,-0.67990) -- (-0.23713,-0.67550) -- (-0.23352,-0.67022) -- (-0.25069,-0.66053) -- (-0.29407,-0.66053) -- (-0.31757,-0.64644) -- (-0.32661,-0.64996) -- (-0.32571,-0.64556) -- (-0.33565,-0.64115) -- (-0.35372,-0.64115) -- (-0.35734,-0.65348) -- (-0.36186,-0.65084) -- (-0.35824,-0.64203) -- (-0.37722,-0.63411) -- (-0.38988,-0.63411) -- (-0.39168,-0.64115) -- (-0.39259,-0.63499) -- (-0.39892,-0.63235) -- (-0.41157,-0.63235) -- (-0.41338,-0.63763) -- (-0.41518,-0.63235) -- (-0.44953,-0.63587) -- (-0.46489,-0.61033) -- (-0.49653,-0.57422) -- (-0.51099,-0.56365) -- (-0.52003,-0.57246) -- (-0.52454,-0.56982) -- (-0.51731,-0.56101) -- (-0.51912,-0.55396) -- (-0.53629,-0.53723) -- (-0.54352,-0.54075) -- (-0.54081,-0.53635) -- (-0.54714,-0.52842) -- (-0.57245,-0.51785) -- (-0.57968,-0.53723) -- (-0.58962,-0.53283) -- (-0.58962,-0.52402) -- (-0.57606,-0.51785) -- (-0.57516,-0.51169) -- (-0.59414,-0.48791) -- (-0.60046,-0.49055) -- (-0.59866,-0.48351) -- (-0.60679,-0.47558) -- (-0.62487,-0.46677) -- (-0.64294,-0.44740) -- (-0.65198,-0.44388) -- (-0.65560,-0.44916) -- (-0.65469,-0.44123) -- (-0.66283,-0.43507) -- (-0.70169,-0.42186) -- (-0.71434,-0.38487) -- (-0.72429,-0.37342) -- (-0.73242,-0.37606) -- (-0.72519,-0.37078) -- (-0.73242,-0.35669) -- (-0.75230,-0.33907) -- (-0.75773,-0.32322) -- (-0.76496,-0.32146) -- (-0.76496,-0.31265) -- (-0.77309,-0.30649) -- (-0.78032,-0.31001) -- (-0.77761,-0.30032) -- (-0.78936,-0.28887) -- (-0.79298,-0.29239) -- (-0.79659,-0.28183) -- (-0.81738,-0.27214) -- (-0.83184,-0.23163) -- (-0.84449,-0.21930) -- (-0.84992,-0.20344) -- (-0.86076,-0.19992) -- (-0.85895,-0.19287) -- (-0.88245,-0.17526) -- (-0.88426,-0.16293) -- (-0.90053,-0.13475) -- (-0.91951,-0.11625) -- (-0.92674,-0.12506) -- (-0.93307,-0.12066) -- (-0.92222,-0.11361) -- (-0.92764,-0.09776) -- (-0.99904,-0.00176) -- (-1.05056,-0.06605) -- (-1.07587,-0.09071) -- (-1.08129,-0.10304) -- (-1.10479,-0.11890) -- (-1.12919,-0.15148) -- (-1.15450,-0.16381) -- (-1.18975,-0.19640) -- (-1.19969,-0.21137) -- (-1.21596,-0.21313) -- (-1.22319,-0.22018) -- (-1.22680,-0.21665) -- (-1.24850,-0.22722) -- (-1.26476,-0.24131) -- (-1.30272,-0.25893) -- (-1.32532,-0.27918) -- (-1.30544,-0.29680) -- (-1.30363,-0.30913) -- (-1.31628,-0.31793) -- (-1.31538,-0.32410) -- (-1.33255,-0.31265) -- (-1.33436,-0.29680) -- (-1.32713,-0.27918) -- (-1.34249,-0.27478) -- (-1.36057,-0.27478) -- (-1.36238,-0.28006) -- (-1.36599,-0.27478) -- (-1.37503,-0.27830) -- (-1.37684,-0.27478) -- (-1.38678,-0.27918) -- (-1.38768,-0.28887) -- (-1.38949,-0.27830) -- (-1.40937,-0.27830) -- (-1.41118,-0.28359) -- (-1.42022,-0.28006) -- (-1.43558,-0.28799) -- (-1.43016,-0.29680) -- (-1.43468,-0.30296) -- (-1.44281,-0.29680) -- (-1.43739,-0.28799) -- (-1.44914,-0.28359) -- (-1.48168,-0.28535) -- (-1.48529,-0.29592) -- (-1.48620,-0.28799) -- (-1.49252,-0.28535) -- (-1.50879,-0.28711) -- (-1.51060,-0.29239) -- (-1.51964,-0.28711) -- (-1.54946,-0.29504) -- (-1.53862,-0.31265) -- (-1.53862,-0.33379) -- (-1.53500,-0.33731) -- (-1.54223,-0.34083) -- (-1.54675,-0.35228) -- (-1.55941,-0.35933) -- (-1.57387,-0.35933) -- (-1.58742,-0.33731) -- (-1.59465,-0.33731) -- (-1.58742,-0.33379) -- (-1.58381,-0.32146) -- (-1.58742,-0.31793) -- (-1.55218,-0.29592) -- (-1.55308,-0.28975) -- (-1.58652,-0.25717) -- (-1.61725,-0.23427) -- (-1.62990,-0.23074) -- (-1.65159,-0.20608) -- (-1.67871,-0.18671) -- (-1.69679,-0.18143) -- (-1.69769,-0.19287) -- (-1.70582,-0.20256) -- (-1.71757,-0.19640) -- (-1.71757,-0.18759) -- (-1.70944,-0.17966) -- (-1.69769,-0.17878) -- (-1.69950,-0.16821) -- (-1.72480,-0.12594) -- (-1.72932,-0.12682) -- (-1.73023,-0.11890) -- (-1.74469,-0.10833) -- (-1.75372,-0.08191) -- (-1.80615,-0.00088) -- (-1.77361,0.04844) -- (-1.76638,0.06781) -- (-1.75372,0.08191) -- (-1.74469,0.10833) -- (-1.73023,0.11890) -- (-1.72932,0.12682) -- (-1.72480,0.12594) -- (-1.69950,0.16821) -- (-1.69769,0.17878) -- (-1.70944,0.17966) -- (-1.71757,0.18759) -- (-1.71757,0.19640) -- (-1.70582,0.20256) -- (-1.69769,0.19287) -- (-1.69679,0.18143) -- (-1.67871,0.18671) -- (-1.65159,0.20608) -- (-1.62990,0.23074) -- (-1.61725,0.23427) -- (-1.58652,0.25717) -- (-1.55308,0.28975) -- (-1.55218,0.29592) -- (-1.58742,0.31793) -- (-1.58381,0.31970) -- (-1.58742,0.33379) -- (-1.59465,0.33731) -- (-1.58742,0.33731) -- (-1.57387,0.35933) -- (-1.55941,0.35933) -- (-1.54675,0.35228) -- (-1.54223,0.34083) -- (-1.53500,0.33731) -- (-1.53862,0.33379) -- (-1.53862,0.31265) -- (-1.54946,0.29504) -- (-1.51964,0.28711) -- (-1.51060,0.29239) -- (-1.50879,0.28711) -- (-1.49252,0.28535) -- (-1.48620,0.28799) -- (-1.48529,0.29592) -- (-1.48168,0.28535) -- (-1.44914,0.28359) -- (-1.43739,0.28799) -- (-1.44281,0.29680) -- (-1.43468,0.30296) -- (-1.43016,0.29680) -- (-1.43558,0.28799) -- (-1.42022,0.28006) -- (-1.41118,0.28359) -- (-1.40937,0.27830) -- (-1.38949,0.27830) -- (-1.38768,0.28887) -- (-1.38678,0.27918) -- (-1.37684,0.27478) -- (-1.37503,0.27830) -- (-1.36599,0.27478) -- (-1.36418,0.28006) -- (-1.36057,0.27478) -- (-1.34249,0.27478) -- (-1.32713,0.27918) -- (-1.33436,0.29680) -- (-1.33255,0.31265) -- (-1.31538,0.32410) -- (-1.31628,0.31793) -- (-1.30363,0.30913) -- (-1.30544,0.29680) -- (-1.32532,0.27918) -- (-1.30272,0.25893) -- (-1.26476,0.24131) -- (-1.24850,0.22722) -- (-1.22680,0.21665) -- (-1.22319,0.22018) -- (-1.22138,0.21489) -- (-1.19969,0.21137) -- (-1.18975,0.19640) -- (-1.15450,0.16381) -- (-1.12919,0.15148) -- (-1.10479,0.11890) -- (-1.08129,0.10304) -- (-1.07587,0.09071) -- (-1.05056,0.06605) -- (-0.99904,0.00176) -- (-0.92764,0.09776) -- (-0.92222,0.11361) -- (-0.93307,0.12066) -- (-0.92674,0.12506) -- (-0.91951,0.11625) -- (-0.90053,0.13475) -- (-0.88426,0.16293) -- (-0.88245,0.17526) -- (-0.85895,0.19287) -- (-0.86076,0.19992) -- (-0.84992,0.20344) -- (-0.84449,0.21930) -- (-0.83184,0.23163) -- (-0.81738,0.27214) -- (-0.79659,0.28183) -- (-0.79207,0.28623) -- (-0.79388,0.29151) -- (-0.78936,0.28887) -- (-0.77761,0.30032) -- (-0.78032,0.31001) -- (-0.77309,0.30649) -- (-0.76496,0.31265) -- (-0.76496,0.32146) -- (-0.75773,0.32322) -- (-0.75230,0.33907) -- (-0.73242,0.35669) -- (-0.72519,0.36902) -- (-0.73242,0.37606) -- (-0.72429,0.37342) -- (-0.71434,0.38487) -- (-0.70169,0.42186) -- (-0.66283,0.43507) -- (-0.65469,0.44123) -- (-0.65560,0.44916) -- (-0.65198,0.44388) -- (-0.64294,0.44740) -- (-0.62487,0.46677) -- (-0.59866,0.48351) -- (-0.59956,0.49143) -- (-0.59414,0.48791) -- (-0.58600,0.49584) -- (-0.57516,0.51697) -- (-0.58962,0.52402) -- (-0.58691,0.53547) -- (-0.57877,0.53635) -- (-0.57245,0.51785) -- (-0.54714,0.52842) -- (-0.54081,0.53635) -- (-0.54352,0.54075) -- (-0.53629,0.53723) -- (-0.51912,0.55396) -- (-0.51731,0.56101) -- (-0.52454,0.56982) -- (-0.52003,0.57246) -- (-0.51099,0.56365) -- (-0.49653,0.57422) -- (-0.46489,0.61033) -- (-0.44953,0.63587) -- (-0.41518,0.63235) -- (-0.41338,0.63763) -- (-0.41157,0.63235) -- (-0.39892,0.63235) -- (-0.39259,0.63499) -- (-0.39168,0.64115) -- (-0.38988,0.63411) -- (-0.37722,0.63411) -- (-0.36096,0.63939) -- (-0.35824,0.64380) -- (-0.36186,0.65084) -- (-0.35734,0.65348) -- (-0.35372,0.64115) -- (-0.33565,0.64115) -- (-0.32571,0.64556) -- (-0.32661,0.64996) -- (-0.31757,0.64644) -- (-0.29407,0.66053) -- (-0.25069,0.66053) -- (-0.23352,0.67022) -- (-0.23713,0.67550) -- (-0.23442,0.67990) -- (-0.23171,0.67198) -- (-0.22538,0.66934) -- (-0.20731,0.67462) -- (-0.20550,0.68343) -- (-0.20189,0.67814) -- (-0.19104,0.68167) -- (-0.16844,0.69840) -- (-0.18833,0.71777) -- (-0.18833,0.73010) -- (-0.17658,0.73979) -- (-0.16393,0.73627) -- (-0.15760,0.72482) -- (-0.15941,0.71073) -- (-0.16664,0.70016) -- (-0.16393,0.69752) -- (-0.12235,0.69047) -- (-0.09885,0.69400) -- (-0.09704,0.70456) -- (-0.09252,0.70192) -- (-0.09433,0.69488) -- (-0.08620,0.69223) -- (-0.06993,0.69400) -- (-0.06631,0.70104) -- (-0.06451,0.69576) -- (-0.04643,0.69928) -- (-0.03287,0.70721) -- (-0.04191,0.71777) -- (-0.03558,0.73098) -- (-0.02745,0.72306) -- (-0.03107,0.70721) -- (-0.01751,0.70280) -- (0.00238,0.70280) -- (0.00418,0.70985) -- (0.00780,0.70280) -- (0.03130,0.70633) -- (0.03762,0.71073) -- (0.03401,0.71777) -- (0.03672,0.72218) -- (0.04305,0.71954) -- (0.04214,0.70985) -- (0.05480,0.70809) -- (0.07649,0.71161) -- (0.08010,0.72218) -- (0.08553,0.71337) -- (0.09276,0.71337) -- (0.10541,0.71689) -- (0.10541,0.72218) -- (0.11987,0.71865) -- (0.15241,0.72746) -- (0.17320,0.70368) -- (0.17320,0.69840) -- (0.17952,0.69223) -- (0.18585,0.69311) -- (0.18223,0.68783) -- (0.20573,0.65965) -- (0.22742,0.64380) -- (0.23285,0.63147) -- (0.25183,0.61297) -- (0.27804,0.59800) -- (0.28165,0.58567) -- (0.29611,0.56805) -- (0.33679,0.53195) -- (0.36480,0.51873) -- (0.37023,0.49584) -- (0.39734,0.45180) -- (0.42445,0.42890) -- (0.42988,0.41129) -- (0.44705,0.39103) -- (0.45880,0.39367) -- (0.45338,0.38839) -- (0.45518,0.38135) -- (0.48320,0.35228) -- (0.49947,0.34171) -- (0.51303,0.36902) -- (0.53382,0.38223) -- (0.53924,0.38046) -- (0.54285,0.38751) -- (0.54466,0.38046) -- (0.56635,0.36813) -- (0.57629,0.37254) -- (0.57629,0.36725) -- (0.56545,0.36197) -- (0.56726,0.35140) -- (0.55189,0.33467) -- (0.53020,0.32762) -- (0.50309,0.33995) -- (0.49857,0.33555) -- (0.49676,0.31617) -- (0.50037,0.31441) -- (0.49676,0.30384) -- (0.50218,0.29856) -- (0.49676,0.29680) -- (0.49676,0.28799) -- (0.50037,0.27566) -- (0.50760,0.27566) -- (0.50037,0.27214) -- (0.50037,0.25452) -- (0.50580,0.25276) -- (0.50218,0.24572) -- (0.50941,0.23163) -- (0.50760,0.20873) -- (0.51122,0.20697) -- cycle;
\path[fill=gray!12,draw=black,line width=0.22pt] (-0.52093,-0.69135) -- (-0.51822,-0.69576) -- (-0.51189,-0.69311) -- (-0.50647,-0.70897) -- (-0.49382,-0.72482) -- (-0.50105,-0.73187) -- (-0.49833,-0.73979) -- (-0.49201,-0.73891) -- (-0.48930,-0.72570) -- (-0.46399,-0.72394) -- (-0.45495,-0.71865) -- (-0.44139,-0.69840) -- (-0.43959,-0.67902) -- (-0.43416,-0.67726) -- (-0.43959,-0.67374) -- (-0.44501,-0.64380) -- (-0.45134,-0.63587) -- (-0.48026,-0.64291) -- (-0.50466,-0.66669) -- (-0.51189,-0.68959) -- cycle;
\path[fill=gray!12,draw=black,line width=0.22pt] (-1.82422,-0.02026) -- (-1.85405,-0.04580) -- (-1.87755,-0.05460) -- (-1.87935,-0.06341) -- (-1.88297,-0.05460) -- (-1.89924,-0.05460) -- (-1.90105,-0.05989) -- (-1.90466,-0.05460) -- (-1.92274,-0.05460) -- (-1.94262,-0.03523) -- (-1.95256,-0.03435) -- (-1.94895,-0.02906) -- (-1.96612,-0.00176) -- (-1.97697,-0.00881) -- (-1.98781,-0.00881) -- (-1.99414,0.00088) -- (-1.98781,0.00881) -- (-1.97697,0.00881) -- (-1.96612,0.00176) -- (-1.94895,0.02906) -- (-1.95256,0.03435) -- (-1.94262,0.03523) -- (-1.92274,0.05460) -- (-1.90466,0.05460) -- (-1.90105,0.05989) -- (-1.89924,0.05460) -- (-1.88297,0.05460) -- (-1.87755,0.06341) -- (-1.87755,0.05460) -- (-1.85405,0.04580) -- (-1.81338,0.01145) -- (-1.80795,-0.00264) -- cycle;
\path[fill=gray!12,draw=black,line width=0.22pt] (0.42445,0.43419) -- (0.42536,0.42978) -- (0.43078,0.42978) -- (0.43530,0.43947) -- (0.43078,0.44211) -- cycle;
\path[fill=gray!12,draw=black,line width=0.22pt] (-0.95295,-0.07662) -- (-0.95385,-0.08102) -- (-0.94753,-0.07662) -- cycle;
\path[fill=gray!12,draw=black,line width=0.22pt] (0.12077,0.81641) -- (0.11174,0.80937) -- (0.12077,0.80937) -- (0.13343,0.76709) -- (0.12981,0.76357) -- (0.13524,0.76181) -- (0.14066,0.74420) -- (0.15150,0.73010) -- (0.15783,0.72922) -- (0.20483,0.75036) -- (0.21387,0.74684) -- (0.21116,0.75300) -- (0.23194,0.76621) -- (0.25363,0.79087) -- (0.25906,0.79440) -- (0.26810,0.78911) -- (0.26900,0.79528) -- (0.26087,0.79616) -- (0.25996,0.80056) -- (0.27442,0.84283) -- (0.27894,0.84900) -- (0.29159,0.84548) -- (0.29792,0.85164) -- (0.28617,0.85781) -- (0.27894,0.85076) -- (0.27623,0.85340) -- (0.26358,0.89039) -- (0.23375,0.92826) -- (0.20664,0.92650) -- (0.20483,0.93002) -- (0.20302,0.92474) -- (0.18495,0.92298) -- (0.14427,0.90448) -- (0.13524,0.88335) -- (0.12801,0.88159) -- (0.13162,0.87454) -- (0.12439,0.85869) -- (0.11987,0.85428) -- (0.11445,0.85781) -- (0.10812,0.85516) -- (0.11083,0.85076) -- (0.11806,0.85252) -- (0.12077,0.84636) -- (0.12077,0.83403) -- (0.11716,0.83227) -- (0.12077,0.83050) -- cycle;
\path[fill=gray!12,draw=black,line width=0.22pt] (-1.75553,0.11009) -- (-1.74740,0.10745) -- (-1.74469,0.11185) -- (-1.75101,0.11978) -- cycle;
\path[fill=gray!12,draw=black,line width=0.22pt] (-1.26567,-0.24396) -- (-1.26296,-0.25012) -- (-1.25844,-0.24748) -- cycle;
\path[fill=gray!12,draw=black,line width=0.22pt] (-0.89872,0.18407) -- (-0.89059,0.17614) -- (-0.88245,0.17702) -- (-0.88245,0.18583) -- (-0.88878,0.19023) -- (-0.89601,0.19023) -- cycle;
\path[fill=gray!12,draw=black,line width=0.22pt] (0.13162,-0.91505) -- (0.13975,-0.91946) -- (0.14427,-0.90624) -- (0.13433,-0.90713) -- cycle;
\path[fill=gray!12,draw=black,line width=0.22pt] (-0.74327,-0.46765) -- (-0.73784,-0.46589) -- (-0.73875,-0.46149) -- cycle;
\path[fill=gray!12,draw=black,line width=0.22pt] (0.22923,0.94676) -- (0.23104,0.93267) -- (0.23737,0.93002) -- (0.25183,0.93883) -- (0.25815,0.94852) -- (0.24640,0.96173) -- (0.24098,0.96173) -- (0.22923,0.95380) -- cycle;
\path[fill=gray!12,draw=black,line width=0.22pt] (0.12620,-0.78471) -- (0.12077,-0.80937) -- (0.11174,-0.80937) -- (0.11264,-0.81377) -- (0.12077,-0.81465) -- (0.12077,-0.83050) -- (0.11716,-0.83227) -- (0.12077,-0.83403) -- (0.12077,-0.84636) -- (0.11806,-0.85252) -- (0.11083,-0.85076) -- (0.10812,-0.85516) -- (0.11445,-0.85781) -- (0.11987,-0.85428) -- (0.12439,-0.85869) -- (0.13162,-0.87454) -- (0.12801,-0.88159) -- (0.13524,-0.88335) -- (0.14518,-0.90536) -- (0.17591,-0.91946) -- (0.19579,-0.92650) -- (0.20302,-0.92474) -- (0.20483,-0.93002) -- (0.20664,-0.92650) -- (0.23375,-0.92826) -- (0.26358,-0.89039) -- (0.27623,-0.85340) -- (0.27894,-0.85076) -- (0.28617,-0.85781) -- (0.29792,-0.85164) -- (0.29159,-0.84548) -- (0.27894,-0.84900) -- (0.27442,-0.84283) -- (0.25996,-0.80056) -- (0.27081,-0.79175) -- (0.26448,-0.78911) -- (0.25906,-0.79440) -- (0.25363,-0.79087) -- (0.23194,-0.76621) -- (0.21116,-0.75300) -- (0.21387,-0.74684) -- (0.20483,-0.75036) -- (0.17591,-0.73451) -- (0.15241,-0.72922) -- (0.14066,-0.74420) -- (0.13524,-0.76181) -- (0.12981,-0.76357) -- (0.13343,-0.76709) -- cycle;
\path[fill=gray!12,draw=black,line width=0.22pt] (-1.63985,0.24043) -- (-1.63985,0.23515) -- (-1.63171,0.23074) -- (-1.63081,0.23691) -- cycle;
\path[fill=gray!12,draw=black,line width=0.22pt] (-0.71706,-0.46677) -- (-0.71886,-0.46325) -- (-0.73152,-0.46501) -- (-0.73965,-0.45885) -- (-0.74146,-0.43947) -- (-0.72609,-0.42274) -- (-0.70260,-0.42274) -- (-0.69988,-0.44652) -- cycle;
\path[fill=gray!12,draw=black,line width=0.22pt] (0.31419,0.55572) -- (0.32142,0.55925) -- (0.31690,0.56189) -- cycle;
\path[fill=gray!12,draw=black,line width=0.22pt] (-1.92906,-0.05725) -- (-1.93087,-0.06429) -- (-1.92454,-0.06693) -- (-1.92003,-0.06077) -- (-1.92274,-0.05637) -- cycle;
\path[fill=gray!12,draw=black,line width=0.22pt] (0.51484,-0.22634) -- (0.51122,-0.22986) -- (0.51574,-0.23427) -- (0.52116,-0.23603) -- (0.52568,-0.23163) -- cycle;
\path[fill=gray!12,draw=black,line width=0.22pt] (-0.62939,-0.46765) -- (-0.63300,-0.47294) -- (-0.62668,-0.47558) -- (-0.62396,-0.46942) -- cycle;
\path[fill=gray!12,draw=black,line width=0.22pt] (-1.07858,-0.11273) -- (-1.08039,-0.10392) -- (-1.07225,-0.10833) -- cycle;
\path[fill=gray!12,draw=black,line width=0.22pt] (0.23194,-0.65348) -- (0.22833,-0.64468) -- (0.23646,-0.64732) -- cycle;
\path[fill=gray!12,draw=black,line width=0.22pt] (-0.74146,0.44476) -- (-0.73965,0.43595) -- (-0.72609,0.42274) -- (-0.70169,0.42362) -- (-0.69988,0.44652) -- (-0.71615,0.46237) -- (-0.71525,0.46677) -- (-0.73513,0.46325) -- (-0.74327,0.45180) -- cycle;
\path[fill=gray!12,draw=black,line width=0.22pt] (0.36480,-0.52402) -- (0.36661,-0.53459) -- (0.37836,-0.54780) -- (0.39192,-0.53987) -- (0.39553,-0.52754) -- (0.38378,-0.51962) -- (0.36751,-0.51785) -- cycle;
\path[fill=gray!12,draw=black,line width=0.22pt] (0.13072,0.91065) -- (0.13162,0.91681) -- (0.13975,0.91946) -- (0.14427,0.90624) -- cycle;
\path[fill=gray!12,draw=black,line width=0.22pt] (0.22742,0.64908) -- (0.22833,0.64468) -- (0.23646,0.64732) -- (0.23194,0.65348) -- cycle;
\path[fill=gray!12,draw=black,line width=0.22pt] (-0.44591,0.64291) -- (-0.45134,0.63587) -- (-0.48026,0.64291) -- (-0.50466,0.66669) -- (-0.51189,0.68959) -- (-0.52093,0.69135) -- (-0.51822,0.69576) -- (-0.51189,0.69311) -- (-0.50647,0.70897) -- (-0.49382,0.72482) -- (-0.50105,0.73187) -- (-0.49833,0.73979) -- (-0.49201,0.73891) -- (-0.48930,0.72570) -- (-0.46399,0.72394) -- (-0.45495,0.71865) -- (-0.44139,0.69840) -- (-0.43959,0.67902) -- (-0.43416,0.67726) -- (-0.43959,0.67374) -- cycle;
\path[fill=gray!12,draw=black,line width=0.22pt] (-0.83907,0.27742) -- (-0.83274,0.27126) -- (-0.81738,0.27390) -- (-0.81919,0.28623) -- (-0.82913,0.29944) -- (-0.84449,0.28799) -- (-0.84269,0.28271) -- (-0.84630,0.28094) -- cycle;
\path[fill=gray!12,draw=black,line width=0.22pt] (-1.12648,-0.15060) -- (-1.12648,-0.16117) -- (-1.11654,-0.16381) -- (-1.11202,-0.15765) -- cycle;
\path[fill=gray!12,draw=black,line width=0.22pt] (0.36661,0.51873) -- (0.38017,0.51785) -- (0.39463,0.52666) -- (0.39192,0.53987) -- (0.37836,0.54780) -- (0.36661,0.53459) -- cycle;
\path[fill=gray!12,draw=black,line width=0.22pt] (0.22923,-0.95204) -- (0.23556,-0.95997) -- (0.24640,-0.96173) -- (0.25815,-0.94852) -- (0.25183,-0.93883) -- (0.23375,-0.93002) -- cycle;
\path[fill=gray!12,draw=black,line width=0.22pt] (-1.08129,0.11009) -- (-1.08039,0.10392) -- (-1.07225,0.10833) -- (-1.07496,0.11273) -- cycle;
\path[fill=gray!12,draw=black,line width=0.22pt] (0.31600,-0.56101) -- (0.32142,-0.55925) -- (0.31690,-0.55484) -- cycle;
\path[fill=gray!12,draw=black,line width=0.22pt] (-0.84630,-0.28094) -- (-0.84269,-0.29151) -- (-0.82913,-0.29944) -- (-0.81919,-0.28623) -- (-0.81738,-0.27390) -- (-0.83274,-0.27126) -- cycle;
\path[fill=gray!12,draw=black,line width=0.22pt] (0.42445,-0.43066) -- (0.42717,-0.44035) -- (0.43530,-0.43947) -- (0.43440,-0.43155) -- cycle;
\path[fill=gray!12,draw=black,line width=0.22pt] (0.28165,0.61209) -- (0.27894,0.59888) -- (0.29431,0.60504) -- (0.29159,0.61297) -- cycle;
\path[fill=gray!12,draw=black,line width=0.22pt] (-1.05418,0.07310) -- (-1.04966,0.07046) -- (-1.04695,0.07662) -- cycle;
\path[fill=gray!12,draw=black,line width=0.22pt] (-0.30040,0.66845) -- (-0.29588,0.66229) -- (-0.28955,0.66493) -- (-0.28955,0.67550) -- (-0.29769,0.68167) -- (-0.30221,0.67550) -- cycle;
\path[fill=gray!12,draw=black,line width=0.22pt] (-0.89691,-0.18759) -- (-0.88878,-0.19023) -- (-0.88336,-0.18671) -- (-0.88245,-0.17702) -- (-0.89420,-0.17790) -- (-0.89872,-0.18231) -- cycle;
\path[fill=gray!12,draw=black,line width=0.22pt] (-1.20059,-0.22458) -- (-1.19427,-0.23427) -- (-1.18704,-0.23427) -- (-1.18252,-0.22810) -- (-1.18433,-0.21930) -- (-1.19788,-0.21137) -- cycle;
\path[fill=gray!12,draw=black,line width=0.22pt] (-0.30221,-0.67198) -- (-0.29769,-0.68167) -- (-0.28955,-0.67550) -- (-0.28955,-0.66493) -- (-0.29588,-0.66229) -- cycle;
\path[fill=gray!12,draw=black,line width=0.22pt] (-1.57839,0.36902) -- (-1.58019,0.36373) -- (-1.57387,0.36109) -- (-1.57296,0.36725) -- cycle;
\path[fill=gray!12,draw=black,line width=0.22pt] (0.23285,-0.76533) -- (0.24008,-0.76357) -- (0.23556,-0.76093) -- cycle;
\path[fill=gray!12,draw=black,line width=0.22pt] (-0.74327,0.46413) -- (-0.73875,0.46149) -- (-0.73784,0.46589) -- (-0.74236,0.46853) -- cycle;
\path[fill=gray!12,draw=black,line width=0.22pt] (-1.12648,0.15412) -- (-1.12377,0.14972) -- (-1.11202,0.15765) -- (-1.12196,0.16381) -- cycle;
\path[fill=gray!12,draw=black,line width=0.22pt] (-0.45043,0.72482) -- (-0.45314,0.71865) -- (-0.44862,0.72130) -- cycle;
\path[fill=gray!12,draw=black,line width=0.22pt] (-1.92816,0.05637) -- (-1.93087,0.06429) -- (-1.92454,0.06693) -- (-1.92003,0.06077) -- cycle;
\path[fill=gray!12,draw=black,line width=0.22pt] (0.23465,0.76181) -- (0.24008,0.76357) -- (0.23375,0.76621) -- cycle;
\path[fill=gray!12,draw=black,line width=0.22pt] (-1.20059,0.22282) -- (-1.19788,0.21137) -- (-1.18433,0.21930) -- (-1.18252,0.22810) -- (-1.18704,0.23427) -- (-1.19427,0.23427) -- cycle;
\path[fill=gray!12,draw=black,line width=0.22pt] (-1.75553,-0.11185) -- (-1.75101,-0.11978) -- (-1.74469,-0.11009) -- (-1.75101,-0.10745) -- cycle;
\path[fill=gray!12,draw=black,line width=0.22pt] (-1.63894,-0.24131) -- (-1.63985,-0.23515) -- (-1.63171,-0.23074) -- (-1.63081,-0.23691) -- cycle;
\path[fill=gray!12,draw=black,line width=0.22pt] (-0.84992,0.22282) -- (-0.84449,0.22106) -- (-0.84720,0.22546) -- cycle;
\path[fill=gray!12,draw=black,line width=0.22pt] (-1.26476,0.24307) -- (-1.26115,0.25012) -- (-1.25844,0.24572) -- cycle;
\path[fill=gray!12,draw=black,line width=0.22pt] (-0.95205,0.07574) -- (-0.95385,0.08102) -- (-0.94753,0.07662) -- cycle;
\path[fill=gray!12,draw=black,line width=0.22pt] (-0.62848,0.46677) -- (-0.63300,0.47118) -- (-0.63029,0.47558) -- (-0.62396,0.47118) -- cycle;
\path[fill=gray!12,draw=black,line width=0.22pt] (-1.77270,-0.07222) -- (-1.77361,-0.06781) -- (-1.76909,-0.06693) -- (-1.76819,-0.07134) -- cycle;
\path[fill=gray!12,draw=black,line width=0.22pt] (-1.57748,-0.36990) -- (-1.58019,-0.36373) -- (-1.57387,-0.36109) -- (-1.57296,-0.36725) -- cycle;
\path[fill=gray!12,draw=black,line width=0.22pt] (0.39282,0.46677) -- (0.39463,0.47206) -- (0.39915,0.47118) -- cycle;
\path[fill=gray!12,draw=black,line width=0.22pt] (0.28256,-0.61297) -- (0.27804,-0.59976) -- (0.28617,-0.59888) -- (0.29431,-0.60504) -- (0.29159,-0.61297) -- cycle;
\path[fill=gray!12,draw=black,line width=0.22pt] (0.51574,0.22546) -- (0.51122,0.22986) -- (0.52116,0.23603) -- (0.52568,0.23163) -- cycle;
\path[fill=gray!12,draw=black,line width=0.22pt] (0.19941,0.67286) -- (0.20121,0.67814) -- (0.20573,0.67726) -- cycle;
\path[fill=gray!12,draw=black,line width=0.22pt] (-0.76044,0.33995) -- (-0.76315,0.34612) -- (-0.75682,0.34876) -- (-0.75230,0.34083) -- cycle;
\path[fill=gray!12,draw=black,line width=0.22pt] (-0.76044,-0.34876) -- (-0.76315,-0.34259) -- (-0.75321,-0.33995) -- (-0.75230,-0.34436) -- cycle;
\path[fill=gray!12,draw=black,line width=0.22pt] (-1.77270,0.06693) -- (-1.77361,0.07134) -- (-1.76909,0.07222) -- (-1.76819,0.06781) -- cycle;
\path[fill=gray!12,draw=black,line width=0.22pt] (0.20121,-0.67814) -- (0.19941,-0.67286) -- (0.20573,-0.67726) -- cycle;
    \draw[blue!70!black,line width=1.15pt] (0.5,0)--(1.25,0);
    \draw[orange!85!black,line width=1.15pt,domain=-1.57:-1,samples=70]
      plot (\x,{sqrt(max(3*\x*(\x+1),0))});
    \draw[blue!70!black,line width=1.15pt,domain=-1.57:-1,samples=70]
      plot (\x,{-sqrt(max(3*\x*(\x+1),0))});
    \draw[orange!85!black,densely dashed,line width=0.7pt,domain=-1.57:-1,samples=70]
      plot (\x,{-sqrt(max(3*\x*(\x+1),0))});
    \draw[blue!70!black,line width=1.15pt] (-1,0)--(0.5,0);
    \foreach \t in {-1,0,0.5} \fill[black] (\t,0) circle(1.5pt);
    \node[below right] at (-0.96,-0.03) {$-1$};
    \node[below] at (0,-0.05) {$0$};
    \node[below] at (0.5,-0.05) {$1/2$};
    \node[right,blue!70!black] at (1.25,0) {$R_f(0)$};
    \node[left,text=red] at (-1.57,1.64) {$R_f(1/3)$};
    \node[left,text=red] at (-1.57,-1.64) {$R_f(2/3)$};
    \node at (0.12,1.15) {$J_f$};
  \end{scope}
  \node at (6.9,-2.65) {$f(z)=z^3+\frac32z^2$};
\end{tikzpicture}
\caption{{The marking portrait of $f(z)=z^3+\frac32z^2$ and its realization on a numerical approximation of $J_f$. Blue marks the Fatou ray star with angles $\{0,2/3\}$; orange marks the Julia critical ray pair with angles $\{1/3,2/3\}$ landing at $-1$. The ray $R_f(2/3)$ belongs to both markings and is shown in both colors.}}\label{fig:supporting-marking}\par
\end{figure}

\subsection{Reduction to Siegel non-capture case}

A laminational equivalence relation is called \emph{Siegel non-capture} if its associated lamination has no critical preperiodic gap that eventually maps to a Siegel Fatou gap under $\sigma_d$.

\begin{proposition}\label{pro:reduction}
Let $\sim$ be a laminational equivalence relation, and let
$$\xi=\FFF_A\cup\FFF_S\cup\JJJ$$ be a marking portrait satisfying the condition above.
Then there exists a Siegel non-capture laminational
equivalence relation $\sim_{\rm nc}$ coarser than $\sim$
such that
\begin{equation}\label{eq:-4}
\mathcal L_\sim\subset\mathcal L_{\sim_{\rm nc}}\quad\text{and}\quad
\mathcal L_{\sim_{\rm nc}}\setminus\mathcal L_\sim
\text{ is at most countable}.
\end{equation}
Moreover, the same portrait $\xi$ is a marking portrait
of $\mathcal L_{\sim_{\rm nc}}$, with Fatou marking
$\mathcal F_A$ and Julia marking
$\mathcal J\cup\mathcal F_S$.
\end{proposition}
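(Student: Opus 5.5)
The idea is to kill every Siegel gap that some critical gap of $\mathcal L_\sim$ eventually maps onto, by inserting the marked critical polygons $Q_G\in\mathcal F_S$ and all their iterated pullbacks along the backward orbit, and then to check that the resulting lamination is proper and still carries $\xi$ as a marking portrait.

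\emph{Step 1: the chords to add.} Let $G\in\mathcal G_S$ be a critical Siegel gap. It is strictly preperiodic, so some image $\sigma_d^n(G)$ is a periodic Siegel gap $S$. Choose $N$ so that every gap in the backward orbits of the finitely many periodic Siegel gaps $S$ is non-critical from level $N$ on, except for the gaps of $\mathcal G_S$. The polygon $Q_G=\operatorname{CH}(\Theta_G)$ lies inside $G$ and is critical of degree $\deg G$. Consider also the images $\sigma_d^j(Q_G)$; these are points of $\partial\sigma_d^j(G)$. Since $G$ is the only part that is critical, the chords we add are:
\begin{itemize}
\item the edges of $Q_G$;
\item for every $k\ge1$, the edges of all components of $\sigma_d^{-k}(Q_G)$ that lie in Fatou gaps of $\mathcal L_\sim$ eventually mapping to $G$.
\end{itemize}
Each such pullback lies in a unique gap, because Fatou gaps are preperiodic and their preimages are distinct gaps. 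Gaps are countable, so only countably many chords are added. The marking condition on $\alpha_G$ guarantees that the grand orbit avoids edges of Fatou gaps. Hence the added polygons have vertices that are not endpoints of leaves of $\mathcal L_\sim$ and do not touch any other class. The compatibility condition $\sigma_d^j(\alpha_G)=\alpha_{\sigma_d^j(G)}$ ensures that when one critical Siegel gap maps onto another, the added polygons are coherent: the image of $Q_G$ is the point $\sigma_d(\alpha_G)$, and its pullbacks inside $\sigma_d^j(G)$ contain $Q_{\sigma_d^j(G)}$.

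\emph{Step 2: define $\sim_{\rm nc}$ and verify (R1)–(R5).} Declare the vertex sets of the added polygons to be new classes; all other classes of $\sim$ are kept. This new relation is coarser than $\sim$ only on angles that were singletons. (R2) and (R3) hold because each new class is finite and lies in the interior of a distinct Fatou gap, away from existing hulls. (R4) and (R5) follow from the construction as pullbacks: $\sigma_d$ restricted to a gap boundary is a covering that is consecutive preserving on a non-critical gap, and on $G$ itself the map collapses $\Theta_G$ to a point. This point is a singleton class, and (R5) is vacuous there.

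The main obstacle is (R1), closedness. The added classes accumulate, and one must show that limits of added chords are degenerate or are leaves of $\mathcal L_\sim$. Here is how I would try it. Pullbacks of $Q_G$ of depth $k$ lie in gaps whose boundary arcs shrink: the gaps eventually mapping to $G$ are distinct gaps of $\mathcal L_\sim$, so their diameters tend to $0$, since only finitely many gaps have diameter above any $\varepsilon$. The exception is the finitely many gaps of large size. Inside a single gap there are finitely many added polygons, because $\sigma_d^{-1}$ applied to a point on the boundary of a gap of fixed degree gives finitely many points. Therefore any limit of added chords is a point. Alternatively, it is a limit within one gap, which is impossible by finiteness. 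This gives (R1).

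\emph{Step 3: the conclusion \eqref{eq:-4} and the marking statement.} The inclusion $\mathcal L_\sim\subset\mathcal L_{\sim_{\rm nc}}$ holds since old classes are unchanged, and the difference is countable by Step 1. In $\mathcal L_{\sim_{\rm nc}}$, each former critical Siegel gap $G$ is subdivided by $Q_G$. The critical set there is the finite class $\Theta_G$, of degree $\deg G$. The complementary pieces of $G$ are Fatou gaps that map with degree one, and they eventually map onto Siegel gaps that are no longer hit by critical gaps. Thus $\sim_{\rm nc}$ is Siegel non-capture. The critical attracting gaps are unchanged, so $\mathcal F_A$ is still a valid Fatou marking. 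Each $Q_G\in\mathcal F_S$ is now the hull of a full finite critical class, so it is an admissible Julia marking, and $\mathcal J\cup\mathcal F_S$ is a Julia marking. The union is the same critical portrait $\xi$.
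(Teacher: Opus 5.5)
Your construction is the paper's: insert the marked polygons and their pullbacks into the precritical Siegel gaps. However, two steps fail as written.

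\emph{Overlapping classes.} You insert $Q_G$ and all its pullbacks for \emph{every} $G\in\GGG_S$. Suppose $G$ and $G_2=\sigma_d^j(G)$ are both critical. Compatibility gives $\sigma_d^j(\Theta_G)=\{\alpha_{G_2}\}\subset\Theta_{G_2}$. Hence the pullback of $Q_{G_2}$ into $G$, namely $\CH(\partial G\cap\sigma_d^{-j}(\Theta_{G_2}))$, has $\deg(\sigma_d^j|_G)\deg(G_2)$ vertices and contains $\Theta_G$. (Your coherence sentence has the direction reversed: pullbacks of $Q_G$ never lie in forward images of the strictly preperiodic gap $G$.) So one gap carries several added polygons sharing vertices, and ``declare the vertex set of each added polygon a class'' is not a partition. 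Several of your claims then fail whenever $G$ is not the last critical Siegel gap in its orbit:
\begin{itemize}
\item your (R3) reason, that each new class lies in its own gap;
\item your (R5) remark, since $\sigma_d(\alpha_G)$ is then a vertex of an added polygon in $\sigma_d(G)$, not a singleton class;
\item the Step~3 claims that the critical class in $G$ is $\Theta_G$ and that $Q_G$ is the hull of a full class.
\end{itemize}
The paper avoids this by pulling back only from the last critical Siegel gaps $G^*$. A gap $G'$ with $\sigma_d^m(G')=G^*$ then receives the single polygon $\CH(\partial G'\cap\sigma_d^{-m}(\Theta_{G^*}))$, with $\deg(G^*)\deg(\sigma_d^m|_{G'})$ vertices. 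For a non-last $G'\in\GGG_S$ this class strictly contains $\Theta_{G'}$ and has degree $\deg G'$. $Q_{G'}$ remains an admissible Julia marking because $\Theta_{G'}$ is a $\deg(G')$-element subset with singleton image, not because it is the whole class. (R5) for such a class is a genuine check for a degree-$\deg G'$ boundary covering.

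\emph{Closedness.} Your proof of (R1) rests on ``only finitely many gaps have diameter above any $\varepsilon$''. This is false in general: for a real Misiurewicz quadratic, the four-angle gaps at the real iterated preimages of $0$ accumulate on long leaves. You also give no reason it should hold for the Fatou gaps in question. So ``any limit of added chords is a point'' is unproven, and it is not needed. Suppose added chords $c_n$ lie in pairwise distinct gaps $G_n$ of $\LLL_\sim$ and $c_n\to c$ with $c$ non-degenerate.
\begin{itemize}
\item $c$ crosses no leaf of $\LLL_\sim$ in $\D$. Crossing is an open condition, and the relative interior of $c_n$ lies in $\operatorname{int}G_n$, which no leaf meets, because its endpoints are not endpoints of edges.
\item $c$ meets the interior of no gap $H$. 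Otherwise $c_n$ would meet $\operatorname{int}H$ with $G_n\neq H$ for large $n$.
\end{itemize}
Hence $c\cap\D\subset\LLL_\sim^\cup$, so $c$ is a leaf of $\LLL_\sim$. This gives closedness of $\LLL_{\rm nc}$. Applied also to diagonals of the added polygons, it gives (R1) for $\sim_{\rm nc}$; the case of infinitely many chords in one gap is handled by finiteness, as you noted.
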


\begin{proof}
The idea is to insert finitely many leaves in each (pre)critical Siegel gap. The resulting lamination will be the required one.

Let \(G\) be a \emph{last} critical Siegel gap of degree \(k>1\), meaning that \(\sigma^n(G)\) is non-critical for every \(n\ge1\). There is a unique critical \(k\)-gon \(Q_G\in\FFF_S\) that marks \(G\). It partitions \(G\) into \(k\) sub-gaps, each mapped homeomorphically by \(\sigma_d\) onto \(\sigma_d(G)\), while \(\sigma_d(Q_G)\) is a single point on \(\partial\sigma_d(G)\). 

For any gap $G'$ with $\sigma_d^m(G')=G$, let $Q_{G'}$ be the convex hull of the points $$\sigma_d^{-m}(Q_G\cap\mathbb S)\bigcap \partial G'.$$
Since \(Q_G\cap\mathbb{S}\) avoids all leaves on the boundaries of Fatou domains, \(Q_{G'}\) has exactly \(k\cdot\deg(\sigma_d^m|_{G'})\) edges. 

Thus, we obtain a polygon $Q_G$ in each (pre)critical Siegel gap such that these $Q_G$ are pairwise disjoint, and every $Q_G$ is disjoint from the leaves of $\LLL_\sim$.
Let $\EEE_G$ denote the collection of chords on $\partial Q_G$, and define a prelamination
\[\LLL_{\rm nc}=\left(\bigcup_{G}\EEE_G\right)\bigcup \LLL_\sim,\] 
where $G$ ranges over all (pre)critical Siegel gaps. The construction and {the properties} of $Q_G$ {imply}  the following facts:
\begin{enumerate}
\item $\LLL_{\rm nc}$ is closed, and hence a lamination;
\item $\LLL_{\rm nc}$ satisfies all properties of Definition \ref{def:invariant-lamination}, hence invariant.
\item Let \(\sim_{\rm nc}\) be the equivalence relation on \(\mathbb{S}\) induced by \(\LLL_{\rm nc}\). Then each \(\sim_{\rm nc}\)-class is either a \(\sim\)-class, or the vertex set of  \(Q_G\) for some (pre)critical Siegel gap $G$ of $\LLL_\sim$, which is disjoint from all non-trivial \(\sim\)-classes.
\item The relation $\sim_{\rm nc}$ is a laminational equivalence relation and $\LLL_{\sim_{nc}}=\LLL_{\rm nc}$.
\end{enumerate}
Therefore, we obtain the required property  \eqref{eq:-4}. Moreover, the lamination $\LLL_{\rm nc}$ is Siegel non-capture, with the Fatou marking of $\LLL_{\rm nc}$ is $\FFF_A$ and the Julia marking is $\FFF_S\cup\JJJ$. Then the proof is complete.
\end{proof}

In the rest of this section, we always assume that $f$ is Siegel non-capture. This implies that $\FFF_A=\FFF$ in a critical Fatou marking of $f$.

\subsection{Partitions in dynamical plane}
Let \[\xi=\FFF\cup \JJJ=\FFF_A\cup\JJJ=\{Q_1,\ldots,Q_s\}\] be a  marking portrait of \(f\).  
Two points \(x,y\in\mathbb{S}\setminus\bigcup_{i=1}^s Q_i\) are called \emph{\(\xi\)-unlinked on \(\mathbb{S}\)} if they lie in the same component of \(\mathbb{S}\setminus Q_i\) for every \(i=1,\ldots,s\); equivalently, if they lie in the same component of \(\overline{\mathbb{D}}\setminus\cup_{i=1}^s Q_i\). There are \(d\)  \(\xi\)-unlinked classes, and for any $\xi$-class $S$,
\begin{equation}\label{eq:7}
\sigma_d:S\to \mathbb S\setminus\sigma_d(\partial S)\text{ is a homeomorphism}.
\end{equation}

The partition of \(\xi\)-unlinked classes on \(\mathbb{S}\) induces a natural partition of \(\mathbb{C}\) as follows. For \(Q\in\xi\), let \(\Theta(Q)\) denote its vertex set. If \(Q\in\JJJ\), set
\[
\wh{R}_f(Q)=R_f(Q)=\bigcup_{\theta\in\Theta(Q)}\overline{R_f(\theta)}\quad\text{and}\quad C_Q=\wh{R}_f(Q)\cap K_f.
\]
If \(Q\in\FFF\), since $f$ is Siegel non-capture, the landing point \(p_f(\theta)\) of \(R_f(\theta)\) for each \(\theta\in\Theta(Q)\) lies on the boundary of the attracting domain \(\Omega_Q\), where \(\overline{\Omega_Q}=\pi_\sim(G_Q)\). Let \(\wh{R}_f(\theta)\) denote the union of \(R_f(\theta)\) and the internal ray in \(\Omega_Q\) landing at \(p_f(\theta)\). In this case, set
\[
\wh{R}_f(Q)=\bigcup_{\theta\in\Theta(Q)}\wh{R}_f(\theta)
\quad\text{and}\quad
C_Q=\wh{R}_f(Q)\cap\overline{\Omega_Q}=\wh{R}_f(Q)\cap K_f.
\]
We call these $\wh{R}_f(Q)$ \emph{marked ray stars} and $C_Q$ the \emph{core part} of $\wh{R}_f(Q)$.

Two points \(z_1,z_2\in\mathbb{C}\setminus\bigcup_{i=1}^s\wh{R}_f(Q_i)\) are said to be \emph{\(\xi\)-unlinked in the \(f\)-plane} if they lie in the same connected component of \(\mathbb{C}\setminus\wh{R}_f(Q_i)\) for every \(i=1,\ldots,s\).

Considering the circle at infinity, we immediately obtain that two external rays \(R_f(\theta)\) and \(R_f(\eta)\) lie in a common \(\xi\)-unlinked  class in the \(f\)-plane if and only if \(\theta\) and \(\eta\) lie in a common \(\xi\)-unlinked  class on \(\mathbb{S}\). This yields a canonical correspondence between the \(\xi\)-unlinked equivalence classes on \(\mathbb{S}\) and those in the \(f\)-plane.

\begin{lemma}\label{lem:slits}
Fix any unlinked  class $ V$ in the $f$-plane. 
\begin{enumerate}
\item For two distinct points $z_1,z_2\in \ov{ V}\cap K_f$, the regulated arc $[z_1,z_2]\subset \ov{ V}$.
\item The map $f$ sends $ V$ bijectively onto $\C\setminus f(\partial V)$, and 
\begin{equation}\label{eq:8}
f(\partial V\cap K_f)\subset \big(\bigcup_{Q\in\FFF}f(C_Q)\big)\bigcup \big({\rm CritV}(f)\cap J_f\big).
\end{equation}
\end{enumerate}
\end{lemma}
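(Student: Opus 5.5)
Both parts of Lemma~\ref{lem:slits} will be reduced to the circle, using the canonical correspondence between $\xi$-unlinked classes in the $f$-plane and on $\mathbb S$, together with the quotient map $\pi_\sim$.

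\emph{Step 1: describe $V$.} Let $S\subset\mathbb S$ be the unlinked class corresponding to $V$, and let $D_S$ be the component of $\overline{\D}\setminus\bigcup_i Q_i$ meeting $\mathbb S$ in $S$. Pushing the disk picture forward by $\pi_\sim$ identifies $\overline V$ with the closure of
\[
\pi_\sim\bigl(\{re^{2\pi i\theta}: r>1,\ \theta\in S\}\bigr)\cup\pi_\sim(D_S\cap\overline{\D}),
\]
with the internal rays of $\wh R_f(Q)$ for $Q\in\FFF$ included. The boundary $\partial V$ is then contained in $\bigcup_i\wh R_f(Q_i)$. Moreover, $\partial V\cap K_f$ is contained in the union of the core parts $C_Q$ over those $Q$ that are edges of $D_S$. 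For $Q\in\JJJ$ the core part $C_Q$ is a single point of $J_f$, namely the landing point of a finite critical class. For $Q\in\FFF$ it is a star of internal rays in $\overline{\Omega_Q}$.

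\emph{Step 2: part (1).} Since $\overline V\cap K_f$ is the image under $\pi_\sim$ of the closed convex region $\overline{D_S}$, it is the intersection of $K_f$ with the closed ``half-planes'' cut out by the stars $\wh R_f(Q)$. Take $z_1,z_2\in\overline V\cap K_f$. If the regulated arc $[z_1,z_2]$ left $\overline V$, it would have to cross some star $\wh R_f(Q)$ twice. The two crossings would be at points of $C_Q$. For $Q\in\JJJ$, $C_Q$ is a single point, so an arc crossing twice would pass through that point twice, which is impossible. For $Q\in\FFF$, the crossings would both lie inside the Fatou domain $\Omega_Q$. There the regulated arc consists of at most two internal rays, so it runs through the centre and stays inside a single sector of the star, again a contradiction. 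The $\FFF$ case is the point I expect to need the most care. It uses the fact that the internal rays in $C_Q$ end at the centre of $\Omega_Q$, so the sectors of $\overline{\Omega_Q}\setminus C_Q$ are themselves regulated-convex.

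\emph{Step 3: part (2).} On the circle, \eqref{eq:7} says that $\sigma_d$ maps $S$ homeomorphically onto $\mathbb S\setminus\sigma_d(\partial S)$. Every edge of $D_S$ is critical, so its vertices map to a single angle. Hence $\sigma_d$ restricted to $D_S$ is injective away from its boundary, and collapses each edge $Q$ to a point or, for $Q\in\FFF$, to an internal ray. Passing to the quotient by $\pi_\sim$ and using \eqref{eq:11}, together with the radial extension over $\Omega_\sim(\infty)$ and the chosen disk coordinates on Fatou domains, shows that $f|_V$ is injective. Its image is $\C$ minus the images of the boundary stars.

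For surjectivity, take $w\notin f(\partial V)$. Its $d$ preimages are distributed one to each of the $d$ unlinked classes, away from branch points. Equivalently, this is the degree count that $f$ has degree $d$ and there are exactly $d$ classes, each meeting $\mathbb S$ in length $1/d$.

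Finally, for \eqref{eq:8}: a point of $\partial V\cap K_f$ lies in some $C_Q$. If $Q\in\FFF$, its image lies in $f(C_Q)$. If $Q\in\JJJ$, the point is the landing point of a critical class, i.e.\ a critical point in $J_f$, so its image lies in ${\rm CritV}(f)\cap J_f$.
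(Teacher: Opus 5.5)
Your part (1) is fine and is essentially the paper's argument. The closure of each component $Y_i$ of $\C\setminus\widehat R_f(Q_i)$ containing $V$ is regulated-convex, because $\partial Y_i\cap K_f$ is either one Julia point or two internal rays meeting at the centre. (The crossing points may be landing points on $\partial\Omega_Q$ rather than points inside $\Omega_Q$; this changes nothing.)

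Part (2), however, has a genuine gap.

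\emph{Injectivity.} You obtain it by ``passing to the quotient'', but there is no map on $D_S$ to pass down. The relation \eqref{eq:11} holds only on $\C\setminus\D$. The chords of a Fatou polygon $Q\in\FFF$ are not sent by $\pi_\sim$ onto the internal rays of $\widehat R_f(Q)$; those rays come from separately chosen disk coordinates. A Julia point of $V$ may also carry several angles. So injectivity on $V\cap K_f$ is never actually established.

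\emph{Surjectivity.} This step is false as stated: the preimages of $w\notin f(\partial V)$ need not be distributed one per class, even away from critical values. Take $f(z)=z^3-3z$ from Figure~\ref{fig:three-codings}, and let $V$ be the class bounded only by the ray pair at $-1$, so that $f(\partial V)=[2,\infty)$. The point $w=-3$ is not a critical value and does not lie in $f(\partial V)$. Yet two of its three preimages lie on $R_f(1/6)\cup R_f(5/6)\subset\widehat R_f(Q_1)$, so the other two classes receive no preimage at all. The equal-length count from \eqref{eq:7} only controls points near infinity.

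\emph{Disjointness.} You also never prove $f(V)\cap f(\partial V)=\emptyset$, although it is part of your claim that the image is $\C$ minus the images of the boundary stars.

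The missing idea is a way to propagate the count near infinity to the whole complement; the paper does this with a covering argument.
\begin{enumerate}
\item All critical points lie on the marked stars, so $f|_V$ is a local homeomorphism.
\item Over each component $W$ of $\C\setminus f(\partial V)$, the map $V\cap f^{-1}(W)\to W$ is proper: preimages of a compact set can neither accumulate on $\partial V$ nor escape to $\infty$.
\item Each $W$ is unbounded, since $f(\partial V)$ consists of finitely many external rays together with a finite regulated tree.
\item Near infinity, \eqref{eq:7} gives exactly one preimage, so the covering has degree one.
\item Disjointness then follows from openness. If $f(z)=f(b)$ with $z\in V$ and $b\in\partial V$, then $f(U_b\cap V)$ contains a value outside $f(\partial V)$, since that set has empty interior. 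This value would have a second preimage in $V$ near $z$, a contradiction.
\end{enumerate}
This route gives injectivity for free.

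Your pigeonhole idea can be rescued only for $w\notin f\bigl(\bigcup_i\widehat R_f(Q_i)\bigr)$, and only after proving injectivity on every class. Even then you would still need a limiting argument for the remaining $w\notin f(\partial V)$, plus the disjointness step.
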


\begin{proof}
(1) For each $i\in\{1,\ldots,s\}$, denote by $Y_i$ the connected component of $\C\setminus\wh{R}_f(Q_i)$ which contains $V$. We obtain that 
$$ V=\bigcap_{i=1}^{s} Y_i \quad\text{and}\quad\ov{ V}=\bigcap_{i=1}^{s} \ov{Y_i}.$$ It follows that {$z_1,z_2$} belong to each $\ov{Y_i}$. Note that $\partial Y_i\cap K_f$ is either a point in $J_f$ or two internal rays of an attracting domain, {so $[z_1,z_2]$} is contained in every $\ov{Y_i}$. Consequently, we have {$[z_1,z_2]\subset \bigcap_{i=1}^{s}\ov{Y_i}=\ov{V}$}.\vspace{3pt}

(2) The image of each marked core $C_Q$ is either a Julia critical value or
one internal ray in an attracting domain. This gives
\textup{(4.3)}. Moreover, $f(\partial V)$ is contained in the union
of finitely many external rays and a finite regulated tree in
$K_f$. Thus every component of
$\mathbb C\setminus f(\partial V)$ is unbounded.

Fix such a component $W$. Since all critical points are on the marked
ray stars, the map $f$ is a local homeomorphism on $V$. The restriction
\[
f:V\cap f^{-1}(W)\longrightarrow W
\]
is proper: a sequence of preimages of a compact subset of $W$
cannot approach $\partial V$, and cannot escape to infinity.
It is therefore a covering map. For a point of $W$ sufficiently
near infinity, \textup{(4.2)} gives exactly one preimage in $V$.
Since $W$ is connected, the covering has degree one. Hence every
point in $\C\setminus f(\partial V)$ has exactly one preimage in $V$.

It remains to show that \(f(V)\cap f(\partial V)=\emptyset\). Suppose \(z\in V\), \(b\in\partial V\), and \(f(z)=f(b)=w\). Choose disjoint neighborhoods \(U_z\subset V\) and \(U_b\) of \(z\) and \(b\), respectively. Since \(f\) is open, \(f(U_z)\) contains a neighborhood of \(w\). Shrinking \(U_b\) if necessary, we may assume \(f(U_b)\subset f(U_z)\). The set \(f(U_b\cap V)\) is nonempty and open. As \(f(\partial V)\) has no interior points, \(f(U_b\cap V)\) contains a point \(w'\notin f(\partial V)\). Then \(w'\) has a preimage in \(U_z\) and another preimage in \(U_b\cap V\), contradicting the uniqueness just proved. Hence \(f(V)\cap f(\partial V)=\emptyset\), and therefore
\[
f:V\longrightarrow\mathbb{C}\setminus f(\partial V)
\]
is a homeomorphism. 
\end{proof}

\subsection{Reduced dynamics}\label{sec:reduce}
To simplify the discussion below, we define the \emph{collapsing forest}
\[T_f(\xi):=\bigcup_{Q\in\FFF}\bigcup_{n\geq0}f^n(C_Q).\]
The construction of $\xi$ yields that $T_f(\xi)$ is an $f$-invariant finite forest, and it consists of closed internal rays in closures of attracting domains.

\begin{lemma}\label{lem:forest}
There exists \(m_0\ge 0\) such that every component of \(f^{m_0}(T_f(\xi))\) meets \(J_f\) in exactly one point.
\end{lemma}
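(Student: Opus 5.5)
Since $f$ is Siegel non-capture, $\FFF=\FFF_A$. For each $Q\in\FFF$ the core part $C_Q$ is a star of $k=\deg(G_Q)$ closed internal rays in $\overline{\Omega_Q}$. Each ray joins the center of $\Omega_Q$ (the point corresponding to $0$ in the disk coordinates) to a landing point $p_f(\theta)$ with $\theta\in\Theta(Q)$. Hence $C_Q$ meets $J_f$ exactly in the $k$ points $p_f(\theta)$, $\theta\in\Theta(Q)$. By the choice of the markings $\Theta(Q)=V_L(G_Q)\cap\sigma_d^{-1}(\beta_{\sigma_d(G_Q)})$, we get $f(C_Q)$ equal to the single closed internal ray of $\sigma_d(G_Q)$ landing at $p_f(\beta_{\sigma_d(G_Q)})$. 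Because the disk coordinates are chosen so that maps between successive Fatou domains preserve radial segments, and $\sigma_d(\beta_G)=\beta_{\sigma_d(G)}$ along $\GGG_A$, every further image $f^n(C_Q)$, $n\ge1$, is again a single closed internal ray $I_{G'}$ of some $G'\in\GGG_A$, namely the one landing at $p_f(\beta_{G'})$. Thus
\[
T_f(\xi)=\bigcup_{Q\in\FFF}C_Q\ \cup\ \bigcup_{G'\in\GGG_A'}I_{G'},
\]
where $\GGG_A'\subset\GGG_A$ consists of the strict forward images of critical gaps. This is a finite union, which confirms that $T_f(\xi)$ is a finite forest.

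Next I would analyse the components. Distinct attracting domains have disjoint closures in the model of Lemma~\ref{lem:proper}. Here that is only true after passing to the perfect part, so in general two closures $\overline{\Omega_G},\overline{\Omega_{G'}}$ may share a point of $J_f$. A component of $f^{m}(T_f(\xi))$ is then a chain of stars and rays glued at common Julia points, and it may meet $J_f$ in several points. The plan is to choose $m_0$ so large that $f^{m_0}(T_f(\xi))$ consists only of single internal rays $I_{G'}$ with $G'$ periodic. Since each critical gap is preperiodic (Theorem~\ref{thm:kiwi}), for $m_0$ at least the maximal preperiod plus one, each $f^{m_0}(C_Q)$ is a ray $I_{G'}$ with $G'$ periodic. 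Hence
\[
f^{m_0}(T_f(\xi))=\bigcup_{G'\ \text{periodic in }\GGG_A} I_{G'}.
\]

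It remains to check that two such rays never share their Julia endpoint. Two rays in the same domain are distinct only if the domains are distinct, since each periodic $G'$ carries exactly one marked ray. The rays also cannot meet in the interior of a Fatou domain. So the only possible intersection is $p_f(\beta_{G_1})=p_f(\beta_{G_2})$ for distinct periodic attracting gaps $G_1,G_2$, and this is the main obstacle. Suppose it happens, and put $z:=p_f(\beta_{G_1})$. Both $\beta_{G_i}$ are left-supporting vertices at $z$. Left-supporting means that $\beta_{G_i}$ is the first angle of the class of $z$ after the sector containing $\Omega_{G_i}$ in positive cyclic order. The two sectors are different, so $\beta_{G_1}\neq\beta_{G_2}$, though both lie in the finite periodic $\sim$-class $A$ of $z$.

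I would derive a contradiction from the dynamics at $z$. Let $N$ be a common period of $G_1$, $G_2$, $\beta_{G_1}$ and $\beta_{G_2}$. The map $\sigma_d^N$ fixes $A$ setwise. It is consecutive preserving by (R5), so it acts on $A$ as a cyclic rotation. It fixes both $\beta_{G_i}$, so the rotation is trivial and every angle of $A$ is fixed. If this argument is correct, it does not give a contradiction, and I expect the statement must instead be read with components meeting $J_f$ in one point for each connected chain. In that case I would fall back on choosing the markings $\beta_G$ compatibly. Alternatively, I would weaken the goal to the statement that every component of $f^{m_0}(T_f(\xi))$ is a tree whose intersection with $J_f$ is a single point. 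To establish that, I would show that the graph whose vertices are the periodic domains, with edges given by shared marked landing points, has no two domains sharing a marked point, using the rotation argument: if all angles of $A$ are fixed, then each sector at $z$ is $\sigma_d^N$-invariant and contains at most one periodic attracting domain adjacent at $z$, and this forces $\beta_{G_1}$ and $\beta_{G_2}$ to be the distinct supporting angles of distinct sectors. I expect this identification of shared landing points to be the delicate step.
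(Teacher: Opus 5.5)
Up to the choice of $m_0$ your argument is the paper's. Once $m_0$ exceeds every preperiod in $\mathcal G_A$, $f^{m_0}(T_f(\xi))$ is the union of the closed internal rays $I_{G'}$, exactly one in each periodic $G'\in\mathcal G_A$. The gap is in your last step. There you call a shared landing point $p_f(\beta_{G_1})=p_f(\beta_{G_2})$ the ``main obstacle'' and set out to exclude it. It is not an obstacle. Each $I_{G'}$ meets $J_f$ only in its endpoint $p_f(\beta_{G'})$, and distinct rays lie in distinct Fatou domains. So, as you yourself observe, two of them can intersect only by having the same landing point. Along any chain of rays in which consecutive members intersect, the landing point is therefore constant. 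Every component of $f^{m_0}(T_f(\xi))$ is a bouquet of internal rays glued at a single point of $J_f$, and it meets $J_f$ exactly there. This is the content of the paper's one-line conclusion from ``each periodic attracting domain contains exactly one internal ray''. The ``weakened goal'' you propose at the end is the lemma itself, and it needs no further input.

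The route you propose instead cannot succeed: showing that no two periodic domains share a marked landing point is false, because such sharing occurs with forced markings. Take $f(z)=-2z^3+3z^2$, conjugated affinely to a monic centered cubic. The superattracting fixed points $0$ and $1$ have immediate basins touching at the repelling fixed point $1/2$. This is where both fixed rays land, since it is the only non-attracting fixed point. Both basins correspond to fixed critical gaps, so each $\beta_G$ must be a fixed left-supporting vertex. The only fixed angles are the two angles landing at $1/2$, and at that point each basin has exactly one of them as its left-supporting vertex. Hence $\beta_{G_1}\neq\beta_{G_2}$ are forced and land together, just as your rotation argument indicates. In the original coordinate the two marked internal rays are $[0,1/2]$ and $[1/2,1]$. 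Their union is connected and meets $J_f$ only at $1/2$, in agreement with the lemma.
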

\begin{proof}
Since all internal rays in \(T_f(\xi)\) are (pre)periodic, we can choose \(m_0\) so that every internal ray in \(f^{m_0}(T_f(\xi))\) is periodic. By the defining property of \(\FFF_A\), each periodic attracting domain contains exactly one internal ray from \(f^{m_0}(T_f(\xi))\). Hence every component of \(f^{m_0}(T_f(\xi))\) meets \(J_f\) in exactly one point.
\end{proof}

Collapsing each component of $T_f(\xi)$ to a point, we obtain a quotient map $\phi:\C\to\C$. 
Since $f(T_f(\xi))\subset T_f(\xi)$, the  map $\phi$ descends the topological polynomial $f$ to a \emph{reduced map}
$\overline{f}:\C\to\C$ such that
\begin{equation}\label{eq:12}
\phi \circ f(z)=\overline{f}\circ \phi(z),\ \forall\,z\in\C.
\end{equation} 
Denote
\[K_{\overline{f}}:=\phi(K_f),\ {\rm Crit}(\overline{f}):=\phi({\rm Crit}(f))\text{\ \ and\ \ } q_{\overline{f}}:=\phi\circ p_f.
\]
It follows that $q_{\overline{f}}\circ\sigma_d=\overline{f}\circ q_{\overline{f}}$ on $\mathbb S$.
For any $a,b\in\mathbb{S}$, denote
\begin{equation}\label{eq:arcs}
 I(a,b)=\phi([p_f(a),p_f(b)]_{K_f}).
\end{equation}
By Lemma \ref{lem:regulated}, this is an arc (maybe degenerate) in $K_{\overline{f}}$ joining $q_{\overline{f}}(a)$ and $q_{\overline{f}}(b)$.

\begin{lemma}\label{lem:nonseparated}
If $a,b$ are not separated by $\xi$, then
\[
 \overline{f}:I(a,b)\longrightarrow I(\sig (a),\sig (b))
\]
is a monotone surjection.
\end{lemma}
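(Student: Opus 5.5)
The plan is to first prove the statement for the topological polynomial $f$ on the regulated arc $[p_f(a),p_f(b)]_{K_f}$, and then push the result through the collapsing map $\phi$ using the semiconjugacy \eqref{eq:12}.

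\emph{Setup.} Since $a,b$ are not separated by $\xi$, they lie in the closure of a single $\xi$-unlinked class $S$ on $\mathbb S$. Let $V$ be the corresponding unlinked class in the $f$-plane. The landing points $z_1=p_f(a)$ and $z_2=p_f(b)$ lie in $\overline V\cap K_f$, so by Lemma~\ref{lem:slits}(1) the regulated arc $A:=[z_1,z_2]$ is contained in $\overline V$. If $a$ or $b$ is itself a marking angle, first approximate it by angles of $S$ and use the closure statement of Lemma~\ref{lem:slits}(1); the endpoint case only adds points of $\partial V$.

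\emph{Step 1: injectivity of $f$ on $A$ modulo the forest.} By Lemma~\ref{lem:slits}(2), $f$ is a homeomorphism from $V$ onto $\C\setminus f(\partial V)$, so $f$ is injective on $A\cap V$. The arc $A$ can meet $\partial V$ only in core parts $C_Q$. A Julia core $C_Q$ with $Q\in\JJJ$ is a single point. A Fatou core $C_Q$ with $Q\in\FFF$ consists of internal rays, and $A$ meets it in a subarc of at most two internal rays of $\Omega_Q$ that lie in $\overline V$; these are pieces of $T_f(\xi)$. Consequently, distinct points of $A$ with the same $f$-image both lie in a single component of $T_f(\xi)\cap A$, namely the two internal rays of some $C_Q$ that are identified by $f$. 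This has to be checked against \eqref{eq:8}.

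\emph{Step 2: the image arc.} By Lemma~\ref{lem:regulated}(3), $f(A)$ is a regulated tree joining $f(z_1)=p_f(\sig(a))$ and $f(z_2)=p_f(\sig(b))$. By Step 1, $f(A)$ is the image of an arc under a map that fails to be injective only by folding the subarc inside a critical attracting domain: the two internal rays of that subarc are mapped onto the same internal ray. Hence $f(A)$ equals the regulated arc $[p_f(\sig a),p_f(\sig b)]$ together with possibly one extra internal ray segment, and this extra segment lies in $f(C_Q)\subset T_f(\xi)$.

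\emph{Step 3: collapse.} Apply $\phi$. Each component of $T_f(\xi)$ collapses to a point, and $\phi(f(A))=\overline f(\phi(A))$. So the extra spike disappears and $\overline f(I(a,b))=I(\sig a,\sig b)$, which gives surjectivity. For monotonicity, take a point $w$ of the image. Its preimage in $I(a,b)$ is $\phi$ of the set of points of $A$ whose $f$-image lies in $\phi^{-1}(w)$. Here $\phi^{-1}(w)$ is either a point or a component of $T_f(\xi)$. Using Step 1 and the forward invariance $f(T_f(\xi))\subset T_f(\xi)$, the preimage is a connected subarc, namely a point together with adjacent forest pieces. After applying $\phi$ it becomes a point or a subarc, hence it is connected.

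\emph{Main obstacle.} The hardest part is Step 1. One must show that all non-injectivity of $f$ on $A$ is confined to $T_f(\xi)$, and that preimages of forest components meet $A$ in connected sets. This needs the fact that $f^{-1}(T_f(\xi))\cap\overline V$ near $A$ consists only of internal rays attached along $A$. I would prove it by following $A$ through each attracting domain, using the description of regulated arcs as unions of at most two internal rays in each Fatou domain.
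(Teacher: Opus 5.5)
Your proposal has a genuine gap: the structural claim that Steps 1--3 depend on is never established, and as you state it, it does not follow from what precedes it.

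\textbf{Step 1.} From Lemma~\ref{lem:slits}(2) you get two facts: injectivity of $f$ on $A\cap V$, and $f(V)\cap f(\partial V)=\emptyset$. So any coincidence $f(u)=f(v)$ with $u\neq v$ forces $u,v\in A\cap\bigcup_{Q\in\xi}C_Q$. Your ``consequently'' then jumps to the conclusion that $u,v$ lie in one component of $T_f(\xi)\cap A$. For that you must exclude $u\in C_Q$, $v\in C_{Q'}$ with $Q\neq Q'$. The cases are:
\begin{enumerate}
\item two Julia critical points with a common value;
\item a Julia critical value equal to the landing point of the image ray $f(C_{Q'})$;
\item two critical Fatou domains adjacent to $V$ with the same image.
\end{enumerate}
The inclusion \eqref{eq:8} only locates $f(\partial V\cap K_f)$ in a finite union and says nothing about such coincidences. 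Excluding them needs a separate local-sector argument using injectivity of $f$ on $V$, which is exactly the part you defer.

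\textbf{Step 3.} Over a point $w\in\phi(T_f(\xi))$ you must show that $(f|_A)^{-1}(T')$ is connected, where $T'=\phi^{-1}(w)$. This is not automatic, because folding is not monotone: the preimage of a connected set containing a junction but only part of its spike is disconnected. Moreover, this preimage can contain points of $A$ outside $T_f(\xi)$. For example, internal rays of an unmarked, non-critical preimage domain of $\Omega_Q$ can map onto the core $C_Q$. So ``a point together with adjacent forest pieces'' is neither justified nor accurate. Also, $A$ can run along the cores of several Fatou markings, so Step~2 can produce several spikes, not just one.

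\textbf{What the paper does instead.} It avoids this fine analysis entirely.
\begin{itemize}
\item Set $H=\phi\circ f|_\gamma$ and $E=\phi(T_f(\xi))\cup\phi(\operatorname{CritV}(f)\cap J_f)$, which is finite.
\item Points of $\gamma$ off the marked cores lie in a single class $V$, and the cores map into $E$. So Lemma~\ref{lem:slits}(2) immediately gives that every point of $Y=H(\gamma)$ outside $E$ has at most one $H$-preimage. No classification of coincidences among cores is needed.
\item $Y$ is a finite tree. Suppose a fiber $H^{-1}(w)$ were disconnected, containing $u,v$ with $t$ between them and $H(t)\neq w$. Any $z\notin E$ on the open arc of $Y$ from $w$ to $H(t)$ would then have a preimage in each of $[u,t]$ and $[t,v]$, contradicting the previous point.
\item Hence $H$ is monotone and $Y$ is an arc, so $Y=\phi([f(x),f(y)])=I(\sig(a),\sig(b))$. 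The fibers of $\overline f|_{I(a,b)}$ are $\phi(H^{-1}(w))$, hence connected.
\end{itemize}
This ``injective off a finite set of the image tree implies monotone'' argument is the idea your proposal is missing. With it, your Steps~1 and~2 become unnecessary.
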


\begin{proof}
Write
\[
x=p_f(a),\quad y=p_f(b),\quad
\gamma=[x,y]_{K_f}.
\]
We consider the map
\[
H=\phi\circ f|_\gamma
  =\overline f\circ\phi|_\gamma.
\]

We first locate the part of $\gamma$ outside the marked ray stars.
For each $Q\in\xi$, the marked core  $C_Q=\wh{R}_f(Q)\cap K_f$
 is either a Julia critical point or a regulated tree in the closure of a critical Fatou domain.
By Lemma~\ref{lem:regulated}(2),
$\gamma\cap C_Q$ is connected. Hence $\gamma\setminus C_Q$
has at most two components, each lying in a component of
$\mathbb C\setminus\widehat R_f(Q)$.
If both components occur, they contain $x$ and $y$, respectively.
Since $a,b$ are not separated by $Q$, the rays $R_f(a)$ and
$R_f(b)$, and therefore these two components, lie {in} the same
component of $\mathbb C\setminus\widehat R_f(Q)$.
It follows that all points of
\[
\gamma\setminus\bigcup_{Q\in\xi}C_Q
\]
belong to a single $\xi$-unlinked class $V$, whenever this
set is nonempty.

Let
\[
E=\phi(T_f(\xi))\cup \phi\bigl(\operatorname{CritV}(f)\cap J_f\bigr).
\]
Then \(E\) is finite, and \(H(C_Q\cap\gamma)\subset E\) for every \(Q\in\xi\).
We claim that every point outside \(E\) has at most one preimage under \(H\). Indeed, suppose
\[
H(u)=H(v)=w\notin E,\qquad u,v\in\gamma.
\]
Since \(w\notin\phi(T_f(\xi))\), the fiber \(\phi^{-1}(w)\) is a singleton, so \(f(u)=f(v)\). Neither \(u\) nor \(v\) lies in a marked core \(C_Q\), hence both lie in \(V\). Lemma~\ref{lem:slits}(2) now yields \(u=v\).

By Lemma~\ref{lem:regulated}(3), \(f(\gamma)\) is a finite regulated tree. By Lemma~\ref{lem:regulated}(2), its intersection with each component of \(T_f(\xi)\) is connected. Hence
\[
Y:=H(\gamma)=\phi(f(\gamma))
\]
is a finite tree, obtained by collapsing finitely many subtrees of \(f(\gamma)\).

We claim that every fiber of \(H\) is connected. Suppose \(u,t,v\) occur in this order on \(\gamma\), with
\[
H(u)=H(v)=w,\qquad H(t)\ne w.
\]
Choose \(z\notin E\) in the open arc of \(Y\) joining \(w\) to \(H(t)\). Both connected sets \(H([u,t]_\gamma)\) and \(H([t,v]_\gamma)\) contain this arc, so \(z\) has two distinct preimages under \(H\), contradicting the claim. Therefore \(H\) is monotone, and \(Y\) is an arc (possibly degenerate) with endpoints \(\phi(f(x))\) and \(\phi(f(y))\).

The regulated arc \([f(x),f(y)]_{K_f}\) lies in \(f(\gamma)\), and its \(\phi\)-image is a connected subset of \(Y\) containing both endpoints. Hence
\[
Y=\phi([f(x),f(y)]_{K_f})=I(\sigma_d(a),\sigma_d(b)).
\]
Finally, \(\phi(\gamma)=I(a,b)\) and \(H=\overline f\circ\phi|_\gamma\). For every \(w\in Y\),
\[
\bigl(\overline f|_{I(a,b)}\bigr)^{-1}(w)=\phi\bigl(H^{-1}(w)\bigr),
\]
which is connected. Thus the lemma is proved.
\end{proof}

\section{Separation graph and path estimation}
\label{sec:paths}

{We recall the separation graphs of \cite{GT,Ti} and study their
path counts as abstract labeled wedges. The estimates below control
escaping paths and localize closed-path growth at one vertex.
Section~\ref{sec:dimension} gives these paths their geometric meaning
as branches and returns on postcritical arcs.}
\subsection{Separation graph and growth rate}

Fix integers $d\geq2$ and $1\leq s\leq d-1$.  The set of pairs
$$\Xi_s=\left\{ (y_k(i),y_l(j))  \ : \  k,l \in\{1,\ldots,s\}, 1 \leq i \leq j \right\}$$
is called the {\it wedge} (of size $s$), and the pairs $(y_k(i),y_l(j))$ are called \emph{vertices}.
We  define a \emph{labeling} of this wedge  as an assignment to each pair
$(y_k(i),y_l(j))$ of a label, which can be either $\emptyset$ or
$$(\alpha_1,\ldots,\alpha_r)$$
with $r\geq 1$ and $\alpha_1,\ldots,\alpha_r\in\{1,\ldots,s\}$ with the $\alpha_i$'s pairwise distinct.
We call the wedge with such a labeling a {\it labeled wedge} of size $s$ and denote it by $\WWW$.
A vertex of $\WWW$ is  \emph{non-separated} if its label is $\emptyset$, and \emph{separated} otherwise.

For each labeled wedge $\WWW$, we construct a separation graph $\Gamma=\G(\WWW)$: its vertices are the elements of \(\Xi_s\); for each vertex \(v=(y_k(i),y_l(j))\), the edges with source \(v\) are determined as follows:
\begin{enumerate}
\item
if  $v$ is labeled $\emptyset$, then there is only one outgoing edge, namely
$$( y_k(i), y_l(j) ) \to ( y_k(i+1), y_l(j+1) );$$
\item
If $v$ is labeled \((\alpha_1,\ldots,\alpha_r)\neq\emptyset\), then there are exactly \(r+1\leq d\) edges outgoing from \(v\), and the targets of these edges, in order, are
\begin{equation}\label{eq:split}
 \begin{split}
 &\big(y_{\alpha_1}(1),y_k(i+1)\big),\\
 &(y_{\alpha_1}(1),y_{\alpha_2}(1)),\ldots,
   (y_{\alpha_{r-1}}(1),y_{\alpha_r}(1)),\\
 &(y_{\alpha_r}(1),y_l(j+1)).
 \end{split}
\end{equation}
The first and last are called  \emph{outer edges}; the others are \emph{middle edges}.
 We call $i (=\min\{i,j\})$ the \emph{height} of $v$, and $j$ the \emph{width} of $v$.
\end{enumerate}

A length-\(n\) path in $\G$ based at a vertex \(v\) is a sequence
\[
v=v_0 \xrightarrow{e_0} v_1 \xrightarrow{e_1} \cdots \xrightarrow{e_{n-1}} v_n
\]
of edges, where \(v_0,\ldots,v_n\) are vertices. If \(v_n=v_0\), the path is called \emph{closed}. A path is specified by its edge sequence, so parallel edges are counted separately; vertices and edges may repeat. Closed paths with different starting vertices are considered distinct.

We use \(P_n(v)\) and \(C_n(v)\) to denote the number of length-\(n\) paths and closed paths in \(\G\) based at \(v\), respectively, and set
\[
C_n=\sum_{v\in \Xi_s} C_n(v).
\]

\begin{lemma}[\cite{GT},\,Proposition 4.1]\label{lem:confinement}
A closed path of length $n$ uses  vertices of height at most $n$ and
width at most $2n$. In particular, we have
\begin{equation}\label{eq:loop-bound}
 C_n\le4s^2n^2d^n.
\end{equation}
\end{lemma}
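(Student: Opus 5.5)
We track how height and width evolve along edges; the key point is that a closed path must eventually traverse a separated edge, and every separated edge resets the first coordinate to index $1$. Write $h(v)=i$ and $w(v)=j$ for $v=(y_k(i),y_l(j))$.

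\emph{Effect of a single edge.} A non-separated edge sends $(i,j)\mapsto(i+1,j+1)$. A separated edge from $(y_k(i),y_l(j))$ has one of three kinds of target: $(y_{\alpha_1}(1),y_k(i+1))$, with height $1$ and width $i+1$; a middle target, with height and width $1$; or $(y_{\alpha_r}(1),y_l(j+1))$, with height $1$ and width $j+1$. In every case the height increases by at most one, and the width increases by at most one.

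\emph{Locating a separated edge.} Consider a closed path $v_0\to\cdots\to v_n=v_0$ with $n\ge1$. If all its edges were non-separated, the height would strictly increase along it, which is impossible for a closed path. Hence some edge is separated, and its target $v_t$ has height $1$. Cycle the path so that it is based at $v_t$; this preserves the set of vertices it visits.

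\emph{Height bound.} Starting from height $1$ at $v_t$, each step raises the height by at most one. So every vertex on the path has height at most $1+(n-1)=n$.

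\emph{Width bound.} The target of a separated edge has width at most $\max(i,j)+1=j+1$, or at most $i+1$ when it is the first outer target. So width never grows by more than one per step. At $v_t$ the width equals $i'+1$ or $j'+1$, where $(i',j')$ are the height and width of the source vertex $v_{t-1}$, or it equals $1$.
\begin{itemize}
\item If the first outer edge or a middle edge was used, the width of $v_t$ is at most $h(v_{t-1})+1\le n+1$.
\item If the last outer edge was used, the width of $v_t$ is $w(v_{t-1})+1$, and a direct bound is circular. To avoid this, choose $v_t$ to be the target of the \emph{last} separated edge before returning to $v_{t-1}$'s position in cyclic order. Alternatively, pick the separated edge whose source has minimal width, and argue that widths between consecutive separated edges only increase by one per step.
\end{itemize}
The simplest robust estimate comes from following the path once around from $v_t$. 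The width at any vertex is at most (width at $v_t$) plus the number of steps taken. Every separated step either resets the width to at most $(\text{height})+1\le n+1$, or sets it to (previous width)$+1$. Hence over a full loop the width is bounded by (width at the last reset) $+\,n$. If no reset to a height-based value occurs, then every separated step is a last-outer step, and the width grows by exactly one per step, which contradicts closedness. So some reset exists, giving width at most $(n+1)+(n-1)=2n$ once the counting is done carefully. I expect this bookkeeping to be the main obstacle, namely getting exactly $2n$ rather than $2n+1$.

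\emph{Counting.} There are at most $s^2\cdot n\cdot 2n$ possible base vertices, since there are $s^2$ choices of the pair $(k,l)$, at most $n$ heights and at most $2n$ widths. Each vertex has at most $d$ outgoing edges, so at most $d^n$ paths of length $n$ start at a given vertex. Multiplying gives
\[
C_n\le 2s^2n^2d^n\le4s^2n^2d^n.
\]
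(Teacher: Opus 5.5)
Your proof is correct. The paper gives no argument of its own and only cites \cite[Proposition~4.1]{GT}. What you wrote is the standard confinement argument:
\begin{itemize}
\item A closed path must use a separated edge, so heights restart at $1$ and never exceed $n$.
\item Widths grow by at most one per step. A closed path must use a first-outer or middle edge, and the target of such an edge has width at most $n+1$, so every width on the loop is at most $(n+1)+(n-1)=2n$.
\end{itemize}
This even gives $C_n\le 2s^2n^2d^n$. In a write-up, delete the abandoned alternatives in the width bullet. Also replace ``(width at the last reset)${}+n$'' by ${}+(n-1)$, because the $n$ vertices of the loop are reached within $n-1$ steps of the reset target.
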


The \emph{(logarithmic) growth rate} of  $\G$, or the labeled wedge $\WWW$, is defined as
\[h(\G)=h(\WWW):=\limsup_{n\to\infty}\log\max\left\{1,\sqrt[n]{C_n}\right\}\]
It follows from Lemma \ref{lem:confinement} that $h(\G)\leq\log d$ for all $\G$.\vspace{3pt}

A sequence $(\WWW_n)_{n\geq1}$ of labeled wedges of size $s$ is said to \emph{converge} to $\WWW$ if, for every finite set of vertices $S\subset\Xi_s$, there is an $N$ such that for every $n\geq N$ and every $v\in S$, the label of $v$ in $\WWW_n$ agrees with its label in $\WWW$.

\begin{lemma}[\cite{GT},\,Lemma 4.4]\label{lem:convergence-of-growth-rate}
If a sequence of labeled wedges $(\WWW_n)_{n\geq1}$ converges to $\WWW$, then the growth rate of $\WWW_n$ converges to that of $\WWW$.
\end{lemma}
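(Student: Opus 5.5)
The plan is to use convergence of labels on finite vertex sets together with the uniform confinement of Lemma~\ref{lem:confinement}. Write $C_n(\WWW)$ for the closed-path count. By Lemma~\ref{lem:confinement}, every closed path of length $n$ visits only vertices of height at most $n$ and width at most $2n$. Let $S_n$ denote this finite set of vertices; it contains at most $4s^2n^2$ elements.

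The edges leaving a vertex are determined entirely by its label, through rule (1) or (2). So the number of closed paths of length $n$ depends only on the labels of the vertices in $S_n$. Given any fixed $n$, convergence supplies an index $N(n)$ with $C_n(\WWW_k)=C_n(\WWW)$ for all $k\ge N(n)$. The difficulty is that $h$ is a $\limsup$ over $n$, while agreement holds only up to length $n$ for each $k$. We therefore need estimates in which the error is controlled uniformly in the wedge.

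For the lower bound, we want to show $\liminf_k h(\WWW_k)\ge h(\WWW)$. Suppose $h(\WWW)>0$ and fix $\varepsilon>0$. Choose $n$ with $\frac1n\log C_n(\WWW)>h(\WWW)-\varepsilon$ and $\frac1n\log(4s^2n^2)<\varepsilon$. By pigeonhole there is a vertex $v\in S_n$ with
\[
C_n(v)\ge C_n(\WWW)/(4s^2n^2).
\]
Concatenating closed paths based at $v$ gives $C_{mn}(v)\ge C_n(v)^m$ in any wedge where these loops persist. For $k\ge N(n)$ the loops at $v$ persist in $\WWW_k$, since their existence depends only on labels in $S_n$. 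Hence
\[
h(\WWW_k)\ge \tfrac1n\log C_n(v)\ge h(\WWW)-2\varepsilon .
\]

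For the upper bound, which I expect to be the main obstacle, we need a bound on the growth rate that holds uniformly across all wedges. The idea is a submultiplicativity estimate showing that $h(\WWW')$ is at most $\frac1n\log\max(1,C_n(\WWW'))$ plus an error $\varepsilon(n)\to0$ depending only on $d,s,n$. The first approach I would try goes through long closed paths. Any closed path of length $L$ lies in $S_L$, and the height constraint forces it to return repeatedly to low-height vertices. The reason is that edges either increase height by one (non-separated vertices) or reset one coordinate to $1$ (separated vertices). Using this, I would cut a long closed path into blocks of length about $n$ and close each block up into a closed path of length at most $n+O(1)$ based at a vertex of bounded height. This cutting-and-closing would give
\[
C_L(\WWW')\le \mathrm{poly}(L)\,\bigl(\max_{n'\le n+c}C_{n'}(\WWW')\bigr)^{L/n},
\]
with polynomial factors bounded in terms of $s$ and $d$.

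If the block surgery proves too delicate, I would fall back on spectral theory. The plan there is to truncate the graph to vertices of height and width at most $n$ and compare with the spectral radius of the resulting finite transfer matrix. The contribution of vertices of large height would be bounded by the escaping-path estimate: along a chain of non-separated vertices the path is forced, so its count grows only polynomially. In either version, the uniform bound combined with $C_{n'}(\WWW_k)=C_{n'}(\WWW)$ for $n'\le n+c$ and $k$ large gives $\limsup_k h(\WWW_k)\le h(\WWW)+2\varepsilon$. Together with the lower bound, this completes the argument.
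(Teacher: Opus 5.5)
Your lower bound is correct. Closed paths of length $n$ stay in the finite confinement region of Lemma~\ref{lem:confinement}, and whether they exist depends only on the labels there. So the pigeonholed loops at $v$ persist in $\WWW_k$ for large $k$, and concatenation gives $h(\WWW_k)\ge h(\WWW)-2\varepsilon$.

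The gap is the upper bound, which is the real content of the lemma, and neither of your routes proves it. Your block estimate $C_L\le\mathrm{poly}(L)\bigl(\max_{n'\le n+c}\max\{1,C_{n'}\}\bigr)^{L/n}$, with $c$ and the polynomial depending only on $s,d$, is false. Take $s=1$, $d=2$, and label a vertex separated exactly when its height equals $H$. Every closed path is then a concatenation of rounds of length exactly $H$: climb from height $1$ to height $H$, then jump back to height $1$. A round starting at $(y_1(1),y_1(k))$ ends either at $(y_1(1),y_1(H+1))$ or at $(y_1(1),y_1(k+H))$. Hence $C_{mH}\ge 2^{m-1}$ and $h=\log 2/H>0$, yet there is no closed path of length $<H$. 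So for $H>n+c$ your right-hand side is polynomial in $L$. With $H=2$, your first formulation $h\le\frac1n\log\max\{1,C_n\}+\varepsilon(n)$ also fails for every odd $n$. The step that breaks is the closing-up of blocks. Separation graphs are not strongly connected, and returns can take arbitrarily long depending on where the separated labels sit. Closed paths can also stay at bounded height while their width is unbounded, so there is no finite anchor set at which to cut with uniformly bounded overhead.

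The spectral fallback has the same hole. The spectral radius of a finite truncation counts only closed paths that stay inside it. The escape estimate counts only paths that stay outside, and it gives $2^{1+n/N}$, not a polynomial, since outer edges can keep the width large. What must be controlled are closed paths that alternate between the two. Here is one way to do it with the estimates already in the paper.

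For $u,v\in S_N$, let $a_{uv}(\ell)$ count length-$\ell$ paths from $u$ to $v$ whose interior avoids $S_N$, and set $F_{uv}(t)=\sum_{\ell\ge1}a_{uv}(\ell)t^\ell$. Then $\sum_L C_L(u)t^L=\sum_{m\ge1}(F(t)^m)_{uu}$. Lemma~\ref{lem:escape} gives $a_{uv}(\ell)\le d^2\,2^{1+\ell/N}$ for every wedge, while the coefficients with $\ell\le L_0$ depend only on finitely many labels.

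Fix $\varepsilon>0$, choose $N$ with $\log2/N<h(\WWW)+\varepsilon$, and set $t=e^{-h(\WWW)-\varepsilon}<2^{-1/N}$, so all entries converge. Then $\rho(F_{\WWW}(t))<1$: otherwise Perron--Frobenius makes some $\sum_L C_L(u)t^L$ diverge, forcing $h(\WWW)\ge h(\WWW)+\varepsilon$. The uniform tail bound, together with monotonicity and continuity of the spectral radius, gives $\rho(F_{\WWW_k}(t))<1$ for all large $k$. Hence closed paths meeting $S_N$ in $\WWW_k$ grow at rate at most $h(\WWW)+\varepsilon$. Closed paths avoiding $S_N$ grow at rate at most $\log2/N$. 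Therefore $\limsup_k h(\WWW_k)\le h(\WWW)+\varepsilon$.

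The paper itself gives no proof of this lemma; it is quoted from Gao--Tiozzo.
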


\subsection{Core entropy and Thurston's algorithm}\label{sec:graph}
We now see how to associate to each critical
portrait a labeled wedge.
Let $$\xi=\{Q_1,\ldots,Q_s\}$$ be a critical portrait.
For $i\ge1$ and $k\in\{1,\ldots,s\}$, set $x_k(i):=\sig^i(Q_k)$.

The portrait $\xi$ induces a labeled wedge $\WWW_\xi$ of size $s$ as follows:  
for any vertex $(y_k(i),y_l(j))$ of $\Xi_s$, 
 it is labeled $\emptyset$ if the angles $x_k(i),x_l(j)$ are not separated by $\xi$;
and labeled $(\alpha_1,\ldots,\alpha_r)$ ($r\geq1$) if 
\begin{enumerate}
\item the chord  $\overline{x_k(i)x_l(j)}$ successively crosses $Q_{\alpha_1},\ldots,Q_{\alpha_r}$; and
\item no other element of $\xi$ separates $x_k(i)$ from $x_l(j)$.
\end{enumerate}

We denote  by $\G_\xi$ the separation graph associated to $\WWW_\xi$. The growth rate  $h(\G_\xi)=h(\WWW_\xi)$  is simply denoted by $h(\xi)$.

\begin{definition}[core entropy]
Let $\xi$ be a critical portrait. Then its \emph{core entropy}   is defined to be  $h(\xi)$.
\end{definition}

Thurston's algorithm computes the core entropy of a postcritically
finite polynomial from a weak critical marking [GT, Theorem~3.4].
Tiozzo developed an infinite-graph formulation in the quadratic
case; Gao and Tiozzo extended this construction to arbitrary
degree. It is well-known that rational critical portraits correspond to postcritically finite polynomials, but the general relation between critical portraits and polynomials remains unclear.

\subsection{Counting closed paths}
We show that the exponential growth of all closed paths is already attained at a single vertex.

Fix an integer $N>0$ and define a subset
\begin{equation}\label{eq:10}
S_N=\{v\in \Xi_s:{\rm width}(v)\le N\}
\end{equation} 
of the wedge $\Xi_s$. Every closed path either avoids $S_N$ or meets it. We first estimate the number of paths of the former kind.

\begin{lemma}\label{lem:escape}
 For $v\notin S_N$, denote $P_n^{\rm out}(v)$ the number of
length-$n$ paths which start from $v$ and whose vertices avoid  $S_N$. Then
\begin{equation}\label{eq:13}
P_n^{\rm out}(v)\leq 2^{1+n/N}.
\end{equation}
\end{lemma}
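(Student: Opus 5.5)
The plan is to track the height and width coordinates along a path that never enters $S_N$, and to show that a genuine binary choice can only occur at widely spaced times. Write a vertex as $(y_k(i),y_l(j))$ with height $i$ and width $j$. The proof proceeds in three steps.

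\emph{Step 1: classify the out-edges of a vertex $v=(y_k(i),y_l(j))\notin S_N$, so $j>N$, by their targets.}
\begin{itemize}
\item If $v$ is labeled $\emptyset$, there is exactly one out-edge, and it goes to $(y_k(i+1),y_l(j+1))$, which has height $i+1$.
\item If $v$ is separated, every middle edge goes to a vertex $(y_\alpha(1),y_\beta(1))$ of width $1\le N$. Such edges are forbidden for paths avoiding $S_N$.
\item The last outer edge of a separated $v$ goes to $(y_{\alpha_r}(1),y_l(j+1))$. This vertex has height $1$ and width $j+1>N$, so it is always allowed.
\item The first outer edge of a separated $v$ goes to $(y_{\alpha_1}(1),y_k(i+1))$. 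This vertex has height $1$ and width $i+1$, so it is allowed only if $i+1>N$, i.e.\ $i\ge N$.
\end{itemize}
Consequently every vertex on an admissible path has at most two admissible out-edges. Call $v$ a \emph{branch vertex} if it has two. Then a branch vertex is separated and has height at least $N$.

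\emph{Step 2: show that branch vertices are spaced at least $N$ steps apart along any admissible path $v_0\to v_1\to\cdots\to v_n$.}
\begin{itemize}
\item Every edge leaving a separated vertex lands at height $1$.
\item An edge leaving an unseparated vertex raises the height by exactly $1$.
\item Hence, if $v_t$ is a branch vertex, then $v_{t+1}$ has height $1$, and $v_{t+1+k}$ has height at most $1+k$.
\item For $v_{t+1+k}$ to be a branch vertex we need height at least $N$, so $k\ge N-1$. The next branch time is therefore at least $t+N$.
\end{itemize}
Only $v_0,\dots,v_{n-1}$ matter for choosing edges. So the number of branch times in $\{0,\dots,n-1\}$ is at most $1+\lfloor (n-1)/N\rfloor\le 1+n/N$.

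\emph{Step 3: count the paths.} Build admissible paths step by step. At a non-branch vertex there is at most one admissible continuation. At a branch vertex there are at most two. A path is determined by its choices at branch vertices, and each path meets at most $1+n/N$ of them. An induction on $n$, or equivalently a tree-counting argument, therefore gives
\[
P_n^{\rm out}(v)\le 2^{1+n/N}.
\]
The extra $1$ in the exponent accounts for $v_0$ itself possibly being a branch vertex.

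The only delicate point is Step 2. One must notice that \emph{both} outer edges reset the height to $1$, not just the first one. This is what prevents branch vertices from accumulating. One should also check that the middle-edge targets, having width $1$, indeed lie in $S_N$ for every $N\ge1$.
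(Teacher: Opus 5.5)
Your proof is correct and follows essentially the same argument as the paper: middle edges land at width $1$ and hence in $S_N$, a separated vertex has two admissible outer edges only when its height is at least $N$, and since every separating edge resets the height to $1$ while a non-separating edge raises it by exactly one, consecutive binary choices are at least $N$ steps apart, so there are at most $1+\lfloor (n-1)/N\rfloor$ of them. Your explicit tree-counting step in Step 3 spells out the final count that the paper leaves implicit.
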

\begin{proof}
For any vertex $w\in \Xi_s$, the two target indices of the middle edges starting from $w$ are both \(1\). So middle edges all enter \(S_N\), and at most two outer edges remain outside. If the source index is \(i\le j\), both outer edges avoid \(S_N\) if and only if \(i+1>N\), i.e., the height of $w$ is at least \(N\).

The target height of any separating edge is \(1\). Before the next binary choice, the height must increase from \(1\) to at least \(N\), and each step increases it by at most \(1\). Hence two consecutive binary choices are separated by at least \(N\) steps. On a path of length \(n\), the number of binary choices is at most \(1+\lfloor (n-1)/N\rfloor\). Then the estimation \eqref{eq:13} holds.
\end{proof}

By Lemmas \ref{lem:confinement} and \ref{lem:escape}, the number $C_n^{\rm out}$ of length-$n$ closed paths avoiding $S_N$ satisfies
\[
C_n^{\rm out}\leq 4s^2n^2\,2^{1+n/N}.
\]
Consequently,
\begin{equation}\label{eq:14}
\limsup_{n\to\infty}\frac{1}{n}\log\max\{C_n^{\rm out},1\}\leq \frac{\log 2}{N}.
\end{equation}

\begin{lemma}\label{lem:meeting}
There exist an integer $N>0$ and a vertex $v\in S_N$ such that
\begin{equation}\label{eq:15}
h(\G)=\limsup_{n\to\infty}\frac{1}{n}\log \max\{C_n(v),1\}.
\end{equation}

\end{lemma}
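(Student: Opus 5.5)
We prove Lemma~\ref{lem:meeting} by splitting closed paths according to whether they meet $S_N$, discarding the escaping ones by \eqref{eq:14}, and moving the base point of the remaining ones into the finite set $S_N$.

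\emph{The case $h(\G)=0$.} Every summand in the definition is nonnegative, so any $v\in S_N$ works as soon as its limsup is also zero. This holds for every $v$, since $C_n(v)\le C_n$.

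\emph{The case $h(\G)>0$, choice of $N$.} Fix $N$ with $\log 2/N<h(\G)$.

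\emph{Step 1: the escaping paths are negligible.} Write $C_n=C_n^{\rm out}+C_n^{\rm in}$, where $C_n^{\rm in}$ counts closed paths of length $n$ that visit some vertex of $S_N$. By \eqref{eq:14},
\[
\limsup_{n\to\infty}\tfrac1n\log C_n^{\rm in}=h(\G).
\]

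\emph{Step 2: rebasing at a vertex of $S_N$.} Take a closed path counted in $C_n^{\rm in}$, based at $v_0$ and of length $n$. Rotate it cyclically so that it starts at the first time $t\in\{0,\dots,n-1\}$ at which it sits in $S_N$. The result is a closed path of length $n$ based at some $w\in S_N$.

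The original path is recovered from the pair (rotated path, $t$). Hence the map is at most $n$-to-one, and
\[
C_n^{\rm in}\le n\sum_{w\in S_N}C_n(w).
\]

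\emph{Step 3: finiteness of $S_N$.} The set $S_N$ is finite: the width is at most $N$, so the height is at most $N$, and there are at most $s^2N^2$ such vertices. Therefore
\[
C_n^{\rm in}\le n\,s^2N^2\max_{w\in S_N}C_n(w).
\]

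\emph{Step 4: pigeonhole.} Choose $n_k\to\infty$ realizing the limsup of $\frac1{n_k}\log C_{n_k}^{\rm in}$. Since $S_N$ is finite, some fixed $v\in S_N$ attains $\max_{w\in S_N}C_{n_k}(w)$ for infinitely many $k$. Passing to that subsequence, the polynomial factor $n\,s^2N^2$ disappears after taking $\frac1n\log$, which gives
\[
\limsup_{n\to\infty}\tfrac1n\log\max\{C_n(v),1\}\ge h(\G).
\]

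\emph{Step 5: the reverse inequality.} We have $C_n(v)\le C_n$, so the limsup for $v$ is at most $h(\G)$. This yields \eqref{eq:15}.

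The step needing care is Step 1 combined with Step 2. The key requirement is $\log2/N<h(\G)$, so that $C_n^{\rm in}$ rather than $C_n^{\rm out}$ carries the full growth rate. If instead we had $\log2/N\ge h(\G)$, the bound \eqref{eq:14} would no longer show that the growth comes from paths meeting $S_N$, and Step 1 would fail.
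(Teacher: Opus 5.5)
Your proof is correct and follows the paper's argument essentially step by step. You choose $N$ with $\log 2/N<h(\G)$, discard the closed paths avoiding $S_N$ via \eqref{eq:14}, rotate the remaining ones to a visit in $S_N$ to get $C_n^{\rm meet}\le n\sum_{u\in S_N}C_n(u)$, and conclude from the finiteness of $S_N$ together with $C_n(u)\le C_n$; your Steps 3--5 simply spell out the pigeonhole argument that the paper leaves implicit.
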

\begin{proof}

Set $h:=h(\G)$. If $h=0$, since $C_n(v)\leq C_n$ for every vertex $v\in\Xi_s$, equation \eqref{eq:15} clearly hold for every $v\in S_N$.

Suppose $h>0$, and choose $N$ so that $\log 2/N<h$. Let $C_n^{\rm meet}$ denote the number of length-$n$ closed paths that meet $S_N$. Then $C_n=C_n^{\rm out}+C_n^{\rm meet}$, and \eqref{eq:14} implies
\[
\limsup_{n\to\infty}\frac1n\log\max\{C_n^{\rm meet},1\}=h.
\]
Rotating a closed path that meets $S_N$ to one of its visits to a vertex in $S_N$ gives
\[
C_n^{\rm meet}\leq n\sum_{u\in S_N} C_n(u).
\]
Because $C_n(u)\leq C_n$ and $S_N$ is finite, some $u\in S_N$ satisfies
\[
\limsup_{n\to\infty}\frac1n\log\max\{C_n(u),1\}=h.
\]
This proves \eqref{eq:15}.
\end{proof}

\section{Core entropy and biaccessibility dimension}
\label{sec:dimension}

Fix a laminational equivalence relation $\sim$ and set $\LLL=\LLL_\sim$.  Let $$\xi=\{Q_1,\ldots,Q_s\}$$ be a marking portrait of  $\LLL$. Recall that $B(\LLL)$ is the set of biaccessible angles (i.e., endpoints of non-degenerate leaves) of $\LLL$. The aim of this section is to prove Theorem \ref{thm:equal} restated as follows.
\begin{theorem}\label{thm:equal1}
The core entropy  $h(\xi)$ coincides with $(\log d)\Hd B(\LLL)$.
\end{theorem}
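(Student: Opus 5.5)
We first reduce to the Siegel non-capture case. By Proposition~\ref{pro:reduction} there is a coarser relation $\sim_{\rm nc}$ for which $\xi$ is still a marking portrait. The laminations $\LLL_{\sim_{\rm nc}}$ and $\LLL_\sim$ differ by countably many leaves. Hence $B(\LLL_{\sim_{\rm nc}})$ and $B(\LLL)$ differ by a countable set, and the Hausdorff dimension is unchanged. The labeled wedge $\WWW_\xi$ depends only on $\xi$, so $h(\xi)$ is unchanged as well. We may therefore assume that $f=f_\sim$ is Siegel non-capture and work with the reduced map $\overline f$ and the arcs $I(a,b)$ of \eqref{eq:arcs}.

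\textbf{Geometric meaning of the graph.} To each vertex $v=(y_k(i),y_l(j))$ we attach the arc $I_v:=I(x_k(i),x_l(j))$ in $K_{\overline f}$, together with the angular cylinder of angles whose $\xi$-unlinked itinerary is compatible with $v$. Suppose $v$ is labeled $\emptyset$. Then Lemma~\ref{lem:nonseparated} shows that $\overline f:I_v\to I_{v'}$ is a monotone surjection onto the arc of the unique successor $v'$. Suppose instead $v$ is labeled $(\alpha_1,\dots,\alpha_r)$. Then the chord crosses $Q_{\alpha_1},\dots,Q_{\alpha_r}$, and the points $q_{\overline f}(Q_{\alpha_t})$ cut $I_v$ into $r+1$ subarcs, each with endpoints not separated by $\xi$. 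Applying Lemma~\ref{lem:nonseparated} to each subarc, it maps monotonically onto the arc of the corresponding target in \eqref{eq:split}. Iterating, a length-$n$ path from $v$ corresponds to a subarc $J\subset I_v$ with $\overline f^n:J\to I_{v_n}$ monotone onto. It also corresponds to a $\sigma_d^n$-inverse branch on one unlinked class per step, and by \eqref{eq:7} each such branch contracts by $d^{-n}$. Distinct paths give subarcs with disjoint interiors and disjoint angular cylinders.

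\textbf{Upper bound.} We fix $N$ large. Angles in $B(\LLL)$ are endpoints of leaves, and leaves are controlled by the arcs $I_v$. Each biaccessible angle $\theta$ gives rise to an infinite path in $\Gamma_\xi$. There are two cases.
\begin{itemize}
\item If the path returns to $S_N$ infinitely often, we cover the angles by cylinders of length $d^{-n}$ indexed by paths that start and end in $S_N$. There are at most $\#S_N\sum_{u\in S_N}C_n(u)$-type counts of these, up to polynomial factors; this uses the rotation argument of Lemma~\ref{lem:meeting} and Lemma~\ref{lem:confinement}. This gives $\Hd\le h(\xi)/\log d$.
\item If the path eventually avoids $S_N$, Lemma~\ref{lem:escape} covers these angles by at most $2^{1+n/N}$ cylinders of size $d^{-n}$. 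Their dimension is therefore at most $\log2/(N\log d)$, which tends to $0$ as $N\to\infty$.
\end{itemize}
The set of degenerate arcs ($I_v$ a point) also has to be handled. There the subarcs collapse, and we must check that such vertices carry no leaves with distinct endpoints. We expect this to follow from the monotonicity in Lemma~\ref{lem:nonseparated}: a degenerate arc has only degenerate preimage subarcs.

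\textbf{Lower bound, the main obstacle.} By Lemma~\ref{lem:meeting} there is a vertex $v$ whose closed-path growth equals $h(\xi)$. We first need to show that $v$ can be taken with $I_v$ non-degenerate. We would try to show that closed paths through degenerate vertices grow subexponentially, since a degenerate arc forces the non-separated label along a stretch. Given $M$ closed paths of length $m$ at $v$ with $\frac{\log M}{m}\approx h(\xi)$, we obtain $M$ subarcs of $I_v$, each mapped monotonically onto $I_v$ by $\overline f^m$. This is an interval horseshoe. Its nested intersections correspond to a compact $\sigma_d^m$-invariant set of angles, coded by a full shift on $M$ symbols. We must then show that, after removing countably many angles (marked, precritical, or boundary angles), points of the non-escaping set are non-singleton $\sim$-classes, that is, they lie in $B(\LLL)$. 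The idea is that a point in the interior of a non-degenerate arc that is not a branch point receives rays from both sides. The classical entropy–dimension formula for $\sigma_d^m$-invariant sets, with the entropy $\log M$ not lost under the lift, then gives $\Hd B(\LLL)\ge \log M/(m\log d)$. Letting $m\to\infty$ gives the lower bound. Ensuring that the lift is injective enough to preserve entropy, and that the horseshoe points are genuinely biaccessible, is the delicate step.
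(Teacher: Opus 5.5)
Your architecture matches the paper's: reduction to the Siegel non-capture case, the arc coding of Proposition~\ref{prop:arc-rule}, a recurrent/escaping split for the upper bound, and a horseshoe plus the entropy--dimension formula for the lower bound. However, two load-bearing steps are asserted rather than proved.

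In the upper bound, the missing idea sits in your sentence ``leaves are controlled by the arcs $I_v$, so each biaccessible angle gives rise to an infinite path''. Nothing in the arc rule forces a cutpoint of $J_f$ ever to enter a postcritical arc. The paper needs a genuine dynamical input here, Proposition~\ref{pro:atom}: every cutpoint eventually lies in a critical atom, separates two critical atoms, or separates a critical atom from its image. One then places the point in the interior of a regulated arc joining two critical points, or a critical point and a critical value. One forward step then lands it in the interior of a non-degenerate $I_v$ (Lemma~\ref{lem:coverage}).

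The coding must also be made well defined. You must first discard the countable set $\Upsilon$, so that the orbit avoids branch points, Fatou boundaries, the collapsed forest and all subdivision points. Then Lemma~\ref{lem:slits} shows that every regular angle landing on a branch $L_e$ lies in the single unlinked class $S_e$. Only then does $\theta$ belong to the angular cylinder of its coding path.

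Three smaller points on the upper bound:
\begin{enumerate}
\item A cylinder is a union of up to $2dn$ intervals of length $d^{-n}$, not one interval, because each inverse branch of $\sigma_d$ on an unlinked class is only piecewise affine (Lemma~\ref{lem:cylinders}).
\item Paths that merely start and end in $S_N$ are not directly controlled by closed-path counts in this infinite graph. Instead, pigeonhole to a single vertex $u$ visited infinitely often and cover by cylinders of closed paths at $u$.
\item Degenerate arcs need no treatment in the upper bound, since overcounting paths is harmless there.
\end{enumerate}

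In the lower bound you leave open exactly the step that carries the proof. First, your heuristic for finding a non-degenerate base vertex is not right as stated. Labels at zero-arc vertices need not be $\emptyset$: two postcritical angles may land at a Julia critical point on opposite sides of its marking, and forest arcs may cross a Fatou marking. The argument that works is Lemma~\ref{lem:zero}. The set $Z$ is forward invariant, and after $m_0$ steps (Lemma~\ref{lem:forest}) both angles land at one Julia point. From then on, branching occurs only when that orbit meets a Julia critical point, which is non-periodic and hence met at most once. So paths from $Z$ are uniformly bounded in number, and closed paths meeting $Z$ grow polynomially.

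Second, your monotone horseshoe is not conjugate to the full $M$-shift, so there is no full shift of angles to read off. It must be built from trimmed, not full, inverse intervals, since full ones overlap along plateaux. Even then, nested intersections need not be points (plateaux, pieces of Fatou domains and of the collapsed forest), and adjacent branches may share endpoints. The paper instead argues as follows.
\begin{enumerate}
\item[(i)] $h_{\rm top}\ge\log M$ on the non-escaping set $X$ of any monotone $M$-horseshoe (Lemma~\ref{lem:entropy}).
\item[(ii)] This entropy is carried by $X\cap\overline J$. Points of $X$ in Fatou domains converge locally uniformly to attracting cycles or fall onto Siegel centers, so Proposition~\ref{pro:entropy}(2) applies (Lemma~\ref{lem:fatou-removal}).
\item[(iii)] $q$ has finite fibers over $\overline J$, so Proposition~\ref{pro:entropy}(3) transfers this entropy without loss to the compact $\sigma_d^m$-invariant set $A_0=q^{-1}(X\cap\overline J)$.
\item[(iv)] Apart from finitely many points, each point of $X\cap\overline J$ has a unique lift in the interior of a regulated arc joining Julia points. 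Such a lift is a cutpoint, so $A_0\setminus B$ is finite.
\end{enumerate}
Only after (i)--(iv) does Lemma~\ref{lem:formula}, applied to $\sigma_d^m$, give $\Hd B\ge\log M/(m\log d)$.
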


By Proposition \ref{pro:reduction}, it suffices to assume that $\LLL$ is Siegel non-capture. In this case $\xi$ contains no Siegel marking so that $\FFF=\FFF_A$.\vspace{3pt}

Throughout this section, simply write $B=B(\LLL)$, $h=h(\xi)$, and
$f=f_{\sim}$. We always assume that $f$ is Siegel non-capture, and retain the notation of Section~\ref{sec:reduce}:
$\overline{f}$ is the reduced map and $\phi f=\overline{f}\phi$. For brevity, we write
\[
p=p_f,\quad q=q_{\overline f}=\phi\circ p,\quad c_k=q(\Theta(Q_k)),\quad J_{\overline{f}}=\phi(J_f)=\overline{J}.
\]
For convenience, we refer to $c_1,\ldots,c_s$
(not necessarily distinct) as the  critical
points of $\overline f$, even though $\overline f$
may collapse some arcs to points.

For a formal vertex $v=(y_k(i),y_l(j))$ of $\G_\xi$, set
$$I_v=I(x_k(i),x_l(j))=\phi\big([p(x_k(i)),p(x_l(j))]_{K_f}\big),$$ called a \emph{postcritical arc}. Distinct vertices may therefore determine the same arc, and an arc may be degenerate.

\subsection{Graph paths and  dynamics on postcritical arc}
\label{sec:coding}
The proof of Theorem~\ref{thm:equal1} compares three types of orbits: an edge path in the separation graph,
an orbit of $\overline f$ on postcritical arcs, and a $\sigma_d$-itinerary on $\mathbb S$. Here we establish the correspondence between the first two. Proposition~\ref{prop:arc-rule}
is the one-step statement; successive pullbacks give the
correspondence for finite and infinite paths.

The maps from the preceding sections fit into the commuting diagram
\[
\begin{CD}
\Sone @>{\sig}>> \Sone\\
@V{p}VV @VV{p}V\\
J_f @>{f}>> J_f\\
@V{\phi}VV @VV{\phi}V\\
\overline{J} @>{\overline{f}}>>\overline{J}.
\end{CD}
\]
In particular, $\overline{f}\circ q=q\circ \sig$. The graph will record the successive
branches along an orbit after an initial postcritical arc has
been chosen.

\begin{proposition}\label{prop:arc-rule}
Let $a,b\in\mathbb S$ have their separation vector $(\alpha_1,\ldots,\alpha_r)$. Set
\[
b_0=q(a),\quad b_j=c_{\alpha_j}\ (1\le j\le r),
\quad b_{r+1}=q(b).
\]
Choose $\theta_j\in\Theta(Q_{\alpha_j})$ for $1\le j\le r$,
and set $\theta_0=a$, $\theta_{r+1}=b$.
Then the points $b_0,\ldots,b_{r+1}$ occur in this order on $I(a,b)$,
and, for $0\le j\le r$,
\begin{equation}\label{eq:local-arc}
\overline f:[b_j,b_{j+1}]
\longrightarrow I(\sigma_d(\theta_j),\sigma_d(\theta_{j+1}))
\end{equation}
is a monotone surjection. Repeated points and degenerate subarcs
are allowed.
\end{proposition}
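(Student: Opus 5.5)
The plan is to reduce Proposition~\ref{prop:arc-rule} to Lemma~\ref{lem:nonseparated}, applied once to each consecutive pair $(\theta_j,\theta_{j+1})$. Before doing so, two facts about the points $b_j$ have to be established. First, each $b_j=c_{\alpha_j}$ lies on $I(a,b)$, and these points appear in the stated order. Second, the subarc $[b_j,b_{j+1}]$ of $I(a,b)$ coincides with $I(\theta_j,\theta_{j+1})$. Once both facts are in hand, it remains to check that $\theta_j,\theta_{j+1}$ are not separated by $\xi$; Lemma~\ref{lem:nonseparated} then yields exactly the monotone surjection \eqref{eq:local-arc}.

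\emph{Order and position.} Put $x=p(a)$, $y=p(b)$ and $\gamma=[x,y]_{K_f}$. Suppose $Q=Q_{\alpha_j}$ separates $a$ from $b$. Then the rays $R_f(a)$ and $R_f(b)$ lie in different components of $\mathbb C\setminus\widehat R_f(Q)$. Since $\gamma\cup R_f(a)\cup R_f(b)$ is connected, $\gamma$ must meet the core $C_Q$, and by Lemma~\ref{lem:regulated}(2) the intersection $\gamma\cap C_Q$ is a (possibly degenerate) subarc. Its image under $\phi$ is the single point $c_{\alpha_j}$: for $Q\in\JJJ$ the core is already a point, and for $Q\in\FFF$ the core lies in the collapsed forest $T_f(\xi)$. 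Hence $b_j\in I(a,b)$.

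For the ordering I will use the hypothesis that the chord $\overline{ab}$ crosses $Q_{\alpha_1},\dots,Q_{\alpha_r}$ successively. The marked ray stars are pairwise disjoint away from shared landing points. Moreover, each star separates the plane compatibly with the way the corresponding polygon separates the circle, by the canonical correspondence between unlinked classes. So walking along $\gamma$ from $x$ to $y$ meets the cores in the same order. Repeated points, which arise when several cores collapse to a common point, are harmless.

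\emph{Identifying subarcs and the non-separation check.} Let $z_j$ be the landing point of $\theta_j$; it lies in $C_{Q_{\alpha_j}}$. The regulated arc $[z_j,z_{j+1}]$ and the piece of $\gamma$ between the cores $C_{Q_{\alpha_j}}$ and $C_{Q_{\alpha_{j+1}}}$ differ only inside these cores. This follows from the uniqueness of regulated arcs together with the description of $\gamma\cap C_Q$ above. Inside a core, the difference consists of internal rays of a critical Fatou domain, and $\phi$ collapses these. Therefore $\phi([z_j,z_{j+1}])=[b_j,b_{j+1}]$, which is the required identification $[b_j,b_{j+1}]=I(\theta_j,\theta_{j+1})$.

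It remains to see that $\theta_j$ and $\theta_{j+1}$ are not separated by $\xi$. Consider the chord from $\theta_j$ to $\theta_{j+1}$. Any $Q\in\xi$ separating its endpoints would either separate $a$ from $b$ between the crossings of $Q_{\alpha_j}$ and $Q_{\alpha_{j+1}}$, contradicting the successive-crossing condition, or it would have to be one of $Q_{\alpha_j},Q_{\alpha_{j+1}}$ themselves. The latter is impossible, because $\theta_j$ is a vertex of $Q_{\alpha_j}$. Here I will need the convention that vertices of $Q$ count as unseparated from points on either adjacent side.

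\emph{Main obstacle.} The hardest step is this last convention. Lemma~\ref{lem:nonseparated} was proved for non-separated pairs, and its proof uses only that $\gamma\setminus\bigcup C_Q$ lies in one unlinked class. So rather than appeal to the lemma verbatim, I will rerun its argument for $\gamma_j=[z_j,z_{j+1}]$. The proof goes through unchanged: $\gamma_j$ minus the cores lies in a single unlinked class $V$, because it sits strictly between two consecutive crossings. Injectivity off the finite set $E$ then gives monotonicity, and the endpoints map to $\phi(f(z_j))$ and $\phi(f(z_{j+1}))$. This identifies the image as $I(\sigma_d(\theta_j),\sigma_d(\theta_{j+1}))$.
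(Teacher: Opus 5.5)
Your proposal follows the paper's proof: you order the cores $C_{Q_{\alpha_j}}$ along $\gamma=[p(a),p(b)]_{K_f}$ using the separating ray stars and the monotonicity of $\phi|_\gamma$, and then apply (the argument of) Lemma~\ref{lem:nonseparated} to each consecutive pair; you also usefully make explicit the identification $[b_j,b_{j+1}]=I(\theta_j,\theta_{j+1})$, which the paper leaves implicit. One caveat: if $Q_{\alpha_j}$ has at least three vertices and $\theta_j$ is not adjacent to the unlinked region between the $j$-th and $(j+1)$-st crossings, then $Q_{\alpha_j}$ does cut the chord $\overline{\theta_j\theta_{j+1}}$, so your ``vertex convention'' alone does not give non-separation; the conclusion still stands, because your rerun of the lemma on $\gamma_j=[z_j,z_{j+1}]$ only needs $\gamma_j\setminus\bigcup_{Q\in\xi}C_Q$ to lie in a single unlinked class, and your identification step supplies exactly that (alternatively, $\sigma_d(\theta_j)$ does not depend on the choice of $\theta_j\in\Theta(Q_{\alpha_j})$, so you may pick an adjacent vertex for each pair).
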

\begin{proof}
Let \(\g = [p(a), p(b)]\) be the regulated arc in $K_f$. Each ray-star $\wh{R}_f(Q_{\alpha_j})$ separates the external rays with angles \(a\) and \(b\). Thus \(\g\) meets these stars in the same order as the chord from \(a\) to \(b\) crosses the corresponding critical polygons.

The quotient map \(\phi\) is monotone on \(\g\) and sends its intersection with $\wh{R}_f(Q_{\alpha_j})$ to \(c_{\alpha_j}=b_j\). It preserves this order, although some of the images may coincide. Thus \(b_0, \ldots, b_{r+1}\) occur in this order on \(I(a,b) = \phi(\g)\). The consecutive subarcs \( [b_j, b_{j+1}] \subset I(a,b)\) consequently have pairwise disjoint relative interiors, although some may be points.

Between consecutive entries there is no further separating critical component. So each pair $\{\theta_j,\theta_{j+1}\}$ is non-separated. Then \eqref{eq:local-arc} follows from  Lemma \ref{lem:nonseparated}.
\end{proof}

\smallskip\noindent
\emph{Edges and finite paths.}
Retain the subarc $L_e\subset I_v$ associated by
Proposition~\ref{prop:arc-rule} to each edge $e:v\to w$.
Thus $\overline{f}:L_e\to I_w$ is a monotone surjection, and the outgoing
edges at $v$ give an ordered subdivision of $I_v$. The interior
subdivision points belong to ${\rm Crit}(\overline{f})$. Repeated subdivision points are allowed
and give degenerate subarcs. See Figure~\ref{fig:three-codings} for a local circle--arc--graph correspondence.

\begin{figure}[htbp]
\centering
\begin{tikzpicture}[x=1cm,y=1cm,line cap=round,line join=round,font=\small]
  \begin{scope}[shift={(1.8,0)}]
    \node at (0,2.05) {(a) Cubic portrait};
    \fill[blue!8] (60:1.12) arc(60:120:1.12) --
      (240:1.12) arc(240:300:1.12) -- cycle;
    \draw[black] (0,0) circle(1.12);
    \draw[black,thin] (-1.12,0)--(1.12,0);
    \draw[blue!70!black,line width=1.3pt] (-0.56,0)--(0.56,0);
    \draw[orange!85!black,line width=1.2pt] (60:1.12)--(300:1.12);
    \draw[violet!85!black,line width=1.2pt] (120:1.12)--(240:1.12);
    \foreach \t in {0,60,120,180,240,300}
      \fill[black] (\t:1.12) circle(1.4pt);
    \node[right,font=\scriptsize] at (0:1.12) {$b=0$};
    \node[left,font=\scriptsize] at (180:1.12) {$a=1/2$};
    \node[above right] at (60:1.12) {$1/6$};
    \node[above left] at (120:1.12) {$1/3$};
    \node[below left] at (240:1.12) {$2/3$};
    \node[below right] at (300:1.12) {$5/6$};
    \node[orange!85!black,right,font=\scriptsize] at (0.56,0.4) {$Q_1$};
    \node[violet!85!black,left,font=\scriptsize] at (-0.56,0.4) {$Q_2$};
    \node[blue!70!black] at (0,0.45) {$S_e$};
    \node[font=\scriptsize] at (0,-2.15)
      {$S_e=(1/6,1/3)\cup(2/3,5/6)$};
  \end{scope}
  \begin{scope}[shift={(6.1,0)}]
    \node at (0,2.05) {(b) Rays and arc};
    \draw[black,thick] (-1.1,0)--(1.1,0);
    \draw[blue!70!black,line width=1.5pt] (-0.55,0)--(0.55,0);
    \draw[orange!85!black,dashed,variable=\r,domain=1:3,samples=40]
      plot ({0.275*(\r+1/\r)},{0.55*sqrt(3)*(\r-1/\r)/2});
    \draw[orange!85!black,dashed,variable=\r,domain=1:3,samples=40]
      plot ({0.275*(\r+1/\r)},{-0.55*sqrt(3)*(\r-1/\r)/2});
    \draw[violet!85!black,dashed,variable=\r,domain=1:3,samples=40]
      plot ({-0.275*(\r+1/\r)},{0.55*sqrt(3)*(\r-1/\r)/2});
    \draw[violet!85!black,dashed,variable=\r,domain=1:3,samples=40]
      plot ({-0.275*(\r+1/\r)},{-0.55*sqrt(3)*(\r-1/\r)/2});
    \draw[gray,densely dotted] (-1.1,0)--(-1.65,0) (1.1,0)--(1.65,0);
    \node[above right,orange!85!black] at (0.917,1.27) {$1/6$};
    \node[above left,violet!85!black] at (-0.917,1.27) {$1/3$};
    \node[below left,violet!85!black] at (-0.917,-1.27) {$2/3$};
    \node[below right,orange!85!black] at (0.917,-1.27) {$5/6$};
    \fill[black] (-1.1,0) circle(1.4pt) (1.1,0) circle(1.4pt);
    \fill[violet!85!black] (-0.55,0) circle(1.7pt);
    \fill[orange!85!black] (0.55,0) circle(1.7pt);
    \node[below] at (-1.1,0) {$-2$};
    \node[below] at (1.1,0) {$2$};
    \node[above right] at (-0.55,0.04) {$-1$};
    \node[above left] at (0.55,0.04) {$1$};
    \node[blue!70!black] at (0,-0.55) {$L_e$};
    \node at (0,-2.15) {$\gamma=I_v=[-2,2]$};
  \end{scope}
  \begin{scope}[shift={(10.4,0)}]
    \node at (0,2.05) {(c) Reduced branch};
    \node at (0,1.4) {$\phi=\mathrm{id},\quad \overline f=f$};
    \draw[black,thick] (-1.2,0.6)--(1.2,0.6);
    \draw[blue!70!black,line width=1.5pt] (-0.6,0.6)--(0.6,0.6);
    \fill[black] (-1.2,0.6) circle(1.4pt) (1.2,0.6) circle(1.4pt);
    \fill[violet!85!black] (-0.6,0.6) circle(1.7pt);
    \fill[orange!85!black] (0.6,0.6) circle(1.7pt);
    \node[above,blue!70!black] at (0,0.6) {$L_e$};
    \node[below] at (-1.2,0.6) {$q(a)$};
    \node[below] at (1.2,0.6) {$q(b)$};
    \draw[->,thick] (0,0.2)--(0,-0.55)
      node[midway,right] {$\overline f$};
    \draw[blue!70!black,line width=1.3pt] (-1.2,-0.8)--(1.2,-0.8);
    \fill[black] (-1.2,-0.8) circle(1.4pt) (1.2,-0.8) circle(1.4pt);
    \node[below,text=red] at (0,-0.8) {$I_w=I_v=[-2,2]$};
    \node[text=red] at (0,-2.15) {edge $e:v\to w$};
  \end{scope}
\end{tikzpicture}
\caption{Circle, arc and graph descriptions for $f(z)=z^3-3z$. The critical leaves $Q_1=\overline{1/6,5/6}$ and $Q_2=\overline{1/3,2/3}$ correspond to the dashed ray pairs landing at $1$ and $-1$. The angles $a,b$ are separated by $Q_1,Q_2$, and the central unlinked class $S_e$ labels the branch $L_e=[-1,1]$. Here $\phi=\mathrm{id}$, and the decreasing homeomorphism $\overline f:L_e\to I_w=[-2,2]$ gives the edge $e:v\to w$, where $v=(y_1(1),y_2(1))$ and $w=(y_2(1),y_1(1))$. These formal vertices are distinct, although $I_w=I_v$. The sector label $S_e$ is made precise for regular angles in Section~\ref{sec:regular}\textcolor{red}{.}}\label{fig:three-codings}\par
\end{figure}

A  length-$n$ path of $\G_\xi$ means a directed edge sequence
\begin{equation}\label{eq:coding-path}
 \omega=(v=v_0\xrightarrow{e_0}v_1\xrightarrow{e_1}\cdots
              \xrightarrow{e_{n-1}}v_n).
\end{equation}
We also write $\omega(j)=v_j$. The vertex $v_1$ is a \emph{successor} of $v$, and $v_j,j\geq 1$ are called {\emph{descendants}} of $v$.
The \emph{full inverse interval} of $\omega$ is
\begin{equation}\label{eq:coding-full-interval}
 A_\omega=\{x\in I_{v_0}:\overline{f}^j(x)\in L_{e_j}
                         \text{ for }0\le j<n\}.
\end{equation}
Equivalently, these intervals are defined recursively by
\begin{equation}\label{eq:coding-extension}
 A_\varnothing=I_{v_0},\qquad
 A_{\omega e}=(\overline{f}^k|_{A_\omega})^{-1}(L_e),\quad k=|\omega|.
\end{equation}
Induction using Proposition \ref{prop:arc-rule} shows that $A_\omega$ is a
nonempty closed subarc, possibly a point, and that
\begin{equation}\label{eq:coding-monotone}
 \overline{f}\,^n:A_\omega\longrightarrow I_{v_n}
 \quad\text{is a monotone surjection.}
\end{equation}
Moreover, the length-$n$ paths
starting at a fixed vertex $v$ give a finite cover of $I_v$.

This cover need not be a partition into intervals with disjoint
interiors. A subdivision point can pull back to a whole plateau (see Figure \ref{fig:full-trimmed-intervals})\textcolor{red}{.}
There is, however, a precise description of the possible overlap:
if $\omega\ne\omega'$ have the same initial vertex $v$ and
the same length $n$, then
\begin{equation}\label{eq:coding-overlap}
 A_\omega\cap A_{\omega'}
 \subset I_v\cap\bigcup_{j=0}^{n-1}\overline{f}^{-j}({\rm Crit}(\overline{f})).
\end{equation}

\smallskip\noindent
\emph{Trimmed intervals and closed paths.}
When the target $I_{v_n}$ is non-degenerate, we also use the
smallest subarc $L_\omega\subset A_\omega$ still mapping onto
$I_{v_n}$, which is  called the \emph{trimmed interval}. For an increasing monotone surjection $T:L\to[a,b]$,
the smallest such inverse interval is
\[
 [\max T^{-1}(a),\min T^{-1}(b)];
\]
for a decreasing map the endpoint order is reversed (see Figure \ref{fig:full-trimmed-intervals}). Making this choice at
each successive pullback, gives
\begin{equation}\label{eq:coding-trimmed}
 \overline{f}^{\,n}:L_\omega\longrightarrow I_{v_n}
 \quad\text{monotone and onto.}
\end{equation}
For distinct paths of the same length from the same vertex,
the trimmed intervals with non-degenerate targets have disjoint
relative interiors.  

\begin{figure}[htbp]
\centering
\begin{tikzpicture}[x=1cm,y=1cm,line cap=round,line join=round,font=\small]
  \draw[->,gray] (-0.6,0)--(6.7,0);
  \draw[->,gray] (-0.6,0)--(-0.6,2.85);
  \draw[gray,densely dashed] (-0.6,0.65)--(1.5,0.65);
  \draw[gray,densely dashed] (-0.6,2.35)--(6.15,2.35);
  \node[left] at (-0.6,0.65) {$a$};
  \node[left] at (-0.6,2.35) {$b$};
  \draw[black,thick] (0,0.65)--(1.5,0.65);
  \draw[blue!70!black,line width=1.35pt] (1.5,0.65)--(4.5,2.35);
  \draw[black,thick] (4.5,2.35)--(6,2.35);
  \node[above left] at (4.5,2.35) {$T=\overline f^{\,n}$};
  \draw[gray,densely dashed] (0,0.65)--(0,-0.7);
  \draw[gray,densely dashed] (6,2.35)--(6,-0.7);
  \draw[blue!70!black,densely dashed] (1.5,0.65)--(1.5,-1.5);
  \draw[blue!70!black,densely dashed] (4.5,2.35)--(4.5,-1.5);
  \draw[black] (0,-0.7)--(6,-0.7);
  \draw[black] (0,-0.6)--(0,-0.8) (6,-0.6)--(6,-0.8);
  \node[below] at (3,-0.7) {full inverse interval $A_\omega$};
  \draw[blue!70!black,line width=1.2pt] (1.5,-1.5)--(4.5,-1.5);
  \draw[blue!70!black] (1.5,-1.4)--(1.5,-1.6) (4.5,-1.4)--(4.5,-1.6);
  \node[below,blue!70!black] at (3,-1.5) {trimmed interval $L_\omega$};
  \node[above,font=\scriptsize] at (1.5,0) {$\max T^{-1}(a)$};
  \node[above,font=\scriptsize] at (4.5,0) {$\min T^{-1}(b)$};
\end{tikzpicture}
\caption{Full and trimmed inverse intervals for an increasing monotone surjection $T:A_\omega\to[a,b]$. The full interval retains the endpoint plateaux. Trimming removes their interiors and gives $L_\omega=[\max T^{-1}(a),\min T^{-1}(b)]$, the smallest closed interval still mapping onto $[a,b]$.}\label{fig:full-trimmed-intervals}\par
\end{figure}

If $\omega$ is closed and $I_v$ is non-degenerate, then
\begin{equation}\label{eq:coding-return}
 L_\omega\subset I_v\text{\ \ and\ \ } \overline{f}^{\,n}(L_\omega)=I_v.
\end{equation}
{This is a \emph{monotone return branch to $I_v$ of time $n$}.
Several distinct closed paths of the same length based at $v$ give
return branches with disjoint relative interiors.

We shall use the following correspondence throughout the rest of
the proof: an edge specifies a monotone arc branch and, as explained
in Section~\ref{sec:regular}, an unlinked angular class; a finite path
specifies a full inverse interval and, for a non-degenerate target,
its trimmed interval; a closed path gives a return to the initial
arc. The associated angular itinerary then determines successive
inverse branches of $\sigma_d$.}

\subsection{Degenerate arcs and horseshoes}
Before extracting return branches from the graph, we must control the
paths whose postcritical arcs collapse to points.

The \emph{zero-arc vertices} of $\G_\xi$ {are} defined as
\begin{equation}\label{eq:1}
Z(f)=Z=\{v:I_v\text{ is a point}\}.
\end{equation}
By
Proposition~\ref{prop:arc-rule}, $Z$ is forward invariant, i.e., if $v\in Z$, then its successors all lie in $Z$. The number of closed paths with vertices in $Z$ is estimated as follows.

\begin{lemma}\label{lem:zero}
There is a constant $K_0$ depending only on $f$, such that
$$P_n(u)\leq K_0,\ \forall n\geq1,\,u\in Z;\quad\text{and}\quad C_n^Z\leq 4s^2K_0n^2,$$
where $C_n^Z$ counts length-$n$ closed paths that meet $Z$.

\end{lemma}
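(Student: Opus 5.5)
The plan is to analyze how paths behave once they enter $Z$, and to show that branching inside $Z$ can happen only a bounded number of times. The second estimate will then follow from the first together with Lemma~\ref{lem:confinement}.

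\emph{Step 1: paths in $Z$ track a single point orbit.} Let $u=(y_k(i),y_l(j))\in Z$ and let $z_u$ be the single point $I_u$. By Proposition~\ref{prop:arc-rule}, every subarc $L_e$ for an edge $e:u\to w$ is the point $z_u$. Since $\overline f(L_e)=I_w$, we get $I_w=\{\overline f(z_u)\}$. Inductively, every descendant $w$ of $u$ at depth $t$ has $I_w=\{\overline f^{\,t}(z_u)\}$. A length-$n$ path from $u$ is determined by its edge choices at branching vertices, i.e.\ separated vertices, where the out-degree is at most $d$. By Proposition~\ref{prop:arc-rule}, at a separated zero-arc vertex the point $\overline f^{\,t}(z_u)$ equals some critical point $c_\alpha$. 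So branching can occur only at times $t$ with $\overline f^{\,t}(z_u)\in\{c_1,\dots,c_s\}$.

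\emph{Step 2: bounding the number of branchings.} Middle edges land at height-one vertices $(y_\alpha(1),y_\beta(1))$, and so do the first outer edges. Hence every branching along a path in $Z$ sends the path into the finite set
\[
Z_1:=\{w\in Z:\ \text{height}(w)=1\},\qquad \#Z_1\le s^2,
\]
or else continues along an outer edge that keeps the same pair of angle orbits. I would show that no branching vertex of $Z$ is recurrent, meaning it lies on no closed path inside $Z$. The argument splits by the type of the critical point $c_\alpha=\overline f^{\,t}(z_u)$.

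If $c_\alpha$ is a Julia critical point, recurrence would force the orbit of $c_\alpha$ to be periodic. Then a finite critical $\sim$-class would be periodic, so $\sigma_d^{p}$ would map it to itself both bijectively and with degree $>1$, which is impossible.

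If $c_\alpha$ is the collapsed image of a Fatou marking, I would use Lemma~\ref{lem:forest}. After $m_0$ steps the two tracked angles land at the same Julia point, which lies on the boundary of a periodic attracting domain. The only critical polygons that can separate them are then those $Q_G\in\FFF$ with a vertex in that class. Because the vertices of $Q_G$ are left-supporting and map to the marked periodic $\beta$, the vertices of distinct $Q_G$ land at distinct boundary points. Using this, I expect to show that a separated pair in such a class maps, after finitely many steps, to a non-separated pair, and so never returns to a branching vertex.

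Granting this, a path from $u$ meets each branching vertex at most once. The branching vertices it can meet lie in the forward image of $Z_1\cup\{u\}$ before stabilisation, and there are boundedly many of them. This gives $P_n(u)\le d^{\,s^2+M}=:K_0$ with $M$ depending only on $f$ (on $m_0$ and on the critical data).

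\emph{Step 3: closed paths.} Since $Z$ is forward invariant, a closed path meeting $Z$ lies entirely in $Z$. Rotate it to start at one of its vertices $u$. By Lemma~\ref{lem:confinement}, $u$ has height at most $n$ and width at most $2n$, so there are at most $2s^2n^2\le 4s^2n^2$ choices of $u$. For each choice there are at most $P_n(u)\le K_0$ paths, which gives $C_n^Z\le 4s^2K_0n^2$.

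\emph{Main obstacle.} The main difficulty is Step 2 in the attracting case: excluding recurrent branching at collapsed periodic Fatou components. That is where the specific left-supporting choice of $\beta_G$ and $\Theta_G$ in the attracting marking must be used. I would first try to prove directly that two angles in one $\sim$-class which are separated by some $Q_G$ have images that are no longer separated after one step, using $\sigma_d(\Theta_G)=\{\beta_{\sigma_d(G)}\}$ and $\sigma_d(V_L(G))=V_L(\sigma_d(G))$.
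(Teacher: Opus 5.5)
Your proposal has a genuine gap in Step~2, because you run the case split in the reduced plane, where it breaks down.

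In the Julia case, recurrence gives only $\overline f$-periodicity of $c_\alpha$, and that says nothing about the critical $\sim$-class. A Julia critical point can lie on the core of a Fatou marking and be collapsed by $\phi$ to a periodic point. In the paper's own example $f(z)=z^3+\frac32z^2$, the Julia critical point $-1$ lies on $C_Q$ for $Q=\{0,2/3\}\in\mathcal F$. So $\phi(-1)=\phi(0)=\phi(1/2)$ is an $\overline f$-fixed point, and it equals both reduced critical points $c_1=c_2$, while the class $\{1/3,2/3\}$ is not periodic. Hence the orbit $\overline f^{\,t}(z_u)$ can lie in $\{c_1,\dots,c_s\}$ at every time, and your Step~1 criterion gives no bound on branching times.

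In the attracting case your separation claim is backwards. Once the two angles land at one Julia point, they lie in one finite $\sim$-class, whose convex hull is disjoint from the interior of every Fatou gap. So no polygon of $\mathcal F$ can separate them. The only polygon that can separate them is the Julia marking of that same class.

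The missing step is to return to the $f$-plane after the forest stage. By Lemma~\ref{lem:forest}, after $m_0$ steps (at most $d^{m_0}$ paths) the two angles of every descendant land at one common Julia point. Your Step~1 tracking argument, carried out in the $f$-plane, shows this point is $f^t(z)$ at time $t$ for every path. This holds because a separating Julia polygon lies in the class of the current point, so its critical value angle lands at its image. Branching therefore occurs only at the path-independent times when $f^t(z)$ is a Julia critical point. Such points are never periodic, since a periodic finite class cannot map with degree $>1$. So each is hit at most once, there are at most $d-1$ branching times, and $P_n(u)\le d^{m_0}d^{d-1}$.

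Your alternative count cannot replace this argument:
\begin{itemize}
\item Every edge out of a separated vertex goes to a height-one vertex, but the outer targets $(y_{\alpha_1}(1),y_k(i+1))$ and $(y_{\alpha_r}(1),y_l(j+1))$ keep arbitrary width. So $\{w\in Z:\mathrm{height}(w)=1\}$ is infinite, not of size at most $s^2$.
\item No outer edge keeps the same pair of angle orbits.
\item Non-recurrence of branching vertices alone does not bound how many distinct branching vertices a path meets.
\end{itemize}
Your Step~3 is fine.
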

\begin{proof}
Fix a vertex $u\in Z$. It suffices to show that $P_n(u)\leq K_0$.
{Indeed,}
Lemma~\ref{lem:confinement} allows at most $4s^2n^2$ base vertices for
a closed path of length $n$. Summing the bound over these vertices gives
the required inequality.

For a vertex $v$, let $x,y$ be its associated angles and write
\[
\g_v=[p(x),p(y)]_{K_f}.
\]
If $v\in Z$, then either $\g_v$ is a point or $\g_v$ is contained in a
component of the collapsing forest $T_f(\xi)$. Proposition \ref{prop:arc-rule}, before taking the quotient,
shows that
\[
\g_w\subset f(\g_v)
 \quad\text{whenever $w$ is a successor of $v$.}
\]
Indeed,  a separating {marking ray-star} meets $\g_v$, and the image of that
intersection contains the common image of the {landing points} of this ray-star.

Suppose first that $\g_u$ is contained in the forest. By
Lemma~\ref{lem:forest}, after $m_0$ iterates it lies in a forest
component meeting $J_f$ in a single point. The endpoints of every
descendant postcritical arc lie in $J_f$, so every descendant arc at that time is a
point. There are at most $d^{m_0}$ paths of length $m_0$. It remains to
bound the number of continuations from a vertex whose two rays already
land at one point $z$. If $\g_u$ was a point initially, we start directly
with this second part.

Every descendant pair then lands at the corresponding iterate of \(z\). Its chord is therefore either a leaf or contained in a finite gap of \(\LLL_\sim\). Hence the chord can cross neither a Fatou marking polygon, since it lies inside a Fatou gap, nor a Julia marking belonging to a different class. Thus branching can occur only when the orbit of \(z\) meets a Julia critical point. Such a point cannot be periodic. Therefore the orbit of \(z\) visits each such point at most once. Since there are at most \(d-1\) such points and each encounter yields at most \(d\) successors, these encounters contribute at most \(d^{d-1}\) length-\(n\) continuations, uniformly in \(n\).

{Combining these bounds gives}
$P_n(u)\le d^{m_0}d^{d-1}=d^{m_0+d-1}=:K_0$.
\end{proof}

\begin{corollary}\label{coro:many-returns}
If $h>0$, then for every $\epsilon>0$ there are a
vertex $v\not\in Z$, an integer $m\ge1$, and $M\ge2$ closed paths of length $m$
based at $v$, such that
\[
 \frac{\log M}{m}>h-\epsilon.
\]
\end{corollary}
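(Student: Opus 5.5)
The plan is to combine Lemma~\ref{lem:meeting}, which localizes the growth at a single vertex, with Lemma~\ref{lem:zero}, which shows that zero-arc vertices contribute only polynomially many closed paths. Fix $\epsilon>0$ and assume $h>0$.

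\textbf{Step 1: localize at a vertex outside $Z$.} By Lemma~\ref{lem:meeting} there is a vertex $v$ with
\[
\limsup_{n\to\infty}\frac1n\log\max\{C_n(v),1\}=h>0.
\]
If $v\in Z$, then Lemma~\ref{lem:zero} gives $C_n(v)\le P_n(v)\le K_0$. This bounds $C_n(v)$ uniformly in $n$ and contradicts $h>0$. Hence $v\notin Z$.

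If one prefers not to rely on the specific vertex from that lemma, one can argue directly instead. Closed paths based at vertices outside $Z$ never meet $Z$, because $Z$ is forward invariant: a closed path that entered $Z$ would have to return to its base vertex, which lies outside $Z$. So $C_n=C_n^Z+\sum_{u\notin Z}C_n(u)$. Since $C_n^Z\le 4s^2K_0n^2$ grows subexponentially, the remaining sum still has growth rate $h$. Then the argument of Lemma~\ref{lem:meeting}, carried out with $S_N\setminus Z$ in place of $S_N$, yields a vertex $v\notin Z$ with growth $h$.

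\textbf{Step 2: choose $m$ and $M$.} Because the $\limsup$ of $\frac1n\log C_n(v)$ equals $h$, there are infinitely many $m$ with $\frac1m\log C_m(v)>h-\epsilon$. Take such an $m$ and set $M=C_m(v)$. Then $\log M>m(h-\epsilon)$.

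\textbf{Step 3: ensure $M\ge2$.} Shrinking $\epsilon$ if necessary, we may assume $\epsilon<h$, so $\log M>0$ and hence $M\ge2$. This choice is harmless, since the inequality for a smaller $\epsilon$ implies it for the original $\epsilon$.

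The only delicate point is Step 1, namely ensuring that the vertex achieving the growth rate is not a zero-arc vertex. This is handled by the uniform bound on $P_n(u)$ for $u\in Z$ from Lemma~\ref{lem:zero}.
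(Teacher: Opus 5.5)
Your proof is correct and follows essentially the same route as the paper. The paper also uses Lemma~\ref{lem:meeting} to find a vertex carrying the full growth rate $h$, and uses the bound from Lemma~\ref{lem:zero} to rule out $v\in Z$. Your reduction to $\epsilon<h$ to guarantee $M\ge2$ makes explicit a detail the paper leaves implicit.
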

\begin{proof}
By Lemma \ref{lem:meeting}, we can find $v,m$ and $M=C_m(v)$ such that {the required inequality holds}.
 By Lemma \ref{lem:zero}, the number of closed paths meeting $Z$ has polynomial growth. So $v\not\in Z$.
\end{proof}

\begin{definition}[Interval horseshoe]
Let \(I_1,\ldots,I_N\) be non-degenerate subintervals of an interval \(I\) with pairwise disjoint interiors. A continuous map
\[
T:\bigcup_{j=1}^N I_j\to I
\]
that sends each \(I_j\) monotonically (resp.\ homeomorphically) onto \(I\) is called a \emph{monotone (resp.\ homeomorphic) interval (\(N\)-)horseshoe}.
The set 
\[
 X=\bigcap_{k\ge0}T^{-k}\left(\bigcup_{i=1}^N I_i\right)
\]
is   the \emph{non-escaping set} of this horseshoe. It is  compact and $T$-invariant.
\end{definition}

\begin{lemma}[Paths to horseshoe]\label{lem:returns}
Suppose that $v\notin Z$ supports $N$ distinct closed paths of length $n$. Then
$I_v$ contains $N$ non-degenerate subarcs $L_1,\ldots,L_N$ with disjoint relative interiors,
each mapped monotonically onto $I_v$ by $\overline{f}^{\,n}$.
\end{lemma}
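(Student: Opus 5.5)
The plan is to take, for each of the $N$ closed paths $\omega_1,\dots,\omega_N$ of length $n$ based at $v$, its trimmed interval, and set $L_i:=L_{\omega_i}$. Since $v\notin Z$, the arc $I_v$ is non-degenerate. Each $\omega_i$ is closed, so its target vertex is $v$ and the target arc $I_{v_n}=I_v$ is non-degenerate. The trimmed interval is therefore defined, and by \eqref{eq:coding-trimmed} and \eqref{eq:coding-return} we get $L_i\subset I_v$ with $\overline f^{\,n}:L_i\to I_v$ a monotone surjection.

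Next I would check that each $L_i$ is non-degenerate. Because $L_i$ surjects onto the non-degenerate arc $I_v$ under the continuous map $\overline f^{\,n}$, it cannot be a single point. This step is immediate.

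The remaining step, and the only one needing care, is disjointness of relative interiors. I would argue by induction on the path length, following the recursive definition \eqref{eq:coding-extension}, with the trimming choice made at each pullback. At length one, the subarcs $L_e$ attached to the outgoing edges of $v$ form the ordered subdivision of $I_v$ from Proposition~\ref{prop:arc-rule}, so their interiors are pairwise disjoint.

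For the inductive step, suppose $\omega$ and $\omega'$ first differ at position $k$. If $k=0$, the trimmed intervals lie in distinct $L_{e}$ and $L_{e'}$, whose interiors are already disjoint. If $k>0$, both trimmed intervals lie in the common full interval $A_{\omega|_k}$. I would then use the trimmed subarc $L_{\omega|_k}$, on which $\overline f^{\,k}$ maps monotonically onto $I_{v_k}$. This requires $I_{v_k}$ to be non-degenerate, which holds: otherwise $v_k\in Z$, and since $Z$ is forward invariant the path could never return to $v\notin Z$.

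The point to verify is that inside the trimmed interval, any point in the relative interior of a sub-trimmed interval maps into the relative interior of the corresponding $L_e$ at step $k$. Trimming removes exactly the endpoint plateaux, so on the relative interior of a trimmed interval the map is monotone with no plateau at the endpoint values. Two different edges $e\neq e'$ at $v_k$ have $L_e$ and $L_{e'}$ with disjoint interiors. Their preimages under $\overline f^{\,k}$ restricted to $L_{\omega|_k}$, once trimmed, can therefore share at most a point or a plateau mapping to a common subdivision point. The trimming choice places such plateaux outside the interiors of both pieces.

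I expect this plateau bookkeeping to be the main obstacle. It is essentially the assertion stated just before \eqref{eq:coding-return} ("trimmed intervals with non-degenerate targets have disjoint relative interiors"), which I would invoke directly, with the above induction as its justification. Finally, distinct paths give distinct trimmed intervals, because two of them would otherwise share interior points, so the $L_i$ are $N$ distinct arcs.
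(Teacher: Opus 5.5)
Your proposal is correct and follows the paper's proof. Both take the trimmed intervals $L_{\omega_i}$, use forward invariance of $Z$ to keep every arc along a closed path based at $v\notin Z$ non-degenerate, and rely on the trimmed intervals having disjoint relative interiors. Your induction supplies a valid justification for that disjointness, which the paper only asserts: interior points of a trimmed interval map into the relative interior of the branch $L_{e_k}$ at the first position where the two paths diverge.
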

\begin{proof}
 A closed path based at $v\notin Z$ never visits $Z$, since $Z$ is
forward invariant. All postcritical arcs along the path are therefore non-degenerate.

Apply the trimmed pullback construction of Section~\ref{sec:coding}
to each of the $N$ paths. Equations~\eqref{eq:coding-trimmed}
and~\eqref{eq:coding-return} give a non-degenerate source interval
$L_\omega\subset I_v$ with $\overline{f}^{\,n}(L_\omega)=I_v$. Distinct paths
give intervals with disjoint relative interiors. These are the required monotone return
branches. 
\end{proof}

\subsection{Regular biaccessible set}\label{sec:regular}
We remove a countable exceptional set from $B$ so that each
remaining angle belongs to a two-point class and its forward orbit
avoids branch points, Fatou boundaries, and marked landing classes.

Let $\Upsilon_0$ be the set of angles $\theta$ such that $p(\theta)$ is one of the following cases:
\begin{itemize}
\item a branched point of $J_f$;
\item a cut point on the boundary of a Fatou domain, or
\item  the landing point of a marking angle for $\xi$ (including ${\rm Crit}(f)\cap J_f$).
\end{itemize}
Define
\[
\Upsilon
=
\bigcup_{m,n\geq0}
\sigma_d^{-m}\bigl(\sigma_d^n(\Upsilon_0)\bigr).
\]
 We call $B_{\rm reg}:=B\setminus \Upsilon$ the \emph{regular biaccessible set} of $\LLL$.
\begin{proposition}\label{pro:exceptional}
The set $B_{\rm reg}$ is  invariant under $\sigma_d$ and every angle in
$B_{\rm reg}$ belongs to a two-point $\sim$-class. The difference $B\setminus B_{\rm reg}$ is countable.
\end{proposition}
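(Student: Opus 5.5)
Three things have to be checked: $\Upsilon$ is countable, $B_{\rm reg}$ is $\sigma_d$-invariant, and every class in $B_{\rm reg}$ has exactly two points. I would take them in that order, starting with a bound on $\Upsilon_0$.

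\emph{Countability.} Branch points of $J_f=J_\sim$ correspond to $\sim$-classes with at least three elements. Their convex hulls have pairwise disjoint nonempty interiors, so there are only countably many such classes, as in the argument before Corollary~\ref{cor:differ}. Each class is finite, so these points contribute countably many angles. Fatou gaps are also countable, since they have disjoint interiors. Each Fatou gap has countably many edges, because edges of a gap have disjoint shadows of positive length on the circle. A cut point on the boundary of a Fatou domain corresponds to a vertex of an edge of the associated gap, so these points give countably many angles. The marking angles form a finite set. Hence $\Upsilon_0$ is countable. Since $\sigma_d^n$ and $\sigma_d^{-m}$ preserve countability, $\Upsilon$ is a countable union of countable sets. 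Therefore $B\setminus B_{\rm reg}\subset\Upsilon$ is countable.

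\emph{Invariance.} First, $\Upsilon$ is backward and forward invariant. If $\theta\in\sigma_d^{-m}(\sigma_d^n(\Upsilon_0))$, then $\sigma_d(\theta)$ lies in $\sigma_d^{-(m-1)}(\sigma_d^n(\Upsilon_0))$ when $m\ge1$, and in $\sigma_d^{n+1}(\Upsilon_0)$ when $m=0$. Preimages of points of $\Upsilon$ are clearly in $\Upsilon$. Thus $\theta\notin\Upsilon$ if and only if $\sigma_d(\theta)\notin\Upsilon$.

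Next, $B$ is forward invariant: by (R4) the image of a non-singleton class is a class. If that image were a singleton, the class would lie in a critical class, whose landing point is a Julia critical point, that is, the landing point of a marking angle. So this collapse cannot happen for $\theta\notin\Upsilon$, and $\sigma_d(B_{\rm reg})\subset B_{\rm reg}$.

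I would also check the converse inclusion $\sigma_d^{-1}(B_{\rm reg})\cap B\subset B_{\rm reg}$. If only forward invariance is needed later, that is all the statement requires. Otherwise, a preimage $\theta$ of $\eta\in B_{\rm reg}$ has a class mapped onto the class of $\eta$ by (R4)--(R5). By (R5) this class is a non-singleton, because $\theta$ cannot be a critical angle, as it lies outside $\Upsilon$. So $\theta\in B$, and hence $\theta\in B_{\rm reg}$.

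\emph{Two-point classes.} Take $\theta\in B_{\rm reg}$. Its class has at least two points because $\theta\in B$. If it had three or more points, $p(\theta)$ would be a branch point of $J_f$, since $J_\sim$ is locally connected and the $n$ rays landing there give $n$ complementary components. Then $\theta\in\Upsilon_0$, a contradiction. So the class has exactly two points.

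The main obstacle I expect is the identification in this last step: showing that a class with at least three points, or a vertex of a Fatou gap that is a cut point, really gives a branch point or cut point of $J_f$ in the sense of $\Upsilon_0$. Once that correspondence between laminational data and topological points is made precise, everything else is bookkeeping.
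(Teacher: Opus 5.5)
Your proposal is correct and follows essentially the same route as the paper. You prove countability from three facts: hulls of classes with at least three angles have disjoint interiors, the countably many Fatou gaps have countably many edges, and the classes of marking angles are finite; you also correctly fill in the invariance and two-point assertions that the paper calls immediate from the choice of $\Upsilon$. One small correction: in your converse-inclusion step, it is (R4), not (R5) or any criticality consideration, that prevents a preimage $\theta$ of $\eta\in B_{\rm reg}$ from having a singleton class, since $\sigma_d(\{\theta\})=\{\eta\}$ would then have to be a full $\sim$-class.
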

\begin{proof}
 The landing class of a branched point contains at least three angles, so its convex hull has nonempty interior. These interiors are pairwise disjoint and each contains a point of \(\mathbb{Q}^2\); hence there are at most countably many such classes. Since every \(\sim\)-class is finite, the angles landing at branched points form a countable set.

Next, fix a Fatou domain \(U\) and its gap \(G\). If a cutpoint of \(J_f\) on \(\partial U\) has exactly two landing angles, the chord joining them is an edge of \(G\). The edges of \(G\) correspond to complementary intervals of \(G\cap\mathbb{S}\), and hence form a countable family. Since there are only countably many Fatou domains, the angles landing at cutpoints on Fatou boundaries are also countable.

Finally, the marking angles and the \(\sim\)-class of each marking {angle} are both finite. Thus \(\Upsilon_0\), and hence \(\Upsilon\), is countable. In particular, \(B\setminus B_{\rm reg}\) is countable.

The remaining conclusions follow immediately from the choice of  $\Upsilon$
\end{proof}

By this proposition, proving Theorem~\ref{thm:equal1} reduces to checking that
\[h=(\log d)\Hd B_{\rm reg}.\] To this end, we need to relate the set \(B_{\rm reg}\) to the paths
in \(\G_\xi\). The next result provides a starting point for establishing such a connection.

\begin{lemma}\label{lem:coverage}
Every angle of $B_{\rm reg}$ eventually lands, under $q$, on
a non-degenerate arc $I_v\subset K_{\overline{f}}$ for some vertex $v$ of $\G_\xi$.
\end{lemma}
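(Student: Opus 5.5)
The plan is to locate a regular biaccessible angle inside the dynamical convex hull of the critical (marked) points, and then use forward iteration to push it onto a non-degenerate postcritical arc. Fix $\theta\in B_{\rm reg}$. By Proposition~\ref{pro:exceptional}, its class is $\{\theta,\theta'\}$ with $\theta\neq\theta'$, and the orbit of $z=p(\theta)$ avoids branch points, Fatou boundaries and marked landing points.

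The first step uses the fact that $\sigma_d$ expands angular length. Let $\ell_n=\overline{\sigma_d^n(\theta)\sigma_d^n(\theta')}$ be the iterated leaves. None of them is critical, since the orbit avoids $\mathrm{Crit}(f)\cap J_f$. If $\ell_n$ were never separated by $\xi$, then for every $n$ both endpoints would lie in one unlinked class $S$. By \eqref{eq:7}, $\sigma_d$ is then injective on the shorter arc, so the length of that arc is multiplied by $d$ at each step. This is impossible indefinitely. Hence some $n_0$ exists for which $\ell_{n_0}$ is separated by some $Q_k$, meaning $\sigma_d^{n_0}(\theta)$ and $\sigma_d^{n_0}(\theta')$ lie in different components of $\mathbb S\setminus Q_k$. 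Replace $\theta$ by $\sigma_d^{n_0}(\theta)$. The point $z$ now has two rays on opposite sides of the star $\wh R_f(Q_k)$, and $z\notin C_Q$ for every $Q$ because $z$ avoids marked points and Fatou boundaries.

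The second step places $z$ inside a postcritical arc. Since $\theta\sim\theta'$ and they are separated by $Q_k$, the point $z$ is a cut point of $K_f$ separating two vertices of $Q_k$. More precisely, the ray pair $R_f(\theta)\cup R_f(\theta')\cup\{z\}$ separates $\mathbb C$ into two pieces, each containing landing points of angles of $\Theta(Q_k)$ (for $Q_k\in\FFF$, these are points on $\partial\Omega_{Q_k}$, and the whole domain $\Omega_{Q_k}$ cannot cross the ray pair, so this case must be handled by taking one further image instead). The cleaner route is to apply $f$ once more. The images $f(p(\Theta(Q_k)))$ collapse, so I instead look at the preimage side. Because two marking angles $\eta,\eta'\in\Theta(Q_k)$ lie on opposite sides of the leaf $\ell$, the regulated arc $[p(\eta),p(\eta')]$ passes through $z$. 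Applying $f$, the arc $f([p(\eta),p(\eta')])$ contains $f(z)$. By Lemma~\ref{lem:regulated}(3), this is a tree with endpoints among critical values. I will therefore aim to show that $f(z)$ lies in the regulated hull of $\{p(x_k(1))\}_k$, and more generally that, in the relevant branch, it lies on some arc $[p(x_k(i)),p(x_l(j))]$ with $x_k(i),x_l(j)$ separated by $\ell_{n_0+1}$.

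The most robust way is a backward argument on the $\xi$-separation tree. Each edge of the regulated hull $H$ of the postcritical points is covered by arcs $I_v$, by Proposition~\ref{prop:arc-rule} applied inductively along images. I would show that $f(z)$, being a non-endpoint of the tree $f([p(\eta),p(\eta')])$, lies on $[p(x_k(1)),p(x_l(j))]$ for suitable vertices: the two sides of $f(z)$ contain critical values, namely the images of $p(\eta)$ and $p(\eta')$, which are postcritical. Then $v=(y_k(1),y_l(j))$ contains $\phi(f(z))$, and regularity ensures $z\notin T_f(\xi)$ and that $\phi$ is injective near $z$.

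The remaining point is non-degeneracy. The arc $I_v$ contains $q(\sigma_d^{n_0+1}\theta)$, which is not collapsed by $\phi$ since it avoids Fatou closures. It separates two distinct rays, so $I_v$ is a genuine arc through an interior point and hence not a singleton. The main obstacle is the second step: arranging that the two sides of the cut point actually contain two postcritical points $p(x_k(i))$, rather than only critical points, in the Fatou-marking case. I would handle this by pushing forward enough times, using that interiors of Fatou domains are collapsed by $\phi$.
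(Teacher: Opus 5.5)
There is a genuine gap, and it starts with the notion of separation in your first step. For $\theta\in B_{\rm reg}$, every $\ell_n=\overline{\sigma_d^n(\theta)\,\sigma_d^n(\theta')}$ is a leaf of $\mathcal L_\sim$ that shares no endpoint with a marking polygon. So it is disjoint from every $Q_k$, since each $Q_k$ lies in a gap of $\mathcal L_\sim$ or is a leaf. Being connected, $\ell_n$ lies in a single component of $\overline{\mathbb D}\setminus Q_k$, so its endpoints are \emph{never} in different components of $\mathbb S\setminus Q_k$. In the plane, the ray pair at $z$ is a connected set missing $\widehat R_f(Q_k)$. Thus the conclusion of your first step fails for every regular angle. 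The length argument behind it is also wrong. The shorter arc between two points of an unlinked class $S$ need not lie in $S$, because unlinked classes are typically unions of several arcs. Even when it does, its image has length $d\lambda$ and may be the \emph{longer} arc of $\ell_{n+1}$. Concretely, for $f(z)=z^2-2$ the angle $1/3$ is regular, and $\overline{1/3\,2/3}$ is invariant with both endpoints in the unlinked class $(1/4,3/4)$ forever. Yet $\sigma_2$ maps $(1/3,2/3)$ onto the long arc through $0$, so the length never grows.

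The second step inherits the same confusion. Two vertices $\eta,\eta'$ of the same $Q_k$ never lie on opposite sides of $\ell$. The arc $[p(\eta),p(\eta')]_{K_f}$ is either a point or two internal rays in $\overline{\Omega_{Q_k}}$, so it never contains $z$. Also $f(p(\eta))=f(p(\eta'))$, so these images cannot lie on both sides of $f(z)$.

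The missing ingredient is an iterate of $z$ that separates two \emph{distinct} critical or postcritical objects. The paper obtains this from Proposition~\ref{pro:atom} (two critical atoms, or a critical atom and its image). It then pushes a regulated arc between critical points or values through Proposition~\ref{prop:arc-rule}.

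Your expansion idea can supply this step if you apply it to the right arc:
\begin{itemize}
\item Let $P=\{x_k(i)\}$; by regularity, $P$ avoids the endpoints of every $\ell_n$. Suppose every $\ell_n$ has a complementary arc $A_n$ missing $P$.
\item Then $\overline{A_{n+1}}$ contains no critical value. By \eqref{eq:7}, its preimage in the unlinked class containing the endpoints of $\ell_n$ is a closed arc with those endpoints, mapped homeomorphically onto $\overline{A_{n+1}}$.
\item This preimage cannot be the side containing $P$, since $\sigma_d(P)\subset P$. Hence it is $\overline{A_n}$, and $|A_{n+1}|=d|A_n|$ for all $n$, which is absurd.
\item So some $\ell_n$ has points of $P$ on both sides. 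The ray pair at $z_n=p(\sigma_d^n\theta)$ separates their landing points, so $z_n$ lies on the regulated arc between them and $q(\sigma_d^n\theta)\in I_v$.
\item $I_v$ is non-degenerate because $z_n\notin T_f(\xi)$ and $z_n$ is not a landing point of any angle of $P$.
\end{itemize}
None of this is in your proposal, and your last paragraph leaves the decisive step open.
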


Indeed, this lemma is a consequence of \cite[Lemma 3.12]{BOPT1}.  Following \cite{BOPT1}, an \emph{atom} of \(J_f\) is either a point or the boundary of a Fatou domain. It is a \emph{critical atom} if the Fatou domain or the point defining it is critical.

\begin{proposition}[\cite{BOPT1},\,Lemma 3.12]\label{pro:atom}
For every cutpoint $x$ of $J_f$, there exists an integer $k\geq0$ such that $f^k(x)$ either belongs to a critical atom, or separates two
critical atoms, or separates a critical atom from its image.
\end{proposition}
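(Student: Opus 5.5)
The plan is to argue by contradiction, using an expanding circle arc. Suppose $x$ is a cutpoint of $J_f$ and that for every $k\ge0$ the point $x_k:=f^k(x)$ satisfies none of the three alternatives. Because $x$ is a cutpoint, $p^{-1}(x)$ is a $\sim$-class with at least two angles, and by (R4) the same holds for every $x_k$. The negated hypothesis says three things for each $k$. First, $x_k$ is not a critical point and does not lie on the boundary of a critical Fatou domain. Second, all critical atoms lie in one component of $J_f\setminus\{x_k\}$; call it $W_k$. Third, for every critical atom $A$, the image $f(A)$ also lies in $W_k$. Here an atom is regarded as lying in a component, which is legitimate because an atom not containing $x_k$ is connected in $J_f\setminus\{x_k\}$.

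On the circle, the components of $J_f\setminus\{x_k\}$ correspond to the complementary arcs of the class $p^{-1}(x_k)$. For each $k$ I would pick such a complementary arc $A_k$ whose component $U_k$ is different from $W_k$; this is possible because there are at least two components. Then no critical atom meets $\overline{U_k}\setminus\{x_k\}$. In lamination terms, $A_k$ contains no vertex of a critical leaf or critical gap. Consequently $\sigma_d|_{A_k}$ is injective, or more generally does not wrap around, and its image is an arc of length $d\,|A_k|$ whenever $d|A_k|<1$.

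The core step is to show that $\sigma_d(A_k)$ is exactly a complementary arc of $p^{-1}(x_{k+1})$, and that it again avoids the critical side. By (R5), $\sigma_d$ maps the consecutive pair bounding $A_k$ to a consecutive pair of $p^{-1}(x_{k+1})$. Since $\overline{U_k}$ contains no critical atoms, $f$ is a local homeomorphism near $\overline{U_k}\setminus\{x_k\}$ in $J_f$. Hence $f(U_k)$ is a union of components of $J_f\setminus\{x_{k+1}\}$, matching the arc $\sigma_d(A_k)$. I then need $f(U_k)\neq W_{k+1}$. Suppose instead that $f(U_k)\supset W_{k+1}$. Then $f(U_k)$ contains some image $f(A)$ of a critical atom $A$. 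Pulling back through the injective branch on $U_k$, some preimage of $f(A)$ lies in $U_k$. I expect that the third hypothesis, applied at time $k$ (so that $f(A)\in W_k$), together with the injectivity of $\sigma_d$ on $A_k$, forces this preimage to be a critical atom, contradicting the second hypothesis. This bookkeeping about where the images of critical atoms sit relative to $x_k$ and $x_{k+1}$ is the main obstacle. To handle it I would work with the sibling structure of Definition~\ref{def:invariant-lamination}(3) and with the gap-invariance proposition, checking that an arc free of critical vertices maps onto an arc free of critical vertices unless it swallows an image of a critical set.

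Granting this step, I may take $A_{k+1}=\sigma_d(A_k)$, so $|A_{k}|=d^{k}|A_0|$ for as long as $d^{k}|A_0|<1$. This is impossible for large $k$ because $|A_0|>0$. The contradiction shows that some iterate $x_k$ lies in a critical atom, or separates two critical atoms, or separates a critical atom from its image, as Proposition~\ref{pro:atom} asserts.
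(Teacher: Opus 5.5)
\textbf{There is a genuine gap at the central step.} The paper does not prove this statement; it quotes it from \cite{BOPT1}. Judged on its own, your expanding-arc strategy is reasonable and the surrounding steps are fine. That $\sigma_d(A_k)$ is a complementary arc of $X_{k+1}:=p^{-1}(x_{k+1})$ is immediate from (R5) once $|A_k|<1/d$. Note that $\#X_{k+1}\ge 2$ needs $\sigma_d$ to be injective on $X_k$, i.e.\ that $x_k$ is not critical, not just (R4). But the step you flag as the main obstacle is the whole content of the lemma, and the mechanism you propose for it fails when $d\ge 3$.

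If $f(U_k)=W_{k+1}$ contains $f(A)$, the point of $U_k$ that the injective branch sends onto $f(A)$ need not lie in $A$. For a Julia critical point $c$ of local degree $<d$, the fibre $f^{-1}(f(c))$ contains points $w\neq c$ (``co-critical'' points). Neither the time-$k$ hypotheses nor injectivity on $A_k$ keeps $w$ out of $U_k$. Indeed, if $S$ is the $\xi$-unlinked class containing $A_k$, then by \eqref{eq:7} the map $\sigma_d|_S$ is a bijection onto $\mathbb S\setminus\sigma_d(\partial S)$. So $S$ contains a non-critical preimage of every critical value lying outside $\sigma_d(\partial S)$. Hence ``some image $f(A)$'' can never give a contradiction; you must choose $A$ carefully.

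\textbf{The missing idea is to use the critical portrait.}
\begin{itemize}
\item Since $x_k$ lies in no critical atom, $\operatorname{CH}(X_k)$ is disjoint from every marking polygon $Q\in\xi$. For Julia classes this is (R3). For a critical Fatou gap $G\supset Q$, no vertex of $X_k$ is a vertex of $G$.
\item $A_k$ contains no marking vertex. Hence $X_k\cup A_k$ lies in a single unlinked class $S$.
\item Therefore $X_{k+1}\cup\sigma_d(A_k)$ misses $\sigma_d(\partial S)$, the critical values $\sigma_d(\Theta(Q))$ of the polygons $Q$ bounding $S$. This observation also justifies your unproved claim that $\sigma_d|_{A_k}$ is injective.
\item Now take $A$ to be the critical atom containing such a $Q$. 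If $p(\sigma_d(A_k))=W_{k+1}$, the hypothesis at time $k+1$ (not time $k$) puts $f(A)$ in $W_{k+1}$. Then the class of $\sigma_d(\Theta(Q))$ lies in $\sigma_d(A_k)$, which is a contradiction. For a Fatou atom $A=\partial\Omega$, use the marked vertex $\sigma_d(\Theta(Q))$ of $\sigma_d(G)$ on $\partial f(\Omega)$.
\end{itemize}

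One edge case remains. For a Julia atom, $f(A)\neq\{x_{k+1}\}$ holds automatically, since $X_{k+1}\cap\sigma_d(\partial S)=\emptyset$. For a Fatou atom with $x_{k+1}\in\partial f(\Omega)$, the same argument works only if ``$x_{k+1}$ separates $A$ from $f(A)$'' is read as a statement about $f(A)\setminus\{x_{k+1}\}$. That set is connected and contains the landing point of the marked vertex. You should state this convention explicitly. Without this step the proof has a hole exactly at its core.
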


\begin{proof}[Proof of Lemma \ref{lem:coverage}]
Let \(\theta\in B_{\rm reg}=B\setminus\Upsilon\) and set \(z=p_f(\theta)\). Then \(z\) is a cutpoint of \(J_f\), and by the choice of \(\Upsilon\), its orbit avoids all critical atoms of \(f\). By Proposition~\ref{pro:atom}, after iterating \(z\) if necessary, we may assume that \(z\) separates either two critical atoms or a critical atom from its image, of $f$.

Since \(z\) avoids the closure of every Fatou domain, we can choose a regulated arc \(\gamma\subset K_f\) whose endpoints are either two critical points or a critical point and a critical value of \(f\), and whose interior contains \(z\). Although \(\gamma\) may meet \(T_f(\xi)\), the point \(z\) does not. Hence \(q(\theta)\) is an interior point of the non-degenerate arc \(\phi(\gamma)\), whose endpoints are critical points or critical values of \(\overline{f}\).

Apply Proposition~\ref{prop:arc-rule} to the arc \(\phi(\gamma)\). Its image under \(\overline{f}\) is a finite union of arcs with postcritical endpoints of \(\overline{f}\), so \(\overline{f}(q(\theta))\) enters some \(I_v\) for \(v\in\Xi_s\). Since \(f(z)\) also avoids \(T_f(\xi)\), it follows that \(\phi(f(z))=\overline{f}(q(\theta))\) is an interior point of \(I_v\). Thus \(I_v\) is non-degenerate.
\end{proof}

We next establish the codings and covers of $B_{\rm reg}$.
We begin by recalling the same unlinked classes in three spaces: the unit circle, the $f$-plane and the $\overline{f}$-plane (see Section \ref{sec:partition}).
\vspace{3pt}

Let $\SSS$ be the set of $\xi$-unlinked classes
on the unit circle. For $ S\in\SSS$, let $V_S$
be the corresponding unlinked class in the $f$-plane: an
external ray has angle in $S$ precisely when it lies in $V_S$.
The corresponding reduced class in $\overline{f}$-plane is
\[
 \widehat V_S=\phi(V_S\setminus T_f(\xi)).
\]
Removing the forest before projection is essential:
different sector boundaries can be identified at a collapsed
forest point.
For regular angles, there is no such identification,
and the three descriptions agree:
\begin{equation}\label{eq:three-unlinked-classes}
 \theta\in S
 \quad\Longleftrightarrow\quad p(\theta)\in V_S
 \quad\Longleftrightarrow\quad q(\theta)\in\widehat V_S,
 \qquad \theta\in B_{\mathrm{reg}}.
\end{equation}

Suppose that $v=v_0\in \Xi_s$ and $\theta\in B_{\rm reg}$ with $q(\theta)\in I_v$. Regularity excludes subdivision points and their preimages; in particular, $\theta$ avoids
$\bigcup_{k\ge0}\sigma_d^k\bigl(\bigcup_{Q\in\xi}\Theta(Q)\bigr)$. Thus, by Proposition \ref{prop:arc-rule}, there exists a unique edge $e=e_0:v_0\to v_1$ and a unique unlinked class $S_{e}\in \SSS$ such that
\[q(\theta)\in L_e,\quad \theta\in S_{e},\quad \overline{f}\circ q(\theta)=q\circ\sigma_d(\theta)\in I_{v_1}.\]

 We claim that this label works
for every regular angle landing on $L_e$, i.e.,
\begin{equation}\label{eq:edge-sector}
 B_v(e):=B_{\mathrm{reg}}\cap q^{-1}(L_e)\subset S_e.
\end{equation}
To verify the claim, choose  endpoint angles
$\alpha_e,\beta_e\in\overline{ S_e}$ such that
\[
 L_e=\phi(\g_e),\qquad
 \g_e=[p(\alpha_e),p(\beta_e)]_{K_f}
                   \subset\overline{V_{S_e}},
\]
where the inclusion follows from Lemma~\ref{lem:slits}.
If $q(\theta)\in L_e$ and $\theta$ is regular, its original
landing point $p(\theta)$ is the unique
point of its $\phi$-fiber, hence belongs to $\g_e$.
It also avoids the marking ray-stars, so it lies in $V_{S_e}$, not on the boundary. Property~\eqref{eq:three-unlinked-classes}
gives $\theta\in S_e$.

Since $q(\sigma_d(\theta))\in I_{v_1}$ and $\sigma_d(\theta)$ is still regular, we can iterate the process above.
Set
\[\text{$\theta_j=\sig^j(\theta)$\quad and\quad $x_j=q(\theta_j)=\overline{f}^{\,j}(q(\theta))$},\quad \forall\, j\geq0.\]
 Let $v_{j+1}$ be the  target of $e_j$. Then
\[
 x_j\in L_{e_j}\subset I_{v_j},\quad\text{and}\quad
 x_{j+1}=\overline{f}(x_j)\in \overline{f}(L_{e_j})=I_{v_{j+1}}.
\]
Therefore, $\theta$ is coded by a unique infinite path $\omega(\theta)$ in $\G_\xi$
\[\omega(\theta): v=v_0\overset{e_0}{\to} v_1\overset{e_1}{\to}v_2\overset{e_2}{\to}\cdots,\]
by an orbit in $\overline{f}$-plane:
\[q(\theta)=x_0\overset{\overline{f}}{\to} x_1\overset{\overline{f}}{\to} x_2\overset{\overline{f}}{\to}\cdots,\quad x_j\in L_{e_j}\subset I_{v_j},\ x_j\in \wh{V}_{S_{e_j}},\]
 and by the corresponding angular itinerary in $\mathbb{S}$:
\[
 S_{e_0},\ S_{e_1},\ S_{e_2},\ldots,
 \qquad \theta_j\in S_{e_j}.
\]
Overlaps between different target arcs cause no ambiguity:
the selected edge has already specified the next formal vertex.

Conversely, given a length-$n$ path $\omega$ in $\G_\xi$ starting at $v$, define its set
of coded regular angles by
\begin{equation}\label{eq:coded-regular-angles}
 B_v(\omega):=B_{\mathrm{reg}}\cap q^{-1}(A_\omega),
\end{equation}
where $A_\omega$ is the full inverse interval determined by $\omega$, see \eqref{eq:coding-full-interval}.
If $B_v(\omega)$
is nonempty, inductively applying \eqref{eq:edge-sector} shows  that
\begin{equation}\label{eq:16}
\omega(\theta)|_n=\omega\quad\text{and}\quad\sigma^j_d(\theta)\in S_{e_j},\ j=0,\ldots,n-1; \quad\forall\, \theta\in B_v(\omega),
\end{equation}
where \(\omega(\theta)|_n\) is the length-\(n\) prefix of \(\omega(\theta)\), and \(e_j\) is the \((j+1)\)-th edge of $\omega$.

For fixed $v$ and $n$, these sets $B_v(\omega)$ partition
$B_{\mathrm{reg}}\cap q^{-1}(I_v)$ by
\eqref{eq:coding-overlap} and regularity.
The corresponding \emph{(angular) cylinder to $\omega$} is
\begin{equation}\label{eq:angular-cylinder}
 S_v(\omega)=\bigcap_{j=0}^{n-1}\sig^{-j}(S_{e_j}),
 \quad \text{with }\ B_v(\omega)\subset S_v(\omega).
\end{equation}

Summarizing the discussion above, we obtain the following result.

\begin{lemma}\label{lem:coverage1}
If $\theta\in B_{\rm reg}$ and $q(\theta)\in I_v$ for some vertex  $v$ of $\G_\xi$,
then there is a unique infinite  path $\omega$ starting at $v$
such that, for every $n\ge1$,
\[
 \theta\in B_v(\omega|_n)\subset S_v(\omega|_n).
\]
\end{lemma}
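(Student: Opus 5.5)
The plan is to assemble the lemma directly from the coding construction carried out just before the statement. Existence of the path comes from iterating the one-step rule, membership in $B_v(\omega|_n)$ is checked through the full inverse intervals, and uniqueness rests on the fact that regular angles avoid all subdivision points.

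\emph{Existence.} Start from $\theta\in B_{\rm reg}$ with $q(\theta)\in I_{v_0}$, $v_0=v$. By regularity, $\theta$ avoids $\bigcup_{k\ge0}\sigma_d^k(\bigcup_{Q\in\xi}\Theta(Q))$, and its landing point avoids $\phi$-identifications. Hence $q(\theta)$ is not one of the subdivision points $b_1,\dots,b_r$ of $I_{v_0}$ given by Proposition~\ref{prop:arc-rule}. These points lie in $\mathrm{Crit}(\overline f)$, which is the image of marked landing points, and those are excluded by $\Upsilon_0$.

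So $q(\theta)$ lies in the relative interior of exactly one subarc $L_{e_0}$, where $e_0:v_0\to v_1$. By \eqref{eq:edge-sector}, $\theta\in S_{e_0}$, and $q(\sigma_d\theta)=\overline f(q(\theta))\in I_{v_1}$. Proposition~\ref{pro:exceptional} gives $\sigma_d\theta\in B_{\rm reg}$, so the hypotheses are reproduced at $v_1$. Induction then produces the infinite path $\omega=\omega(\theta)$, with $x_j:=\overline f^{\,j}(q(\theta))\in L_{e_j}$ for every $j$.

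\emph{Membership.} Fix $n$. From the recursive description \eqref{eq:coding-extension}, show by induction on $n$ that $q(\theta)\in A_{\omega|_n}$. Assume $q(\theta)\in A_{\omega|_k}$. Since $\overline f^{\,k}(q(\theta))=x_k\in L_{e_k}$, the point $q(\theta)$ lies in $(\overline f^{\,k}|_{A_{\omega|_k}})^{-1}(L_{e_k})=A_{\omega|_{k+1}}$. Hence $\theta\in B_v(\omega|_n)$ by \eqref{eq:coded-regular-angles}. The inclusion $B_v(\omega|_n)\subset S_v(\omega|_n)$ is \eqref{eq:angular-cylinder}, which follows from \eqref{eq:16} by applying \eqref{eq:edge-sector} at each step.

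\emph{Uniqueness.} Suppose $\omega'$ is another infinite path from $v$ with $\theta\in B_v(\omega'|_n)$ for all $n$. Let $k$ be the first index with $e'_k\ne e_k$. Then $q(\theta)\in A_{\omega|_{k+1}}\cap A_{\omega'|_{k+1}}$, and \eqref{eq:coding-overlap} puts $q(\theta)$ in $\bigcup_{j\le k}\overline f^{-j}(\mathrm{Crit}(\overline f))$. So some iterate $x_j$ would be a subdivision point, i.e.\ the $\phi$-image of a marked landing point or of a forest point. For $x_j=q(\sigma_d^j\theta)$ this would force $\sigma_d^j\theta$ into the landing class of a marking angle or into $\Upsilon$, contradicting regularity.

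I expect the main obstacle to be this last step. It requires checking carefully that a regular angle's image under $q$ can never coincide with a collapsed forest point or a critical point of $\overline f$. The route is to note that $p(\sigma_d^j\theta)$ lies outside $T_f(\xi)$, since it is not on a Fatou boundary, so its $\phi$-fiber is a singleton. Then $x_j\in\mathrm{Crit}(\overline f)$ would mean $p(\sigma_d^j\theta)$ is a Julia critical point or a marking landing point, both of which are excluded by $\Upsilon_0$.
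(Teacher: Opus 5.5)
Your proposal is correct and takes essentially the paper's route. In the paper the lemma simply summarizes the preceding coding discussion: the path comes from iterating the unique-edge choice (Proposition~\ref{prop:arc-rule}, \eqref{eq:edge-sector} and $\sigma_d$-invariance of $B_{\rm reg}$); membership comes from the definition of $A_\omega$, the inclusion from \eqref{eq:angular-cylinder}, and uniqueness from \eqref{eq:coding-overlap} plus regularity, equivalently \eqref{eq:16}. Your check that a regular landing point has a singleton $\phi$-fiber, and so cannot project to any $c_k$, is exactly what the paper compresses into ``regularity excludes subdivision points and their preimages.''
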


Note that the sets $A_\omega,L_\omega$ are subarcs of $I_v\subset K_{\overline f}$, whereas
$B_v(\omega),S_v(\omega)$ are subsets of $\mathbb S$.
The vertex $v$ specifies the arc, and the edge $e$ specifies its branch $L_e$ and sector label $S_e$.

\subsection{Upper bound on dimension}
This subsection is devoted to  the upper bound estimation on the Hausdorff dimension:
\begin{proposition}\label{prop:dimension-upper}
One has $\Hd B\le h/\log d$.
\end{proposition}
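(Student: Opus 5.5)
By Proposition~\ref{pro:exceptional}, $B\setminus B_{\rm reg}$ is countable, so it suffices to prove $\Hd B_{\rm reg}\le h/\log d$. By Lemma~\ref{lem:coverage}, every $\theta\in B_{\rm reg}$ satisfies $\sigma_d^k(\theta)\in q^{-1}(I_v)$ for some $k\ge0$ and some vertex $v$ with $I_v$ non-degenerate. Hence
\[
B_{\rm reg}\subset\bigcup_{k\ge0}\bigcup_{v\notin Z}\sigma_d^{-k}\bigl(B_{\rm reg}\cap q^{-1}(I_v)\bigr).
\]
This is a countable union. Moreover, $\sigma_d^{-k}$ is a finite union of similarity branches, so it preserves Hausdorff dimension. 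By countable stability, it is therefore enough to prove, for a fixed vertex $v\notin Z$, that $X_v:=B_{\rm reg}\cap q^{-1}(I_v)$ satisfies $\Hd X_v\le h/\log d$.

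\emph{Covering by cylinders.} Fix $n$. By Lemma~\ref{lem:coverage1} and \eqref{eq:angular-cylinder}, $X_v$ is covered by the cylinders $S_v(\omega)$, where $\omega$ ranges over length-$n$ paths from $v$ with $B_v(\omega)\neq\emptyset$. By \eqref{eq:7}, $\sigma_d$ is injective on each unlinked class. Hence $\sigma_d^n$ is injective on $S_v(\omega)$, which has total length at most $d^{-n}$. Each $S_e$ is a union of at most $2s$ circle arcs, so an induction (intersecting with $\sigma_d^{-j}(S_{e_j})$ through the injective map $\sigma_d^j$) shows that $S_v(\omega)$ has at most $2s\,n+1$ components. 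Thus $S_v(\omega)$ is covered by $O(n)$ intervals of length $\le d^{-n}$. The $t$-dimensional Hausdorff sum is then bounded by $O(n)\,P_n(v)\,d^{-nt}$. It therefore suffices to show
\[
\limsup_{n\to\infty}\tfrac1n\log P_n^{*}(v)\le h,
\]
where $P_n^*(v)$ counts length-$n$ paths from $v$. Only paths avoiding $Z$ matter: once a path enters $Z$, all of its later arcs are points, and Lemma~\ref{lem:zero} bounds such continuations by $K_0$. Taking $t>(h+\epsilon)/\log d$ then makes the sum tend to $0$.

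\emph{Path growth versus closed-path growth; this is the main obstacle.} The graph is infinite, so path growth from $v$ need not a priori be controlled by the closed-path growth $h$. I will use the truncation of Lemma~\ref{lem:escape}. Fix $N$ with $\log2/N<\epsilon$. Decompose a path from $v$ into maximal blocks inside $S_N$ and maximal excursions outside $S_N$. By Lemma~\ref{lem:escape}, an excursion of length $k$ has at most $2^{1+k/N}$ choices. Its exit into $S_N$ happens via a middle edge, or through an outer edge once the height drops. These edges land at vertices of height $1$ and bounded index, which lie in a finite set $F_N\supset S_N$ of entry vertices. Consider the finite weighted graph $\Gamma_N$ on $F_N$ whose edges are the direct edges together with the excursions, with excursions weighted by their multiplicities. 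Its generating-function radius is controlled, and the number of its paths of total length $n$ grows at most like $\mathrm{poly}(n)\,e^{n\rho_N}$. Here $\rho_N$ is the largest exponential growth among its strongly connected components. For a strongly connected component, path growth equals closed-path growth. Each closed path of $\Gamma_N$ is a genuine closed path of $\Gamma_\xi$, so $\rho_N\le h$. Excursions contribute at most an extra $e^{n\log2/N}$ factor, and a final unfinished excursion or initial segment contributes at most $2^{1+n/N}$. Hence $P_n^*(v)\le \mathrm{poly}(n)\,e^{n(h+2\epsilon)}$.

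I expect the delicate points to be the following: ensuring that the set of re-entry vertices $F_N$ is finite; summing over the lengths of the excursions without losing more than $e^{\epsilon n}$; and checking that the nesting of path counts within strongly connected components really yields $\limsup\frac1n\log P^*_n(v)\le h+2\epsilon$. Letting $\epsilon\to0$ completes the proof.
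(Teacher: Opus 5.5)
Your proposal is correct, but at the main step it takes a different route from the paper. You bound the growth of \emph{all} paths from $v$ by $h$, and then cover $B_{\rm reg}\cap q^{-1}(I_v)$ at each fixed scale $d^{-n}$ by all level-$n$ cylinders. The paper never bounds total path growth. Instead it splits the coded angles according to their coding path.

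If the path visits some vertex $u$ infinitely often, then after shifting to a visit the angle lies in a set $R_u$. For every $N$, this set is covered by the cylinders of \emph{closed} paths at $u$ of lengths $n\ge N$, and there are at most $C_n\le e^{(h+\epsilon)n}$ of these. Otherwise the path eventually leaves every finite set. After shifting, it then avoids $S_N$ forever, and Lemma~\ref{lem:escape} gives dimension at most $\log 2/(N\log d)$ for every $N$. With this dichotomy, closed-path counts enter directly, so no renewal or strongly-connected-component analysis is needed.

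In exchange, your route proves a stronger combinatorial fact: path growth from every vertex is at most $h$. This gives fixed-scale covers, and hence an upper box-dimension bound for each piece.

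The delicate points you list do resolve.
\begin{itemize}
\item A non-separated edge increases the width, so a path can re-enter $S_N$ only through a separating edge landing in $S_N$. You can therefore take $F_N=S_N$.
\item Paths of length $\ell$ inside one strongly connected component, from $u$ to $w$, inject into closed paths at $u$ of length $\ell+L$ by appending a fixed return path. So there are at most $K_\epsilon e^{(h+\epsilon)\ell}$ of them.
\item Excursions used as transitions contribute at most $d^2 2^{1+j/N}$ for length $j$.
\item A path passes through at most $\#S_N$ components. Splitting $n$ among the initial segment, the components, the transitions and the final segment therefore costs only a factor polynomial in $n$.
\end{itemize}
Together these give $\limsup_n \frac1n\log P_n(v)\le\max\{h+\epsilon,\log 2/N\}$.

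Your separate treatment of $Z$ is unnecessary, since the component argument already counts all paths.
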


\begin{lemma}\label{lem:cylinders}
For every vertex $v$ of $\G_\xi$ and
every length-$n$ path $\omega$ starting at $v$, the set $B_v(\omega)$ defined in \eqref{eq:coded-regular-angles} is covered by
 at most $2dn$ circle intervals of length
at most $d^{-n}$.
\end{lemma}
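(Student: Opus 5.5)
By \eqref{eq:angular-cylinder} we have $B_v(\omega)\subset S_v(\omega)=\bigcap_{j=0}^{n-1}\sigma_d^{-j}(S_{e_j})$. It therefore suffices to cover the angular cylinder $S_v(\omega)$ by at most $2dn$ circle intervals of length at most $d^{-n}$. I would argue by induction on $n$, peeling off the first edge. The inductive claim is that $S_v(\omega)$ is a finite union of open circle arcs, at most $2dn$ of them, each mapped injectively by $\sigma_d^n$. An arc on which $\sigma_d^n$ is injective has length at most $d^{-n}$, since $\sigma_d^n$ multiplies length by $d^n$ and the image has length at most $1$. This gives the required cover.

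The key one-step fact is \eqref{eq:7}. Each unlinked class $S\in\SSS$ is mapped homeomorphically by $\sigma_d$ onto $\mathbb S\setminus\sigma_d(\partial S)$. Moreover, $S$ is a union of finitely many open arcs whose total length is $1/d$. The endpoints of these arcs are marking angles, and they map to the finite set of images of marking angles. Since $\sigma_d|_S$ is injective, the inverse branch $(\sigma_d|_S)^{-1}$ is defined on $\mathbb S$ minus a finite set. I would write
\[
S_v(\omega)=S_{e_0}\cap\sigma_d^{-1}\bigl(S_{\sigma_d(v)}(\omega')\bigr)=(\sigma_d|_{S_{e_0}})^{-1}\bigl(S(\omega')\setminus\sigma_d(\partial S_{e_0})\bigr),
\]
where $\omega'$ is the length-$(n-1)$ tail path $e_1\cdots e_{n-1}$ starting at $v_1$. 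Suppose $S(\omega')$ is a union of $k$ arcs on which $\sigma_d^{n-1}$ is injective. Removing the finitely many points of $\sigma_d(\partial S_{e_0})$ cuts these into more arcs. Pulling back by the injective branch gives arcs on which $\sigma_d^n$ is injective.

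The count is where care is needed, and I expect it to be the main obstacle. Cutting by $\sigma_d(\partial S_{e_0})$ adds at most $\#\sigma_d(\partial S_{e_0})$ new pieces at each step. This number is at most the number of critical values on the boundary, which is at most $2(d-1)$. A naive count would then give $1+2(d-1)n\le 2dn$ arcs, which is exactly the claimed bound. To make this rigorous, I would check the following. The boundary of $S_{e_0}$ consists of at most $2s\le 2(d-1)$ endpoint angles of marking chords. These go to at most $2(d-1)$ distinct image points, with the polygons' vertices collapsing. Hence each step multiplies nothing and only adds at most $2(d-1)$ cut points. This yields at most $1+2(d-1)n$ arcs after $n$ steps.

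A possible subtlety is that the arcs of $S$ do not map onto the whole circle individually, but only jointly. This does not matter, since we pull back via the single injective map $(\sigma_d|_S)^{-1}$. The finitely many excluded points do not matter either, because the regular angles in $B_v(\omega)$ avoid them. Finally, closing each open arc gives closed intervals of length at most $d^{-n}$ covering $B_v(\omega)$.
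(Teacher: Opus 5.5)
Your proof is correct and follows essentially the same route as the paper. Both arguments write the cylinder $S_v(\omega)$ as the image of a composition of the injective inverse branches $(\sigma_d|_{S})^{-1}$, show by induction that each step adds at most $2d-2$ new cut points (giving at most $(2d-2)n+1\le 2dn$ pieces), and bound the length of each piece by $d^{-n}$ using injectivity of $\sigma_d^n$. The only difference is the end from which you induct: you peel off the first edge and cut the tail cylinder at $\sigma_d(\partial S_{e_0})$ before pulling back, while the paper adjoins each new inverse branch on the right and pulls the old cuts back through it (and since $\sigma_d(\partial S_{e_0})$ has at most $s\le d-1$ points, your count is even slightly better than needed).
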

\begin{proof}
 By  \eqref{eq:angular-cylinder}, it suffices to show that the cylinder $S_v(\omega)$ consists of  at most $2dn$ intervals, each of which has length at most $d^{-n}$.

We use the inverse representation of $\sigma_d$. For $S\in\SSS$, the restriction $\sig|_S$ is
injective. Write its inverse, with its actual domain, as
\[
 \beta_S:\sig(S)\longrightarrow S.
\]
Denote the edges of \(\omega\) by $(e_j)_{j=0}^{n-1}$. By \eqref{eq:16}, the path $\omega$ determines the partially defined composition
\begin{equation}\label{eq:itinerary-inverse}
 \beta_\omega=
 \beta_{S_{e_0}}\circ\beta_{S_{e_1}}\circ\cdots
                  \circ\beta_{S_{e_{n-1}}}.
\end{equation}
Its domain consists of terminal angles for which every
intermediate inverse is defined. On this domain,
\[
S_v(\omega)=\beta_\omega(\operatorname{dom}\beta_\omega),
 \quad \theta=\beta_\omega(\sig^n\theta)
            \ (\theta\in S_v(\omega)).
\]

Each inverse $\beta_S$
is piecewise affine of slope $1/d$. Its discontinuities and
missing endpoints are controlled by the marked angles.
There are at most $2d-2$ marked vertices, so the number of
cuts needed for one inverse map is bounded by $2d-2$.

In the composition~\eqref{eq:itinerary-inverse}, the number
of pieces grows at most linearly. When an inverse map is
adjoined on the right, each old cut has at most one preimage,
because that inverse map is globally injective on its domain.
Only its own, at most $2d-2$, cuts can be added. Induction gives
at most $(2d-2)n+1\leq 2dn$ continuous pieces. Each has slope $d^{-n}$
on an interval of length at most one.
\end{proof}

\begin{proof}[Proof of Proposition \ref{prop:dimension-upper}]
For each vertex $v$, set
\[
 B_v=B_{\mathrm{reg}}\cap q^{-1}(I_v).
\]
It follows from Lemma~\ref{lem:coverage} that
\[
 B_{\mathrm{reg}}\subset
 \bigcup_{m\ge0}\ \bigcup_{v\in \Xi_s}\sig^{-m}(B_v).
\]
For every $m$, taking the $\sigma_d^m$-preimage
preserves Hausdorff dimension, and
the set $\Xi_s$ is countable, so it suffices to prove
$\Hd B_v\le h/\log d$ for each fixed $v$.

For $\theta\in B_v$, let $\omega_v(\theta)$ be the infinite
path coding $\theta$ given in Lemma~\ref{lem:coverage1}.
It either visits some vertex infinitely often
or eventually leaves every finite set of vertices. We estimate
the corresponding angle sets separately.

We shall also use the
compatibility with a change of starting time: for $\theta\in B_v$, let
$u=\omega_v(\theta)(k)$ be the vertex reached by its actual coding at time $k$.
Then $\sigma_d^k(\theta)\in B_u$, and its coding relative to $u$ is the tail of $\omega_v(\theta)$ starting at time $k$.\vspace{2pt}

For a vertex $u$, define
\[
 R_u=\{\theta\in B_u:\omega_u(\theta)
                 \text{ returns to }u\text{ infinitely often}\}.
\]
Let $\mathcal C_u(n)$ be the set of closed  paths of length
$n$ based at $u$.
 If $\theta\in R_u$ and $n$ is a return time,
then $\omega_u(\theta)|_n\in\mathcal C_u(n)$, and
$\theta\in B_u(\omega_u(\theta)|_n)$ by Lemma \ref{lem:coverage1}. Since these return times are
unbounded, for every integer $N\ge1$ we have
\begin{equation}\label{eq:17}
 R_u\subset\bigcup_{n\ge N}\ \bigcup_{\omega\in\mathcal C_u(n)}
                       B_u(\omega)\subset\bigcup_{n\ge N}\ \bigcup_{\omega\in\mathcal C_u(n)}
                       S_u(\omega).
\end{equation}

Given $t>h/\log d$, choose $\epsilon>0$ with
$h+\epsilon<t\log d$. By the definition of $h$, the total
number $C_n$ of length-$n$ closed paths satisfies
$C_n\le e^{(h+\epsilon)n}$ for all sufficiently large $n$.
Since $C_n(u)\le C_n$, Lemma~\ref{lem:cylinders} and \eqref{eq:17}
implies that $R_u$ has an interval cover
of mesh at most $d^{-N}$ and total $t$-content at most
\[
 \sum_{n\ge N}2dn C_nd^{-nt}
 \le\sum_{n\ge N}2dn e^{-n(t\log d-h-\epsilon)}.
\]
The last sum tends to zero as $N\to\infty$, proving
$\Hd R_u\le h/\log d$.

If the coding of an angle in $B_v$
visits $u$ infinitely often, shift to any visit to $u$.
The shifted angle belongs to $R_u$. All angles in $B_v$ of
this type therefore belong to
\[
 \bigcup_{k\ge0}\ \bigcup_{u\in \Xi_s}\sig^{-k}(R_u),
\]
whose dimension is at most $h/\log d$.

Let $X_v$ be the set of angles in $B_v$ whose coding eventually
leaves every finite set of vertices. Fix $N\ge1$. For each
$u$ avoiding $S_N$ (defined in \eqref{eq:10}), let
\[
 D_{u,N}=\{\theta\in B_u:\omega_u(\theta)
                    \text{ avoids }S_N\text{ at every time}\}.
\]
Lemma~\ref{lem:escape} bounds the number of possible length-$n$
prefixes of these codings by $2^{1+n/N}$. By
Lemma~\ref{lem:cylinders}, $D_{u,N}$ is therefore covered by
at most $2dn 2^{1+n/N}$ intervals of length at most $d^{-n}$.
For $t>(\log2)/(N\log d)$, their total $t$-content is at most
\[
 2\cdot 2dn\cdot e^{-n(t\log d-(\log2)/N)},
\]
which tends to zero as $n$ tends to infinity. Hence
$$\Hd D_{u,N}\le(\log2)/(N\log d).$$

For every $\theta\in X_v$, there is a time $k$ after which its
coding avoids $S_N$. If $u$ is the vertex at that time, we have $\sig^k(\theta)\in D_{u,N}$.
Consequently
\[
 X_v\subset\bigcup_{k\ge0}\ \bigcup_{u\notin S_N}
                      \sig^{-k}(D_{u,N}).
\]
This is a countable union, so
$\Hd X_v\le(\log2)/(N\log d)$. Since $N$ can be arbitrary large,
it follows that $\Hd X_v=0$.

The two cases exhaust $B_v$ and give $\Hd B_v\le h/\log d$.
\end{proof}

\subsection{Lower bound on dimension}
\label{sec:topological-returns}
{To prove the lower bound, we lift compact horseshoe sets to
the circle. Lemma~\ref{lem:returns} supplies a horseshoe with a
high-entropy non-escaping set. Its lift under $q$, together with the
entropy--dimension formula, gives the required dimension estimate.}\vspace{3pt}

For a compact metric space \(X\) and a continuous map \(T:X\to X\), the topological entropy \(h_{\rm top}(T)\) is defined as usual (see, e.g., \cite{Bow}). We shall use the following basic properties.

\begin{proposition}\label{pro:entropy}
Let $T:X\to X$ be a continuous map on a compact metric space $X$. The following statements hold.
\begin{enumerate}
\item If $X=X_1\cup X_2$, with both $X_1$ and $X_2$ compact and $T$-invariant, then
$$h_{\rm top}(T)=\max\big\{\,h_{\rm top}(T|_{X_1}),h_{\rm top}(T|_{X_2})\,\big\}.$$
\item Let $E$ be a closed and $T$-invariant subset of $X$. Suppose that
\[ {\rm dist}(T^n(x),E)\to 0 \quad\text{as}\quad n\to\infty\]
locally and uniformly  in $X\setminus E$. Then $h_{\rm top}(T)=h_{\rm top}(T|_E)$.\vspace{2pt}

\item Suppose that  $Y$ is a compact metric space, $R:Y\to Y$ is a continuous map and $\pi:Y\to X$ is a continuous surjection, such that
$$\pi\circ R=T\circ \pi\quad\text{on}\quad Y.$$ Then $h_{\rm top}(T)\leq h_{\rm top}(R)$. Furthermore, if $\#\pi^{-1}(x)<\infty$ for every $x\in X$, we obtain the equality $h_{\rm top}(T)= h_{\rm top}(R)$.
\end{enumerate}
\end{proposition}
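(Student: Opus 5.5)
Each of the three statements is standard. I would treat them separately, using Bowen's definition of $h_{\rm top}$ via $(n,\epsilon)$-separated sets and, where convenient, the variational principle $h_{\rm top}(T)=\sup_\mu h_\mu(T)$ over $T$-invariant Borel probability measures (see \cite{Bow}).

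For (1), monotonicity under restriction to closed invariant subsets gives $h_{\rm top}(T)\ge\max\{h_{\rm top}(T|_{X_1}),h_{\rm top}(T|_{X_2})\}$. For the reverse inequality, let $F\subset X$ be an $(n,\epsilon)$-separated set. Then $F\cap X_i$ is $(n,\epsilon)$-separated for $T|_{X_i}$, because $X_i$ is invariant and the Bowen metric is the same. Hence
\[
s_n(\epsilon,X)\le s_n(\epsilon,X_1)+s_n(\epsilon,X_2)\le 2\max_i s_n(\epsilon,X_i).
\]
Taking $\frac1n\log$, then $\limsup_n$, then $\epsilon\to0$ removes the factor $2$. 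Alternatively, every ergodic measure gives full mass to $X_1$ or to $X_2$, and the variational principle finishes.

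For (2), I would use the variational principle. It suffices to show that every $T$-invariant probability measure $\mu$ satisfies $\mu(X\setminus E)=0$, since then $h_\mu(T)=h_\mu(T|_E)$ and the suprema agree.
\begin{itemize}
\item Suppose instead $\mu(X\setminus E)>0$. By inner regularity there is a compact $K\subset X\setminus E$ with $\mu(K)>0$, and $\delta:={\rm dist}(K,E)>0$.
\item Since the convergence ${\rm dist}(T^nx,E)\to0$ is uniform on the compact set $K$, there is $n_0$ with ${\rm dist}(T^nx,E)<\delta$ for all $x\in K$ and $n\ge n_0$.
\item Then $T^n x\notin K$ for every $x\in K$ and $n\ge n_0$. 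This contradicts Poincaré recurrence, which says $\mu$-a.e.\ point of $K$ returns to $K$ infinitely often.
\end{itemize}
The only care needed is to read ``locally uniformly'' as uniform on compact subsets of $X\setminus E$, which is what the argument uses.

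For (3), the inequality $h_{\rm top}(T)\le h_{\rm top}(R)$ is the standard factor inequality. By uniform continuity of $\pi$, choose $\eta>0$ such that $d(y,y')<\eta$ implies $d(\pi y,\pi y')<\epsilon$. If $F\subset X$ is $(n,\epsilon)$-separated, choose one preimage of each point of $F$. The semiconjugacy $\pi\circ R=T\circ\pi$ then makes this set $(n,\eta)$-separated for $R$, so $s_n(\epsilon,T)\le s_n(\eta,R)$.

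For equality under finite fibres, I would invoke Bowen's inequality
\[
h_{\rm top}(R)\le h_{\rm top}(T)+\sup_{x\in X}h_{\rm top}\bigl(R,\pi^{-1}(x)\bigr).
\]
A finite set has zero entropy: it is covered by finitely many $(n,\epsilon)$-balls, uniformly in $n$. So the supremum vanishes and equality follows. Equivalently, the Ledrappier--Walters relative variational principle gives $h_\nu(R)=h_{\pi_*\nu}(T)$ when fibres are finite.

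I expect the only step needing genuine care to be this finite-fibre equality. There I would cite Bowen's fibre-entropy theorem rather than reprove it; parts (1) and (2) are routine once the variational principle is available.
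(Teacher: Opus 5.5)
Your proposal is correct. The paper gives no argument of its own here: it cites \cite[Propositions~2 and~3]{Do} for (1) and (2), and \cite[Theorem~17]{Bow} for (3).

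For the finite-fibre equality in (3) you invoke exactly that Bowen inequality, $h_{\rm top}(R)\le h_{\rm top}(T)+\sup_{x}h_{\rm top}(R,\pi^{-1}(x))$, together with the remark that finite sets carry zero Bowen entropy. There the two routes coincide. Your separated-set count for (1) and your lifting of $(n,\epsilon)$-separated sets for the inequality in (3) are the standard self-contained proofs of what the paper cites.

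The genuinely different piece is (2), where you argue through the variational principle and Poincar\'e recurrence. That argument is sound, and it proves more than stated, because the uniformity you say you need is never used. Take a compact $K\subset X\setminus E$ with $\mu(K)>0$ and set $\delta={\rm dist}(K,E)$. Pointwise convergence alone forces each $x\in K$ to satisfy ${\rm dist}(T^nx,E)<\delta$ for all large $n$. Hence each such $x$ visits $K$ only finitely often, which contradicts recurrence. So your route yields $h_{\rm top}(T)=h_{\rm top}(T|_E)$ whenever ${\rm dist}(T^nx,E)\to0$ for every $x\in X\setminus E$. This would also spare the local-uniformity check in the proof of Lemma~\ref{lem:fatou-removal}. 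The cost is reliance on the variational principle, whereas the citation keeps the paper short.
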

\begin{proof}
{Statements (1) and (2) are Propositions~2 and~3 in
\cite[Section~1.1, p.~68]{Do}, respectively. Statement (3) follows
from \cite[Theorem~17]{Bow}.}
\end{proof}

In literature, if $\pi:Y\to X$ is a semiconjugacy from $R:Y\to Y$ to $T:X\to X$, we usually say that $(T,X)$ is a \emph{factor} of $(R,Y)$ via a \emph{factor map} $\pi$.

\begin{lemma}\label{lem:fatou-removal}
Let $\g\subset K_f$ be a regulated arc joining
two Julia points, and suppose the arc $I=\phi(\g)\subset K_{\overline{f}}$ is  non-degenerate. Suppose that $X\subset I$ is compact, with
$\overline{f}^{\,n}(X)\subset X$ for some $n\ge1$. Set
\[
 X_{\overline{J}}=X\cap \overline{J},\quad A_0=q^{-1}(X_{\overline{J}}),\quad
 A=A_0\cap B_{\mathrm{reg}}.
\]
Then the following results hold.
\begin{enumerate}
\item Both $X_{\overline{J}}$ and $A_0$ are compact, invariant under
$\overline{f}^{\,n}$ and $\sig^n$, respectively.\vspace{2pt}
\item  $h_{\mathrm{top}}(\overline{f}^{\,n}|_X)=h_{\mathrm{top}}(\overline{f}^{\,n}|_{X_{\overline{J}}})=h_{\rm top}(\sigma_d^n|_{A_0})$.\vspace{3pt}
\item The set  $A_0\setminus A$ is
countable, so that $\Hd A=\Hd A_0$.
\end{enumerate}
\end{lemma}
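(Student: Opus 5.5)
We prove (1) first, then obtain the entropy equalities in (2) from Proposition~\ref{pro:entropy}, and finish with the countability in (3).

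For (1): $\overline J=\phi(J_f)$ is compact and completely invariant under $\overline f$, because $\phi$ only collapses components of $T_f(\xi)$, each of which meets $J_f$ in a connected set. Hence $X_{\overline J}=X\cap\overline J$ is compact, and $\overline f^{\,n}(X_{\overline J})\subset X\cap\overline J$. Since $q=\phi\circ p$ is continuous from $\mathbb S$ onto $\overline J$, the set $A_0=q^{-1}(X_{\overline J})$ is closed. The relation $\overline f\circ q=q\circ\sigma_d$ then gives $\sigma_d^n(A_0)\subset A_0$.

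For the second equality in (2): $q:A_0\to X_{\overline J}$ is a surjective factor map from $\sigma_d^n|_{A_0}$ to $\overline f^{\,n}|_{X_{\overline J}}$. Its fibers are finite, because a fiber of $\phi$ over $\overline J$ is either one Julia point or the intersection of one forest component with $J_f$. That intersection is finite: each forest component is a finite union of closed internal rays, hence meets $J_f$ in finitely many landing points. Each such point has a finite $\sim$-class by (R2). Proposition~\ref{pro:entropy}(3) therefore applies.

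For the first equality in (2): the points of $X\setminus\overline J$ lie in $I\setminus\overline J$. This set is contained in the $\phi$-images of finitely many attracting Fatou domains met by $\gamma$, since $\gamma$ is regulated with Julia endpoints. On each such domain, the chosen extension of $f$ acts by power maps in the disk coordinates. Points in these domains converge, locally uniformly, either to the periodic centres or to collapsed forest points. These limits lie in $\phi(T_f(\xi))$, whose components meet $J_f$, and the collapsed points are therefore in $\overline J$.

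This step is the delicate one. I would check that the centres of the attracting domains are collapsed into $\overline J$: $T_f(\xi)$ consists of internal rays through the centres, and after $m_0$ steps (Lemma~\ref{lem:forest}) every periodic domain carries one forest ray, so its centre maps to a point of $\overline J$. Then $E=X_{\overline J}\cup(\overline J\cap\text{limit set})$ is closed and invariant, and the distance of the orbit of $x$ to $E$ tends to $0$ locally uniformly on $X\setminus\overline J$. The first equality then follows from Proposition~\ref{pro:entropy}(2), possibly after enlarging $X$ by adjoining the finitely many collapsed centres and using part (1) of that proposition to combine the pieces.

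For (3): $A_0\setminus A\subset A_0\setminus B_{\rm reg}$. Angles in $A_0$ that are not in $B$ land at points with a singleton $\sim$-class. For $q(\theta)$ to lie in the interior of the arc $I$ at such a point, the point must lie in a collapsed forest fiber or be a Julia endpoint of $\gamma$; otherwise it is a cut point of $I$, hence of $\overline J$, which forces it to be biaccessible. Angles landing at the $\phi$-preimages of collapsed points are countable: those preimages are forest-Julia points, finitely many per component, and the forest has countably many components. The endpoints of $I$ contribute finitely many angles. Together with $B\setminus B_{\rm reg}$, which is countable by Proposition~\ref{pro:exceptional}, this shows $A_0\setminus A$ is countable. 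Since countable sets have Hausdorff dimension zero, $\Hd A=\Hd A_0$.
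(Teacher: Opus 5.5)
Your arguments for (1), for the second equality in (2), and for (3) follow the paper. For the first equality in (2) you take a different route. You stay in the reduced plane and apply Proposition~\ref{pro:entropy}(2) to $\overline f^{\,n}|_X$ with $E=X_{\overline J}$. The paper instead lifts to $Y=\phi^{-1}(X)$, applies that proposition to $f^n|_Y$, and sandwiches the entropies between the factor maps $\phi:Y\to X$ and $\phi:Y\cap J_f\to X_{\overline J}$. Your route is legitimate and even saves a step: collapsed attracting centres already lie in $\overline J$, whereas the paper has to adjoin attracting periodic points to $Y\cap J_f$.

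As written, however, it has a gap: it ignores Siegel domains. Your claim that $I\setminus\overline J$ lies in the $\phi$-images of finitely many attracting Fatou domains is false. The Siegel non-capture assumption only forbids critical gaps that eventually map to Siegel gaps. Periodic Siegel disks and their univalent preimages remain, and a regulated arc $\gamma$ with Julia endpoints may cross them along two internal rays through the centre. (It may also cross infinitely many Fatou domains, which is harmless.) Since $T_f(\xi)$ lies only in closures of attracting domains, the centre $c$ of a periodic Siegel disk satisfies $\phi(c)\notin\overline J$. If $\phi(c)\in X$, its orbit is periodic and stays at positive distance from $X_{\overline J}$, so the hypothesis of Proposition~\ref{pro:entropy}(2) with $E=X_{\overline J}$ fails. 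Moreover, you never exclude non-central Siegel points, whose orbits converge nowhere.

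The missing step is the one the paper supplies. If $x\in X$ lies in a periodic Siegel disk but is not its centre, the closure of its $\overline f^{\,n}$-orbit contains an invariant Jordan curve, which cannot lie in the arc $I$. Hence $X$ meets periodic Siegel disks only in their centres. It meets each strictly preperiodic Siegel domain in at most one point, namely the univalent preimage of a centre; that point is isolated in $X$ and reaches a centre after finitely many steps. Now take $E=X_{\overline J}\cup Z$, where $Z$ is the finite invariant set of Siegel centres in $X$. Proposition~\ref{pro:entropy}(2) then applies, and Proposition~\ref{pro:entropy}(1) discards $Z$, since a finite invariant set carries zero entropy.

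Two smaller points. In (1), forest components do not meet $J_f$ in connected sets: a critical core $C_Q$ meets $J_f$ in at least two points. The invariance $\overline f(\overline J)=\overline J$ follows directly from $\phi\circ f=\overline f\circ\phi$ instead. In (3), an interior point of $I$ is not automatically a cut point of $\overline J$. What you need is that a Julia point in the interior of the regulated arc $\gamma$ is a cut point of $J_f$, which is the fact the paper uses.
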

\begin{proof}
(1) Since \(X\) and \(\overline{J}\) are compact and \(\overline{f}^{\,n}\)-invariant, so is \(X_{\overline{J}}\). Note that \(q:\mathbb{S}\to\overline{J}\) semiconjugates \((\mathbb S,\sigma_d)\) to \((\overline{J},\overline{f})\). Hence \(A_0\) is compact and \(\sigma_d^n(A_0)\subset A_0\).\vspace{3pt}

(2) By definition, {the fibers of the maps $\phi|_{J_f}$ and $p$ are finite}. Then
 Proposition \ref{pro:entropy}\,(3) shows  that the factor map
\[
 q:(A_0,\sig^n)\longrightarrow(X_{\overline{J}},\overline f^{\,n})
\]
preserves topological entropy. Then the second equality holds.

To prove the first equality, lift \(X\) to the \(f\)-plane and set
\[
Y=\phi^{-1}(X),\qquad Y_J=Y\cap J_f,\qquad g=f^n|_Y.
\]
Indeed, \(Y\) lies in the union of \(\g\) and the components of \(T_f(\xi)\) whose images lie in \(X\). By Lemma~\ref{lem:regulated}, \(Y\) is contained in a finite regulated tree. These sets are compact and forward invariant under \(g\). By a similar argument, we have
\[
h_{\mathrm{top}}(g|_{Y_J})=h_{\mathrm{top}}(\overline{f}^{\,n}|_{X_{\overline{J}}}).
\]

It suffices to prove  $h_{\mathrm{top}}(g|_{Y_J})
  =h_{\mathrm{top}}(g)$: if this {holds}, combining the equality in the last paragraph and Proposition \ref{pro:entropy}\,(3), it follows that
  \[
 \begin{aligned}
 h_{\mathrm{top}}(\overline{f}^{\,n}|_X)
 &\le h_{\mathrm{top}}(g)
  =h_{\mathrm{top}}(g|_{Y_J})
 &=h_{\mathrm{top}}(\overline{f}^{\,n}|_{X_{\overline{J}}})
 \le h_{\mathrm{top}}(\overline{f}^{\,n}|_X).
 \end{aligned}
\]
This proves the required equality.

Let \(y\in Y\setminus Y_J=Y\cap F_f\). We claim that \(y\) is either contained in an attracting domain or eventually maps into a periodic Siegel disk. Suppose \(y\) lies in no attracting domain. Then, after iteration, we may assume \(y\) belongs to a periodic Siegel disk \(D\). If \(y\) is not the center of \(D\), then the closure of its orbit \(\{g^k(y):k\ge0\}\) contains a Jordan curve. This contradicts the fact that \(Y\) is contained in a regulated tree.

Let $\widehat{Y}_J$ be the union of $Y_J$ and all
attracting and Siegel periodic points of $f$ in $Y$.
Every $y\in Y\setminus\widehat{Y}_J$ either lies in an
attracting domain or eventually falls on a Siegel periodic
point. Since convergence to attracting cycles is locally
uniform on attracting domains, we obtain
\[
\operatorname{dist}\bigl(g^k(y),\widehat{Y}_J\bigr)
\longrightarrow 0
\]
locally uniformly on $Y\setminus\widehat{Y}_J$.
Proposition~6.13(2) then yields
\[
h_{\rm top}(g)=h_{\rm top}(g|_{\widehat{Y}_J})
=h_{\rm top}(g|_{Y_J}),
\]
where the second equality follows from the fact that
adding finitely many periodic points does not change
topological entropy.

(3) Outside the finite set $\phi(T_f(\xi))\cup\partial I$,
each $x\in X_{\overline{J}}$ has a unique lift $y\in J_f\cap\operatorname{int}(\g)$.
Such a point $y$ is a cut point of $J_f$, and hence is biaccessible.
Since $q$ has finite fibers, $A_0\setminus B$ is finite.
Together with the countability of $B\setminus B_{\mathrm{reg}}$,
this proves the last assertion.
\end{proof}

\begin{lemma}\label{lem:entropy}
Let $T:\bigcup_{j=1}^M I_j\to I$ be a monotone interval $M$-horseshoe. Then
\[h_{\rm top}(T|_X)\geq \log M,\]
where $X$ is the non-escaping set of this horseshoe.
\end{lemma}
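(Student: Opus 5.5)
The plan is to build a semiconjugacy from a subshift onto the non-escaping set. The monotone branches may collapse plateaux, so I avoid claiming a conjugacy with the full $M$-shift and instead use separated sets, counting orbits that are distinguished by which branch they follow.

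\emph{Symbolic coding.} For a word $w=w_0\cdots w_{n-1}\in\{1,\dots,M\}^n$, define recursively, exactly as for the full inverse intervals in Section~\ref{sec:coding},
\[
J_w=\{x: T^k(x)\in I_{w_k},\ 0\le k<n\}.
\]
This is obtained by pulling $I$ back through the monotone surjections $T|_{I_{w_k}}$. Each $J_w$ is a nonempty closed subinterval, and $T^n:J_w\to I$ is a monotone surjection. For an infinite word $w$, the nested intersection $J_w=\bigcap_n J_{w|_n}$ is a nonempty compact set contained in $X$, so every sequence in $\{1,\dots,M\}^{\mathbb N}$ is realized by some point of $X$.

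\emph{Trimming to get separation.} I would pass to trimmed intervals, as in \eqref{eq:coding-trimmed}: inside each $J_w$ choose the smallest subinterval $L_w$ still mapped onto $I$ by $T^n$. Distinct words of the same length then give trimmed intervals with pairwise disjoint interiors. To get a uniform separation constant, I would use the interior of $I$. Since the $I_j$ are non-degenerate with disjoint interiors, fix a point $y_j$ in the interior of each $I_j$. Their distances from the complement of $I_j$ inside $\bigcup_i I_i$ give some $\varepsilon>0$ with the following property: two points whose images at some time $k<n$ equal $y_i$ and $y_j$ with $i\neq j$ are $\varepsilon$-apart under the Bowen metric. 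Here I compare images, not the intervals themselves, and this works because $y_i$ has positive distance from $I_j$ whenever $i\ne j$.

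\emph{Separated sets and the bound.} For each word $w$ of length $n$, pick $x_w\in L_w\cap X$ such that $T^k(x_w)$ is a point of $I_{w_k}$ realized near $y_{w_k}$. This is possible because, in the full shift coding, the infinite word $w\,w_\infty$ (with $w_\infty$ arbitrary) can be chosen so that the orbit passes through a preimage of $y_{w_k}$ at time $k$, using surjectivity of each monotone branch onto $I\supset I_{w_{k+1}}$. Distinct words then yield $(n,\varepsilon)$-separated points in $X$. Hence
\[
s_n(\varepsilon)\ge M^n,
\]
and $h_{\rm top}(T|_X)\ge\log M$.

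\emph{Alternative route.} One could instead take the finite-fibre factor map from the symbolic space of admissible nested intervals and apply Proposition~\ref{pro:entropy}(3). However, fibres may be infinite on plateaux, so the separated-set argument is cleaner.

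The main obstacle is the choice of $x_w$ with its orbit at the marked interior points $y_{w_k}$ at every time $k<n$. I would handle this by pulling back the single point $y_{w_{n-1}}$ successively through the branches $w_{n-2},\dots,w_0$ and using monotonicity: each pulled-back set is a nonempty interval, because monotone surjections have connected fibres. I would then select any point in it, and extend the orbit forward inside $X$ by the nested-interval argument above.
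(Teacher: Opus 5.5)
Your coding step is sound: each cylinder $J_w$ is a nonempty closed interval mapped monotonically onto $I$ by $T^n$, and every infinite word is realized in $X$. The gap is in the separation step.

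To separate $x_w$ from $x_{w'}$ using the marked points, one of the two orbits must be at (or very near) its marked point at the first index $k$ where $w$ and $w'$ differ. Your plan secures this by choosing $x_w$ with $T^k(x_w)$ at or near $y_{w_k}$ for every $k<n$. That cannot be arranged, because the orbit is determined by $x_w$. If $T^k(x_w)$ is near $y_{w_k}$, then $T^{k+1}(x_w)$ is near $T(y_{w_k})$, which is unrelated to $y_{w_{k+1}}$.

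Pulling back $y_{w_{n-1}}$ controls only the final time. At earlier times you know only that $T^k(x_w)\in I_{w_k}$, and this gives no lower bound on distances when $I_{w_k}$ and $I_{w'_k}$ share an endpoint. Concretely, take the tent map $T(x)=2x$ on $I_1=[0,1/2]$, $T(x)=2-2x$ on $I_2=[1/2,1]$, with $y_1=1/4$. For $w=1\,2\,1^{n-2}$ and $w'=2\,2\,1^{n-2}$, your construction gives $x_w=1/2-2^{-n-1}$ and $x_{w'}=1/2+2^{-n-1}$. These have the same image under $T$, so their Bowen distance up to time $n$ is $2^{-n}$. Hence $s_n(\varepsilon)\ge M^n$ is not established.

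Adjacency of the branch intervals is the only obstruction. If the $I_j$ were pairwise disjoint, any $x_w\in X\cap J_w$ would work with $\varepsilon=\min_{i\ne j}\operatorname{dist}(I_i,I_j)$. The itinerary would then be a continuous surjection of $X$ onto the full shift. So pass to an iterate:

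Fix $N$. The $M^N$ trimmed intervals $L_w$ with $|w|=N$, which you already introduced, are non-degenerate, have pairwise disjoint interiors, and are mapped monotonically onto $I$ by $T^N$.

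List them in order along $I$ and keep every other one. This gives at least $M^N/2$ pairwise disjoint branches of a monotone horseshoe for $T^N$. Its non-escaping set $X'$ is compact, $T^N$-invariant and contained in $X$.

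The itinerary map of $X'$ onto the full shift on these branches is a factor map, so
\[
N\,h_{\rm top}(T|_X)=h_{\rm top}(T^N|_X)\ge h_{\rm top}(T^N|_{X'})\ge N\log M-\log 2 .
\]

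Letting $N\to\infty$ gives the lemma. The paper gives no proof of its own here; it only cites the classical estimates of Block--Coppel and Ruette.
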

This is a classical entropy estimate for interval horseshoes, see e.g.~\cite[Chapter VIII, Proposition 14]{BC} and
\cite[Proposition 4.6]{Ru}.

\begin{lemma}[Entropy-dimension formula on the circle]\label{lem:formula}
\label{lem:circle-dimension}
If $A\subset\Sone$ is compact and $\sig(A)\subset A$, then
\[
 h_{\mathrm{top}}(\sig|_A)=\log d\cdot\Hd A.
\]
\end{lemma}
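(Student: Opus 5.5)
We prove the two inequalities separately, both resting on the fact that $\sigma_d$ expands arc length uniformly by the factor $d$ and is injective on arcs of length less than $1/d$. Throughout, fix $\delta\in(0,1/(2d))$. Every subset of $\Sone$ of diameter less than $\delta$ is then mapped by $\sigma_d$ isometrically up to the factor $d$; that is, $\sigma_d$ is a local similarity with ratio $d$ at scale $\delta$.

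For the inequality $\log d\cdot\Hd A\le h_{\rm top}(\sig|_A)$, write $h=h_{\rm top}(\sig|_A)$ and fix $\epsilon>0$.

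1. By the Bowen definition of entropy through spanning sets, for all large $n$ there is an $(n,\delta)$-spanning set $F_n\subset A$ with $\#F_n\le e^{(h+\epsilon)n}$.
2. The Bowen ball $\{x:|\sigma_d^jx-\sigma_d^jy|<\delta,\ 0\le j<n\}$ is contained in an arc of length at most $2\delta d^{-(n-1)}$. This follows by induction from the local-similarity property: on scale $\delta$, $\sigma_d^j$ restricted to the ball is a similarity of ratio $d^j$.
3. Hence $A$ is covered by $e^{(h+\epsilon)n}$ arcs of length comparable to $d^{-n}$. For $t>(h+\epsilon)/\log d$ the $t$-content of this cover tends to $0$, so $\Hd A\le (h+\epsilon)/\log d$.

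This step actually bounds the upper box dimension, which is stronger.

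For the reverse inequality, assume $h>0$ and fix $\epsilon>0$. We use the variational principle to choose an ergodic $\sig$-invariant measure $\mu$ on $A$ with $h_\mu\ge h-\epsilon$.

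1. Consider the partition of $\Sone$ into the $d$ half-open arcs $[k/d,(k+1)/d)$. Its $n$-th refinement consists of the cylinders of length $d^{-n}$, and this partition is generating.
2. By Shannon–McMillan–Breiman, $\mu(P_n(x))\approx e^{-nh_\mu}$ for $\mu$-a.e.\ $x$, where $P_n(x)$ is the cylinder of length $d^{-n}$ containing $x$.
3. A ball $B(x,r)$ with $d^{-n-1}\le r<d^{-n}$ meets at most three $n$-cylinders. Because $\mu$ is ergodic, the same SMB estimate holds uniformly on a set of large measure.
4. Points near cylinder boundaries need separate handling. One can avoid the issue by using the two shifted partitions (the second rotated by $1/(2d)$), or by invoking Billingsley's lemma, which is stated directly for $d$-adic cylinders.
5. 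Together these give $\liminf_{r\to0}\log\mu(B(x,r))/\log r\ge h_\mu/\log d$ for $\mu$-a.e.\ $x$.
6. The mass distribution principle then yields $\Hd A\ge \Hd\mu\ge (h-\epsilon)/\log d$.

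The cleanest route is to state the whole argument on the symbolic level. The coding map $\Sone\to\{0,\dots,d-1\}^{\mathbb N}$ is injective off the countable set of $d$-adic rationals. It conjugates $\sig$ to the shift and sends cylinders to $d$-adic intervals, so Billingsley's theorem (dimension equals $h_\mu/\log d$ for the metric $d^{-n}$) transfers directly.

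The main obstacle is this comparison between balls and $d$-adic cylinders in the lower bound; the upper bound is routine. The countable set of $d$-adic rationals and boundary effects cause no trouble, since countable sets have measure zero for non-atomic measures and $h>0$ forces $\mu$ to be non-atomic. Alternatively, one may simply cite the classical result of Furstenberg (1967) that for closed $\sig$-invariant $A$ the Hausdorff dimension equals $h_{\rm top}/\log d$, which is exactly this statement.
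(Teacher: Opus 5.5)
Your proof is correct, and it takes a genuinely different route from the paper. The paper gives no argument of its own: it cites Furstenberg \cite[Proposition~III.1]{Fur}, the fallback you mention at the end.

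\textbf{Comparison.} Your upper bound is the standard covering estimate, and it controls the upper box dimension: Bowen balls of $\sigma_d|_A$ at scale $\delta<1/(2d)$ lie in arcs of length at most $2\delta d^{-(n-1)}$. For the lower bound you pass through invariant measures, using the variational principle, Shannon--McMillan--Breiman for the $d$-adic generating partition, and then Billingsley's lemma or mass distribution. The classical argument is instead purely combinatorial. If $\Hd A<\beta$, one gets a finite cover of $A$ by $d$-adic intervals $C_w$, $w\in W$, with $s:=\sum_{w\in W}d^{-\beta|w|}<1$. By $\sigma_d$-invariance, every word of length $n$ occurring in the coding of $A$ is a prefix of a concatenation $w_1\cdots w_k$ of words of $W$ with total length in $[n,n+\max_W|w|)$. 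Hence there are at most $d^{\beta(n+\max_W|w|)}/(1-s)$ such words, so $h_{\rm top}(\sigma_d|_A)\le\beta\log d$. That route needs no measures and never meets the ball-versus-cylinder issue. Yours uses heavier theorems but also produces ergodic measures on $A$ whose dimension approximates $\Hd A$.

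\textbf{Small corrections.}
\begin{enumerate}
\item The uniformity in your step 3 comes from Egorov's theorem; ergodicity only makes the SMB limit equal to the constant $h_\mu$.
\item The neighbouring-cylinder problem is closed either by Billingsley's lemma, or by applying the mass distribution principle to $\mu$ restricted to the Egorov set $G$. This works because any $n$-cylinder meeting $G$ has measure at most $e^{-n(h_\mu-\epsilon)}$.
\item The partition rotated by $1/(2d)$ works as stated only for odd $d$. For even $d$, the coarser-level cut points of its dynamical refinement fall on the $d$-adic grid, so a small ball around such a point is cut by both partitions.
\end{enumerate}

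Finally, your argument uses only that $\sigma_d$ is multiplication by $d$. It therefore applies verbatim to $\sigma_d^m=\sigma_{d^m}$, which is the form in which the paper actually uses the lemma.
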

This is a classical entropy--dimension formula, see e.g.~\cite[Proposition III.1]{Fur}.

\begin{proposition}\label{prop:dimension-lower}
One has $\Hd B\ge h/\log d$.
\end{proposition}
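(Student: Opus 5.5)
The plan is to assemble the lemmas of this subsection into a chain: closed paths at a non-degenerate vertex, then a monotone interval horseshoe, then a compact invariant set of angles of large entropy, and finally a dimension bound through the circle formula.

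If $h=0$ there is nothing to prove, so assume $h>0$ and fix $\epsilon>0$. By Corollary~\ref{coro:many-returns} we obtain a vertex $v\notin Z$, an integer $m\ge1$ and $M\ge2$ closed paths of length $m$ based at $v$, with $\log M/m>h-\epsilon$. Lemma~\ref{lem:returns} then supplies non-degenerate subarcs $L_1,\dots,L_M\subset I_v$ with disjoint relative interiors, each mapped monotonically onto $I_v$ by $\overline f^{\,m}$. This defines a monotone interval $M$-horseshoe $T=\overline f^{\,m}|_{\bigcup L_j}$. Let $X$ be its non-escaping set. It is compact, satisfies $\overline f^{\,m}(X)\subset X$, and by Lemma~\ref{lem:entropy} we have $h_{\rm top}(\overline f^{\,m}|_X)\ge\log M$.

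Next we pass to angles. The arc $I_v=\phi([p(x_k(i)),p(x_l(j))]_{K_f})$ is non-degenerate, and it is the image of a regulated arc joining two Julia points, namely the landing points of the two postcritical angles. Lemma~\ref{lem:fatou-removal} therefore applies with $n=m$. It gives a compact set $A_0=q^{-1}(X\cap\overline J)$ with $\sigma_d^m(A_0)\subset A_0$ and $h_{\rm top}(\sigma_d^m|_{A_0})=h_{\rm top}(\overline f^{\,m}|_X)\ge\log M$. It also shows that $A=A_0\cap B_{\rm reg}\subset B$ satisfies $\Hd A=\Hd A_0$.

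The remaining step converts the entropy of $A_0$ into dimension. Lemma~\ref{lem:formula} is stated for $\sigma_d$-invariant sets, while $A_0$ is only $\sigma_d^m$-invariant. We apply it with degree $d^m$, since $\sigma_d^m=\sigma_{d^m}$. This gives
\[\Hd A_0=\frac{h_{\rm top}(\sigma_d^m|_{A_0})}{m\log d}\ge\frac{\log M}{m\log d}>\frac{h-\epsilon}{\log d}.\]
Alternatively, one can replace $A_0$ by the $\sigma_d$-invariant compact set $\bigcup_{k<m}\sigma_d^k(A_0)$, whose $\sigma_d$-entropy equals $\frac1m h_{\rm top}(\sigma_d^m|_{A_0})$ and whose dimension equals that of $A_0$, because $\sigma_d^k$ is locally bi-Lipschitz. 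It follows that $\Hd B\ge\Hd A>(h-\epsilon)/\log d$, and letting $\epsilon\to0$ gives the claim.

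The main point to verify is the hypothesis of Lemma~\ref{lem:fatou-removal}, namely that the set $X$ coming from the horseshoe sits inside a $\phi$-image of a regulated arc with Julia endpoints. Since $X\subset I_v$ and the postcritical angles land on $J_f$, this should hold directly. The degree-$d^m$ version of the circle formula is standard.
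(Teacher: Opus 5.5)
Your proposal is correct and follows the paper's proof essentially step for step. Corollary~\ref{coro:many-returns} and Lemma~\ref{lem:returns} give the monotone $M$-horseshoe on the non-degenerate arc $I_v$. Lemmas~\ref{lem:entropy} and~\ref{lem:fatou-removal} then transfer entropy at least $\log M$ to the compact $\sigma_d^m$-invariant set $A_0$ with $\Hd A=\Hd A_0$, and Lemma~\ref{lem:formula} finishes the argument. Your remarks that the endpoints of $[p(x_k(i)),p(x_l(j))]_{K_f}$ lie in $J_f$ and that the circle formula is applied to $\sigma_d^m=\sigma_{d^m}$ make explicit two points the paper uses without comment.
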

\begin{proof}
Assume \(h>0\) and fix \(0<\epsilon<h\). By Corollary~\ref{coro:many-returns}, there exists a non-zero-arc vertex \(v\) supporting \(M\ge2\) closed paths of  length \(m\) such that \[\log M/m>h-\epsilon.\] 
Lemma~\ref{lem:returns} realizes these paths as a monotone interval \(M\)-horseshoe on the postcritical arc \(I_v\), yielding a compact \(\overline{f}^{\,m}\)-invariant set \(X\subset I_v\). Applying Lemmas~\ref{lem:fatou-removal} and~\ref{lem:entropy}, we obtain a compact \(\sig^m\)-invariant set \(A_0\) with 
 \[h_{\rm top}(\sigma_d^m|_{A_0})\geq \log M,\] together with a subset \(A=A_0\cap B_{\mathrm{reg}}\) of the same Hausdorff dimension. Thus
\[
\begin{aligned}
\Hd B&\ge\Hd A=\Hd A_0
   =\frac{h_{\mathrm{top}}(\sig^m|_{A_0})}{m\log d}\\
   &\ge\frac{\log M}{m\log d}
   >\frac{h-\epsilon}{\log d},
\end{aligned}
\]
where the second equality is by Lemma~\ref{lem:formula}. This completes the proof.
\end{proof}

\begin{proof}[Proof of Theorem \ref{thm:equal1}]
It follows from Propositions \ref{pro:reduction}, \ref{prop:dimension-upper} and \ref{prop:dimension-lower}.
\end{proof}

\section{Continuity of biaccessibility dimension for polynomials}\label{sec:continuity1}
\label{sec:continuity}

{We prove Theorem~\ref{thm:main} by passing from actual smooth
landing data to a perfect lamination and its dendrite model.
This model provides no global landing map for the original
polynomial. We therefore realize only the finite periodic classes
needed to bound conformal return systems. Uniform pruning controls
their return times and boundary angles; the resulting systems give
the two semicontinuity estimates.}

\subsection{Biaccessible lamination and its perfect part}\label{sec:biaccessible}

Recall that $\Sigma_0(f)$ consists of all arguments whose external radii terminate at critical points of $f$. Then 
\begin{equation}\label{eq:5}
\#\Sigma_0(f)\leq \mu_d:=(d-1)2^{d-1},
\end{equation}
and the (smooth) external ray $R_f(\theta)$ is well-defined if and only if $\theta$ avoids 
$$\Sigma(f)=\bigcup_{j=0}^{\infty}\sigma_d^{-j}(\Sigma_0(f)).$$

For each point \(z\in J_f\), let the \emph{landing class} of \(z\) be the set of smooth angles whose external rays land at \(z\). For every such finite landing class containing at least two angles, take the edges of its convex hull. The resulting collection, together with points in $\mathbb S$, is a prelamination $\LLL_{\rm bi}^\circ(f)$. Taking its closure  in the space of chords yields a lamination $\LLL_{\rm bi}(f)$, called the \emph{biaccessible lamination} of \(f\). Its perfect part is $\LLL_{\rm bi}^{\rm p}(f)$. 

It is precisely the following two facts that relate \(\LLL_{\rm bi}^{\rm p}(f)\) to the dynamics of \(f\).

By a \emph{ray pair} of \(f\), we mean the union of two smooth external rays and their common landing point.
If $\ell=\overline{ab}$ and the rays at $a,b$ are smooth and land together, $R_f(\ell)$ denotes this ray pair.
\begin{proposition}\label{pro:fact}
 Each leaf of $\LLL_{\rm bi}^\circ(f)$ corresponds to a ray pair of $f$; and
each leaf of $\LLL_{\rm bi}^{\rm p}(f)$ can be approximated by leaves in $\LLL_{\rm bi}^\circ(f)$.
\end{proposition}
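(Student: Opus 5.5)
The statement has two parts, and both follow almost directly from the definitions together with the Cantor--Bendixson structure recorded in Proposition~\ref{pro:countable}.

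For the first part, take a leaf $\ell=\overline{ab}$ of $\LLL_{\rm bi}^\circ(f)$. By construction, $\ell$ is an edge of the convex hull of a finite landing class $A$ of some point $z\in J_f$ with $\#A\ge2$. All angles in $A$ are smooth, and their rays land at the common point $z$. In particular, $a$ and $b$ are smooth angles whose rays $R_f(a)$ and $R_f(b)$ land together at $z$. Hence $R_f(\ell)=R_f(a)\cup R_f(b)\cup\{z\}$ is a ray pair. One point deserves a remark: distinct finite landing classes have disjoint convex hulls in $\mathbb D$, because two ray pairs landing at distinct points cannot cross in the plane. This is what makes $\LLL_{\rm bi}^\circ(f)$ a prelamination, but it is not needed for the correspondence itself. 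Note also that a leaf determines its landing point $z$ uniquely, since $z$ is the common landing point of $R_f(a)$.

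For the second part, let $\ell\in\LLL_{\rm bi}^{\rm p}(f)$. I separate two cases.

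\emph{Degenerate $\ell$.} Here $\ell$ is a point of $\mathbb S$, and it is trivially approximated by degenerate leaves, which belong to $\LLL_{\rm bi}^\circ(f)$ by definition. So assume $\ell$ is non-degenerate.

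\emph{Non-degenerate $\ell$.} By Proposition~\ref{pro:countable}, $\ell$ is a limit of a sequence of distinct leaves $\ell_n\in\LLL_{\rm bi}(f)$. Since $\LLL_{\rm bi}(f)$ is the closure of $\LLL_{\rm bi}^\circ(f)$ in the compact metric space of chords, each $\ell_n$ is itself a limit of leaves of $\LLL_{\rm bi}^\circ(f)$. So I choose $\ell_n'\in\LLL_{\rm bi}^\circ(f)$ within Hausdorff distance $1/n$ of $\ell_n$. A diagonal argument then gives $\ell_n'\to\ell$.

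The only subtle point is whether "approximated" is meant to require distinct leaves, i.e. whether $\ell$ is a non-isolated limit of leaves of $\LLL_{\rm bi}^\circ(f)$ and not merely equal to one. If so, I would pick each $\ell_n'$ at distance less than $\min\{1/n,\,\mathrm{dist}(\ell_n,\ell)/2\}$ from $\ell_n$. Since the $\ell_n$ are distinct and converge to $\ell$, we have $\ell_n\neq\ell$, so this radius is positive. Then $\ell_n'\neq\ell$, and after passing to a subsequence the $\ell_n'$ are pairwise distinct.

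This last step is the only place requiring care, and I expect it to be routine. No dynamics is needed here; the dynamical content enters only in the first part, through the fact that leaves come from actual landing classes.
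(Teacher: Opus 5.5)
Your argument is correct and is the natural unpacking of the definitions. The paper gives no separate proof of this proposition: the first part is immediate from building $\LLL_{\rm bi}^\circ(f)$ out of finite smooth landing classes, and the second follows from $\LLL_{\rm bi}(f)$ being the closure of $\LLL_{\rm bi}^\circ(f)$ in chord space together with Proposition~\ref{pro:countable}, exactly as you do. Your extra care about distinctness is what the later uses in Proposition~\ref{pro:perfect} and Lemma~\ref{lem:actual-separators} need, as is the automatic fact that approximants of a non-degenerate leaf are eventually non-degenerate and hence give genuine ray pairs.
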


\begin{proposition}\label{pro:perfect}
The lamination $\LLL_{\rm bi}^{\rm p}(f)$ is invariant and perfect.
\end{proposition}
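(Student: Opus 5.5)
Since $\LLL_{\rm bi}^{\rm p}(f)$ is perfect by definition of the perfect part, the content of Proposition~\ref{pro:perfect} is invariance. By Lemma~\ref{lem:perfect}, the perfect part of any invariant lamination is a perfect invariant lamination. So it suffices to prove that the full biaccessible lamination $\LLL_{\rm bi}(f)$ is $\sigma_d$-invariant in the sense of Definition~\ref{def:invariant-lamination}.

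First I would verify the three conditions on the prelamination $\LLL_{\rm bi}^\circ(f)$, using the dynamics of ray pairs from Proposition~\ref{pro:fact}. Then I would pass to the closure.

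\emph{Forward invariance.} Let $\ell=\overline{ab}$ be an edge of the convex hull of a finite landing class at $z$. Then $f$ maps $R_f(a),R_f(b)$ to the smooth rays $R_f(\sigma_d a),R_f(\sigma_d b)$, which land together at $f(z)$. If the landing class of $f(z)$ is finite, $\sigma_d(\ell)$ is either a point or a chord inside its convex hull. To show it is an \emph{edge}, I would use the local homeomorphism of $f$ near a non-critical $z$ (and local degree at critical $z$), which preserves cyclic order of the rays at $z$ up to the covering. This is a consecutive-preserving argument as in (R5). If the class of $f(z)$ is infinite, forward images cannot be finite classes; this case has to be handled separately. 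I would restrict $\LLL_{\rm bi}^\circ$ accordingly, or note that such leaves are countable and disappear in the perfect part.

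\emph{Pullbacks and siblings.} Take a ray pair landing at $w$ with angles $a',b'$. Choose $a,b$ with $\sigma_d(a)=a'$ and $\sigma_d(b)=b'$ such that the lifted rays are smooth and land at a common preimage $z$ of $w$. Lifting the ray pair as a curve through the covering $f$ gives $d$ disjoint lifts away from critical values. When the pair passes through critical values, or the lifts are non-smooth (angles in $\Sigma(f)$), lifting fails. However, these angles form a countable set. This is exactly what the closure and perfect-part steps absorb.

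\emph{Passage to the closure.} Invariance passes to closures by continuity of $\sigma_d$ on chords. For a limit leaf $\ell=\lim\ell_n$, take pullbacks and siblings of the $\ell_n$ and extract convergent subsequences. Pairwise disjointness of limit siblings can fail only if sibling leaves collapse to a common endpoint, which forces that endpoint to be critical. I would handle this as in the BMOV closure theorem for sibling-invariant prelaminations.

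I expect the main obstacle to be the exceptional leaves: non-smooth lifts near $\Sigma(f)$, infinite landing classes, and the collapse of siblings in the limit. The cleanest route I would try first is to avoid proving full invariance of $\LLL_{\rm bi}(f)$. Instead I would show invariance of the sub-prelamination of leaves whose angles avoid the countable set $\Sigma(f)\cup\bigcup_n\sigma_d^{n}(\Sigma_0(f))$ and infinite classes, and then take its closure. Since these exceptions involve countably many leaves, by Proposition~\ref{pro:countable} the perfect part is unaffected. Lemma~\ref{lem:perfect} then finishes the proof.
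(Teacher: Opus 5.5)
Your reduction to invariance and the final appeal to Lemma~\ref{lem:perfect} are fine. The gap is in the step that is supposed to make the exceptional leaves harmless.

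\textbf{The countable-deletion step.} Proposition~\ref{pro:countable} compares a \emph{closed} lamination with its own perfect part. It says nothing about deleting a countable family $C$ of leaves from the prelamination $\LLL_{\rm bi}^\circ(f)$ \emph{before} taking the closure. With $K=\LLL_{\rm bi}^\circ(f)\setminus C$ you have $\LLL_{\rm bi}(f)=\overline K\cup\overline C$. Hence $\LLL_{\rm bi}^{\rm p}(f)=(\overline K)^{\rm p}\cup(\overline C)^{\rm p}$, since perfect kernels are the condensation points. The closure of a countable family of leaves can be uncountable with nonempty perfect kernel. A leaf of $\LLL_{\rm bi}^{\rm p}(f)$ approximated only by deleted leaves would therefore be lost, and nothing in your argument rules this out. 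The same objection applies to your fallback of discarding leaves that map into infinite landing classes.

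\textbf{The kept family is not invariant.} Your deleted angle set $\Sigma(f)\cup\bigcup_n\sigma_d^n(\Sigma_0(f))$ is not completely invariant. Take a smooth sibling $a$ of a point of the forward orbit of $\Sigma_0(f)$. It lies outside the set while $\sigma_d(a)$ lies inside, so a kept leaf at $a$ has a deleted image. You would need the full grand orbit.

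\textbf{Your first route.} Invariance of all of $\LLL_{\rm bi}(f)$ is not available in general. The sibling condition can fail where lifts break at escaping critical points.

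\textbf{The missing idea.} The obstruction is \emph{finite} at the one-step level, and one step is all that conditions (1)--(3) of Definition~\ref{def:invariant-lamination} involve.
\begin{enumerate}
\item $\Sigma_1(f)$ is finite and each smooth angle lies in exactly one landing class, so only finitely many finite landing classes meet $\Sigma_1(f)$.
\item By Lemma~\ref{lem:lift}, every lift of a smooth angle outside $\Sigma_1(f)$ is smooth. The paper marks the classes meeting $\Sigma_1(f)$ together with their finitely many preimage classes. Outside this finite family, images, pullbacks and sibling lifts behave as usual.
\item The infinite-class issue is settled directly, not by deletion. A finite landing class cannot map into an infinite one: the infinitely many angles of the latter outside $\Sigma_1(f)$ lift smoothly to every preimage of the landing point.
\item One then passes to the derived set $\LLL_{\rm bi}'(f)$ of non-isolated leaves. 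Each non-degenerate such leaf is a limit of distinct leaves of $\LLL_{\rm bi}^\circ(f)$, and these eventually avoid the finite marked family. So images, pullbacks and sibling families pass to the limit, and $\LLL_{\rm bi}'(f)$ is invariant.
\item Lemma~\ref{lem:perfect} applied to $\LLL_{\rm bi}'(f)$, together with $(\LLL_{\rm bi}')^{\rm p}(f)=\LLL_{\rm bi}^{\rm p}(f)$, finishes the proof.
\end{enumerate}
Finitely many exceptions are invisible to limits of distinct leaves; countably many need not be.
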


\begin{proof}
We first locate the possible failures of the usual ray-lifting
argument. Recall
\[
 \Sigma_1(f)=\sig(\Sigma_0(f)).
\]
By Lemma \ref{lem:lift}, every lift
 of a smooth angle outside $\Sigma_1(f)$ is smooth.

First, a finite landing class cannot map into an infinite one: the latter has infinitely many angles avoiding \(\Sigma_1(f)\), whose rays lift smoothly to land at every preimage of the landing point, yielding infinitely many angles in the source class.

Second, only finitely many finite landing classes meet $\Sigma_1(f)$. Mark their edges together with the edges of their finitely many preimage classes. Outside this finite family, forward images and sibling lifts of leaves in $\LLL_{\rm bi}^\circ (f)$ obey the usual invariance rules in Definition \ref{def:invariant-lamination}.

We now consider the derived set \(\LLL_{\rm bi}'(f)\) of non-isolated leaves of \(\LLL_{\rm bi}(f)\). Every non-degenerate leaf of \(\LLL_{\rm bi}'(f)\) is a limit of distinct unmarked leaves in \(\LLL_{\rm bi}^{\circ}(f)\). Forward images and sibling collections pass to the limit. Hence, by the previous discussion, \(\LLL_{\rm bi}'(f)\) is an invariant lamination. Applying Lemma~\ref{lem:perfect} to \(\LLL_{\rm bi}'(f)\), we see that its perfect part \((\LLL_{\rm bi}')^{\rm p}(f)\) is invariant and perfect. Since \((\LLL_{\rm bi}')^{\rm p}(f)=\LLL_{\rm bi}^{\rm p}(f)\), the assertion follows.
\end{proof}

\begin{lemma}\label{lem:infinite-landings}
The smooth angles in infinite landing classes of $f$ have Hausdorff dimension zero.
Consequently,
\begin{equation}\label{eq:actual-upper-perfect}
 \Hd B(f)\le\Hd B(\LLL_{\rm bi}^{\rm p}(f)).
\end{equation}
\end{lemma}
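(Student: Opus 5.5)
The plan is to show that the infinite landing classes form only countably many classes, that each one is eventually mapped into a periodic class, and that every periodic infinite class carries zero topological entropy. The entropy--dimension formula (Lemma~\ref{lem:formula}) then gives dimension zero for each class, and countable stability of Hausdorff dimension together with $\sigma_d$-preimages finishes the first claim.

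\emph{Step 1: countability.} Landing classes of distinct points have pairwise unlinked convex hulls, because the ray pairs are disjoint in the plane. An infinite class has a convex hull with nonempty interior. As in the proof of Proposition~\ref{pro:exceptional}, the $\mathbb Q^2$ argument shows that there are only countably many such classes.

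\emph{Step 2: preperiodicity.} Let $A$ be the infinite landing class at $z$. Away from the finitely many critical points, $f$ is a local homeomorphism near $z$, so $\sigma_d(A)$ is contained in the landing class of $f(z)$. Moreover, all but finitely many angles of $\sigma_d(A)$ avoid $\Sigma_1(f)$, so $\sigma_d(A)$ is again infinite. Now suppose the orbit of $z$ were wandering. Choose a finite subset $A'\subset A$ with more than $2^d$ points, taken away from $\Sigma(f)$. The hulls $\operatorname{CH}(\sigma_d^n(A'))$ lie in the hulls of the classes of the distinct points $f^n(z)$, so they are pairwise disjoint. This contradicts Theorem~\ref{thm:kiwi}. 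Hence $z$ is preperiodic, and after passing to preimages (which preserve Hausdorff dimension) we may assume $z$ is periodic of period $p$.

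\emph{Step 3: zero entropy.} I expect this step to be the main obstacle. Let $\overline A$ be the closure of the periodic class. It is compact and $\sigma_d^p$-invariant. Near a non-critical periodic point, the local dynamics permute the rays landing at $z$ while preserving their cyclic order. So $\sigma_d^p|_{A}$ is injective and cyclic-order preserving, and by continuity $\sigma_d^p|_{\overline A}$ is weakly cyclic-order preserving. A degree-one, weakly monotone circle-type map on a compact set has topological entropy zero. The plan is to argue this via the rotation-number structure, or by counting $(n,\epsilon)$-separated orbits, which should grow only polynomially in $n$. If $z$ is critical or hits critical points, the finitely many exceptional angles can be removed first. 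Lemma~\ref{lem:formula}, applied to $\overline A$ with the map $\sigma_d^p$, then gives $\Hd\overline A=0$.

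\emph{Consequence.} Take $\theta\in B(f)$. If its landing class is infinite, $\theta$ lies in the null-dimension set above. If the class is finite, then $\theta$ is an endpoint of a leaf of $\LLL_{\rm bi}^\circ(f)\subset\LLL_{\rm bi}(f)$. Such a leaf either belongs to $\LLL_{\rm bi}^{\rm p}(f)$, or lies in the countable set $\LLL_{\rm bi}(f)\setminus\LLL_{\rm bi}^{\rm p}(f)$ given by Proposition~\ref{pro:countable}. Hence $B(f)\subset B(\LLL_{\rm bi}^{\rm p}(f))\cup(\text{countable})\cup(\text{dimension zero})$, which yields \eqref{eq:actual-upper-perfect}.
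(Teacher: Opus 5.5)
Your proposal is correct and follows the paper's proof step for step. Kiwi's wandering-polygon bound (Theorem~\ref{thm:kiwi}) forces preperiodicity, and preimages reduce to a periodic landing point. Cyclic-order preservation of $\sigma_d^p$ on $\overline{A}$ then gives zero entropy; the paper makes this concrete by extending the map to a monotone degree-one map of $\partial\CH(\overline{A})$. Lemma~\ref{lem:formula} gives dimension zero, and Proposition~\ref{pro:countable} handles the passage to the perfect part. Your hedge about $z$ being critical is unnecessary: a periodic critical point is superattracting and cannot lie in $J_f$, so $f^p$ is automatically locally univalent at $z$.
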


\begin{proof}
Let \(\Theta(z)\) be an infinite landing class at \(z\in J_f\). We first claim that \(z\) is preperiodic. Otherwise,  choose a set $A$ of \(2^d+1\) angles in this infinite landing class.  {The convex hulls of its forward images form a wandering polygon.} It follows from Theorem \ref{thm:kiwi} that $\# A\leq 2^d$, a contradiction. 

It suffices now to consider a periodic point: if $f^k(z)=w$, then
$\sigma_d^k(\Theta(z))\subset\Theta(w)$, and taking an inverse
 under $\sigma_d^k$ preserves Hausdorff dimension.

Suppose that $f^m(z)=z$, and set $A=\overline{\Theta(z)}$.
Since  $f^m$ is locally
univalent at $z$, its action on the smooth rays landing at $z$ is
injective and preserves cyclic order. By continuity, $\sigma_d^m(A)\subset A$, and
the restriction to $A$ preserves weak cyclic order.
Hence $\sigma_d^m:A\to A$ extends to
a continuous monotone circle map 
\[\sigma_d^m:\partial\, {\rm CH}(A)\to \partial\,{\rm CH}(A)\]
of degree
one. Such a circle map has
zero topological entropy. Consequently,
\[
 h_{\rm top}(\sigma_d^m|_A)=0,
 \qquad \Hd\Theta(z)\le\Hd A=0,
\]
where the last equality follows from Lemma~\ref{lem:formula}.
As preperiodic points are countable, the union of all infinite landing classes has Hausdorff dimension zero.

Every angle in a finite non-singleton landing class is an endpoint of a leaf of \(\LLL_{\rm bi}(f)\). By Proposition~\ref{pro:countable}, passing to the perfect part removes only countably many leaves. Together with the preceding paragraph, this proves~\eqref{eq:actual-upper-perfect}.
\end{proof}

\subsection{\texorpdfstring{The dendrite model of $\LLL_{\rm bi}^{\rm p}(f)$}{The dendrite model of the perfect biaccessible lamination}}\label{sec:dendrite-model}
Set \(\LLL_{\rm bi}^{\rm p}(f)=\LLL_{\rm bi}^{\rm p}\). By Lemma~\ref{lem:proper}, the lamination \(\LLL_{\rm bi}^{\rm p}\) is proper and hence generates a laminational equivalence relation \(\sim=\sim_{\LLL_{\rm bi}^{\rm p}}.\) Let \(J_\sim,g_\sim,\pi_\sim\) be the corresponding Julia set, topological polynomial, and projection. Then \((g_\sim,J_\sim)\) is a factor of \((\sigma_d,\mathbb{S})\) via \(\pi_\sim\). Note that, by Lemma \ref{lem:perfect}, all Fatou domains of $g_\sim$ are attracting.\vspace{2pt}

Choose a marking portrait $\xi$ of $g_\sim$. 
As in Section~\ref{sec:reduce}, collapsing the components of the collapsing forest of \(g_\sim\) to points yields the reduced dynamics \(\overline{g}_\sim:J_{\overline{g}_\sim}\to J_{\overline{g}_\sim}\), which is a factor of \(g_\sim:J_\sim\to J_\sim\) through the projection \(\phi\).\vspace{2pt}

Since \(\LLL_{\rm bi}^{\rm p}\) is perfect, the closures of the Fatou domains of \(g_\sim\) are pairwise disjoint by Lemma \ref{lem:proper}. Collapsing the closure of each Fatou domain to a point yields a dendrite Julia set \(J_\approx\) and a topological polynomial \(g_\approx\). In this case, every component of the collapsing forest of \(g_\sim\) lies in the closure of a unique Fatou domain. Hence \((g_\approx, J_\approx)\) is a factor of \((\overline{g}_\sim, J_{\overline{g}_\sim})\), with projection denoted by \(\wp\). The successive quotients and their commuting diagram are given in Figure \ref{fig:successive-quotients}. 
\begin{figure}[htbp]
\centering
\begin{tikzpicture}[x=1cm,y=1cm,line cap=round,line join=round,font=\small]
  \begin{scope}[shift={(1.65,0)}]
    \node at (0,1.25) {Marked forest in $K_\sim$};
    \fill[gray!10] (-0.75,0) circle(0.5) (0.75,0) circle(0.5);
    \draw[black] (-0.75,0) circle(0.5) (0.75,0) circle(0.5);
    \draw[black,thick] (-0.25,0)--(0.25,0);
    \draw[black,thick] (-1.183,0.25)--(-1.5,0.75)
      (-1.183,-0.25)--(-1.5,-0.75)
      (1.183,0.25)--(1.5,0.75) (1.183,-0.25)--(1.5,-0.75);
    \draw[orange!85!black,line width=1.35pt] (-0.75,-0.5)--(-0.75,0.5);
    \draw[orange!85!black,line width=1.35pt] (0.75,0)--(1.25,0);
    \fill[orange!85!black] (-0.75,0) circle(1.5pt) (0.75,0) circle(1.5pt);
    \node[orange!85!black,left,font=\scriptsize] at (-0.75,0) {$C_1$};
    \node[orange!85!black,above,font=\scriptsize] at (0.99,0) {$C_2$};
    \foreach \xx/\yy/\lab in {-1.5/0.75/u_1,-1.5/-0.75/u_2,1.5/0.75/v_1,1.5/-0.75/v_2}
      \node[font=\scriptsize,fill=white,inner sep=1pt] at (\xx,\yy) {$\lab$};
    \node at (0,-1.15) {crosscut and radial core};
  \end{scope}
  \draw[->,thick] (3.38,0)--(4.0,0) node[midway,above] {$\phi$};
  \begin{scope}[shift={(5.65,0)}]
    \node at (0,1.25) {Marked cores collapsed};
    \fill[gray!10] (-1.02,0) circle(0.3) (-0.42,0) circle(0.3)
      (0.75,0) circle(0.5);
    \draw[black] (-1.02,0) circle(0.3) (-0.42,0) circle(0.3)
      (0.75,0) circle(0.5);
    \draw[black,thick] (-0.12,0)--(0.25,0);
    \draw[black,thick] (-1.280,0.15)--(-1.5,0.75)
      (-1.280,-0.15)--(-1.5,-0.75)
      (1.183,0.25)--(1.5,0.75) (1.183,-0.25)--(1.5,-0.75);
    \fill[orange!85!black] (-0.72,0) circle(2pt) (1.25,0) circle(2pt);
    \node[orange!85!black,font=\scriptsize] at (-0.72,-0.62) {$\phi(C_1)$};
    \node[orange!85!black,font=\scriptsize] at (0.75,-0.68) {$\phi(C_2)$};
    \foreach \xx/\yy/\lab in {-1.5/0.75/u_1,-1.5/-0.75/u_2,1.5/0.75/v_1,1.5/-0.75/v_2}
      \node[font=\scriptsize,fill=white,inner sep=1pt] at (\xx,\yy) {$\lab$};
    \node at (0,-1.15) {two lobes on the left};
  \end{scope}
  \draw[->,thick] (7.38,0)--(8.0,0) node[midway,above] {$\wp$};
  \begin{scope}[shift={(9.65,0)}]
    \node at (0,1.25) {Fatou closures collapsed};
    \draw[black,thick] (-0.75,0)--(0.75,0);
    \draw[black,thick] (-0.75,0)--(-1.5,0.75)
      (-0.75,0)--(-1.5,-0.75)
      (0.75,0)--(1.5,0.75) (0.75,0)--(1.5,-0.75);
    \fill[blue!70!black] (-0.75,0) circle(2pt) (0.75,0) circle(2pt);
    \node[blue!70!black,below] at (-0.75,0) {$z_1$};
    \node[blue!70!black,below] at (0.75,0) {$z_2$};
    \foreach \xx/\yy/\lab in {-1.5/0.75/u_1,-1.5/-0.75/u_2,1.5/0.75/v_1,1.5/-0.75/v_2}
      \node[font=\scriptsize,fill=white,inner sep=1pt] at (\xx,\yy) {$\lab$};
    \node at (0,-1.15) {the same four outer edges};
  \end{scope}
  \begin{scope}[shift={(0.15,-2.15)}]
    \node (s0) at (0,0) {$\mathbb S$};
    \node (s1) at (3.55,0) {$J_\sim$};
    \node (s2) at (7.1,0) {$J_{\overline g_\sim}$};
    \node (s3) at (10.65,0) {$J_\approx$};
    \node (t0) at (0,-1.35) {$\mathbb S$};
    \node (t1) at (3.55,-1.35) {$J_\sim$};
    \node (t2) at (7.1,-1.35) {$J_{\overline g_\sim}$};
    \node (t3) at (10.65,-1.35) {$J_\approx$};
    \draw[->] (s0)--(s1) node[midway,above] {$\pi_\sim$};
    \draw[->] (s1)--(s2) node[midway,above] {$\phi$};
    \draw[->] (s2)--(s3) node[midway,above] {$\wp$};
    \draw[->] (t0)--(t1) node[midway,below] {$\pi_\sim$};
    \draw[->] (t1)--(t2) node[midway,below] {$\phi$};
    \draw[->] (t2)--(t3) node[midway,below] {$\wp$};
    \draw[->] (s0)--(t0) node[midway,left] {$\sigma_d$};
    \draw[->] (s1)--(t1) node[midway,left] {$g_\sim$};
    \draw[->] (s2)--(t2) node[midway,left] {$\overline g_\sim$};
    \draw[->] (s3)--(t3) node[midway,left] {$g_\approx$};
  \end{scope}
\end{tikzpicture}
\caption{Collapsing the crosscut $C_1$ produces a figure eight, with two closed disks joined at $\phi(C_1)$; collapsing the radial core $C_2$ leaves a disk with $\phi(C_2)$ on its boundary. The map $\wp$ then collapses both lobes together to $z_1$ and the other disk to $z_2$. }\label{fig:successive-quotients}\par
\end{figure}

Set \(\pi_\approx := \wp \circ \phi \circ \pi_\sim\). Then \(\pi_\approx\) is a factor map from \((\sigma_d, \mathbb{S})\) to \((g_\approx, J_\approx)\). For each \(x \in J_\approx\), the fiber \(\pi_\approx^{-1}(x)\) is called a \emph{$\approx$-class}. It is either a finite $\sim$-class, or the vertex set of an infinite gap of $\LLL_\sim$.

We remark that the actual polynomial is always denoted by $f$, while $g_\sim$, $\overline g_\sim$ and $g_\approx$ denote its models.
A finite landing class is a finite set of smooth angles whose rays of $f$ land together; while
a finite $\approx$-class belongs to the quotient model and is not assumed realized before Lemma~\ref{lem:finite-fibers}.
Figure~\ref{fig:successive-quotients} records the projection maps and their dynamical compatibility.

\begin{lemma}\label{lem:project-returns}
Let $\g$ be a regulated arc in $K_\sim$ such that $I=\phi(\g)$ is non-degenerate and  $\overline{g}_\sim^m:\bigcup_{j=1}^M I_j\to I$ is a monotone $M$-horseshoe with $M\geq2$.
 The projected arcs in $J_\approx$ form a homeomorphic $M$-horseshoe.
\end{lemma}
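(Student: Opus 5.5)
The plan is to push everything through the monotone map $\wp$ and use the dendrite structure of $J_\approx$. First, $\wp$ restricted to $I$ is monotone and its image $\wp(I)$ is an arc. Indeed, the fibers of $\wp$ are closures of Fatou domains of $\overline g_\sim$, or points. The arc $I=\phi(\gamma)$ meets each such closure in a connected set, namely a union of at most two internal-ray segments by the regulated property (Lemma~\ref{lem:regulated}(2)). So $\wp|_I$ has connected fibers. Its image is a Peano continuum in the dendrite $J_\approx$ that is a monotone image of an arc, hence an arc or a point. I would first show it is non-degenerate. Since $M\ge 2$ and the $I_j$ have disjoint interiors, $I$ cannot lie in a single Fatou closure. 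If it did, the return map $\overline g_\sim^m$ would send a closed Fatou disk onto itself with at least two monotone branches along a regulated arc. But internal-ray segments map homeomorphically to internal-ray segments, which is incompatible with two interior-disjoint subarcs each covering $I$. Set $\widehat I=\wp(I)$ and $\widehat I_j=\wp(I_j)$.

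Next, each $\widehat I_j$ is a subarc of $\widehat I$, and by the semiconjugacy $\wp\circ\overline g_\sim=g_\approx\circ\wp$ we get $g_\approx^m(\widehat I_j)=\widehat I$. The restriction $g_\approx^m|_{\widehat I_j}$ is monotone. A fiber of it pulls back under $\wp$ to a union of a fiber of the monotone map $\overline g_\sim^m|_{I_j}$ together with Fatou closures along it. These are connected because Fatou closures meet $I_j$ in connected sets and map onto Fatou closures. The relative interiors of the $\widehat I_j$ stay pairwise disjoint: $\wp$ is monotone on $I$, so it preserves the order of subarcs. Two consecutive $I_j$ can only have overlapping images if a whole $I_j$ collapses to a point, and that is excluded because $g_\approx^m(\widehat I_j)=\widehat I$ is non-degenerate.

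The main step is upgrading monotone to homeomorphic, which is where the dendrite model is used. Suppose $g_\approx^m$ is constant on a non-degenerate subarc $\beta\subset\widehat I_j$. Pulling back, $\wp^{-1}(\beta)\cap I_j$ is a non-degenerate arc on which $\overline g_\sim^m$ maps into a single Fatou closure or point. I would argue that a non-degenerate arc of $J_\approx$ contains uncountably many points whose $\pi_\approx$-classes are two-point classes of leaves of the perfect lamination $\LLL_{\rm bi}^{\rm p}$. This uses perfectness, Proposition~\ref{pro:perfect}, and the fact that Fatou closures are disjoint (Lemma~\ref{lem:proper}). Distinct such points have distinct leaves. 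Under $\sigma_d^m$ these leaves map either to distinct leaves or to leaves of one gap/class. The finite-to-one nature of $\sigma_d$ on leaves, with at most $d^m$ siblings, together with the fact that a finite $\approx$-class or the boundary of a gap has only countably many edges, then gives a contradiction. Hence $g_\approx^m|_{\widehat I_j}$ is injective, and so a homeomorphism onto $\widehat I$. This last counting argument, that a non-degenerate arc in $J_\approx$ cannot be collapsed by an iterate of $g_\approx$, is the step I expect to be most delicate. If the leaf-count route fails, I would instead invoke the standard fact that a topological polynomial on a dendrite has no wandering or collapsing arcs except at critical points, which are finite in number.
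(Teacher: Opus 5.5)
Your proposal is correct. It has the same skeleton as the paper's proof: rule out $\wp(I)$ being a point by looking at a regulated arc inside a single periodic Fatou closure; push the branches through $\wp\circ\overline g_\sim=g_\approx\circ\wp$ to obtain monotone surjections $g_\approx^m:\wp(I_j)\to\wp(I)$ with disjoint interiors; then upgrade these to homeomorphisms.

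The real divergence is the last step, which you flag as the most delicate. The paper does it in one line. $g_\approx$ is a degree-$d$ branched covering, so every fiber of $g_\approx^m$ has at most $d^m$ points. A fiber of a monotone map on an arc is a connected subset of that arc, and a finite connected set is a singleton.

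Your leaf-counting detour reproves this finiteness in a special case, and as sketched it misses a case. Suppose $\pi_\approx^{-1}(y)$ is the vertex set of a Fatou gap $G$. Then your two-point leaves could a priori be collapsed by $\sigma_d^m$ onto distinct single vertices of $G$. Counting edges of $G$ and siblings does not exclude this, because $G\cap\mathbb S$ is uncountable. You would also need that any family of pairwise disjoint chords collapsed by $\sigma_d^m$ is finite. Your fallback, that no arc of $J_\approx$ is collapsed by an iterate, is the right fact. It is exactly the finite-fiber property and holds with no exception at critical points.

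Two smaller points.
\begin{enumerate}
\item Monotonicity of $g_\approx^m|_{\wp(I_j)}$ is cleanest as follows. The fiber over $y$ is $\wp$ applied to the $\overline g_\sim^m|_{I_j}$-preimage of the subarc $I\cap\wp^{-1}(y)$. The preimage of a connected set under a monotone map of arcs is connected.
\item In the non-degeneracy step, an internal-ray segment is not always mapped homeomorphically by $\overline g_\sim$. Rays in the collapsing forest are crushed by $\phi$, so a segment maps either injectively or to a point, as the paper says. In the two-arm case, the incompatibility comes from the image $c$ of the center. Every branch covering $I$ must contain $c$ in its interior, and two branches with disjoint interiors cannot both do so. Your sketch should state this explicitly.
\end{enumerate}
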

\begin{proof}
We first show that $\wp(I)$ is non-degenerate. Otherwise,
$\gamma\subset\overline U$ for some Fatou domain $U$.
The return relation forces $g_\sim^m(U)=U$.
Let $c$ be the image under $\phi$ of the center of $U$.
Since $\gamma$ is regulated, $I$ either lies in the image
of one internal ray or consists of two arms joined at $c$.
On each internal ray, the induced return is either injective
or constant, and its image lies in a single internal ray.
In the first case, there cannot be two branches onto $I$.
In the second, every branch onto $I$ must contain $c$ in
its interior, contradicting the disjointness of the branch
interiors. Thus $\wp(I)$ is non-degenerate.

Set $A=\wp(I)$ and $A_j=\wp(I_j)$. By semiconjugacy,
\(
g_\approx^m(A_j)=A,
\)
so every $A_j$ is non-degenerate. Since $\wp|_I$ is monotone,
the arcs $A_j$ have pairwise disjoint interiors, and each
 map $g_\approx^m:A_j\to A$ is a monotone surjection.
Its fibers are connected and finite, hence singletons.
Thus every branch is a homeomorphism.
\end{proof}

\subsection{\texorpdfstring{Realization of finite periodic $\approx$-classes}{Realization of finite periodic classes}}\label{sec:finite-realization}

{We now realize each finite periodic $\approx$-class avoiding
$\Sigma(f)$ by smooth rays landing at one periodic point of $f$.
These are precisely the classes needed for the boundaries of the
return strips below; no global landing map is asserted.
The following separation lemma is the main tool.}

\begin{lemma}\label{lem:actual-separators}
Let $\Theta$ be a finite $\approx$-class and let $\beta\notin \Theta$.
Then for every $r\geq 1$, there are $r$ leaves in $\LLL_{\rm bi}^\circ(f)$  which
strictly separate the angles of $\Theta$ from $\beta$ and the corresponding ray pairs of $f$ have pairwise
distinct landing points.
\end{lemma}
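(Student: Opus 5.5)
The plan is to argue entirely in the closed disk, using the structure of the perfect lamination $\LLL_{\rm bi}^{\rm p}$ near the convex hull ${\rm CH}(\Theta)$. There are two steps: first produce infinitely many leaves of $\LLL_{\rm bi}^{\rm p}$ separating $\Theta$ from $\beta$; then replace them by nearby leaves of the prelamination $\LLL_{\rm bi}^\circ(f)$ using Proposition~\ref{pro:fact}.

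For the first step, note that $\Theta$ is a finite $\approx$-class. Hence it is a finite $\sim$-class that does not lie on the boundary of any Fatou gap. Otherwise $\pi_\approx$ would merge it with the vertex set of an infinite gap, so that its $\approx$-class would be infinite. Let $P={\rm CH}(\Theta)$; when $\#\Theta=1$, $P$ is a point. Let $\ell_0$ be the edge of $P$, or the point $P$ itself, that faces the component of $\overline{\D}\setminus P$ containing $\beta$. Let $W$ be the complementary arc of $\mathbb S$ cut off by $\ell_0$ that contains $\beta$.

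I claim there are leaves of $\LLL_{\rm bi}^{\rm p}$ with both endpoints in $W$ that converge to $\ell_0$ from the $\beta$-side, and that can be taken pairwise distinct and strictly nested between $\ell_0$ and $\beta$. Suppose not. Then some open region of the disk adjacent to $\ell_0$ on the $\beta$-side meets no leaf. That region lies in a gap $G$ of $\LLL_{\rm bi}^{\rm p}$ having $\ell_0$ as an edge, or, when $\Theta$ is a single point, having that point as a vertex. If $G$ were finite, it would be contained in the $\sim$-class of $\Theta$, contradicting the choice of $\ell_0$ as the outermost edge. If $G$ were infinite, $\Theta$ would meet an infinite gap. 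That gap is either Fatou, or countable and hence collapsed by $\approx$ as the vertex set of an infinite gap; in both cases this contradicts finiteness of the $\approx$-class. Perfectness of $\LLL_{\rm bi}^{\rm p}$ (Proposition~\ref{pro:perfect}, Lemma~\ref{lem:proper}) is what rules out $\ell_0$ being isolated on that side. This gives a sequence $\ell_n\to\ell_0$ of leaves of $\LLL_{\rm bi}^{\rm p}$ strictly separating $\Theta$ from $\beta$. Passing to a subsequence, the $\ell_n$ have pairwise disjoint closures in $\overline{\D}$. Only countably many leaves share endpoints, so after discarding a few we may take all endpoints distinct.

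For the second step, apply Proposition~\ref{pro:fact}: each $\ell_n$ is a limit of leaves of $\LLL_{\rm bi}^\circ(f)$. Choose $r$ of the $\ell_n$ with disjoint closures, and for each choose $\ell_n'\in\LLL_{\rm bi}^\circ(f)$ close enough that it still strictly separates $\Theta$ from $\beta$ and the $\ell_n'$ have pairwise disjoint closures. This is an open condition because the separation is strict. Each $\ell_n'$ is an edge of the convex hull of a finite landing class, so it corresponds to a ray pair $R_f(\ell_n')$. Suppose two of these ray pairs landed at the same point $z$. Then all four angles would belong to one landing class of $f$. I need to check that this is incompatible with the choice. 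If the landing class is finite, its convex hull lies in $\LLL_{\rm bi}^\circ(f)^\cup$ and contains both disjoint chords, which are edges of one polygon. Two distinct edges of one finite polygon can both separate $\Theta$ from $\beta$ only if they are adjacent to the same side. To avoid this, I pick the approximating leaves inside disjoint nested annular strips between consecutive $\ell_n$, so that no single convex polygon can have edges in two of them. A polygon would then have to cross $\ell_m$, which is impossible because $\ell_m$ is a limit of leaves unlinked with the landing class. If the class is infinite, it is not among the edges of $\LLL_{\rm bi}^\circ(f)$ at all. So the landing points are distinct.

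The main obstacle I expect is the first step: showing that $\Theta$ is accumulated from the $\beta$-side by separating leaves of the perfect part. This needs a careful case analysis of the gap adjacent to $\ell_0$, including the degenerate case $\#\Theta=1$, where one must use that a point of $\mathbb S$ not in any gap of positive size is a two-sided limit of leaves.
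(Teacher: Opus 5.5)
Your proposal is correct, and it follows the same two-stage outline as the paper. First you produce many leaves of $\LLL_{\rm bi}^{\rm p}$ separating $\Theta$ from $\beta$; then you approximate them by leaves of $\LLL_{\rm bi}^\circ(f)$ via Proposition~\ref{pro:fact}. Each stage, however, is carried out differently.

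\textbf{Producing separating leaves.} The paper does not analyze the gap next to $\CH(\Theta)$. It works in the dendrite: with $x=\pi_\approx(\Theta)\neq y=\pi_\approx(\beta)$, it picks $2r$ points $z_j$ of the arc $(x,y)\subset J_\approx$ whose classes are two-angle classes. This is possible because only countably many classes have three or more angles. The chords $\CH(\pi_\approx^{-1}(z_j))$ are then leaves of $\LLL_{\rm bi}^{\rm p}$ separating $\Theta$ from $\beta$. This handles $\#\Theta=1$ and $\#\Theta\ge 2$ uniformly. Your local finite-versus-infinite gap dichotomy on the $\beta$-side of $\ell_0$ also works. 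It additionally shows that the leaves accumulate on $\ell_0$, which the lemma does not need.

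\textbf{Distinct landing points.} The paper uses $2r$ approximants and a counting argument. Each approximant crosses the fixed chord $\overline{\theta\beta}$, and the boundary of a convex polygon crosses that chord at most twice. Hence at most two approximants come from one finite landing class, leaving at least $r$ distinct landing points. You instead interleave the approximants with leaves $\ell_m$ of $\LLL_{\rm bi}^{\rm p}$. Each $\ell_m$ is a limit of leaves of $\LLL_{\rm bi}^\circ(f)$ from other landing classes, so every finite landing class lies in one closed side of $\ell_m$. Therefore no landing-class hull can have edges in two different strips. This gives $r$ distinct classes directly, at the cost of the interleaving and this weak-unlinkedness observation; the paper's doubling trick is shorter.

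A few points to tighten:
\begin{itemize}
\item To guarantee a separating $\ell_m$ strictly between any two approximants, approximate only every other leaf, say $\ell_{2k}$ inside the strip bounded by $\ell_{2k-1}$ and $\ell_{2k+1}$.
\item You can pass to pairwise disjoint $\ell_n$ because $\sim$-classes are finite, not because only countably many leaves share endpoints; your sequence is itself countable, so that reason does not work.
\item The sentence about two edges being ``adjacent to the same side'' is confused and not needed for your argument.
\end{itemize}
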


\begin{proof}
First, $x=\pi_\approx(\Theta)$ and
$y=\pi_\approx(\beta)$ are distinct points of $J_\approx$.
Since there are only countably many $\approx$-classes
containing at least three angles, we may choose
$2r$ points $z_1,\ldots,z_{2r}\in(x,y)$ such that
\[
\ell_j:=\CH\bigl(\pi_\approx^{-1}(z_j)\bigr)
\]
is a leaf of $\LLL_{\rm bi}^{\rm p}(f)$ separating
$\Theta$ from $\beta$ for each $j$.
By Proposition~\ref{pro:fact}, these leaves can be
approximated by $2r$ distinct leaves of
$\mathcal L_{\rm bi}^{\circ}(f)$, preserving strict
separation.

Fix $\theta\in\Theta$. Each approximating leaf
crosses $\overline{\theta\beta}$. Since the boundary
of a convex polygon crosses this chord at most twice,
at most two approximating leaves can belong to the
same finite landing class. Thus there are at least
$r$ distinct landing points. Choosing one leaf at
each of these points completes the proof.
\end{proof}

\begin{lemma}[Realization of periodic classes]\label{lem:finite-fibers}
Let $\Theta$ be a finite periodic $\approx$-class disjoint from
$\Sigma(f)$. Then all rays with angles in $\Theta$
land at a common repelling or parabolic periodic point,
and distinct such classes have distinct landing points. 

\end{lemma}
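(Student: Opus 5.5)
Since $\Theta$ is a periodic $\approx$-class and avoids $\Sigma(f)$, every ray $R_f(\theta)$, $\theta\in\Theta$, is smooth. Each of these angles is periodic under $\sigma_d$, because $\sigma_d$ permutes the finite class $\Theta$ up to the period $p$ of the class. Lemma~\ref{lem:PZ1} therefore shows that each $R_f(\theta)$ lands at a repelling or parabolic periodic point $z_\theta$. What remains is to show that the points $z_\theta$ coincide for $\theta\in\Theta$, and that landing points coming from distinct classes differ. In both cases I will use separating ray pairs from Lemma~\ref{lem:actual-separators} together with a closed-curve separation argument in the plane.

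\emph{Distinct classes give distinct points.} Let $\Theta\neq\Theta'$ be two such classes, and pick $\theta\in\Theta$ and $\beta\in\Theta'$. Lemma~\ref{lem:actual-separators} supplies a leaf $\ell\in\LLL_{\rm bi}^\circ(f)$ that strictly separates $\Theta$ from $\beta$; by Proposition~\ref{pro:fact} it corresponds to a ray pair $R_f(\ell)$ landing at a point $w$. Choose $\ell$ so that $w\ne z_\theta, z_\beta$; this is possible because Lemma~\ref{lem:actual-separators} gives $r\ge3$ leaves with pairwise distinct landing points. The ray pair together with $\infty$ is a Jordan curve that separates the plane into two components. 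The ray $R_f(\theta)$ lies in one component and $R_f(\beta)$ in the other, since their angles lie on opposite sides of $\ell$. Neither landing point equals $w$, so $z_\theta$ and $z_\beta$ lie in the closures of different components and are not the common point $w$. Hence $z_\theta\neq z_\beta$. The same argument applies to any $\beta\notin\Theta$ whose ray lands.

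\emph{Rays in one class land together.} This is the main obstacle, because separators only distinguish points and never identify them. Suppose $\theta,\theta'\in\Theta$ but $z_\theta\ne z_{\theta'}$. Both points are fixed by $f^{p}$, and they are distinct periodic points. I would separate them in the actual plane by a cut whose angular data lie in $\LLL_{\rm bi}^{\rm p}$ and then argue that this contradicts $\theta\approx\theta'$. There are two candidate routes.

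The first route uses a landing class. Distinct repelling or parabolic periodic points $z_\theta\ne z_{\theta'}$ in a connected component of the filled Julia set should be separated by a finite periodic landing class. Compare the standard fact (Goldberg--Milnor / Douady--Hubbard type separation) that two distinct periodic points which are not separated by any periodic ray pair have rays forming a common cycle structure. Such a class gives a periodic leaf of $\LLL_{\rm bi}^\circ$, and a periodic leaf is non-isolated after applying the usual pullback and accumulation argument, so it lies in the perfect part. That leaf separates $\theta$ from $\theta'$, which contradicts their lying in one $\approx$-class.

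The second route, which I would try first, is more self-contained and uses impressions. Here $\theta$ and $\theta'$ are consecutive in $\Theta$, and by perfectness the chord $\overline{\theta\theta'}$ is a limit of leaves of $\LLL_{\rm bi}^\circ$ (Proposition~\ref{pro:fact}). The corresponding ray pairs of $f$ land at points $w_n$. Upper semicontinuity of impressions (Lemma~\ref{lem:star-impressions}) forces every accumulation point of $w_n$ to lie in $\Imp_f(\theta)\cap\Imp_f(\theta')$. At a repelling or parabolic periodic landing point, local linearization or the Fatou coordinate shows that the impression reduces to the landing point on the relevant side. Hence $z_\theta=\lim w_n=z_{\theta'}$. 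The delicate step is making this impression claim precise.

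If $\Theta$ is instead the vertex set of a gap, the class is infinite, so the lemma does not apply. For an edge $\overline{\theta\theta'}$ of $\CH(\Theta)$, I would iterate the argument around the polygon to conclude that all the points $z_\theta$ coincide.
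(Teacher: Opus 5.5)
Your opening step is fine, and so is your proof that distinct classes have distinct landing points. The latter is essentially the paper's closing step: a common landing point would have to lie on every separating ray pair, and Lemma~\ref{lem:actual-separators} supplies pairs with distinct landing points.

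The gap is in the identification step, which is the real content of the lemma. In your second route, the approximating landing points $w_n$ only accumulate in $\Imp_f(\theta)\cap\Imp_f(\theta')$. This shows that $\bigcup_{\theta\in\Theta}\Imp_f(\theta)$ is connected, which is exactly the paper's first claim. It gives $z_\theta=z_{\theta'}$ only if $\Imp_f(\theta)=\{z_\theta\}$, and local linearization or Fatou coordinates at $z_\theta$ do not yield that. They control $R_f(\theta)$ and the dynamics near $z_\theta$, whereas the impression records where rays with nearby angles travel at low potential. Triviality of the impression is therefore a local-connectivity property of $K_f$ at $z_\theta$. That is a global statement, and it is not available for arbitrary $f\in\PPPP_d$: irrationally indifferent cycles, non-locally connected Julia sets, and disconnected $K_f$ (a case your proposal never treats) all stand in the way.

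Your first route fails as stated, for two reasons.
\begin{itemize}
\item The separation principle it relies on is false in general. For $z^2+c$ with $c$ on the boundary of the main cardioid at an irrational internal angle, $c$ lies in no wake. So every repelling periodic point receives exactly one ray, and no periodic ray pair separates two of them.
\item Periodic leaves can be isolated. The leaf joining $1/3$ and $2/3$ in the basilica lamination is a common edge of two Fatou gaps, so such leaves need not survive in $\LLL_{\rm bi}^{\rm p}(f)$.
\end{itemize}

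What is missing is a mechanism that turns separation into collapse. The paper does not try to control a single impression. Instead it shows that the whole union of impressions of the class is isolated from every other impression, and then invokes a rigidity result.
\begin{itemize}
\item The set $X_\Theta=\bigcup_{\theta\in\Theta}\Imp_f(\theta)$ is connected, so it lies in a periodic component $K$ of $K_f$.
\item If $K$ is non-degenerate, the return map is straightened to a polynomial $P$, and angles are transported by the Petersen--Zakeri map $\Pi$. Set $\Lambda=\Pi(\Theta)$ and $Y_\Lambda=\bigcup_{\alpha\in\Lambda}\Imp_P(\alpha)$.
\item Fix $\beta'\notin\Lambda$. Three ray pairs separating $\Theta$ from a lift of $\beta'$, with distinct landing points, come from Lemma~\ref{lem:actual-separators} with $r=3$. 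They project to at least two pairs strictly separating $\Lambda$ from $\beta'$ at distinct points.
\item This forces $Y_\Lambda\cap\Imp_P(\beta')=\varnothing$, so the periodic continuum $Y_\Lambda$ meets no other impression.
\item Lemma~37 of Blokh--Curry--Oversteegen then shows that $Y_\Lambda$ is a single point, and this point is the common landing point.
\end{itemize}
An argument of this kind, or a genuine proof that the relevant impressions are trivial, is needed to close your proof.
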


\begin{proof}
Since angles in $\Theta$ are periodic, their smooth rays land at 
repelling or parabolic points by Lemma \ref{lem:PZ1}.
It suffices to show these landing points coincide. 

We first claim that the set
\[
 X_\Theta:=\bigcup_{\theta\in\Theta}\Imp_f(\theta)
\]
is connected. If $\Theta$ is a singleton, the claim holds as impressions are continua by Lemma~\ref{lem:star-impressions}. Otherwise,
let 
 $\overline{ab}$ be a leaf of $\LLL_{\rm bi}^{\rm p}(f)$ with $a,b\in \Theta$. By Proposition \ref{pro:fact}, we can approximate $\overline{ab}$
by leaves $\overline{a_jb_j}$ of $\LLL_{\rm bi}^{\circ}(f)$, where $R_f(a_j)$ and $R_f(b_j)$ land together. A limit of their common landing
points belongs to $\Imp_f(a)\cap\Imp_f(b)$ by upper
semicontinuity. The impressions are continua, and successive ones meet.
Thus $X_\Theta$ is connected, and the claim is proved.

By the claim, \(X_\Theta\) lies in a component \(K\) of \(K_f\). Since \(K\) contains a periodic landing point, it is periodic. If \(K\) is a singleton, the rays already land together.

Suppose that $K$ is non-degenerate. {Its return admits a polynomial-like restriction; see the
discussion preceding \cite[Theorem~2.2]{PZ1}. Straighten this restriction} to a polynomial $P$ with connected filled Julia set,
using a hybrid conjugacy $\varphi$ defined near $K$ (see \cite{DH2}).
Let $I_K$ be the compact set of angles of smooth or broken
rays accumulating on $K$. By \cite[Theorem~2.2]{PZ1}, there is
a continuous degree-one monotone surjection
$\Pi:I_K\to\Sone$ intertwining the return angle maps, with
\begin{equation}\label{eq:component-ray-correspondence}
 R_f(\theta)\text{ lands at }z\in K
 \ \Longleftrightarrow\
 R_P(\Pi(\theta))\text{ lands at }\varphi(z),
 \quad \theta\in I_K\setminus\Sigma(f).
\end{equation}
Extend $\Pi$ continuously and monotonically to the whole circle, constantly
across each complementary interval of $I_K$;
see \cite[Lemma~4.5]{PZ2}.
When $K_f$ is connected, take $P=f$ and
$\varphi=\Pi=\mathrm{id}$.
Set
\[
 \Lambda=\Pi(\Theta),\qquad Y_\Lambda=\bigcup_{\alpha\in \Lambda}\Imp_P(\alpha).
\]
The set $\Lambda$ is finite and consists of periodic angles.
We verify connectedness and separation for $Y_\Lambda$, then apply
\cite[Lemma~37]{BCO}.

The angles of a ray pair of $f$ landing outside $K$ lie in one component of $\R/\Z\setminus I_K$, hence has equal $\Pi$-images.
 Consequently, if a leaf $\overline{ab}$ of $\LLL_{\rm bi}^{\rm p}(f)$
 satisfies $a,b\in\Theta$ and $\Pi(a)\ne\Pi(b)$, then all
sufficiently close approximating landing pairs land in $K$.
Their projected rays also land together by
\eqref{eq:component-ray-correspondence}. Upper semicontinuity
gives
\[
 \Imp_P(\Pi(a))\cap\Imp_P(\Pi(b))\ne\varnothing.
\]
Following the cyclic order and omitting repeated images proves
that $Y_\Lambda$ is connected; the singleton case is immediate.

Fix \(\beta'\notin\Lambda\) and choose \(\beta\in I_K\) with \(\Pi(\beta)=\beta'\). Choose three ray pairs that strictly separate \(\Theta\) from \(\beta\) and have distinct landing points \(z_1,z_2,z_3\). We claim that the three points lie in \(K\). Indeed, a ray pair landing outside \(K\) would place the connected set \(K\), and hence all angles in \(I_K\), on one side, so it could not separate \(\Theta\) from \(\beta\), a contradiction.

By \textup{(7.3)}, the projected rays land at the distinct points \(\varphi(z_j),j=1,2,3\). Note that none of their angles belongs to \(\Lambda\). Otherwise, \(\Pi(\alpha)=\Pi(\theta)\) for some separating angle \(\alpha\) and some \(\theta\in\Theta\). The ray of angle \(\theta\) already lands by Lemma~2.1, and the ray correspondence would force it to land together with the ray of angle \(\alpha\), contradicting  Lemma~7.5.

A degenerate projected pair is constant on one angular side. That side cannot contain \(\Theta\), since no projected angle lies in \(\Lambda\) (as shown above). Hence it must contain \(\beta\), and its image is \(\beta'\). Among the three projected pairs with distinct landing points, at most one has endpoint \(\beta'\). The other two strictly separate \(\Lambda\) from \(\beta'\) at distinct landing points. For either pair, impressions on opposite sides can meet only at its landing point, so these two landing points are distinct. It follows that
\[
Y_\Lambda\cap\Imp_P(\beta')=\varnothing.
\]
{Choose $r\ge1$ such that $\sigma_{\deg P}^{r}$ fixes
$\Lambda$ pointwise. Equivariance of impressions gives
$P^r(Y_\Lambda)=Y_\Lambda$, so $Y_\Lambda$ is the periodic continuum
required in \cite[Lemma~37]{BCO}.}
Thus, Lemma~37 of~\cite{BCO} yields
\begin{equation}\label{eq:straightened-class-point}
Y_\Lambda=\{\widetilde z\}.
\end{equation}
By \eqref{eq:component-ray-correspondence}, all rays in \(\Theta\) land at \(\varphi^{-1}(\widetilde z)=:z_\Theta\).

For another such class $\Theta'$, choose $\beta\in \Theta'$ and two leaves of $\LLL_{\rm bi}^{\circ}(f)$
 separating $\Theta$ from $\beta$, such that their corresponding ray pairs of $f$ land at distinct 
points, by Lemma~\ref{lem:actual-separators}. If $z_\Theta=z_{\Theta'}$,
this point would be the landing point of both separating
pairs, which is impossible. 
\end{proof}

\subsection{From horseshoes to conformal strip systems}\label{sec:strip-systems}
 A \emph{(cut-ray) strip} of \(f\) is a simply connected domain bounded by two disjoint ray pairs of \(f\).
It intersects the circle at infinity in two open arcs with disjoint closures, called \emph{channels}.
 Explicitly, if the angles of the ray pairs are $\{\theta,\eta\}$ and $\{\alpha,\beta\}$, 
we can write this strip as 
\begin{equation*}
U_f(\overline{\theta\eta},\overline{\alpha\beta}),
\end{equation*}
and $\alpha,\beta,\theta,\eta$ are called its \emph{boundary angles}.

\begin{definition}\label{def:strip}
By an \emph{$f^n$-conformal strip system of size $N$}, we mean a map
\begin{equation}\label{eq:21}
f^n:\bigcup_{j=1}^N U_j\to U
\end{equation}
with  $U$ and $(U_j)_{j=1}^N$ strips of $f$, such that
\begin{itemize}
\item each $\overline{U_j}$ lies in $U$ and separates the two components of $\partial U$; 
\item the closures of $U_1,\ldots,U_N$ are pairwise disjoint;
\item $f^n:U_j\to U$ is a conformal onto map for every $j=1,\ldots,N$.
\end{itemize}
\end{definition}

 For a strip $U$, set $U(t)=\{z\in U:G_f(z)<t\}$ and denote its two angular channels by $\Gamma^+(t),\Gamma^-(t)$.
Compact containment will be used only after truncating at finite potential, as in Figure~\ref{fig:conformal-strips}.

\begin{figure}[htbp]
\centering
\begin{tikzpicture}[x=1.05cm,y=1.05cm,line cap=round,line join=round,font=\small]
  \fill[gray!9] (-2,0)..controls(-2,1.0)..(-2.35,1.9)
    ..controls(0,2.2)..(2.35,1.9)..controls(2,1.0)..(2,0)
    ..controls(2,-1.0)..(2.35,-1.9)..controls(0,-2.2)..(-2.35,-1.9)
    ..controls(-2,-1.0)..cycle;
  \draw[black,thick] (-2,0)..controls(-2,1.0)..(-2.35,1.9);
  \draw[black,thick] (-2,0)..controls(-2,-1.0)..(-2.35,-1.9);
  \draw[black,thick] (2,0)..controls(2,1.0)..(2.35,1.9);
  \draw[black,thick] (2,0)..controls(2,-1.0)..(2.35,-1.9);
  \draw[black,dashed,->] (-2.35,1.9)--(-2.55,2.55);
  \draw[black,dashed,->] (2.35,1.9)--(2.55,2.55);
  \draw[black,dashed,->] (-2.35,-1.9)--(-2.55,-2.55);
  \draw[black,dashed,->] (2.35,-1.9)--(2.55,-2.55);
  \draw[blue!70!black,thick] (-2.35,1.9)..controls(0,2.2)..(2.35,1.9);
  \draw[blue!70!black,thick] (-2.35,-1.9)..controls(0,-2.2)..(2.35,-1.9);
  \node at (0,2.45) {$\Gamma^+(t)$};
  \node at (0,-2.45) {$\Gamma^-(t)$};
  \node[left] at (-2.55,2.55) {$\alpha$};
  \node[left] at (-2.55,-2.55) {$\beta$};
  \node[right] at (2.55,2.55) {$\theta$};
  \node[right] at (2.55,-2.55) {$\eta$};
  \fill[black] (-2,0) circle(1.7pt);
  \fill[black] (2,0) circle(1.7pt);
  \node[left] at (-2,0) {$z_a$};
  \node[right] at (2,0) {$z_b$};
  \foreach \x/\j in {-1.0/1,1.0/2}{
    \fill[blue!13] (\x-0.26,0)..controls(\x-0.26,0.7)..(\x-0.34,1.15)
      ..controls(\x,1.24)..(\x+0.34,1.15)..controls(\x+0.26,0.7)..(\x+0.26,0)
      ..controls(\x+0.26,-0.7)..(\x+0.34,-1.15)
      ..controls(\x,-1.24)..(\x-0.34,-1.15)..controls(\x-0.26,-0.7)..cycle;
    \draw[blue!70!black,thick] (\x-0.26,0)..controls(\x-0.26,0.7)..(\x-0.34,1.15)
      ..controls(\x,1.24)..(\x+0.34,1.15)..controls(\x+0.26,0.7)..(\x+0.26,0)
      ..controls(\x+0.26,-0.7)..(\x+0.34,-1.15)
      ..controls(\x,-1.24)..(\x-0.34,-1.15)..controls(\x-0.26,-0.7)..cycle;
    \draw[blue!70!black,dashed] (\x-0.34,1.15)--(\x-0.47,2.35);
    \draw[blue!70!black,dashed] (\x+0.34,1.15)--(\x+0.47,2.35);
    \draw[blue!70!black,dashed] (\x-0.34,-1.15)--(\x-0.47,-2.35);
    \draw[blue!70!black,dashed] (\x+0.34,-1.15)--(\x+0.47,-2.35);
    \fill[blue!70!black] (\x-0.26,0) circle(1.5pt);
    \fill[blue!70!black] (\x+0.26,0) circle(1.5pt);
    \node[blue!70!black] at (\x,0.48) {$U_{\j}$};
  }
  \node at (0,-0.6) {$U(t)$};
  \draw[blue!70!black,->] (3.3,1.9)--(2.4,1.9);
  \node[anchor=west] at (3.35,1.9) {$G_f=t$};
  \draw[blue!70!black,->] (3.3,1.15)--(1.45,1.15);
  \node[anchor=west] at (3.35,1.15) {$G_f=t/d^n$};
  \node[align=left,anchor=west] at (3.15,-0.4)
    {$f^n:U_j(t/d^n)\to U(t)$\\[3pt]conformal and onto\\[3pt]$\overline{U_j(t/d^n)}\subset U(t)$};
\end{tikzpicture}
\caption{A conformal strip system with two source strips.}\par
\label{fig:conformal-strips}
\end{figure}

\begin{proposition}\label{pro:strip}
Suppose that $f^n:\bigcup_{j=1}^N U_j\to U$ is a conformal strip
system of size $N$. There is a compact set $\Theta\subset B(f)$ with
$\sigma_d^n(\Theta)=\Theta$ and
\begin{equation}\label{eq:strip-entropy-dimension}
 h_{\rm top}(\sigma_d^n|_\Theta)=\log N,
 \qquad \Hd\Theta=\frac{\log N}{n\log d}.
\end{equation}
\end{proposition}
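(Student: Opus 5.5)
The plan is to work in angular coordinates at infinity, where $f^n$ acts by $\sigma_d^n$, to obtain a circle horseshoe. Write $U=U_f(\overline{\theta\eta},\overline{\alpha\beta})$ with channels $\Gamma^\pm\subset\mathbb S$, two open arcs with disjoint closures. Each $U_j$ has channels $\Gamma_j^\pm$, and I will show $\Gamma_j^\pm\subset\Gamma^\pm$ (one in each, since $\overline{U_j}$ separates the two boundary components of $\partial U$). I will also show that $\sigma_d^n$ maps each $\Gamma_j^\pm$ homeomorphically onto $\Gamma^\pm$ (possibly swapping them). This should follow from the conformality of $f^n:U_j\to U$ together with $f^n(R_f(\gamma))=R_f(\sigma_d^n\gamma)$ and the Böttcher coordinate near infinity: the boundary ray pairs of $U_j$ map onto the boundary ray pairs of $U$, so the channel endpoints map to channel endpoints, and injectivity gives the homeomorphism. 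Since the closures $\overline{U_j}$ are pairwise disjoint, the closed channels $\overline{\Gamma_j^\pm}$ are pairwise disjoint. Hence $\Gamma:=\Gamma^+\cup\Gamma^-$ carries $2N$ disjoint closed arcs, and on each of them $\sigma_d^n$ is an affine expanding bijection onto $\overline{\Gamma^+}$ or $\overline{\Gamma^-}$.

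Next I will define the compact $\sigma_d^n$-invariant set
\[
\Theta=\bigcap_{k\ge0}\sigma_d^{-kn}\Big(\bigcup_{j}\overline{\Gamma_j^+}\cup\overline{\Gamma_j^-}\Big).
\]
Coding an angle by the index $j$ at each return, together with the side $\pm$, gives a conjugacy, up to finitely many shared endpoints, to a subshift of finite type. Its transition graph has two vertices $\pm$, and each vertex has $N$ outgoing edges, so the transition matrix has spectral radius $N$. Uniform expansion by $d^n$ makes the coding injective. This gives $h_{\rm top}(\sigma_d^n|_\Theta)=\log N$ and $\sigma_d^n(\Theta)=\Theta$. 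Lemma~\ref{lem:formula}, applied to the compact $\sigma_d$-invariant set $\bigcup_{i<n}\sigma_d^i(\Theta)$ (whose entropy under $\sigma_d$ is $\log N/n$), then gives $\Hd\Theta=\log N/(n\log d)$. A direct Moran-cover argument would give the same value: $2N^k$ intervals of length about $d^{-kn}$ with separated children.

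It remains to show $\Theta\subset B(f)$, up to a removable countable set. Let $\theta\in\Theta$ with itinerary $(j_k,\pm)$. I will form the nested pullback strips $U_{j_0j_1\cdots j_{k}}=(f^n|_{U_{j_0}})^{-1}\circ\cdots$, which are conformal preimages of $U$. Their channels shrink to the angle $\theta$ on the $+$ side and to a partner angle $\theta'$ on the $-$ side, giving a map $\theta\mapsto\theta'$ within $\Theta$. I must check two things. First, the rays $R_f(\theta)$ and $R_f(\theta')$ are smooth. The boundary ray pairs of the nested strips are smooth by construction, as conformal pullbacks of the smooth boundary rays, and a limiting angle avoids $\Sigma(f)$ because critical points cannot lie in $U_j$, where $f^n$ is conformal. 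Second, both rays land at a common point. For this I would restrict to potentials below $t$ as in Figure~\ref{fig:conformal-strips}: since $\overline{U_j(t/d^n)}\subset U(t)$, the inverse branches are strict contractions of the hyperbolic metric of $U(t)$ on compact pieces. The nested truncated strips therefore shrink to a single point $z$, and both $R_f(\theta)$ and $R_f(\theta')$ land at $z$. Thus $\theta\in B(f)$.

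I expect the main obstacle to be the shrinking of the nested strips. The truncated domains $U_{j}(t/d^n)$ are compactly contained in $U(t)$ only away from the landing points of the boundary rays, where the boundaries of $U$ and $U_j$ may come close. I would first apply the Schwarz–Pick contraction on $U(t)$ and, if uniformity fails near $\partial U\cap J_f$, use the disjointness of the closures $\overline{U_j}$ from $\partial U$ inside $U$ to obtain a fixed hyperbolic-distance margin. Only countably many angles, namely the boundary ones and their preimages, could behave exceptionally, and removing them does not affect $\Hd\Theta$. If needed, I would replace $\Theta$ by the closure of the good set, which is still contained in $B(f)$ after this adjustment.
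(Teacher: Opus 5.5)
Your proposal is correct and follows essentially the paper's route. The two channels carry a $\sigma_d^n$-horseshoe, coded by a two-state shift with $N$ edges out of each state, and the entropy--dimension formula gives the dimension. The limiting rays are smooth because a crash point would lie in every nested strip $U_{\omega|r}$, on which $f^{nr}$ is conformal. Paired rays land together because the inverse branches uniformly contract the hyperbolic metric of the truncated strip $U(t)$.

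The obstacle you anticipate does not arise. Definition~\ref{def:strip} requires $\overline{U_j}\subset U$, so $\overline{U_j(t/d^n)}$ is a compact subset of $U(t)$, landing points included. The contraction is therefore uniform, and there are no exceptional angles to discard. This matters, because for $N\ge 2$ a co-countable subset of the Cantor set $\Theta$ is dense in $\Theta$, so your closure-of-the-good-set fallback could not have excluded bad angles anyway.
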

\begin{proof}
Let \(h_j:U\to U_j\) be the inverse branches.  For sufficiently large \(t\), the set
$
U(t)=\{z\in U:G_f(z)<t\}
$
is a Jordan domain obtained by truncating the four boundary rays at potential \(t\) and joining them by the two equipotential arcs. Choose \(t\) so large that the same description applies to \(U_j(t/d^n)\). The Green-function identity then gives
\[
h_j(U(t))=U_j(t/d^n)\Subset U(t).
\]
Thus the finitely many inverse maps share a common contraction constant less than one with respect to the Poincaré metric on \(U(t)\).

For a word \(w=j_1\cdots j_r\), put
\[
h_w=h_{j_1}\circ\cdots\circ h_{j_r},\qquad
U_w=h_w(U),\qquad V_w=h_w(U(t)).
\]
The closures of successive \(V_w\) are nested, and their diameters tend to zero uniformly as \(r\to\infty\). Hence, for every infinite word \(\omega\in\{1,\ldots,N\}^{\mathbb N}\),
\[
\bigcap_{r\ge1}\overline{V_{\omega|_r}}=\{z_\omega\}\subset J_f.
\]

Let $\Gamma=\Gamma^+\cup\Gamma^-$ be the union of the two channels
of $U$, and let $\Gamma_j$ be the union of the channels of $U_j$.
The map $\sigma_d^n:\Gamma_j\to\Gamma$ maps the two component
intervals homeomorphically onto those of $\Gamma$, possibly
interchanging them. Its inverse $\beta_j:\Gamma\to\Gamma_j$ is
affine of slope $d^{-n}$ on each component. For a finite word $w$,
write $\Gamma_w=\beta_{j_1}\circ\cdots\circ\beta_{j_r}(\Gamma)$.
These are the two channels of $U_w$.

Note that all boundaries of $U_w$ consist of smooth ray pairs.  Moreover,
$\overline{\Gamma_j}\subset\Gamma$, and each component of
$\Gamma_w$ has length at most $d^{-nr}$. Thus
\[
 \bigcap_{r\ge1}\overline{\Gamma_{\omega|_r}}
   =\{\theta_\omega^+,\theta_\omega^-\},
\]
with one angle in each channel of $U$. Both angles are interior
points of every corresponding finite-level channel.

We claim that their external rays are smooth. Since otherwise, the
external radius at one of the two angles crashes at a point $c$ of
positive potential. Then the
crash point belongs to $U_{\omega|_r}$ for every $r\geq1$. But $c$ is an escaping
precritical point of $f$, so it is a critical point of $f^{nr}$ for
all sufficiently large $r$. This contradicts the conformality of
$f^{nr}:U_{\omega|_r}\to U$. The claim follows.

Note that the tail of either ray below potential $t/d^{nr}$ lies in
$V_{\omega|_r}$. Uniform shrinking
therefore shows that both rays land at $z_\omega$. In particular,
$z_\omega\in J_f$, and
\[
 \Theta:=\{\theta_\omega^+,\theta_\omega^-:
                  \omega\in\{1,\ldots,N\}^{\mathbb N}\}
 \subset B(f).
\]
This set is compact, since
\[
 \Theta=\bigcap_{r\ge1}\ \bigcup_{|w|=r}\overline{\Gamma_w},
\]
and each union is finite. The map $\sigma_d^n$ removes the first
symbol of the word and permutes the two channel labels according
to the first inverse branch. Consequently $\sigma_d^n$ maps $\Theta$ onto
itself. 

The coding identifies this system with a two-state symbolic system in which each state has one outgoing edge for each symbol \(1,\ldots,N\), and the terminal state is determined by the channel permutation of that branch. There are exactly \(2N^r\) paths of length \(r\), so the entropy is \(\log N\). Lemma~\ref{lem:formula} then yields \eqref{eq:strip-entropy-dimension}.
\end{proof}

Call a finite $\approx$-class \emph{realized} if the external rays of $f$ with these angles  are
smooth and land together. 

A point \(a\in J_\approx\) is called \emph{finitely biaccessible} if its class \(\Theta_a=\pi_\approx^{-1}(a)\) is finite and contains at least two angles. For two such distinct points \(a\ne b\), let \(\ell_a\) be the edge of \(\CH(\Theta_a)\) separating \(\Theta_a\) from \(\Theta_b\), and define \(\ell_b\) analogously. If both classes are realized, their landing points are distinct by Lemma~\ref{lem:actual-separators}. We denote by \[U_f(\ell_a,\ell_b)\] the strip bounded by the external ray pairs of $f$ corresponding to $\ell_a$ and $\ell_b$. \vspace{2pt}

Recall that $\Sigma_N(f):=\bigcup_{k=1}^N\sigma_d^k(\Sigma_0(f))$ for each $N\geq1$.


\begin{lemma}\label{lem:analytic}
Let $I=[a,b]\subset J_\approx$, where $a,b$ are finitely biaccessible
$g_\approx^n$-fixed points whose classes avoid
$\Sigma(f)\cup\Sigma_n(f)$. Suppose that
$I_1,\ldots, I_N$ are pairwise disjoint closed
subarcs of $(a,b)$ and that $g_\approx^n:I_j\to I$ is a
homeomorphism. If the closure of the component of
$J_\approx\setminus\{a,b\}$ containing $(a,b)$ is disjoint from the critical
values of $g_\approx^n$, then
\begin{equation}\label{eq:analytic-returns}
 f^n:\bigcup_{j=1}^N U_f(\ell_{a_j},\ell_{b_j})
       \longrightarrow U_f(\ell_a,\ell_b)
\end{equation}
is a conformal strip system, where $[a_j,b_j]=I_j$.
\end{lemma}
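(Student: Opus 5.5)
First I will check that all the relevant classes are realized and that the strips are well defined. The classes $\Theta_a,\Theta_b$ are finite and periodic (since $a,b$ are $g_\approx^n$-fixed) and avoid $\Sigma(f)$. Lemma~\ref{lem:finite-fibers} then shows that their rays are smooth and land at repelling or parabolic periodic points $z_a\ne z_b$. Thus $U=U_f(\ell_a,\ell_b)$ is a strip. Its two channels are the two arcs of $\mathbb S$ cut off between $\ell_a$ and $\ell_b$, and the external angles landing in $\overline U$ are exactly the angles whose $\pi_\approx$-image lies in the closure $D$ of the component of $J_\approx\setminus\{a,b\}$ containing $(a,b)$.

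The endpoints $a_j,b_j$ of $I_j$ satisfy $g_\approx^n(\{a_j,b_j\})=\{a,b\}$, so $\Theta_{a_j}$ and $\Theta_{b_j}$ are finite classes mapped by $\sigma_d^n$ into $\Theta_a\cup\Theta_b$. Because $\Theta_a\cup\Theta_b$ avoids $\Sigma(f)\cup\Sigma_n(f)$, Lemma~\ref{lem:lift}(1) shows that all order-$\le n$ lifts of these rays are smooth. Each such lifted ray lands at a preimage of $z_a$ or $z_b$. To see that the rays of $\ell_{a_j}$ land together, I will argue as follows. The two angles of $\ell_{a_j}$ map under $\sigma_d^n$ to angles of one realized class, so their rays land at preimages of a single point. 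If these landing points were different, then, since $(a,b)$ avoids critical values, a univalent pullback of a neighbourhood of $z_a$ would give a contradiction. Alternatively, I can apply Lemma~\ref{lem:actual-separators}: it yields ray pairs separating any two distinct points, and pulling back the pair $\ell_a$ shows that the lifts bound a common preimage component. This is the step I am least sure about, and I would try the univalent-pullback argument first. Distinctness of landing points for different $j$, and disjointness of the closures $\overline{U_j}$, then follow from the disjointness of the $I_j$ together with Lemma~\ref{lem:actual-separators}. Nesting $\overline{U_j}\subset U$ and the separation of the two boundary components of $U$ by $U_j$ follow because $I_j\subset(a,b)$ lies strictly between $a$ and $b$ in the dendrite, so the channels of $U_j$ lie strictly inside those of $U$.

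The main step is conformality: I must show that $f^n:U_j\to U$ is a conformal bijection. The map $f^n$ sends $\partial U_j$ into $\partial U$ (ray pairs go to ray pairs), and it is proper from $U_j\cap f^{-n}(U)$ onto $U$. I first want to show $U$ contains no critical value of $f^n$. The critical values of $f^n$ in $K_f$ project into critical values of $g_\approx^n$, which the closure $D$ avoids by hypothesis. Escaping critical values cannot lie in $U$ either: an escaping critical value of $f^k$ ($k\le n$) in $U$ would put a ray of angle in $\Sigma_n(f)$ inside $U$ landing near the strip. To exclude this I will use the hypothesis that $\Theta_a,\Theta_b$ avoid $\Sigma_n(f)$, restricting attention to potentials below those of the escaping critical values, or else replace the argument by a channel-counting one. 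Since $U$ is simply connected and free of critical values, each component of $f^{-n}(U)$ maps conformally onto $U$. Finally, the component containing $U_j$ is exactly $U_j$. This holds because $\sigma_d^n$ maps each channel of $U_j$ bijectively onto a channel of $U$: the slope is $d^n$, and the channel has length less than $d^{-n}$ times the length of its image, by the homeomorphism $g_\approx^n:I_j\to I$. So the degree at infinity is one.
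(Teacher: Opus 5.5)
Your outline has the right overall shape, but two steps do not go through as proposed. Both fail for the same reason: you move information between the model $g_\approx$ and the actual polynomial $f$ without a mechanism that does so.

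\textbf{Realizing $\Theta_{a_j}$ and $\Theta_{b_j}$.} The univalent-pullback argument produces no contradiction. Suppose the two angles of $\ell_{a_j}$ landed at distinct preimages $w_1\neq w_2$ of $z_a$. This is perfectly compatible with $f^n$ being univalent near $w_1$ and $w_2$. Moreover, the hypothesis that the component containing $(a,b)$ avoids critical values of $g_\approx^n$ says nothing directly about critical values of $f^n$. What excludes this \emph{wrong pairing} of the lifted angles is Lemma~\ref{lem:actual-separators}, used as follows.
\begin{enumerate}
\item Pick $\eta\in\Theta_{a_j}$ and let $z_j$ be its landing point. If a smooth ray with angle $\beta\notin\Theta_{a_j}$ also landed at $z_j$, then the two separating ray pairs with distinct landing points supplied by that lemma would both have to land at $z_j$. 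This is impossible.
\item Lift locally at $z_j$ the rays with angles in $\Theta_a$ landing at $z_a$; the lifts are smooth by Lemma~\ref{lem:lift}. This produces $\#\Theta_a$ distinct angles of $\sigma_d^{-n}(\Theta_a)$ landing at $z_j$, and by the previous step they all lie in $\Theta_{a_j}$.
\item Since $a$ is not a critical value of $g_\approx^n$, the map $\sigma_d^n:\Theta_{a_j}\to\Theta_a$ is a bijection. Hence these angles exhaust $\Theta_{a_j}$, which is therefore realized.
\end{enumerate}
You never establish the cardinality equality in the last step, and without it the argument cannot close.

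\textbf{Conformality.} Your claim that critical values of $f^n$ in $K_f$ project to critical values of $g_\approx^n$ presupposes a semiconjugacy from $f|_{K_f}$ to $g_\approx$. No such map is available: the model is built from angles only, and no global landing map exists. You also have no argument for escaping critical values. The hypotheses keep only the boundary classes off $\Sigma_n(f)$, not the channels of $U$.

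More to the point, you do not need $U$ to be free of critical values of $f^n$. The critical-value hypothesis belongs in the model plane. There it makes $g_\approx^n$ map the model strip $U_{\approx,j}$ homeomorphically onto $U_\approx$. Consequently $\sigma_d^n$ maps the two channels of $U_j$ homeomorphically onto those of $U$. Your appeal to $g_\approx^n:I_j\to I$ alone does not give this: a critical point of $g_\approx^n$ on a side branch of the dendrite attached to $(a_j,b_j)$ would destroy injectivity on the channels.

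Then finish as follows. For large $t$, the sets $U(t)$ and $U_j(t/d^n)$ are Jordan domains bounded by four ray segments and two equipotential arcs. The channel bijection shows that $f^n$ maps $\partial U_j(t/d^n)$ homeomorphically onto $\partial U(t)$. The argument principle then gives that $f^n:U_j(t/d^n)\to U(t)$ is a conformal bijection, and letting $t\to\infty$ finishes the proof. Your closing remark about degree one at infinity is the right idea, but it has to be paired with this argument-principle step, not with a preimage-of-a-critical-value-free-domain argument.
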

\begin{proof}
By Lemma~\ref{lem:finite-fibers}, the \(\approx\)-classes \(\Theta_a,\Theta_b\) of \(a,b\) are realized, so the strip \(U_f(\ell_a,\ell_b)\) is well-defined. Since \(a,b\) are not critical values of \(g_\approx^n\), the maps 
\[\sigma_d^n:\Theta_{a_j}\to\Theta_a\quad \text{and} \quad\sigma_d^n:\Theta_{b_j}\to\Theta_b\] 
are bijections for every \(j=1,\ldots,N\).
We first claim that \(\Theta_{a_j}\) (resp.\ \(\Theta_{b_j}\)) is realized as well for each \(j\). Thus, the strips $U_f(\ell_{a_j},\ell_{b_j}),j=1,\ldots,N$, are well-defined.\vspace{1pt}

Since \(\Theta_a\) avoids \(\Sigma(f)\cup\Sigma_n(f)\), by Lemma \ref{lem:lift}, every angle in
$$
\sigma_d^{-n}(\Theta_a)\supset \Theta_{a_j}
$$
is smooth. Choose \(\eta\in \Theta_{a_j}\), and let \(z_j\) be the landing point of \(R_f(\eta)\). By Lemma~\ref{lem:actual-separators}, no smooth  ray with angle outside \(\Theta_{a_j}\) can land at \(z_j\).

On the other hand, local lifting at \(z_j\) supplies at least one smooth ray above each angle of \(\Theta_a\). All these angles must therefore belong to \(\Theta_{a_j}\). Since \(\#\Theta_{a_j}=\#\Theta_a\), they exhaust \(\Theta_{a_j}\). Hence \(\Theta_{a_j}\) is realized. The claim is proved.\vspace{1pt}

Using the separation lemma (Lemma~\ref{lem:actual-separators}) again, we obtain that the ray pairs \[R_f(\ell_{a_j}),\ R_f(\ell_{b_j}), \quad j=1,\ldots,N\] are pairwise disjoint. Moreover, the relative positions of the intervals \(I_j\) and \(I\) imply that the strips \(U_f(\ell_{a_j},\ell_{b_j})\) and \(U_f(\ell_a,\ell_b)\) satisfy items 1 and 2 of Definition~\ref{def:strip}. Thus, to check that \eqref{eq:analytic-returns} is a conformal strip system, it suffices to verify
\begin{equation}\label{eq:conformal}
f^n:U_f(\ell_{a_j},\ell_{b_j})\to U_f(\ell_a,\ell_b)
\end{equation}
is a conformal map for each $j=1,\ldots,N$.

In the $g_\approx$-plane, the leaves $\ell_a,\ell_b$ (resp.\,$\ell_{a_j},\ell_{b_j}$) induce disjoint external ray pairs of $g_\approx$, which bound a strip $U_{\approx}$ (resp.\, $U_{\approx,j}$).
Since the component of \(J_\approx\setminus\{a,b\}\) containing \((a,b)\) is assumed to avoid the critical values of \(g^n_\approx\), the domain \(U_{\approx}\) contains no critical values of \(g^n_\approx\). Consequently, for each \(j\), the component of \(g_\approx^{-n}(U_{\approx})\) containing \((a_j,b_j)\) is exactly \(U_{\approx,j}\), and
\[
g^n_\approx: U_{\approx,j}\to U_{\approx}\text{ is a homeomorphism}.
\]
Let $\Gamma$ and $\Gamma_j$ be the unions of the two angular channels of
$U_\approx$ and $U_{\approx,j}$, respectively.
The action at infinity gives
\begin{equation}\label{eq:22}
\sigma_d^n:\Gamma_j\longrightarrow\Gamma
\quad\text{a homeomorphism}.
\end{equation}
It maps each component interval homeomorphically onto a target interval.
The corresponding actual strips have the same channel angles.

For simplicity, write
\[
U=U_f(\ell_a,\ell_b),\qquad U_j=U_f(\ell_{a_j},\ell_{b_j}).
\]
Choose \(t\) so large that
$
U(t)$ and 
$U_j(t/d^n)=\{z\in U_j:G_f(z)<t/d^n\}
$
are Jordan domains, each bounded by four ray segments and two equipotential arcs (see Figure \ref{fig:conformal-strips}). It follows from \eqref{eq:22} that $f^n$ maps $\partial U_j(t/d^n)$ homeomorphically onto $\partial U(t)$. Its winding number about a point of $U(t)$ is \(+1\). The Argument Principle then shows 
\[
f^n:U_j(t/d^n)\to U(t)
\]
is conformal and onto. Letting \(t\) increase proves that \(f^n:U_j\to U\) is conformal and onto. Combined with the separation and inclusion properties already established, this yields the desired conformal strip system.
\end{proof}

\subsection{Uniform pruning of interval horseshoes}\label{sec:pruning}
The preceding lemma realizes a conformal strip system from an interval horseshoe. For what follows, we need a periodic bound on the endpoints of $\ell_a,\ell_b$ independent of the polynomial \(f\). The following uniform pruning argument provides such a bound.

\begin{lemma}\label{lem:pruning}
Suppose $I\subset J_\approx$ has $M_0\ge2$ internally disjoint homeomorphic
 return branches of time $m_0$. For any finite set  $E\subset\Sone$ and any $\epsilon>0$, there exist  integers 
$m>m_0, M>M_0$, 
 an arc $[a,b]\subset{\rm int}(I)$
with $M$ pairwise disjoint homeomorphic return branches in $(a,b)$ of time $m$, such that:
\begin{enumerate}
\item $a,b$ are finitely biaccessible $g_\approx^m$-fixed points, such that
their $\approx$-classes avoid $\Sigma(f)\cup\Sigma_m(f)$, and every angle
in these classes is fixed by $\sigma_d^{2m}$;\vspace{2pt}
\item the closure of the component of $J_\approx\setminus\{a,b\}$
containing $(a,b)$ avoids the critical values of $g_\approx^m$
and the set $\pi_\approx(E)$;\vspace{2pt}
\item $\log M/m>\log M_0/m_0-\epsilon$, and $m,M$ depend only on
$m_0,M_0,\epsilon,\#E,d$.
\end{enumerate}
\end{lemma}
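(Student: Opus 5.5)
The plan is to pass to the symbolic coding of the horseshoe and prune it. Write $T=g_\approx^{m_0}$ on $\bigcup_{j=1}^{M_0}I_j\to I$, with inverse branches $h_1,\ldots,h_{M_0}$. Since every branch is a homeomorphism onto $I$ and $J_\approx$ is a dendrite, compositions $h_w=h_{w_1}\circ\cdots\circ h_{w_k}$ give nested arcs $I_w=h_w(I)$ indexed by words of length $k$. These arcs have pairwise disjoint interiors, and $g_\approx^{km_0}:I_w\to I$ is a homeomorphism. I do not expect uniform contraction in the dendrite. Instead I will use combinatorial finiteness: the dangerous set is finite with cardinality bounded in terms of $d$, $m_0$, $\#E$. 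It consists of the critical values of $g_\approx^m$ relevant to the arc, the points of $\pi_\approx(E)$, and the finitely many $\approx$-classes meeting $\Sigma_0(f)$ or its low iterates. Each dangerous point lies in the interior of at most two arcs $I_w$ at a given level.

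Fix a large block length $k$ and a word $u$ of length $k$. Take $a,b$ to be the two fixed points of $g_\approx^{2km_0}$ coded by the periodic words $(u u')^\infty$ and $(u' u)^\infty$, or a similar pair. Concretely, I choose two distinct words $w_a,w_b$ of length $k$. I let $a$ be the unique fixed point of $h_{w_a w_a}$ in $I$ and $b$ that of $h_{w_b w_b}$; uniqueness holds because the branch acts monotonically on an arc. Set $m=2km_0$. Then $a,b$ are $g_\approx^m$-fixed, and their classes are mapped into themselves by $\sigma_d^m$ with a permutation. Possibly replacing $a$ and $b$ by period-doubled choices, this forces every angle to be fixed by $\sigma_d^{2m}$, which gives the "every angle fixed by $\sigma_d^{2m}$" clause. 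Finite biaccessibility of $a,b$ follows because $a,b$ are interior cut points of $I$ away from the countably many infinite gaps. The periodic points of period $m$ that lie in Fatou-gap classes or in classes containing angles of $\Sigma(f)\cup\Sigma_m(f)$ are finite in number, and the bound depends only on $d,m$ via $\#\Sigma_0\le\mu_d$. So among the exponentially many candidate words we can avoid them.

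The return branches inside $(a,b)$ are obtained as follows. Take all words $w$ of length $m/m_0=2k$ whose arcs $I_w$ lie strictly inside $(a,b)$ and are disjoint from the closed component between $a$ and $b$ that carries dangerous points. Their number is at least $M_0^{2k}-C\cdot(\text{number of dangerous points})$. The correction term counts the boundary words near $a,b$ together with the words hit by the finitely many bad points. This also handles the side branches of $J_\approx$ hanging off $(a,b)$: critical values on such branches are excluded by shrinking $[a,b]$ to a subarc whose complementary component avoids them. The component condition is what requires care. A critical value of $g_\approx^m$ may sit on a side branch attached at an interior point of $(a,b)$. I plan to choose $w_a,w_b$ adjacent within a single first-level branch region avoiding all attachment points of bad side branches. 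Since the bad set has cardinality $D\le D(d,m,\#E)$ and the $M_0^{2k}$ subarcs are disjoint, a pigeonhole argument over consecutive blocks yields such an interval with $M\ge M_0^{2k}/(D+1)-2$ branches. We still need $\log M/m>\log M_0/m_0-\epsilon$, but $D$ grows only like $d^{m}$ (critical values of $g_\approx^m$ are at most $(d-1)m$, and $\Sigma_m$ has size $\le m\mu_d$). Hence $D$ is linear in $m$, and $\log M/m\ge \log M_0/m_0 - O(\log m/m)$. Choosing $k$ large in terms of $m_0,M_0,\epsilon,\#E,d$ gives (3), with $m,M$ independent of $f$.

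The main obstacle is the pigeonhole step producing an arc $[a,b]$ whose closed complementary component avoids all bad points while still containing nearly all $M_0^{2k}$ branches. Bad points in the dendrite need not lie on $I$, so they must be projected to their attachment points on $I$. One must also check that $a,b$ avoid the countably many non-admissible classes uniformly. The argument here works through the finite set $\Sigma_m(f)\cup\Sigma_0$-orbit bound, with at most $\mu_d m$ bad classes, and not through countability.
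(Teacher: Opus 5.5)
\textbf{Overall route.} Your plan follows the paper's proof. You iterate the horseshoe and collect a finite obstruction set whose size is linear in $m$ plus $\#E$: the critical values of $g_\approx^m$, the set $\pi_\approx(E)$, and the inadmissible $g_\approx^m$-fixed points. You retract this set to $I$ through first-attachment points and pigeonhole over the components of its complement in $I$. Then you take $a,b$ as fixed points in the extreme surviving arcs, so that $\log M/m\to\log M_0/m_0$.

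\textbf{The gap: the last clause of (1).} The claim that every angle of $\Theta_a,\Theta_b$ is fixed by $\sigma_d^{2m}$ is not proved. Saying that $\sigma_d^m$ permutes $\Theta_a$ and that ``period-doubled choices'' force this permutation to become trivial is not an argument. The map $\sigma_d^m$ acts on the finite class $\Theta_a$ as a cyclic-order-preserving bijection, i.e.\ a rotation, and the square of a rotation of order $q\ge 3$ is not the identity. Periodic classes can have arbitrarily many angles, so nothing formal bounds that order.

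The missing input is geometric. The point $a$ lies in the interior of an arc on which $g_\approx^m$ is a homeomorphism onto $I\ni a$. Hence $g_\approx^m$ sends the two germs of $I$ at $a$ to germs of $I$ at $a$. With your doubled words $w_aw_a$ it fixes each germ; in general it may swap them. These germs lie in distinct components of $J_\approx\setminus\{a\}$, which correspond to two complementary arcs of $\Theta_a$ in $\Sone$. A rotation of $\Theta_a$ that fixes one complementary arc is the identity. This is the paper's argument, and with your doubling it even yields $\sigma_d^m$-fixedness. As written, however, your doubling is never connected to the angles.

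\textbf{Smaller repairs.}
\begin{enumerate}
\item Monotonicity does not make the fixed point of $h_{w_aw_a}$ unique. An increasing homeomorphism onto a subarc can fix a whole subarc, and you explicitly have no contraction. You only need existence, which follows from the intermediate value theorem, as in the paper.
\item The return branches must be the pullbacks $h_w([a,b])$. These lie in $\operatorname{int}(I_w)$ because $[a,b]\subset\operatorname{int}(I)$. The arcs $I_w$ themselves map onto $I$, not onto $[a,b]$, and adjacent ones may share endpoints. The lemma requires pairwise disjoint branches onto $[a,b]$.
\item Fix the count.
\begin{itemize}
\item Your first count $M_0^{2k}-C\cdot(\text{number of bad points})$ is incompatible with (2), since $[a,b]$ must lie in a single component of $I$ minus the retracted bad set.
\item First discard the arcs meeting the obstruction set, including the endpoints of $I$, and then pigeonhole, as in $\kappa_r=\lfloor (M_0^r-2s_r)/(s_r+1)\rfloor$.
\item Requiring the extreme arcs to be doubled-word arcs costs a further $O(M_0^k)$ arcs, since there is one doubled arc inside each level-$k$ arc. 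This loss is harmless because $D$ is linear in $k$.
\end{itemize}
\item Your finiteness claims each need a line of justification.
\begin{itemize}
\item $g_\approx^m$-fixed points with infinite classes number at most $(d-1)m$, because every cycle of infinite classes contains a gap of degree greater than one.
\item A $\sigma_d^m$-invariant finite class meeting $\Sigma(f)$ already meets $\Sigma_m(f)$, by the argument of Lemma~\ref{lem:lift}(2). This gives at most $\mu_d m$ such classes.
\end{itemize}
\end{enumerate}
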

\begin{proof}
Let $\rho_I:J_\approx\to I$ be the natural
retraction: for $x\notin I$, the arc from $x$ to $I$ first meets
$I$ at $\rho_I(x)$. 

Fix a large integer $r\ge 2$.
Composing the original returns $r$ times gives $M_0^r$
homeomorphic full returns of time $m_r:=rm_0$, with pairwise disjoint 
interiors. 

We define a finite set $S_r$ of some special points in $I$ including: 
\begin{itemize}
\item the endpoints of $I$;
\item the $g_\approx^{m_r}$-fixed points in $I$ which correspond to infinite $\approx$-classes;
\item the $g_\approx^{m_r}$-fixed points in $I$ whose $\approx$-classes intersecting $\Sigma_{m_r}(f)$;
\item the points in $\rho_I({\rm CritV}(g_\approx^{m_r}))\bigcup \rho_I(\pi_\approx(E))$.
\end{itemize}
Since ${\rm CritV}(g_\approx^{m_r})=\bigcup_{k=1}^{m_r} g_\approx^k({\rm Crit}(g_\approx))$, we have
\[
\#\operatorname{CritV}(g^{m_r}_\approx)\le(d-1)m_r.
\]
Since every cycle of infinite classes contains a gap of degree greater than one, there are at most \((d-1)m_r\) such \(g_\approx^{m_r}\)-fixed points. Finally, \eqref{eq:5} yields
$
\#\Sigma_{m_r}(f)\le \mu_d m_r.
$
It follows that
\[
 \#S_r\le s_r:=(2+2^{d-1})(d-1)m_r+\#E+2.
\]

Since each special point lies in at most two return intervals, discarding all return intervals that meet \(S_r\) leaves at least \(M_0^r-2s_r\) of them. These remaining intervals lie in at most \(s_r+1\) components of \(I\setminus S_r\). Hence a component \(L\) contains at least
\[
\kappa_r=\left\lfloor\frac{M_0^r-2s_r}{s_r+1}\right\rfloor
\]
return intervals. Choose \(r\) large enough so that \(\kappa_r>M_0+2\), and select exactly \(\kappa_r\) of these intervals in their order along \(I\).

Each selected interval lies in \(\operatorname{int}(I)\) and maps homeomorphically onto \(I\), so it contains a fixed point of \(g^{m_r}_\approx\). Choose one such fixed point in each interval, and let \(a\) and \(b\) be those in the first and last intervals. For each of the remaining \(\kappa_r-2\) intervals, pull back \([a,b]\) by the return homeomorphism. These pullbacks are strictly contained in their source intervals, hence are pairwise disjoint closed subarcs of \((a,b)\) mapping homeomorphically onto \([a,b]\) by $g_\approx^{m_r}$, see Figure \ref{fig:interval-pruning}.
\begin{figure}[htbp]
\centering
\begin{tikzpicture}[x=1.02cm,y=1cm,line cap=round,line join=round,font=\small]
  \draw[black,thick] (0,0)--(12,0);
  \node[left] at (0,0) {$I$};
  \foreach \x/\y in {0.4/1.2,1.7/2.5,2.9/3.45,3.7/4.25,4.5/5.05,5.3/5.85,6.1/6.65,6.8/7.5,8.0/8.6,9.0/9.6,10.3/11.0}{
    \fill[gray!18] (\x,-0.12) rectangle (\y,0.12);
    \draw[gray] (\x,-0.12) rectangle (\y,0.12);
  }
  \foreach \x/\y in {2.9/3.45,6.1/6.65}{
    \fill[orange!22] (\x,-0.12) rectangle (\y,0.12);
    \draw[orange!85!black,thick] (\x,-0.12) rectangle (\y,0.12);
  }
  \foreach \x/\y in {3.7/4.25,4.5/5.05,5.3/5.85}{
    \draw[blue!70!black,thick] (\x,-0.12) rectangle (\y,0.12);
  }
  \foreach \x/\y in {3.82/4.12,4.62/4.92,5.42/5.72}{
    \fill[blue!45] (\x,-0.12) rectangle (\y,0.12);
  }
  \foreach \x in {0,2.4,7.4,10.7,12}{
    \fill[red!75!black] (\x,0) circle(2pt);
  }
  \draw[black] (10.7,0)--(10.7,0.85);
  \fill[red!75!black] (10.7,0.85) circle(1.8pt);
  \draw[red!75!black,->] (10.95,0.72)--(10.95,0.18);
  \node[above] at (10.7,0.85) {critical value};
  \node[red!75!black,below] at (2.4,-0.18) {$S_r$};
  \node[red!75!black,below] at (7.4,-0.18) {$S_r$};
  \node[red!75!black,below] at (10.7,-0.18) {$S_r$};
  \draw[decorate,decoration={brace,amplitude=4pt}] (2.55,0.6)--(7.25,0.6)
    node[midway,above=6pt] {one component of $I\setminus S_r$};
  \fill[black] (3.15,0) circle(1.8pt);
  \fill[black] (6.4,0) circle(1.8pt);
  \node[below] at (3.15,-0.2) {$a$};
  \node[below] at (6.4,-0.2) {$b$};
  \draw[blue!70!black,thick] (3.15,-0.72)--(6.4,-0.72);
  \draw[gray,dashed] (3.15,-0.16)--(3.15,-0.72) (6.4,-0.16)--(6.4,-0.72);
  \node[below,blue!70!black] at (4.78,-0.72) {new target $[a,b]$};
  \node[blue!70!black,above] at (3.98,0.15) {$I_1$};
  \node[blue!70!black,above] at (4.78,0.15) {$I_2$};
  \node[blue!70!black,above] at (5.58,0.15) {$I_3$};
  \node[align=center,anchor=west] at (8.0,-1.2)
    {$g_\approx^{m_r}:I_j\to[a,b]$\\[3pt]homeomorphic returns};
\end{tikzpicture}
\caption{Pruning the interval returns. Red points are the finite obstruction set $S_r$. In one complementary component, the first and last selected returns (orange) supply fixed points $a,b$. Pulling $[a,b]$ back in the interior returns leaves the smaller blue branches. Five selected returns and three retained branches are shown.}\label{fig:interval-pruning}\par
\end{figure}

The $\approx$-classes of $a,b$ are finite and avoid $\Sigma_{m_r}(f)$, by the
choice of $S_r$. They also avoid $\Sigma(f)$ by Lemma \ref{lem:lift}. 

The retraction marks show that the closure of the component of \(J_\approx\setminus\{a,b\}\) containing \((a,b)\) misses \(\operatorname{CritV}(g^{m_r}_\approx)\cup\pi_\approx(E)\). In particular, \(a,b\) are not critical points of \(g^{m_r}_\approx\). At each of them, the local map permutes the two directions along \(I\), so its second iterate fixes both. On the circle, the induced permutation of the finite \(\approx\)-class preserves cyclic order and fixes the complementary interval for that direction; hence it fixes every vertex. Therefore \(\sigma_d^{2m_r}\) fixes every angle in the \(\approx\)-classes of \(a,b\). Since \(a,b\) are interior points of an arc in the dendrite, their classes contain at least two angles. This proves (1) and (2).

Finally, observe that $\log \kappa_r / m_r \to \log M_0 / m_0$ as $r\to\infty$. Thus we can choose $r$ sufficiently large, depending only on $m_0,M_0$, $\#E$, $d$, and $\epsilon$, such that $$\log M / m > \log M_0 / m_0 - \epsilon,$$ where $M := \kappa_r - 2$ and $m = m_r = m_0 r$. This proves the lemma.
\end{proof}

Let \(\xi_f=\xi\) be a  marking portrait of \(\sim = \sim_{\LLL_{\rm bi}^{\rm p}(f)}\), with the induced separation graph \(\G_{\xi}\).
Combining the previous results in this section, we can establish a link between the number of closed paths in \(\G_{\xi}\) starting from a fixed vertex and a certain subset of the biaccessible set of \(f\). This link will play a key role in the proof of the continuity of biaccessible dimension.\vspace{2pt}

 Recall that \(C_n(v)=C_n(v,\G_{\xi})\) denotes the number of length-\(n\) closed paths in \(\G_{\xi}\) starting from \(v\); and the zero-arc vertices are defined in \eqref{eq:1}.

\begin{lemma}\label{lem:key}
Let $v$ be a non-zero-arc vertex of $\Gamma_\xi$, and fix
$m_0\ge1$ and $2\le M_0\le C_{m_0}(v)$.
Then for every $\epsilon>0$ and finite $E\subset\Sone$, there are
integers $M,m>0$, a conformal strip system
\[
 f^m:\bigcup_{j=1}^M U_j\longrightarrow
 U=U_f(\overline{\alpha\beta},\overline{\theta\eta}),
\]
and a compact $\sigma_d^m$-invariant set $\Theta\subset B(f)$ such that:
\begin{enumerate}
\item $M,m$ depend only on $M_0,m_0,\epsilon,\#E,d$;
\item $\alpha,\beta,\theta,\eta\in\operatorname{Fix}(\sigma_d^{2m})$;
\item  the two channels of $U$  and
the $\approx$-classes of $\alpha$ and $\theta$  all avoid $E$;
\item the dimension of \(\Theta\) satisfies 
\[
 \Hd\Theta=\frac{\log M}{m\log d}
          >\frac{\log M_0}{m_0\log d}-\epsilon.
\]
\end{enumerate}
\end{lemma}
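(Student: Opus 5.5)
The plan is to chain the model-level results of this section: horseshoe in the reduced model, then the dendrite, then pruning, then an analytic strip system. First, since $v$ is a non-zero-arc vertex supporting at least $M_0$ closed paths of length $m_0$, Lemma~\ref{lem:returns} (applied to the reduced map $\overline g_\sim$ of the model $\sim=\sim_{\LLL_{\rm bi}^{\rm p}(f)}$) gives $M_0$ non-degenerate subarcs of $I_v$ with disjoint relative interiors. Each maps monotonically onto $I_v$ under $\overline g_\sim^{\,m_0}$, so we have a monotone $M_0$-horseshoe. Here $I_v=\phi(\gamma)$ for a regulated arc $\gamma$ joining the landing points of the postcritical angles. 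Lemma~\ref{lem:project-returns} then projects this to a homeomorphic $M_0$-horseshoe on the non-degenerate arc $\wp(I_v)\subset J_\approx$ for $g_\approx^{m_0}$.

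Next, apply the uniform pruning Lemma~\ref{lem:pruning} to this horseshoe, with the given $E$ and $\epsilon$ (after rescaling: use $\epsilon\log d$ so that the inequality in (3) of that lemma yields item (4) here). This produces $m,M$ depending only on $m_0,M_0,\epsilon,\#E,d$, which gives item (1). It also produces an arc $[a,b]$ whose endpoints are finitely biaccessible $g_\approx^m$-fixed points, with classes avoiding $\Sigma(f)\cup\Sigma_m(f)$ and fixed pointwise by $\sigma_d^{2m}$, together with $M$ disjoint homeomorphic return branches $I_j\subset(a,b)$. Moreover, the closure of the relevant complementary component avoids $\operatorname{CritV}(g_\approx^m)$ and $\pi_\approx(E)$. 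Taking $\overline{\alpha\beta}=\ell_a$ and $\overline{\theta\eta}=\ell_b$, the boundary angles lie in these classes, which gives item (2). Item (3) follows because the channels of $U$ consist of angles projecting into the closure of the component of $J_\approx\setminus\{a,b\}$ containing $(a,b)$, and that closure misses $\pi_\approx(E)$. The classes of $\alpha,\theta$ project to $a,b$, which lie in that closure, so they avoid $E$ as well.

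Now Lemma~\ref{lem:analytic} applies verbatim, since its hypotheses are exactly conclusions (1) and (2) of the pruning lemma. It yields the conformal strip system $f^m:\bigcup_j U_f(\ell_{a_j},\ell_{b_j})\to U_f(\ell_a,\ell_b)$; realizability of the classes of $a,b$ comes from Lemma~\ref{lem:finite-fibers}, since these classes are periodic and avoid $\Sigma(f)$. Finally, Proposition~\ref{pro:strip} gives a compact $\sigma_d^m$-invariant $\Theta\subset B(f)$ with $\Hd\Theta=\log M/(m\log d)$, which completes item (4).

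The main obstacle is the bookkeeping in item (3). One must check that each channel angle of $U$ projects under $\pi_\approx$ into the closed component of $J_\approx\setminus\{a,b\}$ containing $(a,b)$. The natural route is to say that such an angle lies between $\ell_a$ and $\ell_b$ on the circle, hence its $\approx$-class lies on the strip side of both leaves, hence it projects into that component. If this correspondence between channels and the component is not immediate, I would try first to add the finitely many $\approx$-class endpoints to $E$ before pruning, or to enlarge $E$ to its $\approx$-saturation. Either adjustment changes $\#E$ only by a bounded amount depending on $d$, so the uniformity in item (1) survives.
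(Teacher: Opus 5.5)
Your proposal is correct and follows the paper's proof essentially step for step: Lemma~\ref{lem:returns}, then Lemma~\ref{lem:project-returns}, then Lemma~\ref{lem:pruning} with error $\epsilon\log d$, then Lemma~\ref{lem:analytic} and Proposition~\ref{pro:strip}, with item (3) obtained exactly as you say, from the channels of $U$ corresponding to the central component of $J_\approx\setminus\{a,b\}$. The fallback you sketch is unnecessary and should be dropped, since enlarging $E$ to its $\approx$-saturation would not keep $\#E$ bounded in terms of $d$ (a point of $E$ may lie in the vertex set of an infinite gap or in a large finite class).
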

\begin{proof}
Select exactly $M_0$ closed paths of length $m_0$ based at $v$. By Lemma~\ref{lem:returns}, we obtain a postcritical arc \(I_v\) that admits $M_0$ internally disjoint monotone return branches of time \(m_0\). By Lemma~\ref{lem:project-returns}, this yields a non-degenerate arc  \(L\subset J_\approx\) which admits \(M_0\) internally disjoint homeomorphic return branches of time \(m_0\).

Applying Lemma~\ref{lem:pruning} to \(L\), \(E\), and \(\epsilon\log d\), we obtain an arc \([a,b]\subset L\) with \(M\) homeomorphic return branches of time \(m\) satisfying
\[
\frac{\log M}{m}>\frac{\log M_0}{m_0}-\epsilon\log d.
\]
Its endpoints satisfy the period and avoidance conditions of that lemma. By Lemma~\ref{lem:analytic}, the resulting horseshoe is realized as the stated conformal strip system. The two channels correspond to the central component cut off by \(a,b\), and the avoidance of its closure yields assertion (3). Finally, Proposition~\ref{pro:strip} provides \(\Theta\) together with its dimension. The displayed inequality gives (4), while the uniform choice in Lemma~\ref{lem:pruning} gives (1).
\end{proof}

\subsection{The continuity of biaccessible dimension}
For a fixed polynomial \(f\), recall that \(\LLL_{\rm bi}(f)\) denotes the biaccessible lamination of \(f\), and \(\LLL_{\rm bi}^{\rm p}(f)\) denotes its perfect part (see Section~\ref{sec:biaccessible}). The sets \(B(f)\) and \(B(\LLL_{\rm bi}^{\rm p}(f))\) are the biaccessible sets of \(f\) and \(\LLL_{\rm bi}^{\rm p}(f)\), respectively.

Since \(\LLL_{\rm bi}^{\rm p}(f)\) is perfect, Lemma~\ref{lem:proper} implies that it is proper. Hence it induces a laminational equivalence relation \(\sim_f\). By Corollary~\ref{cor:differ}, the invariant laminations \(\LLL_{\rm bi}^{\rm p}(f)\) and \(\LLL_{\sim_f}\) differ in only countably many leaves. Let \(\xi_f=\xi\) be a marking portrait of \(\sim_f\), and let \(\G_{\xi}\) be its induced separation graph.

\begin{proposition}\label{prop:pointwise}
For any polynomial $f\in \PPPP_d$, we have
\[
 (\log d)\Hd B(f)=h(\xi).
\]
\end{proposition}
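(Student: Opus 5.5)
We prove the two inequalities $(\log d)\Hd B(f)\le h(\xi)$ and $(\log d)\Hd B(f)\ge h(\xi)$ separately. Both sides are routed through the laminational model $\sim_f$ and Theorem~\ref{thm:equal1}. The upper bound then reduces to comparing biaccessible sets; the lower bound requires realizing model horseshoes as actual ray systems of $f$ via Lemma~\ref{lem:key}.

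\emph{Upper bound.} By Lemma~\ref{lem:infinite-landings} we have $\Hd B(f)\le\Hd B(\LLL_{\rm bi}^{\rm p}(f))$. Next we compare $B(\LLL_{\rm bi}^{\rm p}(f))$ with $B(\LLL_{\sim_f})$. By Corollary~\ref{cor:differ}, the laminations $\LLL_{\rm bi}^{\rm p}(f)$ and $\LLL_{\sim_f}$ differ in only countably many leaves. Hence their sets of leaf endpoints differ by a countable set, which has Hausdorff dimension zero. Therefore
\[
\Hd B(\LLL_{\rm bi}^{\rm p}(f))=\Hd B(\LLL_{\sim_f}).
\]
Theorem~\ref{thm:equal1}, applied to the marking portrait $\xi$ of $\sim_f$, gives $(\log d)\Hd B(\LLL_{\sim_f})=h(\xi)$. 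This proves $(\log d)\Hd B(f)\le h(\xi)$.

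\emph{Lower bound.} If $h(\xi)=0$ there is nothing to prove, so assume $h(\xi)>0$ and fix $\epsilon>0$. By Corollary~\ref{coro:many-returns}, there are a non-zero-arc vertex $v$ of $\G_\xi$, an integer $m_0$, and a number $M_0=C_{m_0}(v)\ge2$ with $\log M_0/m_0>h(\xi)-\epsilon$. Apply Lemma~\ref{lem:key} with $E=\emptyset$ and tolerance $\epsilon$. It produces a compact set $\Theta\subset B(f)$ with
\[
\Hd\Theta>\frac{\log M_0}{m_0\log d}-\epsilon>\frac{h(\xi)-\epsilon}{\log d}-\epsilon.
\]
Letting $\epsilon\to0$ yields $(\log d)\Hd B(f)\ge h(\xi)$.

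All the heavy lifting sits inside Lemma~\ref{lem:key}, and through it in Lemmas~\ref{lem:finite-fibers} and~\ref{lem:analytic}. The main point to check is that the model invoked by Lemma~\ref{lem:key} is the right one. That lemma is stated for a marking portrait of $\sim_f$, and the dendrite $J_\approx$ is built from $\LLL_{\rm bi}^{\rm p}(f)$; we must confirm these describe the same quotient. They do: $\sim_f$ is by definition the relation induced by $\LLL_{\rm bi}^{\rm p}(f)$, so $J_\approx$, $g_\approx$ and $\G_\xi$ all come from the same lamination. The remaining delicate step is the interaction of the Siegel reduction (Proposition~\ref{pro:reduction}) with the perfect lamination. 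This causes no difficulty, because the Fatou gaps of a perfect lamination are attracting by Lemma~\ref{lem:perfect}, so $\sim_f$ is already Siegel non-capture.
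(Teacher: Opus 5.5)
Your proposal is correct and follows the paper's proof essentially verbatim. The upper bound comes from Lemma~\ref{lem:infinite-landings}, Corollary~\ref{cor:differ} and Theorem~\ref{thm:equal1}, and the lower bound from Corollary~\ref{coro:many-returns} combined with Lemma~\ref{lem:key} applied with $E=\varnothing$; your closing check that $\sim_f$ is Siegel non-capture mirrors the paper's own remark at the start of Section~\ref{sec:dendrite-model}.
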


\begin{proof}
By Theorem~\ref{thm:equal1} and Corollary \ref{cor:differ}, we have
\[
 h:=h(\xi)=(\log d)\Hd B\bigl(\LLL_{\rm bi}^{\rm p}(f)\bigr).
\]
Lemma~\ref{lem:infinite-landings} then yields the upper bound
\[
(\log d)\Hd B(f)\le h.
\]
If \(h=0\), this bound together with nonnegativity gives equality. Henceforth, assume that the core entropy is positive, and fix \(\epsilon>0\) sufficiently small.

By Corollary~\ref{coro:many-returns}, there exists a non-zero-arc vertex \(v\) of \(\G_{\xi}\) such that
\[
\frac{\log C_{m_0}(v)}{m_0}>h-\epsilon\log d.
\]
By Lemma~\ref{lem:key} (with \(M_0=C_{m_0}(v)\), \(E=\varnothing\)), we obtain a set \(\Theta\subset B(f)\) satisfying
\[
\Hd B(f)\geq \Hd(\Theta)>\frac{\log C_{m_0}(v)}{m_0\log d}-\epsilon>\frac{h}{\log d}-2\epsilon.
\]
Letting \(\epsilon\downarrow0\), we obtain the lower bound and  this proposition is proved.
\end{proof}

\begin{proposition}\label{prop:lsc}
If $f_n\to f$ in $\PPPP_d$, then
\[
\Hd B(f)\le\liminf_{n\to\infty}\Hd B(f_n).
\]
\end{proposition}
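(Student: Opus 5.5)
We may assume $\Hd B(f)>0$; otherwise there is nothing to prove. Fix $\epsilon>0$. The plan is to build, for $f$ itself, a conformal strip system whose boundary ray pairs are stable under perturbation, and then to push it to every $f_n$ with $n$ large.

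First we extract the system for $f$. By Proposition~\ref{prop:pointwise} and Corollary~\ref{coro:many-returns}, applied to a marking portrait $\xi_f$ of $\sim_f$, there is a non-zero-arc vertex $v$ and integers $m_0$, $M_0=C_{m_0}(v)\ge2$ with $\log M_0/m_0>(\log d)\Hd B(f)-\epsilon\log d$. Apply Lemma~\ref{lem:key} with a finite exceptional set $E$ chosen to make the boundary rays robust. Because the boundary angles of the resulting system lie in the finite set $\operatorname{Fix}(\sigma_d^{2m})$ and $m$ depends only on the data, we can take $E$ to be the finitely many angles in $\operatorname{Fix}(\sigma_d^{2m})$ whose rays land at parabolic points, together with their $\approx$-classes. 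There are finitely many parabolic cycles and each receives finitely many periodic rays, so this $E$ is finite. Since $\#E$ must be fixed before $m$ is determined, we bound $\#E$ a priori: the number of parabolic cycles is at most $d-1$, and each carries at most a bounded number of periodic angles from the relevant finite set. We then enlarge $E$ with these angles in advance.

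This yields a conformal strip system
\[
f^m:\bigcup_{j=1}^M U_j\to U=U_f(\overline{\alpha\beta},\overline{\theta\eta}),
\]
whose four boundary angles are smooth, periodic, and land at repelling periodic points, and with $\log M/(m\log d)>\Hd B(f)-2\epsilon$. Its source strips $U_j$ are bounded by ray pairs whose angles are $\sigma_d^{m}$-preimages of these, landing at pre-repelling points. Since $\sigma_d^{-m}$ is bijective from the source channels onto the target channels, the critical points of $f^m$ avoid these landing orbits.

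Next we perturb. By Lemma~\ref{lem:stable}, the finitely many boundary rays of $U$ and of every $U_j$ stay smooth for $f_n$ close to $f$. They land at points converging to the corresponding landing points, and their closures converge in the Hausdorff sense. Coincident landings persist because a repelling periodic point moves holomorphically with its rays, and distinct landing points stay distinct. So for large $n$ the same angles bound strips $U^{(n)}$ and $U_j^{(n)}$ of $f_n$, and the closures of the $U_j^{(n)}$ remain pairwise disjoint and contained in $U^{(n)}$.

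Finally we check conformality, which is the main obstacle. Here we repeat the argument of Lemma~\ref{lem:analytic}. The channel map $\sigma_d^m:\Gamma_j\to\Gamma$ is a homeomorphism and does not depend on $n$, so $f_n^m$ maps the truncated boundary $\partial U_j^{(n)}(t/d^m)$ homeomorphically onto $\partial U^{(n)}(t)$ with winding number one. The argument principle then gives that $f_n^m:U_j^{(n)}\to U^{(n)}$ is conformal and onto. The delicate point is choosing the truncation potential $t$ uniformly in $n$, which we get from the convergence of Böttcher coordinates near infinity. Proposition~\ref{pro:strip} then gives
\[
\Hd B(f_n)\ge \frac{\log M}{m\log d}>\Hd B(f)-2\epsilon
\]
for all large $n$, and letting $\epsilon\to0$ proves the proposition.
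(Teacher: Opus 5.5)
Your route is the paper's route. You take a high-growth non-zero-arc vertex from Proposition~\ref{prop:pointwise} and Corollary~\ref{coro:many-returns}, apply Lemma~\ref{lem:key} with an exceptional set that forces repelling landing points, continue the boundary ray pairs by Lemma~\ref{lem:stable}, and apply Proposition~\ref{pro:strip} to each $f_n$. Two side remarks. The circularity you worry about in choosing $E$ is not there: the set of \emph{all} periodic angles whose rays land at parabolic points is finite and independent of $m$, so it can be fixed first. Likewise $t$ need not be uniform in $n$: for each $n$, take $t$ above the escaping critical potentials of $f_n$ and let $t\to\infty$.

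The real gap is the sentence ``since $\sigma_d^{-m}$ is bijective from the source channels onto the target channels, the critical points of $f^m$ avoid these landing orbits.'' This does not follow. Bijectivity at infinity and conformality of $f^m$ on the open strip $U_j$ say nothing about the local degree of $f^m$ at the boundary point $z_j$ where the two rays of $\ell_{a_j}$ land. For instance, $w\mapsto w^2$ maps a quarter-plane conformally onto a half-plane although $0$ is critical. Yet you need exactly this fact, for two reasons.

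\begin{enumerate}
\item It is the hypothesis of Lemma~\ref{lem:stable} for the preperiodic source rays.
\item It is what keeps the two source rays landing together after perturbation. Your ``repelling periodic point moves holomorphically'' argument covers only the target pairs. At a critical point of $f^m$ the common landing point generally splits: compare the rays of angles $\theta/2$ and $(\theta+1)/2$ landing at the critical point of a quadratic Misiurewicz polynomial with external angle $\theta$.
\end{enumerate}

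The paper closes this by also putting into $E$ the periodic angles whose rays land on the critical orbit of $f$, which is still a finite set. Then $z_a,z_b$ are not postcritical, so no $f^k(z_j)$ with $k<m$ is critical, and $f_n^m$ is injective near $z_j$ for large $n$.

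Alternatively, you can extract the fact from the proof of Lemma~\ref{lem:analytic}. There all order-$m$ lifts of $\Theta_a$ are smooth, and the landing class at $z_j$ is exactly $\Theta_{a_j}$ with $\#\Theta_{a_j}=\#\Theta_a$; this forces local degree one at $z_j$.

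With either repair, the rest of your argument goes through, including the argument-principle check of conformality for $f_n$ (a step the paper only asserts).
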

\begin{proof}
If $\Hd B(f)=0$, the conclusion is obvious.
Otherwise fix $\epsilon>0$ sufficiently small. 
Combining 
Proposition~\ref{prop:pointwise} and  Corollary~\ref{coro:many-returns}, there exists a non-zero-arc vertex \(v\) of \(\G_{\xi}\) such that
\begin{equation}\label{eq:lsc-loops}
\frac{\log C_{m_0}(v)}{m_0}>(\log d)\Hd B(f)-\epsilon\log d.
\end{equation}

Let \(E=E_f\) be the set of periodic angles whose smooth rays land at parabolic points or at points of the critical orbit of \(f\). Then \(E\) is finite. Applying Lemma~\ref{lem:key} with $M_0=C_{m_0}(v),m_0$, error $\epsilon$, and exceptional set $E$, we obtain \(M,m>0\) and a conformal strip system
\begin{equation}\label{eq:2}
f^m:\bigcup_{j=1}^M U_f(\overline{\theta_j\eta_j},\overline{\alpha_j\beta_j})\to U_f(\overline{\theta\eta},\overline{\alpha\beta})
\end{equation}
in the \(f\)-plane,  such that 
\begin{itemize}
\item the $\approx$-classes containing $\alpha$ and $\theta$, respectively,  both avoid \(E\);
\item the numbers $M,m$ satisfy
\begin{equation}\label{eq:lsc-pruning-rate}
\frac{\log M}{m\log d}>\frac{\log C_{m_0}(v)}{m_0\log d}-\epsilon.
\end{equation}
\end{itemize}

The first item implies that
the landing points of $R_f(\theta)$ and $R_f(\alpha)$ are repelling and are not critical values of \(f^m\).
Thus, by Lemma \ref{lem:stable}, the conformal strip system \eqref{eq:2} in the $f$-plane can be perturbed to the one in the $f_n$ plane
\begin{equation*}
f_n^m:\bigcup_{j=1}^M U_{f_n}(\overline{\theta_j\eta_j},\overline{\alpha_j\beta_j})\to U_{f_n}(\overline{\theta\eta},\overline{\alpha\beta})
\end{equation*}
for every large $n$. Proposition \ref{pro:strip} gives a subset $\Theta_n\subset B(f_n)$ such that
\[\Hd B(f_n)\geq \Hd (\Theta_n)=\frac{\log M}{m\log d}\]
Combining inequalities \eqref{eq:lsc-pruning-rate} and \eqref{eq:lsc-loops}, we obtain
\[\Hd B(f_n)>\Hd B(f)-2\epsilon\]
for every sufficiently large $n$.
Taking the lower limit and then letting $\epsilon\downarrow0$
proves the proposition.
\end{proof}

{For lower semicontinuity, one fixed conformal return system
and its continuation suffice. For upper semicontinuity, the systems
belong to varying polynomials, so their return times must first be
bounded uniformly. This is the role of Lemma~\ref{lem:key}: its
uniform realization places all boundary angles in a single finite
set, from which we can select a constant subsequence.}

\begin{proposition}\label{prop:usc}
If $f_n\to f$ in $\PPPP_d$, then
\[
 \limsup_{n\to\infty}\Hd B(f_n)\le \Hd B(f).
\]
\end{proposition}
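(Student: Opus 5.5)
We pass to a subsequence realizing the $\limsup$; call the limit $D$ and assume $D>0$. By Proposition~\ref{prop:pointwise}, $(\log d)\Hd B(f_n)=h(\xi_{f_n})$ for marking portraits $\xi_{f_n}$ of $\sim_{f_n}$. All labeled wedges $\WWW_{\xi_{f_n}}$ have size at most $d-1$ and take values in a finite label alphabet, so a diagonal argument lets us pass to a further subsequence with fixed size $s$ along which the wedges converge vertexwise to a labeled wedge $\WWW$. Lemma~\ref{lem:convergence-of-growth-rate} then gives $h(\WWW)=(\log d) D$. The limit graph $\Gamma(\WWW)$ is used only to select finite paths; it need not be a marking graph of $f$. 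Fix $\epsilon>0$. Lemma~\ref{lem:meeting} applied to $\Gamma(\WWW)$ gives a vertex $v$ and a length $m_0$ with $\log C_{m_0}(v)/m_0>(\log d)D-\epsilon\log d$. Since these $C_{m_0}(v)$ closed paths involve only finitely many vertices (Lemma~\ref{lem:confinement}), and the labels stabilize on finite sets, they are also closed paths based at $v$ in $\Gamma_{\xi_{f_n}}$ for all large $n$. Set $M_0=C_{m_0}(v)\ge2$.

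Next we check that $v$ is a non-zero-arc vertex for $\Gamma_{\xi_{f_n}}$, at least for large $n$. If it were not, Lemma~\ref{lem:zero} would bound the number of paths from $v$ by $K_0(f_n)$. That constant is not uniform in $n$, so instead we would take $m_0$ in a sequence $m_0k$ via powers of closed paths. Concretely, $k$-fold concatenations give $M_0^k$ closed paths of length $km_0$ at $v$ in every $\Gamma_{\xi_{f_n}}$, and this count exceeds $K_0(f_n)$ once $k$ is large for fixed $n$. Hence $v\notin Z(f_n)$, because zero-arc vertices support only boundedly many paths. We then apply Lemma~\ref{lem:key} to $f_n$ with $M_0,m_0,\epsilon$ and an exceptional set $E=E_f$ chosen from the \emph{limit} polynomial. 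We take $E_f$ to consist of the finitely many periodic angles in $\Sigma_0(f)\cup\Sigma_N(f)$ for the relevant $N$, together with the periodic angles landing at parabolic points or on critical orbits of $f$. Lemma~\ref{lem:key} is uniform, so $M,m$ are independent of $n$. It yields conformal strip systems $f_n^m:\bigcup_{j}U^n_j\to U_{f_n}(\overline{\alpha_n\beta_n},\overline{\theta_n\eta_n})$ with $\log M/(m\log d)>D-2\epsilon$. The boundary angles lie in the finite set $\operatorname{Fix}(\sigma_d^{2m})$, and the preimage strip angles lie in $\sigma_d^{-m}(\operatorname{Fix}(\sigma_d^{2m}))$. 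Passing to a further subsequence, all of these angles are constant in $n$.

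The next step is to show that this fixed angle data defines a conformal strip system for $f$. The angles avoid $E_f$, so by Lemma~\ref{lem:lift} the rays $R_f(\alpha),\dots$ are smooth and land at repelling points off the critical orbit. Lemma~\ref{lem:stable} then gives convergence $\overline{R_{f_n}}\to\overline{R_f}$ of the rays and of their landing points. Hence the limits of the ray pairs of $f_n$ are ray pairs of $f$ for the same angles.

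The main obstacle is non-degeneracy. We must show that the landing points of the distinct pairs $\overline{\alpha\beta}$, $\overline{\theta\eta}$ and $\overline{\alpha_j\beta_j}$, $\overline{\theta_j\eta_j}$ remain distinct for $f$, so that we obtain genuine disjoint strips. The first thing I would try is to argue by contradiction. If two such pairs landed at one point $z$ for $f$, then $z$ would be repelling, since its class avoids parabolic angles, and hence stable. By Lemma~\ref{lem:stable} the two pairs would then land together for nearby $f_n$ as well. This contradicts the distinctness of their landing points for $f_n$, which Lemma~\ref{lem:actual-separators} guarantees. The nesting and separation properties in Definition~\ref{def:strip} are combinatorial, determined by the constant angles, so they persist. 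Conformality of $f^m:U_j\to U$ then follows from the argument-principle step in the proof of Lemma~\ref{lem:analytic}, since no critical value of $f^m$ enters $U$: the channels avoid the relevant angles by the choice of $E$. Finally, Proposition~\ref{pro:strip} gives $\Hd B(f)\ge\log M/(m\log d)>D-2\epsilon$. Letting $\epsilon\downarrow0$ gives the proposition.
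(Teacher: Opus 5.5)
Your proposal is correct and follows the paper's proof essentially step for step: a convergent subsequence of labeled wedges, Lemma~\ref{lem:meeting} on the limit graph, the concatenation argument excluding zero-arc vertices, a uniform application of Lemma~\ref{lem:key} with an exceptional set chosen from the limit $f$, a constant subsequence of boundary angles, and passage to the limit via Lemmas~\ref{lem:lift} and~\ref{lem:stable}; your stability argument showing that the limiting landing points stay distinct spells out what the paper only asserts. Two small repairs are needed: (i) let $E_f$ contain all periodic angles of $\bigcup_{k\ge1}\sigma_d^k(\Sigma_0(f))$ rather than those of $\Sigma_N(f)$ ``for the relevant $N$''; this set is finite because each forward orbit contains at most one cycle, whereas your choice is circular since $m$ depends on $\#E$; (ii) drop the claim that no critical value of $f^m$ lies in $U$, which your choice of $E$ does not give and which the argument-principle step does not need, since that step uses only the combinatorially determined boundary correspondence.
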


\begin{proof}
Pass to a subsequence such that
\[
 \Hd B(f_n)\longrightarrow \kappa=\limsup_{n\to\infty}\Hd B(f_n).
\]
If $\kappa=0$, the assertion is immediate. Suppose $\kappa>0$ and fix
$0<\epsilon<\kappa/3$. Choose a  marking portrait $\xi_n$ for the laminational equivalence relation $\sim_n$ induced by 
$\LLL_{\rm bi}^{\rm p}(f_n)$ for each $n$. By
Proposition~\ref{prop:pointwise},
\[
 \frac{h(\G_{\xi_n})}{\log d}=\Hd B(f_n)\longrightarrow \kappa.
\]

Passing to a subsequence, we may assume that all \(\xi_n\) have the same size and, by a diagonal selection from the finitely many labels at each vertex, that their labeled wedges $\WWW_n$ converge to a labeled wedge \(\WWW_\infty\).
Lemma \ref{lem:convergence-of-growth-rate} gives
$$h(\WWW_\infty)=\kappa\cdot \log d.$$ 
This limit is used only to select
finitely many paths; it need not be the labeled wedge of a marking portrait of the limit
polynomial $f$.

By Lemma~\ref{lem:meeting}, there are $M_0\ge2$ closed paths
based at one vertex $v$ of $\G(\WWW_\infty)$, all of length $m_0$, with
\begin{equation}\label{eq:usc-loop-rate}
 \frac{\log M_0}{m_0\log d}>\kappa-\epsilon.
\end{equation}
Only finitely many labels occur on these paths. Hence the same
paths occur in $\G_{\xi_n}$ for all sufficiently large $n$.
For each such $n$, $v$ is not a zero-arc vertex: otherwise
Lemma~\ref{lem:zero} would bound the number of paths starting
at $v$ independently of their length, whereas concatenating
the selected loops gives $M_0^r$ paths of length $rm_0$.

We choose the exceptional angles using the limit polynomial $f$.
Let $\operatorname{Per}(\sigma_d)$ denote the periodic angles, and set
\[
 E_f^{\rm esc}=\operatorname{Per}(\sigma_d)\cap
       \bigcup_{k\ge1}\sigma_d^k(\Sigma_0(f)).
\]
This set is finite: each of the finitely many forward orbits
starting in $\Sigma_0(f)$ contains at most one finite periodic
cycle.

Let $E_f^{\rm pp}$ be the set of smooth periodic angles whose external rays land at parabolic points or at points of ${\rm Post}(f)$ defined in \eqref{eq:-3}.
There are only finitely many such
points, and each receives only finitely many periodic rays. Thus
\[\text{$E_f:=E_f^{\rm esc}\cup E_f^{\rm pp}$\ \ is finite}.\]

Apply Lemma~\ref{lem:key} in each $f_n$-plane with the vertex $v$, the selected $M_0$ paths of length $m_0$, error $\epsilon$, and set $E_f$.
Its uniformity gives common integers $M,m$ and conformal strip
\begin{equation*}
f_n^m:\bigcup_{j=1}^M U_{f_n,j}\to U_{f_n}(\overline{\theta_n\eta_n},\overline{\alpha_n\beta_n})
\end{equation*}
for each $f_n$, such that
\begin{itemize}
\item the angles $\alpha_n,\beta_n,\theta_n$ and $\eta_n$ are $\sigma_d^{2m}$-fixed;
\item the $\approx$-classes containing $\alpha_n$ and $\theta_n$, respectively,  both avoid \(E_f\);
\item the numbers $M,m$ satisfy
\begin{equation}\label{eq:4}
\frac{\log M}{m\log d}>\frac{\log M_0}{m_0\log d}-\epsilon.
\end{equation}
\end{itemize}

By the first assertion, the boundary angles of \(U_{f_n}\) and \(U_{f_n,j}\) for all \(j, n\) lie in the finite set \(\sigma_d^{-m}(\operatorname{Fix}(\sigma_d^{2m}))\). Passing to a subsequence, we may assume that all boundary angles of \(U_{f_n}\) and \(U_{f_n,j}\)  are independent of \(n\). In particular, for every \(n\ge N\), we may write $(\alpha_n,\beta_n,\theta_n,\eta_n)=(\alpha,\beta,\theta,\eta)$.

By the second assertion, the rays of \(f\) with arguments \(\alpha,\beta,\theta,\eta\) are smooth, and their landing points are repelling and disjoint from the critical orbits. The choice of \(E_f\) and Lemma~\ref{lem:lift} further give smoothness for arguments in \[\sigma_d^{-m}(\{\alpha,\beta,\theta,\eta\}).\] Hence, by Lemma~\ref{lem:stable}, the smooth ray pairs of \(f_n\) on the boundaries of \(U_{f_n,j}\) and \(U_{f_n}\) converge to those of \(f\) with the same arguments. Therefore, we  obtain a conformal strip system for \(f\)
\[
f^m:\bigcup_{j=1}^M U_{f,j}\to U_{f}(\overline{\theta\eta},\overline{\alpha\beta}).
\]

By Proposition~\ref{pro:strip}, there exists a set \(\Theta\subset B(f)\) with $\Hd\Theta=\log M/(m\log d)$. Combining this with \eqref{eq:4} and \eqref{eq:usc-loop-rate}, we obtain
\[
\Hd B(f)\geq\Hd\Theta>\kappa-2\epsilon.
\]
Letting \(\epsilon\to0\) completes the proof of the proposition.
\end{proof}

\begin{proof}[Proof of Theorem \ref{thm:main}]
It follows directly from Propositions \ref{prop:lsc} and \ref{prop:usc}.
\end{proof}

\end{document}